\documentclass[11pt,reqno]{amsart}

\usepackage[a4paper]{geometry}
\usepackage{amsmath,amssymb,amsfonts,mathrsfs,mathtools}
\usepackage{enumitem}
\usepackage{booktabs}
\usepackage{array}
\usepackage{tabularx}
\usepackage{float}
\usepackage{microtype}
\usepackage{xcolor}
\usepackage{tikz}
\usepackage[colorlinks=true,allcolors=blue]{hyperref}

\numberwithin{equation}{section}

\allowdisplaybreaks
\hypersetup{
  hypertexnames=false,
  pdftitle={Exact High-Order GPT Cancellation in Noncircular Multilayers},
  pdfauthor={Youjun Deng, Hongyu Liu, Longyue Tao},
  pdfsubject={Generalized polarization tensor cancellation in two-dimensional conductivity multilayers},
  pdfkeywords={generalized polarization tensors, GPT-vanishing structures, noncircular multilayers, near-cloaking}
}

\definecolor{outerblue}{RGB}{63,118,163}
\definecolor{middleblue}{RGB}{137,185,213}
\definecolor{innergold}{RGB}{232,188,92}
\definecolor{coregray}{RGB}{95,98,104}
\definecolor{softgray}{RGB}{245,246,247}

\newtheorem{theorem}{Theorem}[section]
\newtheorem{proposition}[theorem]{Proposition}
\newtheorem{corollary}[theorem]{Corollary}
\newtheorem{lemma}{Lemma}[section]
\theoremstyle{definition}
\newtheorem{definition}{Definition}[section]

\theoremstyle{remark}
\newtheorem{remark}{Remark}[section]

\title[Exact high-order GPT cancellation]
{Exact High-Order GPT Cancellation in Noncircular Multilayers}

\author{Youjun Deng}
\address[Youjun Deng]{School of Mathematics and Statistics, Central South University, Changsha, Hunan 410083, China}
\email{youjundeng@csu.edu.cn}

\author{Hongyu Liu}
\address[Hongyu Liu]{Department of Mathematics, City University of Hong Kong, Hong Kong SAR, China}
\email{hongyu.liuip@gmail.com, hongyliu@cityu.edu.hk}

\author{Longyue Tao}
\address[Longyue Tao]{Department of Mathematics, City University of Hong Kong, Hong Kong SAR, China}
\email{sdyctly@163.com, longyue.tao@my.cityu.edu.hk}

\date{\today}
\subjclass[2020]{Primary 35R30; Secondary 35J15, 35B20, 65G20}
\keywords{Generalized polarization tensors, GPT-vanishing structures,
noncircular multilayers, near-cloaking}

\begin{document}
\raggedbottom

\begin{abstract}
This paper is concerned with exact high-order generalized polarization tensor (GPT) cancellation in two-dimensional noncircular multilayers with finite positive isotropic coating conductivities.
We first prove that, for every admissible cyclically symmetric core--shell geometry, there exists a unique positive coating conductivity for which the structure is weakly neutral to every uniform incident field.
For higher orders, we establish locally unique coating conductivities that cancel the complete \(K\)-contracted GPTs (K-CGPTs) block on a broad class of fixed noncircular multilayers with cyclic symmetry.
This includes homothetic multilayers generated by strictly star-shaped symmetric domains, without assuming that the geometry is close to concentric disks.
We also replace the near-background condition by a small-core condition covering finite, insulating, and perfectly conducting cores.
By varying selected interface modes together with the coating conductivities near a radial GPT-vanishing structure, we further construct exact families with neither rotational nor reflection symmetry.
For three coatings, the relevant nondegeneracy conditions and an explicit chiral example are rigorously certified.
Finally, we show that the resulting cancellation yields enhanced near-cloaking for every entire harmonic incident field.
A set of certified numerical examples fully corroborates the theoretical predictions established in this work.
\end{abstract}

\maketitle
\enlargethispage{3pt}

\section{Introduction}

Let \(D\Subset\mathbb{R}^2\) be the bounded region occupied by a multilayer, and let \(\sigma\in L^\infty(\mathbb{R}^2)\) be a scalar piecewise constant conductivity satisfying \(0<c_*\leq\sigma(x)\leq c^*<\infty\) almost everywhere in \(\mathbb{R}^2\) and \(\sigma=1\) almost everywhere in \(\mathbb{R}^2\setminus\overline D\).
For an entire harmonic incident field \(H\), the associated total potential \(u\in H^1_{\rm loc}(\mathbb{R}^2)\) solves
\begin{equation}\label{eq:conductivity-transmission-problem}
 \left\{
 \begin{array}{@{}l@{\qquad}l@{}}
  \nabla\!\cdot(\sigma\nabla u)=0
  &\text{in }\mathbb{R}^2,\\
  (u-H)(x)=O(|x|^{-1})
  &\text{as }|x|\to\infty.
 \end{array}
 \right.
\end{equation}
For the homogeneous coefficient \(\sigma\equiv1\), uniqueness in \eqref{eq:conductivity-transmission-problem} yields \(u=H\).
For \(\sigma\not\equiv1\), define the scattered potential \(w:=u-H\); the associated exterior response is \(w|_{\mathbb{R}^2\setminus\overline D}\).
Multilayered piecewise constant conductivities arise naturally in geophysical imaging, composite materials, and anatomical models, and have been studied in electrical impedance tomography (EIT) in connection with reconstruction, stability, and anomaly identification \cite{borcea1999multiscattering,deng2024identifying,garde2020reconstruction,kong2024inverse,li2022refined}.
Related algebraic response-matrix frameworks have been developed for multilayer electrostatic and electro-osmotic systems, with applications to plasmonic resonance and hydrodynamic cloaking \cite{FangdengMMA23,FDLMMA15,kong2024enlargement,kong2024electro}.
Perfect cloaking requires \(w=0\) in the exterior for every admissible \(H\), while near-cloaking requires the exterior response to be uniformly small; see \cite{greenleaf2003nonuniqueness,kohn2008cloaking} for  EIT and \cite{bao2014nearly,kohn2010cloaking,li2015regularized} for related electromagnetic and Helmholtz models.
This common objective can be pursued through several distinct mechanisms.

Transformation cloaking uses coordinate invariance and, in its ideal form, singular anisotropic media \cite{greenleaf2003nonuniqueness,pendry2006controlling}.
Resonance-based cloaking exploits anomalous localized resonance, typically with sign-changing parameters and limiting absorption \cite{milton2006cloaking}.
Passive scattering cancellation tunes coatings to cancel selected scattering or multipole coefficients \cite{alu2005achieving,liu2024simultaneously}.
Active exterior cloaking uses controlled sources to create a quiet region with a small far field \cite{vasquez2009active}.
We follow the third route with finite positive scalar isotropic coating conductivities and perfectly bonded interfaces, without singular transformations, sign-changing media, or active sources.

The scattered potential \(w\) has a far-field multipole expansion whose leading block is the classical polarization tensor \cite{schiffer1949virtual} and whose full coefficient family consists of the generalized polarization tensors (GPTs) \cite{ammari2003high,ammari2003properties}.
In two dimensions, the ordinary real contracted GPTs (CGPTs) express this expansion in a real harmonic basis and quantify the coupling between incident and outgoing harmonic modes \cite{ammari2014reconstruction,ammari2007polarization}.
Their first-order part is the polarization tensor, whose vanishing removes the leading response to uniform incident fields and is called weak neutrality.
Complete \(K\)-CGPT cancellation means that every such coupling through harmonic order \(K\) vanishes, thereby eliminating the corresponding finite set of scattering channels and providing the mechanism underlying GPT-vanishing structures \cite{ammari2013enhancement}.

Exact neutrality means that a chosen incident field is completely unchanged outside the inclusion.
For a single applied field, this can occur even in noncircular structures, including constructions based on conformal maps \cite{jarczyk2012neutral,milton2001neutral}.
The situation becomes rigid when exact neutrality is required for two independent uniform fields: the two boundaries must be confocal ellipses, reducing to concentric circles when the background is isotropic \cite{kang2014coated}.
For \(K>1\), complete \(K\)-CGPT cancellation strengthens weak neutrality by removing every coupling in the square block \(1\leq m,n\leq K\), but it remains a finite-order condition because responses outside this block may survive.
We therefore ask whether these finitely many scattering coefficients can be canceled exactly by a noncircular multilayer with perfectly bonded interfaces and finite positive scalar isotropic coating conductivities.

In a regularized near-cloak, the accuracy is measured by the decay of the exterior perturbation as the virtual inclusion size \(\rho\) tends to zero \cite{kohn2008cloaking}.
Using a concentric GPT-vanishing structure improves this decay: rotational symmetry leaves one scalar condition at each order, and canceling the first \(K\) radial GPTs yields an error of \(O(\rho^{2K+2})\) in two dimensions \cite{ammari2013enhancement}.
This raises the material question of whether positive coating conductivities achieving these cancellations exist for arbitrary prescribed radii and a fixed core conductivity.

In both two and three dimensions, suitable positive coating conductivities exist for arbitrary prescribed radii and any fixed nonnegative core conductivity, allowing the first \(K\) radial GPTs to be canceled \cite{sun2026existence}.
Thus the radial material problem is settled; the main difficulty begins beyond circular geometry, where angular modes no longer decouple.
Existing approaches address this difficulty by varying the interface shapes, adding controls on imperfect interfaces, or adapting the multipole basis to the geometry.

The first treats the interfaces as design variables: shape differentiation and sensitivity analysis describe how GPTs respond to boundary perturbations \cite{ammari2022localized,ammari2010conductivity} and support numerical optimization of GPT-vanishing multilayers \cite{feng2017construction}.
Rigorous existence is also known for small perturbations of balls and disks, but only for weak neutrality rather than complete higher-order cancellation \cite{kang2021polarization,kang2022existence}.

The second direction introduces additional controls through imperfect interfaces.
Position-dependent interface parameters yield weakly neutral inclusions of general shape \cite{kang2019construction}, while a Faber polynomial framework yields constructions with vanishing leading GPTs \cite{choi2024construction}.
These results demonstrate the flexibility of interface controls, but their transmission laws differ from those of the perfectly bonded interfaces considered here.

The third direction adapts the multipole basis to the geometry.
Exterior conformal maps, Faber polynomials, and Grunsky coefficients provide explicit representations of layer-potential operators and geometric multipole coefficients \cite{choi2021analytical,duren1983univalent,faber1903uber,grunsky1939koeffizientenbedingungen,jung2021series,kang2015construction}.
This framework also yields a geometric multipole expansion and semi-neutral inclusions of general shape \cite{choi2023geometric}.
This representation makes noncircular mode coupling explicit.

Across these three directions, complete higher-order cancellation for multilayers with perfectly bonded interfaces and finite positive isotropic coating conductivities remains unresolved.
The underlying obstacle is a mismatch between constraints and controls.
For a radial structure with \(K\) coatings, rotational symmetry leaves \(K\) scalar cancellation conditions that can be matched by the \(K\) coating conductivities.
In a generic noncircular multilayer, mixed modes no longer vanish automatically, so the same material variables must satisfy many more conditions and the system becomes overdetermined.
A radial GPT-vanishing structure also cannot simply be carried to a noncircular geometry by an exterior conformal map.
Such a map generally does not extend through the whole multilayer as a physical coordinate transformation, while a nonconformal extension would produce anisotropic conductivities outside the class considered here.
Conformal analysis can therefore describe noncircular mode coupling, but it does not supply the missing controls needed for complete higher-order cancellation.

We overcome this mismatch in two ways.
First, a finite cyclic symmetry forces most coupled responses to vanish exactly, leaving one material equation for each coating conductivity on a fixed noncircular geometry.
Second, selected Fourier modes of the interfaces are varied together with the coating conductivities, providing enough variables for complete cancellation without imposing symmetry on the final structure.

The main contributions, in the order developed below, are as follows.

\begin{enumerate}[label=\textnormal{(\roman*)},leftmargin=*,itemsep=0.6em]
\item
\emph{Global one-coating weak neutrality.}
For \(K=1\), every admissible fixed \(C_Q\)-symmetric core--shell geometry with \(Q\geq3\), and every positive pair of core and background conductivities, admits a unique positive coating conductivity for which the polarization tensor vanishes.
This result is global on the positive coating axis and independent of the perturbative higher-order arguments.

\item
\emph{High-order cancellation on cyclic noncircular geometries.}
For \(K\geq2\) and \(Q>2K\), a nonsingular moment Jacobian yields locally unique coating conductivities that cancel the complete \(K\)-CGPT block.
This covers arbitrary fixed \(C_Q\)-symmetric targets for near-background cores and, alternatively, fixed finite, insulating, or  perfectly conducting core types when the core is sufficiently small; homothetic strictly star-shaped targets satisfy the required nondegeneracy automatically.
Global existence can also be established when the cyclic perturbation is small enough.

\item
\emph{Symmetry-free high-order cancellation.}
Starting from a radial \(K\)-GPT-vanishing root, explicit material and interface nondegeneracy allows selected interface Fourier modes and coating conductivities to cancel every ordinary CGPT through total degree \(2K+1\) without imposing symmetry on the final geometry.
For \(K=3\), interval certification of the radial root and all derivative blocks yields exact families with neither rotational nor reflection symmetry.

\item
\emph{Scattering consequences and certified examples.}
Total-degree cancellation gives an \(O(\varepsilon^{2K+2})\) exterior estimate for every entire harmonic incident field after scaling.
We also certify a fixed chiral \(C_8\) three-coating structure whose complete \(3\)-CGPT block vanishes, while a symmetry-permitted response at the first total degree beyond the certified range, namely total degree eight, is rigorously nonzero.
\end{enumerate}

The symmetry-free construction is the principal higher-order result of the paper.

Except for the scalar \(K=1\) theorem, the higher-order existence results are local in the parameter regimes specified above.
For the fixed chiral \(C_8\) geometry, uniqueness is established only in the validated box.
Global continuation on general noncircular geometries and global uniqueness for the higher-order material problem remain open.

\subsection*{Organization of the paper}

The remainder of the paper is organized as follows.
Section~\ref{sec:foundations} formulates the multilayer conductivity problem and develops the conformal representation and cyclic selection rules.
Section~\ref{sec:main-results} states the global ($K$=1) theorem and the local higher-order results under cyclic symmetry, together with their scattering consequences.
Section~\ref{sec:global-one-shell} proves the scalar result, while Section~\ref{sec:local-constructions} proves, in order, the fixed-geometry theorem and homothetic corollary, the small-core theorem, and the continuation results for common conformal level curves.
Section~\ref{sec:symmetry-free-completion} then states and proves the main result without geometric symmetry by adding interface controls to the conductivity tuning.
Section~\ref{sec:harmonic-backgrounds} converts the cancellation results into scattering estimates for arbitrary entire harmonic incident fields, and Section~\ref{sec:examples} gives the computer-assisted and numerical examples.

\section{Problem formulation, conformal geometry, and cyclic selection}
\label{sec:foundations}

This section introduces the multilayer conductivity problem and the CGPTs, then derives the selection rules associated with cyclic symmetry.

\subsection{Conductivity problem and the complete CGPT block}
\label{sec:problem}

Let
\[
 D_0\Subset D_1\Subset\cdots\Subset D_K\subset\mathbb{R}^2
\]
be bounded simply connected domains with sufficiently smooth interfaces.
The core is $D_0$, the $j$-th coating is $D_j\setminus\overline{D_{j-1}}$, and the background conductivity is normalized to one.
We consider a piecewise constant conductivity distribution.
For positive constants $\sigma_0,\ldots,\sigma_K$, set
\begin{equation}
 \sigma(x)=
 \begin{cases}
  \sigma_0,&x\in D_0,\\
  \sigma_j,&x\in D_j\setminus\overline{D_{j-1}},\quad 1\leq j\leq K,\\
  1,&x\in\mathbb{R}^2\setminus\overline{D_K}.
 \end{cases}
 \label{eq:conductivity}
\end{equation}
For an entire harmonic incident field $H$, let $u$ be the solution of \eqref{eq:conductivity-transmission-problem} corresponding to the conductivity \eqref{eq:conductivity}.
Potential and conormal flux are continuous across every interface.
For \(z_*\in\mathbb{R}^2\) and \(\varepsilon>0\), define the translated and uniformly scaled conductivity
\begin{equation}
 \sigma_{\varepsilon,z_*}(x)
 :=\sigma\left(\frac{x-z_*}{\varepsilon}\right).
 \label{eq:scaled-conductivity}
\end{equation}

Identify $x=(x_1,x_2)$ with $z=x_1+ix_2$ and define the real harmonic polynomials
\begin{equation}
 P_n^c(z)=\operatorname{Re} z^n,
 \qquad
 P_n^s(z)=\operatorname{Im} z^n,
 \qquad n\geq1.
 \label{eq:harmonic-basis}
\end{equation}
The CGPTs describe how each incident harmonic mode generates outgoing multipoles.
Let $u_n^\beta$ solve \eqref{eq:conductivity-transmission-problem} with $H=P_n^\beta$ as defined in \eqref{eq:harmonic-basis}.
For $m,n\geq1$ and $\alpha,\beta\in\{c,s\}$, we define the real CGPTs by
\begin{equation}
 M_{mn}^{\alpha\beta}[\sigma]
 =\int_{\mathbb{R}^2}(\sigma-1)
   \nabla u_n^\beta\cdot\nabla P_m^\alpha\,\mathrm{d} x,
 \label{eq:real-cgpt}
\end{equation}
which is in accordance with the CGPTs defined in \cite{ammari2014reconstruction}.
Note that the integrand in \eqref{eq:real-cgpt} is supported in \(\overline{D_K}\).
Reciprocity gives
\begin{equation}
 M_{mn}^{\alpha\beta}=M_{nm}^{\beta\alpha}.
 \label{eq:reciprocity}
\end{equation}

By the standard expansion of the logarithmic fundamental solution, these coefficients determine the exterior perturbation \cite{ammari2007polarization,ammari2013enhancement}.
Write
\begin{equation}
 H(r,\theta)=H(0)+\sum_{n=1}^{\infty}r^n
  \bigl(a_n^c\cos n\theta+a_n^s\sin n\theta\bigr),
 \label{eq:H-expansion}
\end{equation}
For $H$ as in \eqref{eq:H-expansion} and sufficiently large $r$, the exterior multipole expansion is
\begin{align}
 u-H={}&-\sum_{m=1}^{\infty}\frac{\cos m\theta}{2\pi m r^m}
 \sum_{n=1}^{\infty}
 \bigl(M_{mn}^{cc}a_n^c+M_{mn}^{cs}a_n^s\bigr)\notag\\
 &-\sum_{m=1}^{\infty}\frac{\sin m\theta}{2\pi m r^m}
 \sum_{n=1}^{\infty}
 \bigl(M_{mn}^{sc}a_n^c+M_{mn}^{ss}a_n^s\bigr).
 \label{eq:multipole-expansion}
\end{align}
Here $n$ labels the incident mode and $m$ the outgoing multipole.
For a nonradial structure, an incident mode of order $n$ may generate outgoing modes of different orders $m$.
We therefore cancel the full block $1\leq m,n\leq K$, rather than only its diagonal entries; uniform-field neutrality concerns only $n=1$.

To express the action of rotations, we combine the four real CGPTs into two complex families:
\begin{align}
 \mathbb N_{mn}^{(1)}
 &=(M_{mn}^{cc}-M_{mn}^{ss})
   +i(M_{mn}^{cs}+M_{mn}^{sc}),
 \label{eq:N1}\\
 \mathbb N_{mn}^{(2)}
 &=(M_{mn}^{cc}+M_{mn}^{ss})
   +i(M_{mn}^{cs}-M_{mn}^{sc}).
 \label{eq:N2}
\end{align}

The complex families \eqref{eq:N1}--\eqref{eq:N2} contain the same information as the four real CGPTs and lead to the following cancellation condition.

\begin{definition}
A conductivity structure is \emph{$K$-GPT-vanishing} if its complete $K$-CGPT block satisfies
\begin{equation}
 M_{mn}^{\alpha\beta}=0
 \quad\text{for all }1\leq m,n\leq K,
 \quad\alpha,\beta\in\{c,s\}.
 \label{eq:complete-block}
\end{equation}
Thus all four ordinary real CGPT families vanish for every $1\leq m,n\leq K$, or equivalently the complete $2K\times2K$ real contracted GPT block is zero.
\end{definition}

\subsection{Common conformal level curves and cyclic selection}
\label{sec:cyclic-geometry}

Canceling the complete CGPT block defined above generally requires many coupled conditions.
We reduce them by imposing a finite rotational symmetry, which we first define precisely.

\begin{definition}
Fix an integer $Q\geq3$ and set $\omega=e^{2\pi i/Q}$.
The cyclic group
\[
 C_Q:=\{\omega^q:0\leq q\leq Q-1\}
\]
acts on $\mathbb{C}$ by $z\mapsto\omega^q z$.
A set is called \emph{$C_Q$-symmetric} if it is invariant under this action, and a conductivity $\sigma$ is called \emph{$C_Q$-invariant} if $\sigma(\omega z)=\sigma(z)$ for almost every $z\in\mathbb{C}$.
\end{definition}

A convenient class of $C_Q$-symmetric multilayers is obtained from a common Laurent map.
In a reference $w$-plane, fix radii
\begin{equation}
 0<r_0<r_1<\cdots<r_K
 \label{eq:radii}
\end{equation}
and deform the circles $|w|=r_j$ by the same finite Laurent map.
For an integer $L\geq1$ and parameters $a=(a_1,\ldots,a_L)\in\mathbb{C}^L$, define
\begin{equation}
 \Phi_a(w)=w+\sum_{\ell=1}^{L}a_\ell w^{1-\ell Q}.
 \label{eq:common-map}
\end{equation}
The leading term is the identity map, while the coefficients $a_\ell$ control the noncircular deformation.
The powers $1-\ell Q$ are chosen so that the deformation respects rotations by $2\pi/Q$.
The candidate interfaces are the common level curves
\begin{equation}
 \Gamma_j(a)=\Phi_a(\{|w|=r_j\}),
 \qquad 0\leq j\leq K.
 \label{eq:level-curves}
\end{equation}
Only the restriction of $\Phi_a$ to $|w|\geq r_0$ is used.
The core will be the region enclosed by the innermost interface, so the Laurent series is never evaluated at $w=0$.
To ensure that these images form valid multilayer interfaces, we must exclude self-intersections and crossings between different layers.
The following condition guarantees this on the entire region where $\Phi_a$ is used.

\begin{lemma}\label{lem:global-injectivity}
If
\begin{equation}
 \sum_{\ell=1}^{L}(\ell Q-1)|a_\ell|r_0^{-\ell Q}<1,
 \label{eq:univalence}
\end{equation}
then $\Phi_a$ is injective on $\{|w|\geq r_0\}\cup\{\infty\}$.
In particular, the curves \eqref{eq:level-curves} are analytic, simple, and strictly nested.
\end{lemma}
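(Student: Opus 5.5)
The plan is to write $\Phi_a(w)=w+g(w)$ with $g(w)=\sum_{\ell=1}^L a_\ell w^{1-\ell Q}$ and show that $g$ is a strict contraction in the appropriate sense on the closed exterior region $E:=\{|w|\ge r_0\}$. Each exponent $1-\ell Q$ is negative (since $Q\ge3$), so $g$ is holomorphic on $|w|>r_0/2$, say, and $g(w)\to0$ as $w\to\infty$. Consequently $\Phi_a$ extends holomorphically to $\infty$ with $\Phi_a(\infty)=\infty$ and a simple pole there. The derivative is
\[
 g'(w)=\sum_{\ell=1}^{L}(1-\ell Q)a_\ell w^{-\ell Q},
 \qquad\text{so}\qquad
 \sup_{|w|\ge r_0}|g'(w)|\le\sum_{\ell=1}^L(\ell Q-1)|a_\ell|r_0^{-\ell Q}=:\kappa<1,
\]
by \eqref{eq:univalence}.

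For injectivity on $E$, take $w_1\ne w_2$ in $E$. The segment joining them may leave $E$, so I integrate $g'$ along a path inside $E$ instead. The region $E$ is not convex, so the plain segment estimate is unavailable. Two ways around this:

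\begin{itemize}
\item[(i)] Apply the maximum principle to the holomorphic difference quotient. Set
\[
 h(w_1,w_2)=\frac{g(w_1)-g(w_2)}{w_1-w_2}.
\]
In each variable $h$ is holomorphic on $|w|\ge r_0$, including at $\infty$ and along the diagonal, where it equals $g'$. By the maximum principle, applied separately in each variable on the exterior domain (which includes $\infty$), $|h|$ is maximized on the torus $|w_1|=|w_2|=r_0$.
\item[(ii)] Use the explicit expansion
\[
 \frac{w_1^{-k}-w_2^{-k}}{w_1-w_2}=-\sum_{p+q=k-1}w_1^{-p-1}w_2^{-q-1}.
\]
For $|w_i|\ge r_0$ this has modulus at most $k\,r_0^{-k-1}$. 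Writing $w^{1-\ell Q}=w\cdot w^{-\ell Q}$ complicates matters slightly, so I would instead apply this identity directly with $k=\ell Q-1$, using $w^{1-\ell Q}=w^{-(\ell Q-1)}$. This gives $|h|\le\sum_\ell(\ell Q-1)|a_\ell|r_0^{-\ell Q}=\kappa$.
\end{itemize}

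Route (ii) is clean and avoids the maximum principle, so I will use it. It then follows that
\[
 |\Phi_a(w_1)-\Phi_a(w_2)|\ge(1-\kappa)|w_1-w_2|>0 .
\]
Injectivity at $\infty$ holds because $\Phi_a(w)$ is finite for finite $w$. Hence $\Phi_a$ is injective on $E\cup\{\infty\}$.

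For the consequences, $\Phi_a$ is analytic on a neighbourhood of each circle $|w|=r_j$ with $r_j\ge r_0$. Moreover $|\Phi_a'|\ge1-\kappa>0$ there, so each $\Gamma_j(a)$ is an analytic regular parametrized curve, and it is simple by injectivity. For nesting, $\Phi_a$ maps $\{|w|>r_j\}\cup\{\infty\}$ conformally and injectively onto an open set $U_j$ containing $\infty$. By the Jordan curve theorem together with injectivity, $U_j$ is exactly the exterior component of $\mathbb C\setminus\Gamma_j(a)$; this is the standard argument via the argument principle on large circles. For $j<k$, the curve $\Gamma_k(a)$ is the image of $|w|=r_k\subset\{|w|>r_j\}$, hence lies in the exterior of $\Gamma_j(a)$, and it is disjoint from $\Gamma_j(a)$ by injectivity. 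Therefore $D_j\Subset D_k$.

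The step I expect to need the most care is this identification of the image of the exterior disk with the exterior Jordan domain, that is, showing the interiors are strictly nested rather than merely that the curves are disjoint. I would handle it by noting that $\Phi_a$ on $\{|w|>r_j\}$ is a proper injective holomorphic map into $\hat{\mathbb C}\setminus\Gamma_j(a)$ fixing $\infty$. Its image is then open and closed in the exterior component, so it equals that component.
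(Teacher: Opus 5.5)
Your route (ii) is the paper's argument: the same difference-quotient bound $|(w_1^{-p}-w_2^{-p})/(w_1-w_2)|\le p\,r_0^{-(p+1)}$ with $p=\ell Q-1$, which gives $|\Phi_a(w_1)-\Phi_a(w_2)|\ge(1-\kappa)|w_1-w_2|$ and hence $|\Phi_a'|\ge1-\kappa>0$. Your identification of $\Phi_a(\{|w|>r_j\}\cup\{\infty\})$ with the exterior of $\Gamma_j(a)$ supplies the detail that the paper compresses into one line. One wording fix: nesting does not follow from $\Gamma_k(a)$ alone lying in the exterior of $\Gamma_j(a)$, because two disjoint Jordan curves can each lie outside the other. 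It follows because all of $\mathbb C\setminus D_k=\Phi_a(\{|w|\ge r_k\})$ lies in that exterior, which your identification already gives, so $\overline{D_j}\subset D_k$.
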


\begin{proof}
For $p\geq1$ and $|w_1|,|w_2|\geq r_0$, the algebraic difference quotient gives
\[
 \left|\frac{w_1^{-p}-w_2^{-p}}{w_1-w_2}\right| \leq p r_0^{-(p+1)}.
\]
With $p=\ell Q-1$,
\[
 |\Phi_a(w_1)-\Phi_a(w_2)| \geq |w_1-w_2| \left(1-\sum_{\ell=1}^{L}(\ell Q-1)|a_\ell|r_0^{-\ell Q}\right),
\]
which is positive for $w_1\neq w_2$ and proves injectivity.
Letting $w_2\to w_1$ in the same estimate shows that $\Phi_a'$ does not vanish.
The image of each annulus $\{r_j\leq|w|\leq r_{j+1}\}$ is bounded by $\Gamma_j(a)$ and $\Gamma_{j+1}(a)$, so the curves are simple and strictly nested.
\end{proof}

Under \eqref{eq:univalence}, the map $\Phi_a$ is conformal on $|w|>r_0$, and we denote the bounded component enclosed by $\Gamma_j(a)$ by $D_j(a)$.
The special exponents in \eqref{eq:common-map} give
\begin{equation}
 \Phi_a(\omega w)=\omega\Phi_a(w).
 \label{eq:equivariance}
\end{equation}
By \eqref{eq:equivariance}, every interface and every phase region is invariant under rotation through $2\pi/Q$.
This rotational symmetry imposes exact restrictions on the coupling between harmonic modes.

\begin{proposition}\label{prop:selection}
Let $\sigma$ be a bounded, uniformly positive scalar conductivity that equals $1$ outside $D_K$ and is invariant under the $C_Q$ action,
\begin{equation}
 \sigma(\omega z)=\sigma(z)
 \quad\text{for a.e. }z\in\mathbb{C}.
 \label{eq:CQ-conductivity}
\end{equation}
This includes the piecewise constant conductivity \eqref{eq:conductivity} when the domains are the common level-curve domains \eqref{eq:level-curves}.
Then
\begin{align}
 \mathbb N_{mn}^{(1)}&=0
 &&\text{unless }m+n\equiv0\pmod Q,
 \label{eq:selection1}\\
 \mathbb N_{mn}^{(2)}&=0
 &&\text{unless }m-n\equiv0\pmod Q.
 \label{eq:selection2}
\end{align}
If $Q>2K$, the complete block \eqref{eq:complete-block} vanishes if and only if the $K$ quantities
\begin{equation}
 \mathcal G_n:=\frac{1}{2\pi n}\mathbb N_{nn}^{(2)},
 \qquad 1\leq n\leq K,
 \label{eq:reduced-response}
\end{equation}
vanish.
These quantities are real by \eqref{eq:reciprocity} and \eqref{eq:N2}.
\end{proposition}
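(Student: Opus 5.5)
The plan is to rewrite the real CGPTs in complex form and use the rotation covariance of the solutions. Let $u[H]$ denote the solution of \eqref{eq:conductivity-transmission-problem} with incident field $H$; it is real-linear in $H$. Extend it complex-linearly, and set $U_n:=u[z^n]=u_n^c+iu_n^s$ and $\overline{U}_n:=u[\bar z^n]=u_n^c-iu_n^s$.

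Expanding the bilinear form $\int(\sigma-1)\nabla u[\cdot]\cdot\nabla(\cdot)$ gives
\[
 \int(\sigma-1)\nabla U_n\cdot\nabla z^m\,\mathrm dx=(M^{cc}_{mn}-M^{ss}_{mn})+i(M^{cs}_{mn}+M^{sc}_{mn})=\mathbb N^{(1)}_{mn},
\]
and in the same way
\[
 \int(\sigma-1)\nabla U_n\cdot\nabla\bar z^m\,\mathrm dx=(M^{cc}_{mn}+M^{ss}_{mn})+i(M^{cs}_{mn}-M^{sc}_{mn})=\mathbb N^{(2)}_{mn}.
\]
The only care needed here is the bookkeeping: $\nabla$ acts componentwise and the dot product is bilinear, not Hermitian.

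Next comes the covariance step. Let $R(x)=\omega x$ be the rotation. Since $\sigma\circ R=\sigma$ by \eqref{eq:CQ-conductivity} and rotations preserve the divergence-form operator with scalar coefficient, $u[H]\circ R$ solves the problem with incident field $H\circ R$. The decay condition is rotation invariant, so uniqueness gives $u[H]\circ R=u[H\circ R]$. Applying this to $H=z^n$ gives $U_n\circ R=\omega^nU_n$.

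Now change variables $x=Ry$ in the first integral. Using $|\det R|=1$, $\nabla(f\circ R)=R^{T}(\nabla f)\circ R$ and orthogonality of $R$, we have
\[
 (\nabla U_n\cdot\nabla z^m)\circ R=\nabla(U_n\circ R)\cdot\nabla(z^m\circ R)=\omega^{n+m}\,\nabla U_n\cdot\nabla z^m .
\]
Together with the invariance of $\sigma$, this yields $\mathbb N^{(1)}_{mn}=\omega^{m+n}\mathbb N^{(1)}_{mn}$. Since $\omega$ is a primitive $Q$-th root of unity, $\mathbb N^{(1)}_{mn}=0$ unless $Q\mid m+n$, which is \eqref{eq:selection1}. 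The same computation with $\bar z^m\circ R=\bar\omega^m\bar z^m$ gives the factor $\omega^{n-m}$ and hence \eqref{eq:selection2}. The covariance identity is the main thing to verify carefully; everything else is algebra.

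For the block reduction, take $1\le m,n\le K$ with $Q>2K$. Then $2\le m+n\le 2K<Q$, so $\mathbb N^{(1)}_{mn}=0$ throughout the block. Also $|m-n|<K<Q$, so $\mathbb N^{(2)}_{mn}=0$ whenever $m\ne n$. Since \eqref{eq:N1}--\eqref{eq:N2} are invertible, with
\[
 M^{cc}=\tfrac12\operatorname{Re}(\mathbb N^{(1)}+\mathbb N^{(2)}),\quad M^{ss}=\tfrac12\operatorname{Re}(\mathbb N^{(2)}-\mathbb N^{(1)}),
\]
and similarly for $M^{cs},M^{sc}$ from the imaginary parts, the block vanishes if and only if $\mathbb N^{(2)}_{nn}=0$ for each $n\le K$, i.e. if and only if every $\mathcal G_n=0$.

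Finally, reality: $\operatorname{Im}\mathbb N^{(2)}_{nn}=M^{cs}_{nn}-M^{sc}_{nn}=0$ by \eqref{eq:reciprocity} with $m=n$. The statement that the level-curve piecewise conductivity qualifies follows from \eqref{eq:equivariance}, since each $D_j(a)$ is $C_Q$-invariant.
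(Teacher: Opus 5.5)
Your proposal is correct and follows the paper's proof: you complexify the CGPTs, use rotation covariance and uniqueness to obtain the phase factors $\omega^{m+n}$ and $\omega^{n-m}$, and count residues under $Q>2K$. The only difference is cosmetic. On the diagonal you invert the map $(M^{cc},M^{ss},M^{cs},M^{sc})\mapsto(\mathbb N^{(1)},\mathbb N^{(2)})$ directly, whereas the paper uses reciprocity to kill the cross terms; in your version reciprocity is needed only for the reality of $\mathcal G_n$.
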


\begin{proof}
Since $P_n^c(z)+iP_n^s(z)=z^n$, we have
\[
 P_n^c(\omega z)+iP_n^s(\omega z) =\omega^n\bigl(P_n^c(z)+iP_n^s(z)\bigr).
\]
Changing variables \(x=\omega y\) in \eqref{eq:real-cgpt} and using uniqueness of the rotated transmission solution, together with \eqref{eq:CQ-conductivity}, gives
\[
 \mathbb N_{mn}^{(1)}=\omega^{m+n}\mathbb N_{mn}^{(1)},
 \qquad
 \mathbb N_{mn}^{(2)}=\omega^{n-m}\mathbb N_{mn}^{(2)}.
\]
A coefficient can therefore be nonzero only when its phase factor equals one, which proves \eqref{eq:selection1}--\eqref{eq:selection2}.
If $m,n\leq K$ and $Q>2K$, then $0<m+n<Q$ and $|m-n|<Q$.
Thus the first family and the off-diagonal part of the second family vanish.
On the diagonal, \eqref{eq:selection1} and \eqref{eq:N1} give \(M_{nn}^{cc}=M_{nn}^{ss}\) and \(M_{nn}^{cs}=-M_{nn}^{sc}\).
Reciprocity \eqref{eq:reciprocity} also gives \(M_{nn}^{cs}=M_{nn}^{sc}\), so both cross terms vanish.
Moreover, \eqref{eq:N2} and \eqref{eq:reduced-response} show that \(\mathcal G_n=0\) gives \(M_{nn}^{cc}+M_{nn}^{ss}=0\).
Hence all four diagonal entries vanish.
The converse follows immediately from \eqref{eq:N2} and \eqref{eq:reduced-response}.
\end{proof}

\begin{remark}
Without structural symmetry, reciprocity leaves a symmetric real $2K\times2K$ array with $K(2K+1)$ independent real conditions.
The cyclic selection rules reduce this array to $K$ real diagonal conditions, matching the $K$ coating conductivities.
Without the cyclic symmetry, the $K$-parameter problem is generically overdetermined.
\end{remark}

The block reduction requires $Q>2K$.
The stronger condition $Q\geq2K+2$ is used only for the later total-degree estimates, whereas the selection rules themselves hold for every $Q\geq3$.

\section{Main results under cyclic symmetry}
\label{sec:main-results}

This section states the main results obtained under cyclic symmetry and their scattering consequences.
The scalar case \(K=1\) gives a globally unique positive coating conductivity, whereas the higher-order problem \(K\geq2\) is treated in two local regimes: conductivity tuning on a fixed geometry near the homogeneous state, and cancellation for an arbitrary fixed core type when the core is sufficiently small.
The homothetic constructions and those based on common conformal level curves provide concrete geometric classes for these higher-order results.
The resulting GPT cancellations yield the small-inclusion and fixed-scale estimates stated at the end of the section.

The principal result without geometric symmetry is stated later.
Its formulation requires the framework for interface controls developed in Section~\ref{sec:symmetry-free-completion}, where the result is also proved.

\subsection{An independent scalar case: \texorpdfstring{$K=1$}{K=1}}
\begin{theorem}\label{thm:global-one-shell}
	Let \(D\Subset\Omega\subset\mathbb{R}^2\) be bounded domains such that \(\Omega\) is simply connected, \(\partial D\) and \(\partial\Omega\) are of class \(C^{1,\alpha}\) for some \(0<\alpha<1\), and \(\Omega\setminus\overline D\) is nonempty and connected.
	Suppose that \(D\) and \(\Omega\) are invariant under rotation through \(2\pi/Q\) about the same point for an integer \(Q\geq3\).
	For every finite positive core conductivity \(\sigma_{\rm c}\) and background conductivity \(\sigma_{\rm m}\), there exists exactly one finite positive coating conductivity \(\sigma_{\rm sh}^*\).
	For the conductivity normalized by the background value,
	\begin{equation}
		\sigma^*(x):=
		\begin{cases}
			\sigma_{\rm c}/\sigma_{\rm m},&x\in D,\\
			\sigma_{\rm sh}^*/\sigma_{\rm m},
			&x\in\Omega\setminus\overline D,\\
			1,&x\in\mathbb{R}^2\setminus\overline\Omega,
		\end{cases}
		\label{eq:global-one-shell-conductivity}
	\end{equation}
	one has
	\begin{equation}
		M_{11}^{\alpha\beta}[\sigma^*]=0
		\quad\text{for all }\alpha,\beta\in\{c,s\}.
		\label{eq:global-one-shell-neutrality}
	\end{equation}
	Equivalently, the coated inclusion is weakly neutral to every uniform incident field.
	Moreover,
	\begin{align}
		\sigma_{\rm c}>\sigma_{\rm m}
		&\quad\Longrightarrow\quad 0<\sigma_{\rm sh}^*<\sigma_{\rm m},
		\label{eq:one-shell-root-below}\\
		0<\sigma_{\rm c}<\sigma_{\rm m}
		&\quad\Longrightarrow\quad \sigma_{\rm sh}^*>\sigma_{\rm m},
		\label{eq:one-shell-root-above}
	\end{align}
	whereas \(\sigma_{\rm c}=\sigma_{\rm m}\) implies \(\sigma_{\rm sh}^*=\sigma_{\rm m}\).
\end{theorem}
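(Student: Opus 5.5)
By Proposition~\ref{prop:selection} with \(K=1\) and \(Q\geq3>2\), the \(C_Q\)-invariance reduces the polarization tensor of the normalized structure to a scalar multiple of the identity. We first normalize by \(\sigma_{\rm m}\), so that the background is \(1\), the core is \(\kappa:=\sigma_{\rm c}/\sigma_{\rm m}\), and the shell is \(t:=\sigma_{\rm sh}/\sigma_{\rm m}\). Then the selection rules give \(M_{11}^{cs}=M_{11}^{sc}=0\) and \(M_{11}^{cc}=M_{11}^{ss}=:m(t)\), for all \(t>0\). Hence \eqref{eq:global-one-shell-neutrality} is equivalent to the single scalar equation \(m(t)=0\). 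The plan is to show four things: \(m\) is continuous, \(m\) is strictly increasing on \((0,\infty)\), \(m(t)<0\) for small \(t\), and \(m(t)>0\) for large \(t\). Existence and uniqueness of \(t^*\), and hence of \(\sigma_{\rm sh}^*=\sigma_{\rm m}t^*\), then follow from the intermediate value theorem.

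\emph{Monotonicity.} Continuity and real-analyticity of \(t\mapsto m(t)\) follow from the Lax--Milgram dependence of the transmission solution \(u_1^c\) on the coefficient. For the derivative, we use the standard variational (Hellmann--Feynman) identity for the polarization tensor. Differentiating \(M_{11}^{cc}=\int(\sigma-1)\nabla u_1^c\cdot\nabla P_1^c\) and using the weak formulation and symmetry of the problem should give
\[
 m'(t)=\int_{\Omega\setminus\overline D}|\nabla u_1^c|^2\,\mathrm{d}x .
\]
Equivalently, we can invoke the known monotonicity \(\sigma\le\tilde\sigma\Rightarrow M[\sigma]\le M[\tilde\sigma]\), obtained from the dual energy principles. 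Strictness requires \(\nabla u_1^c\not\equiv0\) on the nonempty open shell. If \(\nabla u_1^c\) vanished there, then \(u_1^c\) would be constant on the connected shell. Its Cauchy data on \(\partial\Omega\) would then be constant with zero flux, and unique continuation of the exterior harmonic function would force \(u_1^c\equiv\) const outside \(\Omega\). This contradicts \(u_1^c-x_1=O(|x|^{-1})\).

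\emph{Sign at \(t=1\) and endpoint limits.} At \(t=1\) the structure is the single inclusion \(D\) with conductivity \(\kappa\). The classical sign property of the Pólya--Szegő tensor gives \(m(1)>0\) if \(\kappa>1\), \(m(1)<0\) if \(\kappa<1\), and \(m(1)=0\) if \(\kappa=1\). Combined with strict monotonicity, this yields \eqref{eq:one-shell-root-below}, \eqref{eq:one-shell-root-above}, and \(t^*=1\) when \(\sigma_{\rm c}=\sigma_{\rm m}\), once existence is known.

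For existence, we need \(\lim_{t\to\infty}m(t)>0\) and \(\lim_{t\to0}m(t)<0\). By monotonicity in the coefficient, the structure with parameter \(t\) dominates the one whose conductivity is \(\min(\kappa,t)\) on all of \(\Omega\). For \(t>\kappa\) we have \(\min(\kappa,t)=\kappa\), a fixed homogeneous inclusion \(\Omega\) with \(\kappa\), whose tensor has the sign of \(\kappa-1\). This is not enough when \(\kappa<1\), so the bound must come from a limit argument instead.

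\emph{Main obstacle.} The hardest step is to show that as \(t\to\infty\) the tensor tends to a limit that is at least the perfectly conducting polarization tensor of \(\Omega\) minus a correction that vanishes. I would use the primal principle
\[
 m(t)+\text{const}=\min_v\int\sigma|\nabla v|^2,
\]
together with a test/energy argument. On the connected shell, large \(t\) forces \(u\) to be nearly constant. Because the shell is an annular region surrounding \(D\) (\(\Omega\) simply connected), the energy of \(u\) inside \(D\) is bounded by \(\kappa\) times a trace-controlled quantity. The limit should then be the perfect-conductor tensor of \(\Omega\), which is positive definite. Symmetrically, the dual principle shows that as \(t\to0\) the limit is the insulating tensor of \(\Omega\), which is negative definite, because flux cannot cross the shell. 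The quantitative versions I would establish are \(\liminf_{t\to\infty}m(t)\ge m_\infty(\Omega)>0\) and \(\limsup_{t\to0}m(t)\le m_0(\Omega)<0\). Uniformity in \(\kappa\) is not needed, since \(\kappa\) is fixed.
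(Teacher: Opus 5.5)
Your proposal is correct and follows the paper's argument essentially step for step: the scalarization \(\mathsf M(t)=\mu(t)I_2\) from \(C_Q\)-covariance and reciprocity, the sensitivity identity \(\mu'(t)=\int_S|\nabla u_t^g|^2\,\mathrm{d}x>0\) with the same unique-continuation argument for strictness, the identification of the \(t\downarrow0\) and \(t\to\infty\) limits with the insulating and floating perfectly conducting tensors of \(\Omega\) (whose quadratic forms are \(\mp(|\Omega||g|^2+\text{exterior energy})\)), the intermediate value theorem, and the sign of \(\mu(1)\). The differences are only in execution: the paper carries out your energy heuristic through interior and exterior Dirichlet-to-Neumann maps on \(\partial\Omega\) with \(ct\|f\|^2_{X_\Gamma}\leq\langle\Lambda_t^{\rm int}f,f\rangle\leq Ct\|f\|^2_{X_\Gamma}\), which also gives the rates \(O(t)\) and \(O(1/t)\); and it obtains \(\operatorname{sign}\mu(1)=\operatorname{sign}(\kappa-1)\) by integrating the sensitivity formula in the core conductivity rather than citing the classical single-inclusion result (note also that with your pairing of primal and dual principles both endpoint bounds require analyzing the optimizer, whereas a primal test field equal to the insulating solution outside \(\Omega\) and constant in \(D\) directly gives \(\limsup_{t\downarrow0}\mu(t)\leq g^{\mathsf T}\mathsf M_{\rm ins}g\) for unit \(g\)).
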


\begin{corollary}\label{cor:one-shell-uniform-suppression}
Under the assumptions of Theorem~\ref{thm:global-one-shell}, let \(u^g\) be the solution at the unique coating conductivity \(\sigma_{\rm sh}^*\) for the uniform incident field \(H_g(x)=g\cdot x\).
Then
\begin{equation}
u^g-H_g=O(|x|^{-(Q-1)}),
\qquad
\nabla(u^g-H_g)=O(|x|^{-Q})
\quad (|x|\to\infty).
\label{eq:one-shell-uniform-decay}
\end{equation}
Consequently, the first \(Q-2\) outgoing multipoles generated by every uniform incident field vanish.
\end{corollary}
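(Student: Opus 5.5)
Under the hypotheses of Theorem~\ref{thm:global-one-shell}, the normalized conductivity \(\sigma^*\) is \(C_Q\)-invariant with \(Q\geq3\), so Proposition~\ref{prop:selection} applies; its proof uses only \(C_Q\)-invariance, not the level-curve structure. The plan is to obtain \(u^g-H_g\) from the exterior multipole expansion \eqref{eq:multipole-expansion} and to show that every outgoing coefficient of order \(m\leq Q-2\) vanishes. By linearity, a uniform incident field \(H_g=g_1x_1+g_2x_2\) has \(a_1^c=g_1\), \(a_1^s=g_2\), and \(a_n^\beta=0\) for \(n\geq2\). Hence the coefficient of \(r^{-m}\cos m\theta\) in \eqref{eq:multipole-expansion} is \(-(M_{m1}^{cc}g_1+M_{m1}^{cs}g_2)/(2\pi m)\), and the sine coefficient is analogous with \(M_{m1}^{sc},M_{m1}^{ss}\).

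The key step is to determine which \(M_{m1}^{\alpha\beta}\) can be nonzero. By \eqref{eq:selection1}, \(\mathbb N^{(1)}_{m1}\) vanishes unless \(m\equiv-1\pmod Q\), and the first such \(m\) is \(Q-1\). By \eqref{eq:selection2}, \(\mathbb N^{(2)}_{m1}\) vanishes unless \(m\equiv1\pmod Q\), i.e.\ \(m=1\) or \(m\geq Q+1\). Since \eqref{eq:N1}--\eqref{eq:N2} can be inverted to recover \(M^{cc}_{m1},M^{ss}_{m1},M^{cs}_{m1},M^{sc}_{m1}\), all four real coefficients vanish for \(2\leq m\leq Q-2\). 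For \(m=1\) they vanish by \eqref{eq:global-one-shell-neutrality}. Thus all outgoing orders \(1\leq m\leq Q-2\) vanish, which is the final assertion of the corollary.

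It remains to convert this into the decay bounds. The expansion \eqref{eq:multipole-expansion} converges absolutely and uniformly for \(r\geq R\), where \(R\) is larger than the radius of a disk containing \(\overline\Omega\). Its terms can also be differentiated termwise. The cleanest way to see this is to write \(w=u^g-H_g\) in \(|x|>R_0\) as a harmonic function that vanishes at infinity. Such a function is the real part of a Laurent series in \(1/z\) with coefficients bounded by \(CR_0^{m}\); the absence of a constant term comes from the normalization \(w=O(|x|^{-1})\). Removing the vanishing terms leaves \(w=\operatorname{Re}\sum_{m\geq Q-1}c_m z^{-m}\). Factoring out \(z^{-(Q-1)}\) gives \(|w|\leq C|x|^{-(Q-1)}\) for \(|x|\geq 2R_0\). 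Differentiating, \(\nabla w\) corresponds to \(\sum_{m\geq Q-1} m c_m z^{-m-1}\), which is \(O(|x|^{-Q})\).

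The only delicate point is to check carefully that the selection rules exclude all orders \(2\leq m\leq Q-2\) in both families, and this is a simple congruence check. One should also note that, at \(m=1\), neutrality is needed for both \(\alpha\) and \(\beta\). This is exactly what \eqref{eq:global-one-shell-neutrality} provides, so all four \(M_{11}^{\alpha\beta}\) vanish and not just the trace.
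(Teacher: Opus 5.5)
Your proof is correct and follows the paper's argument: you apply the selection rules of Proposition~\ref{prop:selection} with incident order $n=1$ to eliminate all orders $2\leq m\leq Q-2$, use weak neutrality \eqref{eq:global-one-shell-neutrality} for $m=1$, and read off the decay from the multipole expansion \eqref{eq:multipole-expansion}. Your Laurent-series argument for the $O(|x|^{-(Q-1)})$ and $O(|x|^{-Q})$ bounds fills in a step the paper leaves implicit when it simply says that the multipole expansion gives the decay.
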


\begin{remark}\label{rem:one-shell-uniform-scope}
Corollary~\ref{cor:one-shell-uniform-suppression} concerns only degree-one incident fields.
It implies neither cancellation of a complete higher-order CGPT block nor the same fixed-scale decay for general polynomial or harmonic fields; those conclusions require separate multiparameter arguments.
\end{remark}

Theorem~\ref{thm:global-one-shell} and Corollary~\ref{cor:one-shell-uniform-suppression} are proved in Section~\ref{sec:global-one-shell}.

\subsection{The higher-order regime: \texorpdfstring{$K\geq2$}{K>=2}}

Fix \(K\geq2\) and \(Q>2K\).
The proofs of all results in this subsection are collected in Section~\ref{sec:local-constructions}.

\subsubsection{Fixed geometries near the homogeneous state}

Let
\[
 \mathcal D=(D_0,D_1,\ldots,D_K)
\]
be a fixed collection of bounded simply connected \(C^{1,\alpha}\) domains satisfying
\[
 D_0\Subset D_1\Subset\cdots\Subset D_K
\]
and invariant under rotation through \(2\pi/Q\) about the origin.
Put
\[
 \mathcal A_0=D_0,
 \qquad
 \mathcal A_j=D_j\setminus\overline{D_{j-1}},
 \quad 1\leq j\leq K,
\]
and define
\begin{equation}
 \sigma_{\mathcal D,\delta,\eta}(x)=
 \begin{cases}
  e^\delta,&x\in\mathcal A_0,\\
  e^{\eta_j},&x\in\mathcal A_j,\quad1\leq j\leq K,\\
  1,&x\notin D_K,
 \end{cases}
 \label{eq:fixed-target-conductivity}
\end{equation}
where \(\delta\in\mathbb{R}\) and \(\eta=(\eta_1,\ldots,\eta_K)\in\mathbb{R}^K\).
Define the geometry-dependent material Jacobian \(J^{\mathcal D}\in\mathbb{R}^{K\times K}\) and core vector \(v^{\mathcal D}\in\mathbb{R}^K\) by
\begin{align}
 J^{\mathcal D}_{nj}
 &:=\frac{n}{\pi}\int_{\mathcal A_j}|z|^{2n-2}\,\mathrm{d} x,
 &&1\leq n,j\leq K,
 \label{eq:fixed-target-J}\\
 v^{\mathcal D}_n
 &:=\frac{n}{\pi}\int_{D_0}|z|^{2n-2}\,\mathrm{d} x,
 &&1\leq n\leq K.
 \label{eq:fixed-target-v}
\end{align}

\begin{theorem}\label{thm:fixed-target}
Suppose that the fixed \(C_Q\)-symmetric geometry \(\mathcal D\) satisfies
\begin{equation}
 \det J^{\mathcal D}\neq0.
 \label{eq:fixed-target-nondegeneracy}
\end{equation}
Then there exist \(\delta_*>0\), a neighborhood \(V\) of \(0\) in \(\mathbb{R}^K\), and a real-analytic map
\begin{equation}
 \eta^{\mathcal D}:(-\delta_*,\delta_*)\longrightarrow V,
 \qquad
 \eta^{\mathcal D}(0)=0,
 \label{eq:fixed-target-branch}
\end{equation}
such that, for every \(\lvert\delta\rvert<\delta_*\), the vector \(\eta^{\mathcal D}(\delta)\) is the unique vector in \(V\) for which
\begin{equation}
 M_{mn}^{\alpha\beta}
 [\sigma_{\mathcal D,\delta,\eta^{\mathcal D}(\delta)}]=0
 \quad
 \text{for all }1\leq m,n\leq K,
 \quad\alpha,\beta\in\{c,s\}.
 \label{eq:fixed-target-cancel}
\end{equation}
Moreover,
\begin{equation}
 \eta^{\mathcal D}(\delta)
 =-(J^{\mathcal D})^{-1}v^{\mathcal D}\,\delta+O(\delta^2)
 \qquad(\delta\to0).
 \label{eq:fixed-target-expansion}
\end{equation}
\end{theorem}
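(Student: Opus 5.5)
The plan is to apply the real-analytic implicit function theorem to the reduced map
\[
 F(\delta,\eta):=\bigl(\mathcal G_1,\ldots,\mathcal G_K\bigr)[\sigma_{\mathcal D,\delta,\eta}]\in\mathbb{R}^K ,
\]
with \(\mathcal G_n\) as in \eqref{eq:reduced-response}. Since \(\sigma_{\mathcal D,\delta,\eta}\) is \(C_Q\)-invariant and \(Q>2K\), Proposition~\ref{prop:selection} shows that \eqref{eq:fixed-target-cancel} is equivalent to \(F(\delta,\eta)=0\). Also \(F(0,0)=0\), because \(\sigma\equiv1\) gives \(u=H\) and the integrand in \eqref{eq:real-cgpt} vanishes.

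\textbf{Step 1: analyticity.} Write \(\sigma=1+\sum_k(e^{t_k}-1)\chi_{\mathcal A_k}\), where \(t=(\delta,\eta)\). Each \(u_n^\beta-P_n^\beta\) solves a transmission problem that I would formulate on a fixed large ball with a Dirichlet-to-Neumann exterior condition, or in the energy space for the decaying part. The solution operator depends analytically on the coefficient in \(L^\infty\) through a Neumann series about \(\sigma=1\). Hence \(t\mapsto M^{\alpha\beta}_{mn}\) is real-analytic near \(0\), and so is \(F\).

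\textbf{Step 2: first derivatives at \(0\).} Write \(\sigma-1=\sum_k t_k\chi_{\mathcal A_k}+O(|t|^2)\) and \(\nabla u_n^\beta=\nabla P_n^\beta+O(|t|)\) in \(L^2(D_K)\). This gives
\[
 M^{\alpha\beta}_{mn}=\sum_k t_k\int_{\mathcal A_k}\nabla P_n^\beta\cdot\nabla P_m^\alpha\,\mathrm{d}x+O(|t|^2).
\]
For \(m=n\), the identity \(|\nabla P_n^c|^2=|\nabla P_n^s|^2=n^2|z|^{2n-2}\) (from \(|(z^n)'|^2\)) yields \(M^{cc}_{nn}+M^{ss}_{nn}=2n^2\sum_k t_k\int_{\mathcal A_k}|z|^{2n-2}\,\mathrm{d}x+O(|t|^2)\). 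Dividing by \(2\pi n\) gives
\[
 F_n(\delta,\eta)=\frac{n}{\pi}\Bigl(\delta\int_{D_0}|z|^{2n-2}\,\mathrm{d}x+\sum_j\eta_j\int_{\mathcal A_j}|z|^{2n-2}\,\mathrm{d}x\Bigr)+O(|t|^2).
\]
So \(\partial_\eta F(0,0)=J^{\mathcal D}\) and \(\partial_\delta F(0,0)=v^{\mathcal D}\). The imaginary part of \(\mathbb N^{(2)}_{nn}\) vanishes identically by reciprocity, so \(F\) is real, consistent with the proposition.

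\textbf{Step 3: conclusion.} Assumption \eqref{eq:fixed-target-nondegeneracy} makes \(\partial_\eta F(0,0)\) invertible. The analytic implicit function theorem then gives \(\delta_*\), a neighborhood \(V\), and a unique analytic branch \(\eta^{\mathcal D}(\delta)\) with \(F(\delta,\eta^{\mathcal D}(\delta))=0\). Its derivative is \(-(J^{\mathcal D})^{-1}v^{\mathcal D}\), which is \eqref{eq:fixed-target-expansion}.

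The main obstacle is Step 1: making the analytic dependence rigorous despite the unbounded domain and the growing incident fields. I would handle it by writing \(u=H+w\) with \(\nabla\cdot(\sigma\nabla w)=-\nabla\cdot((\sigma-1)\nabla H)\). The right-hand side is compactly supported in \(\overline{D_K}\), so \(w\) lies in the homogeneous space \(\dot H^1\), with \(\sigma\)-dependence analytic via the Lax--Milgram inverse. The linear expansion in Step 2 then follows from the first-order Born term.
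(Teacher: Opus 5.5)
Your proposal is correct and follows essentially the same route as the paper's proof. Both arguments reduce the problem to the $K$ real diagonal responses via Proposition~\ref{prop:selection}, obtain analyticity of the $\dot H^1$ corrector by Neumann-series inversion of the coercive operator about the homogeneous medium, identify the derivatives of the reduced map at the origin with $J^{\mathcal D}$ and $v^{\mathcal D}$, and conclude with the analytic implicit function theorem. Your first-order Born expansion is the same observation as the paper's remark that the field-derivative term is multiplied by $\sigma-1$ and therefore vanishes at the homogeneous point.
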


Theorem~\ref{thm:fixed-target} shows the fixed geometry need not be close to a concentric circular geometry; only the log-conductivity variables \(\delta\) and \(\eta\) are required to be close to zero.
Besides, it reduces the problem of locally tuning the coating conductivities to the invertibility of the geometry-dependent matrix \(J^{\mathcal D}\). 
This nondegeneracy does not follow from nesting and cyclic symmetry alone, so we next identify a concrete noncircular class for which it can be verified explicitly. 
Consider a homothetic multilayer generated as follows.
Let \(D\subset\mathbb{R}^2\) be a bounded simply connected \(C^{1,\alpha}\) domain that is invariant under rotation through \(2\pi/Q\) and strictly star-shaped with respect to the origin, in the sense that \(x\cdot\nu_D(x)>0\) on \(\partial D\).
Choose
\begin{equation}
 0<\rho_0<\rho_1<\cdots<\rho_K,
 \qquad
 D_j=\rho_jD,
 \quad0\leq j\leq K.
 \label{eq:homothetic-domains}
\end{equation}
Put
\[
 x_j:=\rho_j^2,
 \qquad
 B_{nj}:=x_j^n-x_{j-1}^n,
 \qquad
 b_n:=x_0^n.
\]

\begin{corollary}\label{cor:homothetic}
For every homothetic geometry \eqref{eq:homothetic-domains}, the matrix \(J^{\mathcal D}\) is nonsingular.
Consequently, for every sufficiently small core log-conductivity \(\delta\), there exists a unique coating log-conductivity vector near zero such that the resulting structure is \(K\)-GPT-vanishing.
In fact,
\begin{equation}
 \det J^{\mathcal D}>0,
 \label{eq:homothetic-positive-determinant}
\end{equation}
and
\begin{equation}
 \eta^{\mathcal D}(\delta)
 =-B^{-1}b\,\delta+O(\delta^2).
 \label{eq:homothetic-expansion}
\end{equation}
The leading material compensation law is independent of the shape of \(D\).
In particular, when \(D\) is noncircular, the result does not require it to be close to a disk.
\end{corollary}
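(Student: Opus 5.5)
The plan is to compute \(J^{\mathcal D}\) explicitly for the homothetic geometry \eqref{eq:homothetic-domains} and then apply Theorem~\ref{thm:fixed-target}. By scaling, \(\int_{\rho D}|z|^{2n-2}\,\mathrm{d}x=\rho^{2n}\int_D|z|^{2n-2}\,\mathrm{d}x\). Setting
\[
 c_n:=\frac{n}{\pi}\int_D|z|^{2n-2}\,\mathrm{d}x>0,
\]
and writing \(\mathcal A_j=\rho_jD\setminus\overline{\rho_{j-1}D}\) (which is legitimate because \(D\) is strictly star-shaped, so \(\rho_{j-1}D\Subset\rho_jD\)), we obtain
\[
 J^{\mathcal D}_{nj}=c_n\bigl(x_j^n-x_{j-1}^n\bigr)=c_nB_{nj},
 \qquad v^{\mathcal D}_n=c_nx_0^n=c_nb_n .
\]
Hence \(J^{\mathcal D}=CB\) and \(v^{\mathcal D}=Cb\) with \(C=\operatorname{diag}(c_1,\dots,c_K)\). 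Consequently
\[
 (J^{\mathcal D})^{-1}v^{\mathcal D}=B^{-1}C^{-1}Cb=B^{-1}b,
\]
which gives \eqref{eq:homothetic-expansion} from \eqref{eq:fixed-target-expansion} and shows that the leading law does not depend on the shape of \(D\).

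It remains to show \(\det J^{\mathcal D}=\bigl(\prod_n c_n\bigr)\det B>0\), so it suffices to prove \(\det B>0\). The columns of \(B\) are the successive differences of the vectors \(V_j=(x_j,x_j^2,\dots,x_j^K)^T\), for \(j=0,\dots,K\). Column operations, adding column \(1\) to column \(2\) and then the resulting column to column \(3\), and so on, transform \(B\) into the matrix with columns \(V_j-V_0\) for \(j=1,\dots,K\) without changing the determinant. Appending a first row \((1,0,\dots,0)\) and the column \((1,V_0)\) gives
\[
 \det B=\det\begin{pmatrix}1&1&\cdots&1\\ V_0&V_1&\cdots&V_K\end{pmatrix},
\]
which is the Vandermonde-type determinant in the variables \(x_0,\dots,x_K\) with rows \(x^0,x^1,\dots,x^K\). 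It equals \(\prod_{0\le i<j\le K}(x_j-x_i)\), and this is positive because \(0<x_0<x_1<\dots<x_K\).

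The main obstacle is getting this determinant identity and its sign right. The bordered-matrix reduction above is what I would use, and I would check it by cofactor expansion along the first row after subtracting the first column from the others. The star-shapedness enters only to guarantee the nesting \(\rho_{j-1}D\Subset\rho_jD\) and the \(C^{1,\alpha}\) admissibility needed by Theorem~\ref{thm:fixed-target}. The \(C_Q\)-symmetry is inherited directly from \(D\).
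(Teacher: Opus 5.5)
Your proof is correct and follows the paper's structure. Homogeneity of $|z|^{2n-2}$ under $z=\rho_j y$ gives $J^{\mathcal D}=CB$ and $v^{\mathcal D}=Cb$ with a common positive diagonal factor. That factor makes $\det J^{\mathcal D}$ a positive multiple of $\det B$ and cancels in $(J^{\mathcal D})^{-1}v^{\mathcal D}=B^{-1}b$. Your use of strict star-shapedness for nesting also matches the paper.

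The genuine difference is how you get the sign of $\det B$.

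\emph{The paper's route.} It writes $B_{nj}=\int_{I_j} n t^{n-1}\,\mathrm{d}t$ with $I_j=(x_{j-1},x_j)$. Multilinearity and Fubini then express $\det B$ as $\bigl(\prod_n n\bigr)\int_{I_1\times\cdots\times I_K}\prod_{i<j}(t_j-t_i)\,\mathrm{d}t$. Positivity is read off from the strict ordering of the intervals, and the value is never computed.

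\emph{Your route.} The telescoping column operations turn the columns into $V_j-V_0$. Bordering then turns $\det B$ into the full $(K+1)\times(K+1)$ Vandermonde determinant in $x_0,\dots,x_K$. This gives the closed form $\det B=\prod_{0\le i<j\le K}(x_j-x_i)$. The two expressions agree; for $K=2$ both equal $(x_1-x_0)(x_2-x_1)(x_2-x_0)$.

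\emph{What each buys.} Your argument is purely algebraic and yields an exact value. That value would feed directly into quantitative bounds on $\|B^{-1}\|$, and hence on the size of the leading compensation $-B^{-1}b\,\delta$. It also applies verbatim to $\det A$ in Proposition~\ref{prop:local-tangent} and, with $x_0=0$, to the small-core matrix $J^0$. The paper's integral argument never needs the closed form. It uses only the fact that $\det[n t_j^{n-1}]$ has a fixed sign on strictly ordered tuples. It would therefore carry over unchanged to moment matrices $\int_{I_j}\phi_n(t)\,\mathrm{d}t$ built from any family $(\phi_n)$ with that sign property, i.e.\ a Chebyshev-system argument, where no explicit determinant formula is available.
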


Theorem~\ref{thm:fixed-target} and Corollary~\ref{cor:homothetic} are proved in Subsection~\ref{subsec:proof-fixed-geometry}.

\subsubsection{Small cores of fixed material type}

The preceding fixed-geometry theorem assumes that the core conductivity is sufficiently close to the background on an arbitrary fixed geometry.
The next result instead makes the core geometrically small while keeping its type fixed.
The core may have an arbitrary finite positive conductivity, may be insulating, or may be a floating perfect conductor.

Let \(\mathcal B,D_1,\ldots,D_K\subset\mathbb{R}^2\) be bounded simply connected \(C^{1,\alpha}\) domains containing the origin.
Assume that they are invariant under rotation through \(2\pi/Q\), and that
\[
 D_1\Subset D_2\Subset\cdots\Subset D_K.
\]
For sufficiently small \(\varepsilon>0\), put
\begin{equation}
 D_0^\varepsilon:=\varepsilon\mathcal B,
 \qquad
 \mathcal A_1^\varepsilon
 :=D_1\setminus\overline{\varepsilon\mathcal B},
 \qquad
 \mathcal A_j^\varepsilon
 :=D_j\setminus\overline{D_{j-1}},\quad 2\leq j\leq K.
 \label{eq:small-core-regions}
\end{equation}
At \(\varepsilon=0\), set
\begin{equation}
 \mathcal A_1^0:=D_1,
 \qquad
 \mathcal A_j^0:=\mathcal A_j^\varepsilon,
 \quad 2\leq j\leq K.
 \label{eq:small-core-limit-regions}
\end{equation}
Fix \(\kappa\in[0,\infty]\), with the endpoint conventions described below, and let \(\eta=(\eta_1,\ldots,\eta_K)\in\mathbb{R}^K\).
For \(0<\kappa<\infty\), define
\begin{equation}
 \sigma_{\varepsilon,\eta}(x)=
 \begin{cases}
  \kappa,&x\in\varepsilon\mathcal B,\\
  \mathrm{e}^{\eta_j},&x\in\mathcal A_j^\varepsilon,
       \quad 1\leq j\leq K,\\
  1,&x\notin D_K,
 \end{cases}
 \label{eq:small-core-conductivity}
\end{equation}
At \(\kappa=0\), the core is insulating; at \(\kappa=\infty\), it is a floating perfect conductor, meaning that its potential is constant and its total flux is zero.
All coating interfaces remain perfectly bonded.
The endpoint CGPTs are defined through the same exterior multipole expansion as in the finite-conductivity case.
The notation \(\sigma_{\varepsilon,\eta}\) refers to the resulting finite-conductivity or endpoint structure in all three cases.
Put
\begin{align}
 J^0_{nj}
 &:=\frac{n}{\pi}\int_{\mathcal A_j^0}
 |z|^{2n-2}\,\mathrm{d} x,
 &&1\leq n,j\leq K,
 \label{eq:small-core-limit-J}\\
 g_n^{\mathcal B,\kappa}
 &:=\frac{1}{2\pi n}\mathbb N_{nn}^{(2)}
 [\mathcal B;\kappa],
 &&1\leq n\leq K.
 \label{eq:small-core-reference-response}
\end{align}
Here \([\mathcal B;\kappa]\) denotes the single reference inclusion with conductivity \(\kappa\) when \(0<\kappa<\infty\), the insulating problem when \(\kappa=0\), and the floating perfectly conducting problem when \(\kappa=\infty\).

\begin{theorem}\label{thm:small-strong-core}
Suppose that
\begin{equation}
 \det J^0\neq0.
 \label{eq:small-core-nondegeneracy}
\end{equation}
For each fixed \(\kappa\in[0,\infty]\), interpreted as above, there exist \(\varepsilon_*>0\), an open neighborhood \(V\subset\mathbb{R}^K\) of the origin, and \(C_\kappa>0\) such that, for every \(0<\varepsilon<\varepsilon_*\), there is exactly one \(\eta^\varepsilon\in V\) for which
\begin{equation}
 M_{mn}^{\alpha\beta}
 [\sigma_{\varepsilon,\eta^\varepsilon}]=0
 \quad
 \text{for all }1\leq m,n\leq K,
 \quad \alpha,\beta\in\{c,s\}.
 \label{eq:small-core-complete-cancellation}
\end{equation}
The coating conductivities are finite, positive, scalar, isotropic, and piecewise constant.
Moreover,
\begin{equation}
 |\eta^\varepsilon|\leq C_\kappa\varepsilon^2,
 \label{eq:small-core-root-bound}
\end{equation}
and
\begin{equation}
 \eta^\varepsilon
 =-\varepsilon^2g_1^{\mathcal B,\kappa}
   (J^0)^{-1}e_1
   +O_\kappa(\varepsilon^3),
 \qquad e_1=(1,0,\ldots,0)^{\mathsf T}.
 \label{eq:small-core-leading-expansion}
\end{equation}
Here the constants and the admissible core scale may depend on the fixed geometry and on the fixed core type; no uniformity over \([0,\infty]\) is asserted.
The reference core \(\mathcal B\) need not be close to a disk.
At the two endpoints,
\begin{equation}
 g_1^{\mathcal B,0}<0,
 \qquad
 g_1^{\mathcal B,\infty}>0,
 \label{eq:small-core-endpoint-signs}
\end{equation}
and there are constants \(0<c_\kappa<C_\kappa'<\infty\) such that
\begin{equation}
 c_\kappa\varepsilon^2
 \leq|\eta^\varepsilon|
 \leq C_\kappa'\varepsilon^2
 \label{eq:small-core-two-sided-bound}
\end{equation}
for all sufficiently small \(\varepsilon\).
\end{theorem}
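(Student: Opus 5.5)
By Proposition~\ref{prop:selection}, and since all regions are $C_Q$-symmetric with $Q>2K$, the complete block \eqref{eq:small-core-complete-cancellation} is equivalent to the $K$ real equations $\mathcal G_n[\sigma_{\varepsilon,\eta}]=0$, $1\le n\le K$. The selection rules use only rotation invariance and uniqueness, so they hold verbatim for the endpoint problems $\kappa\in\{0,\infty\}$. I will therefore study the map $F(\varepsilon,\eta)=(\mathcal G_n[\sigma_{\varepsilon,\eta}])_{n=1}^K$ and solve $F=0$ by a quantitative implicit function theorem. Rather than a parameter-analytic one, I use a contraction/Newton argument, because the dependence on $\varepsilon$ comes from a shrinking inclusion and need not be analytic.

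\emph{Step 1 (the $\varepsilon=0$ problem).} At $\varepsilon=0$ the core is absent and the conductivity is $e^{\eta_j}$ on $\mathcal A_j^0$. At $\eta=0$ we have $u=P_n^\beta$, so $F(0,0)=0$. Differentiating \eqref{eq:real-cgpt} at $\sigma\equiv1$ leaves only the term $(\sigma-1)\nabla P_n\cdot\nabla P_m$, because the derivative of $u$ is multiplied by $\sigma-1=0$. This gives
\[
\partial_{\eta_j}M_{nn}^{\alpha\alpha}\big|_{0}=\int_{\mathcal A_j^0}|\nabla P_n^\alpha|^2\,\mathrm dx=n^2\int_{\mathcal A_j^0}|z|^{2n-2}\,\mathrm dx .
\]
Summing over $\alpha\in\{c,s\}$ and dividing by $2\pi n$ yields $D_\eta F(0,0)=J^0$. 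The map $\eta\mapsto F(0,\eta)$ is real-analytic, since the solution operator depends analytically on the bounded coefficient. By \eqref{eq:small-core-nondegeneracy}, $F(0,\cdot)$ is a local analytic diffeomorphism near $0$.

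\emph{Step 2 (small-core asymptotics, the main obstacle).} I must show that, uniformly for $\eta$ in a fixed neighbourhood $V$,
\[
F(\varepsilon,\eta)=F(0,\eta)+\varepsilon^2 R(\varepsilon,\eta),\qquad D_\eta F(\varepsilon,\eta)-D_\eta F(0,\eta)=o(1),
\]
and also that $R(0,0)=g_1^{\mathcal B,\kappa}e_1+O(\varepsilon)$ at $\eta=0$. The tool is the standard small-inclusion asymptotics: the core perturbs the $\varepsilon=0$ solution $u^0_n$ by a dipole $\varepsilon^2\,\nabla u^0_n(0)\cdot$(polarization tensor of $[\mathcal B;\kappa]$, computed in the medium $e^{\eta_1}$) plus $O(\varepsilon^3)$. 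The proof runs through layer potentials on $\partial(\varepsilon\mathcal B)$ after rescaling. For $\kappa=0,\infty$ the Neumann problem, or the floating-conductor problem with its zero-flux constraint, replaces the transmission problem. The constant $\int\!\!\!-$ part is harmless because of the zero-flux condition, and the resulting bounds are uniform in $\eta\in V$.

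At $\eta=0$ we have $u^0_n=P_n$ and $\nabla P_n(0)=0$ for $n\ge2$. Hence the $\varepsilon^2$ correction to $\mathcal G_n$ vanishes for $n\ge 2$, and for $n=1$ it equals exactly $\frac{1}{2\pi}\mathbb N^{(2)}_{11}[\varepsilon\mathcal B;\kappa]/\varepsilon^2=g_1^{\mathcal B,\kappa}$, by scaling of first-order CGPTs. Here I use that $M[\sigma]$ for the layered structure at $\eta=0$ reduces to that of the isolated core in background $1$. The $\eta$-derivative estimate follows by differentiating the same integral representation, using the analytic dependence on $\eta$ for each fixed $\varepsilon$.

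\emph{Step 3 (root and expansion).} Set $\Psi_\varepsilon(\eta)=\eta-(J^0)^{-1}F(\varepsilon,\eta)$. By Steps 1--2 it is a contraction on a ball $|\eta|\le C_\kappa\varepsilon^2$ for small $\varepsilon$, since $|\Psi_\varepsilon(0)|\lesssim\varepsilon^2$. This gives a unique root in $V$ together with \eqref{eq:small-core-root-bound}. Expanding $0=F(\varepsilon,\eta^\varepsilon)=J^0\eta^\varepsilon+O(|\eta^\varepsilon|^2)+\varepsilon^2 g_1^{\mathcal B,\kappa}e_1+O(\varepsilon^3+\varepsilon^2|\eta^\varepsilon|)$ gives \eqref{eq:small-core-leading-expansion}.

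\emph{Step 4 (endpoint signs).} The signs \eqref{eq:small-core-endpoint-signs} follow from the variational characterization of the polarization tensor: for the insulating inclusion it is negative definite ($-|\mathcal B|$ minus a positive term), and for the floating perfect conductor it is positive definite. Its trace equals $2\pi g_1$ up to the normalization $\mathbb N^{(2)}_{11}=M^{cc}_{11}+M^{ss}_{11}$. Finally, since $g_1\neq0$ and $(J^0)^{-1}e_1\neq0$, the expansion yields the two-sided bound \eqref{eq:small-core-two-sided-bound}.
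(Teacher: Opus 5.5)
\textbf{Overall.} Your outline is correct and shares the paper's skeleton. The paper also reduces to the $K$ diagonal equations and identifies $D_\eta\boldsymbol{\mathcal G}_0(0)=J^0$ at the homogeneous point. It uses the exact scaling at $\eta=0$ (only the core carries contrast, so $\mathcal G_{\varepsilon,n}(0)=\varepsilon^{2n}g_n^{\mathcal B,\kappa}$) for the forcing, runs the contraction $\eta\mapsto\eta-(J^0)^{-1}\boldsymbol{\mathcal G}_\varepsilon(\eta)$, and gets the endpoint signs from Green's identity. Your trace argument there is slightly more direct than the paper's scalarization $\mathsf M=\mu I_2$.

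\textbf{Where your route differs.} The real difference is how you obtain $\varepsilon$-stability. You invoke the full small-inclusion dipole asymptotics, via layer potentials on $\partial(\varepsilon\mathcal B)$ inside the layered medium, to claim $\boldsymbol{\mathcal G}_\varepsilon(\eta)-\boldsymbol{\mathcal G}_0(\eta)=O(\varepsilon^2)$ uniformly in $\eta$. The paper proves only $O(\varepsilon)$ closeness in $C^1$, using elementary energy estimates: $\|\nabla(u^c_{\varepsilon,n}-u^c_{0,n})\|_{L^2}\le C_\kappa\varepsilon$ uniformly for $|\eta|\le\rho$. The source term is supported in $\varepsilon\mathcal B$ for finite $\kappa$, is a rescaled boundary pairing for $\kappa=0$, and is absorbed by a cutoff lift of energy $O(\varepsilon)$ for $\kappa=\infty$. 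This estimate is then inserted into the sensitivity formula $\partial_{\eta_j}\mathcal G_{\varepsilon,n}=\frac{e^{\eta_j}}{\pi n}\int_{\mathcal A_j^\varepsilon}|\nabla u^c_{\varepsilon,n}|^2\,\mathrm{d}x$. That is enough because the $\varepsilon^2$ size of the forcing comes from exact scaling alone. Hence $|\eta^\varepsilon|=O(\varepsilon^2)$, and the cross remainder is $O(\varepsilon|\eta^\varepsilon|)=O(\varepsilon^3)$. Your route gives more (the explicit $\varepsilon^2$ correction for $\eta\neq0$), but it rests on heavier machinery that you only cite, for all three core types. The paper's route is self-contained and purely variational.

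\textbf{Points to tighten.}
\begin{itemize}
\item In Step 2 you state only $D_\eta F(\varepsilon,\eta)-D_\eta F(0,\eta)=o(1)$. Step 3, however, uses an $O(\varepsilon^2|\eta^\varepsilon|)$ remainder, which requires a uniform Lipschitz bound on $R$ in $\eta$. With merely $o(1)$ you get an $o(\varepsilon^2)$ remainder, and \eqref{eq:small-core-leading-expansion} does not follow. You need at least an $O(\varepsilon)$ rate for the Jacobian difference, for example from Cauchy estimates in complexified $\eta$ or from the energy argument above.
\item Running the contraction on the shrinking ball $|\eta|\le C_\kappa\varepsilon^2$ gives uniqueness only inside that ball, not in a fixed neighbourhood $V$. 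Run it instead on a fixed ball $B_\rho$, which your uniform Jacobian closeness allows. Then read off $|\eta^\varepsilon|\le 2|(J^0)^{-1}F(\varepsilon,0)|$ from the fixed-point inequality.
\item At the endpoints $\kappa\in\{0,\infty\}$, the selection rules do follow from rotation invariance and uniqueness. But reducing to the real quantities $\mathcal G_n$ also needs endpoint well-posedness and endpoint reciprocity, to kill $M_{nn}^{cs}$ and $M_{nn}^{sc}$. The paper proves these separately in Lemma~\ref{lem:small-core-endpoint-framework}.
\end{itemize}
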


If the fixed outer interfaces are homothetic, \(D_j=\rho_jD\) with \(0<\rho_1<\cdots<\rho_K\), the same difference--Vandermonde calculation as in Corollary~\ref{cor:homothetic}, now with the inner endpoint equal to zero, gives \(\det J^0>0\).
Thus this class satisfies the nondegeneracy hypothesis of Theorem~\ref{thm:small-strong-core} automatically.

Theorem~\ref{thm:small-strong-core} is proved in Subsection~\ref{subsec:proof-small-core}.

\subsubsection{Common conformal geometries}

We now specialize to the common conformal geometry \eqref{eq:radii}--\eqref{eq:level-curves} and impose the stronger condition \(Q\geq2K+2\) needed for the total degree cancellation and the estimates for entire harmonic incident fields below.
Parameterize the conductivities logarithmically by
\begin{equation}
 \sigma_{a,\delta,\eta}(x)=
 \begin{cases}
  e^\delta,&x\in D_0(a),\\
  e^{\eta_j},&x\in D_j(a)\setminus\overline{D_{j-1}(a)},
       \quad1\leq j\leq K,\\
  1,&x\notin D_K(a),
 \end{cases}
 \label{eq:log-conductivity}
\end{equation}
where \(\eta=(\eta_1,\ldots,\eta_K)\in\mathbb{R}^K\).
The domains depend on the conformal coefficients \(a\) through \eqref{eq:level-curves}; complex shape coefficients are regarded as pairs of real parameters.

\begin{corollary}\label{cor:common-map-branch}
There exist a neighborhood \(\mathcal U\) of the circular shape, a number \(\delta_*>0\), and a unique real-analytic map
\begin{equation}
 \eta:\mathcal U\times(-\delta_*,\delta_*)\longrightarrow\mathbb{R}^K,
 \qquad \eta(0,0)=0,
 \label{eq:eta-branch}
\end{equation}
such that the complete \(K\)-CGPT block vanishes:
\begin{equation}
 M_{mn}^{\alpha\beta}
 [\sigma_{a,\delta,\eta(a,\delta)}]=0
 \quad
 \text{for all }1\leq m,n\leq K,
 \quad\alpha,\beta\in\{c,s\}.
 \label{eq:main-cancel}
\end{equation}
The resulting conductivities are finite, positive, constant in each phase, and isotropic.
If \(a\neq0\), the interfaces are noncircular, and if \(\delta\neq0\), the resulting \(K\)-GPT-vanishing structure is nontrivial.
The uniqueness is local near the zero log-conductivity vector.
\end{corollary}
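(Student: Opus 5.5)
Under the hypothesis \(Q\geq 2K+2>2K\), I will use Proposition~\ref{prop:selection} to reduce the complete block \eqref{eq:main-cancel} to the \(K\) scalar equations \(\mathcal G_n=0\), \(1\leq n\leq K\). So I define
\[
 F(a,\delta,\eta):=\bigl(\mathcal G_1,\ldots,\mathcal G_K\bigr)[\sigma_{a,\delta,\eta}]\in\mathbb{R}^K ,
\]
with \(a\) in a small ball where \eqref{eq:univalence} holds, so that Lemma~\ref{lem:global-injectivity} gives admissible nested \(C_Q\)-symmetric domains. The plan is to apply the real-analytic implicit function theorem at \((a,\delta,\eta)=(0,0,0)\), where \(F(0,0,0)=0\) because \(\sigma\equiv1\).

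The first step is real-analyticity of \(F\) in all variables. I will pull the transmission problem back to the fixed reference geometry of concentric annuli through a diffeomorphism of \(\mathbb{R}^2\). On \(|w|\geq r_0\) this diffeomorphism agrees with \(\Phi_a\). Near \(w=0\) it is cut off smoothly, for instance \(w+\chi(|w|)\sum_\ell a_\ell w^{1-\ell Q}\) with \(\chi\) supported in \(|w|>r_0/2\) and equal to \(1\) on \(|w|\geq r_0\). Because the cutoff is real-linear in \(a\), it stays a diffeomorphism for \(a\) small. After this change of variables the equation has a fixed piecewise constant interface structure. The coefficient becomes \(\sigma\) times a matrix that is analytic in \(a\), and the right-hand side \(P_n\circ\Phi_a\) is also analytic in \(a\). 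Since \(\eta\mapsto e^{\eta_j}\) is analytic, the solution operator on \(H^1_{\rm loc}\), handled through a Dirichlet-to-Neumann or weighted formulation, depends analytically on \((a,\delta,\eta)\). The integral \eqref{eq:real-cgpt} then inherits analyticity. Alternatively, I could rely on the standard analytic dependence of layer-potential (Neumann–Poincaré) operators on analytic boundary perturbations and on the contrast parameters.

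The second step is the Jacobian in \(\eta\) at the origin. At \(\sigma\equiv1\) we have \(u_n^\beta=P_n^\beta\). Differentiating \eqref{eq:real-cgpt} in \(\eta_j\) at \(\eta=0,\delta=0\) kills the term involving \(\partial_{\eta_j}u\), because it is multiplied by \(\sigma-1=0\). This leaves
\[
 \partial_{\eta_j}M_{nn}^{\alpha\alpha}=\int_{\mathcal A_j}|\nabla P_n^\alpha|^2\,\mathrm{d} x .
\]
Using \(|\nabla P_n^c|^2+|\nabla P_n^s|^2=2n^2|z|^{2n-2}\), we obtain \(\partial_{\eta_j}\mathcal G_n=\frac{n}{\pi}\int_{\mathcal A_j}|z|^{2n-2}\,\mathrm{d} x=J^{\mathcal D}_{nj}\), with \(\mathcal D\) the geometry at the given \(a\). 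This is the same computation as in Theorem~\ref{thm:fixed-target}. At \(a=0\) the geometry consists of concentric disks of radii \(r_j\). That is the homothetic case of Corollary~\ref{cor:homothetic} with \(D\) the unit disk, so \(\det J>0\) there, with \(B_{nj}=r_j^{2n}-r_{j-1}^{2n}\) a difference–Vandermonde matrix. The implicit function theorem in its real-analytic form now gives \(\mathcal U\), \(\delta_*\), a neighborhood \(V\) of \(0\), and a unique analytic \(\eta(a,\delta)\in V\) with \(F=0\). Because \(F(a,0,0)=0\) for every \(a\) (the coefficient is homogeneous), uniqueness forces \(\eta(a,0)=0\).

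The remaining claims follow directly. Positivity and isotropy are automatic from the parametrization \(e^{\eta_j}\). If \(a\neq0\), then \(\Phi_a\) is not a rotation-dilation, so the level curves are noncircular. If \(\delta\neq0\), the core differs from the background, so the structure is nontrivial. The step I expect to be the main obstacle is the rigorous joint analyticity in the shape parameter \(a\), in particular making the pulled-back formulation analytic uniformly up to infinity. I would handle this by keeping the diffeomorphism equal to \(\Phi_a\) only on a bounded annulus \(r_0\leq|w|\leq R\) and cutting it off to the identity beyond. The GPTs are intrinsic integrals over \(D_K(a)\), so they can be computed on any such pulled-back formulation.
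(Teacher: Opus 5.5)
Your implicit-function argument is essentially the paper's. You reduce to \(\boldsymbol{\mathcal G}(a,\delta,\eta)=0\) through Proposition~\ref{prop:selection}. You pull back to the concentric partition by \(w+\chi(|w|)\sum_\ell a_\ell w^{1-\ell Q}\); the paper takes \(\chi\) equal to one near the radii \(r_j\), vanishing near the origin and outside a fixed ball, which is your fallback. You then invert analytically for the finite-energy corrector and use \(D_\eta\boldsymbol{\mathcal G}(0,0,0)=A\) with \(\det A>0\) from the difference--Vandermonde computation. Your concern about analyticity up to infinity is harmless even without the outer cutoff. Wherever the pullback equals the conformal map \(\Phi_a\), the coefficient \(\sigma\det D\Psi\,D\Psi^{-1}D\Psi^{-\mathsf T}\) reduces to the scalar \(\sigma\). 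Hence its deviation from the identity stays compactly supported.

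The genuine gap is the noncircularity claim. ``\(\Phi_a\) is not a rotation--dilation, so the level curves are noncircular'' is not a valid inference: conformal maps that are not similarities can send circles to circles (every M\"obius map does). You need an argument specific to this family.

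The paper's argument runs as follows. A \(C_Q\)-invariant circle must be centered at the origin, since the rotation fixes its center. So if \(\Gamma_j(a)\) were a circle, then \(|\Phi_a(r_j\mathrm{e}^{i\theta})|=|p(\zeta)|\) would be constant on \(|\zeta|=1\), where \(p(\zeta)=r_j+\sum_\ell a_\ell r_j^{1-\ell Q}\zeta^\ell\) and \(\zeta=\mathrm{e}^{-iQ\theta}\). A polynomial of constant modulus on the unit circle is a monomial. Since the constant term \(r_j\neq0\), \(p\) would be constant, which forces every \(a_\ell=0\).

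You can also see this directly. If \(\ell_0\) is the largest index with \(a_{\ell_0}\neq0\), the \(\zeta^{\ell_0}\) Fourier coefficient of \(|p|^2\) is \(r_j\,a_{\ell_0}r_j^{1-\ell_0Q}\neq0\).

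An alternative is uniqueness of the normalized exterior conformal map. If \(\Gamma_j(a)\) bounded a disk \(\{|z-c|<\rho\}\), then \(\Phi_a\) on \(|w|>r_j\) would coincide with \(w\mapsto c+(\rho/r_j)w\), again forcing \(a=0\). Either argument shows that every interface \(\Gamma_j(a)\) is noncircular when \(a\neq0\), as the statement asserts.
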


\begin{proposition}\label{prop:local-tangent}
Put \(x_j=r_j^2\) and define
\begin{equation}
 A_{nj}=x_j^n-x_{j-1}^n,
 \qquad v_n=x_0^n,
 \qquad 1\leq n,j\leq K.
 \label{eq:A-v}
\end{equation}
Then \(\det A>0\), and
\begin{equation}
 \eta(a,\delta)
 =-A^{-1}v\,\delta
 +O\bigl(\delta^2+|a|^2|\delta|\bigr),
 \label{eq:branch-expansion}
\end{equation}
as \((a,\delta)\to(0,0)\).
Thus the first conductivity correction is explicit, while the first shape-induced correction is quadratic rather than linear.
\end{proposition}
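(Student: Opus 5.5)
We need to prove Proposition~\ref{prop:local-tangent}: \(\det A>0\), and the branch expansion \eqref{eq:branch-expansion} with the shape correction entering only at quadratic order.

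\textbf{Step 1: reduce to the Jacobian of \(\mathcal G\).}
By Proposition~\ref{prop:selection}, the branch of Corollary~\ref{cor:common-map-branch} is the implicit solution of
\[
\mathcal G(a,\delta,\eta)=(\mathcal G_1,\ldots,\mathcal G_K)=0 .
\]
The expansion therefore follows from the first-order derivatives of \(\mathcal G\) at the homogeneous state \(\delta=0\), \(\eta=0\), together with control of the mixed shape–material terms.

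At \(\sigma\equiv1\) we have \(u_n^\beta=P_n^\beta\). Differentiating \eqref{eq:real-cgpt} in a log-conductivity \(t\) at zero gives the Born term
\[
\int_{\text{phase}}\nabla P_n^\beta\cdot\nabla P_m^\alpha\,\mathrm{d}x .
\]
Using \(|\nabla P_n^c|^2=|\nabla P_n^s|^2=n^2|z|^{2n-2}\), this yields
\[
\partial_{\eta_j}\mathcal G_n=\frac{1}{2\pi n}\cdot 2n^2\int_{\mathcal A_j}|z|^{2n-2}\,\mathrm{d}x=J_{nj}(a),
\qquad
\partial_\delta\mathcal G_n=v_n(a),
\]
which are exactly the quantities of \eqref{eq:fixed-target-J}--\eqref{eq:fixed-target-v}.

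For \(a=0\) the phases are annuli, and a direct computation gives
\[
\frac{n}{\pi}\int_{r_{j-1}<|z|<r_j}|z|^{2n-2}\,\mathrm{d}x=x_j^n-x_{j-1}^n=A_{nj},
\qquad
\frac{n}{\pi}\int_{|z|<r_0}|z|^{2n-2}\,\mathrm{d}x=v_n .
\]

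\textbf{Step 2: \(\det A>0\).}
Apply column operations: replace column \(j\) by the sum of columns \(1,\ldots,j\). The determinant becomes that of \(\bigl(x_j^n-x_0^n\bigr)_{n,j}\). Bordering with \(x_0\) shows this equals the Vandermonde-type determinant
\[
\det\bigl(x_k^{n}\bigr)_{0\le k\le K,\;0\le n\le K}
\]
with the row \((1,\ldots,1)\) included. Since \(0<x_0<\cdots<x_K\), this determinant is positive. This is the difference–Vandermonde calculation referred to in Corollary~\ref{cor:homothetic}.

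\textbf{Step 3: implicit function structure.}
Since \(\mathcal G(a,0,0)=0\) for every \(a\) (the homogeneous medium), uniqueness in Corollary~\ref{cor:common-map-branch} gives \(\eta(a,0)=0\). Real-analyticity then lets us write
\[
\eta(a,\delta)=\delta\,\xi(a)+O(\delta^2),
\qquad
\xi(a)=-J(a)^{-1}v(a).
\]
This comes from differentiating \(\mathcal G(a,\delta,\eta(a,\delta))=0\) in \(\delta\) at \(\delta=0\) and using that \(\mathcal G\) vanishes identically at zero log-conductivity. Hence it suffices to show
\[
J(a)=A+O(|a|^2),\qquad v(a)=v+O(|a|^2).
\]

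\textbf{Step 4 (the main obstacle): the shape dependence is quadratic.}
The claim is that the moments \(\int_{D_j(a)}|z|^{2n-2}\,\mathrm{d}x\) have no linear term in \(a\). Pull back by \(\Phi_a\):
\[
\int_{|w|<r_j}|\Phi_a(w)|^{2n-2}\,|\Phi_a'(w)|^2\,\mathrm{d}A(w).
\]
For the interior region, integrate only over \(|w|\ge r_0\) and treat the core through Green's formula with the boundary curve \(\Gamma_0(a)\). Alternatively, write the moment as a boundary integral
\[
\frac{1}{2i}\oint_{\Gamma_j}\frac{z^{n}\,\bar z^{\,n-1}}{n}\,\mathrm{d}\bar z\quad\text{(up to normalization)}
\]
and substitute \(z=\Phi_a(w)\) on \(|w|=r_j\).

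The first-order variation in \(a_\ell\) consists of terms of the form \(w^{n}\bar w^{\,n-1}\cdot a_\ell w^{1-\ell Q}\) and their conjugates. On the circle these are Fourier modes \(e^{\pm i\ell Q\theta}\) with \(\ell Q\neq0\), so they integrate to zero. Equivalently, the linear term is not invariant under \(C_Q\) and rotation averaging kills it. By real-analyticity in \(a\), the remainder is then \(O(|a|^2)\).

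Combining gives
\[
\xi(a)=-A^{-1}v+O(|a|^2),
\]
and hence \eqref{eq:branch-expansion}. The care needed is in tracking that the mixed remainder is uniformly \(O(|a|^2|\delta|)\), which follows from analyticity of \(\eta\) jointly in \((a,\delta)\).
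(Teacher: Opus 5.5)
Your proposal is correct, but it reaches the key step by a different route from the paper.

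\emph{The paper's route.} The paper never computes a shape derivative. It notes that $(\mathcal R_\alpha a)_\ell=e^{i\ell Q\alpha}a_\ell$ represents a physical rotation, so $\boldsymbol{\mathcal G}(\mathcal R_\alpha a,\delta,\eta)=\boldsymbol{\mathcal G}(a,\delta,\eta)$. No nonzero real-linear functional on a weight-$\ell Q$ coordinate is invariant under this action, so $D_a\boldsymbol{\mathcal G}(0,\delta,\eta)=0$ at every nearby material state. Hence $D_a\eta(0,\delta)=0$, and the expansion follows from the Hadamard factorization $\eta=\delta h(a,\delta)$.

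\emph{Your route.} You use $\eta(a,0)=0$ to reduce everything to the $\delta$-linear coefficient $\partial_\delta\eta(a,0)=-(J^{\mathcal D(a)})^{-1}v^{\mathcal D(a)}$. This is the linearization of Theorem~\ref{thm:fixed-target} applied to each deformed geometry. You then show by an explicit first-variation computation that the Born moments $\int_{D_j(a)}|z|^{2n-2}\,\mathrm{d}x$ have no linear term in $a$. On $|w|=r_j$ the weight is constant, and the normal velocity $\operatorname{Re}\bigl(a_\ell r_j^{1-\ell Q}e^{-i\ell Q\theta}\bigr)$ has zero mean.

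\emph{Comparison.} Your argument is more concrete: it identifies the $\delta$-coefficient, for every $a$, as an explicit geometric quantity polynomial in $(a,\bar a)$, and it would yield the $|a|^2$ coefficients if wanted. The paper's argument is structural and computation-free. It also gives the stronger fact that the linear shape derivative vanishes at finite contrast, not only at first order in $\delta$. Both suffice for the stated remainder $O(\delta^2+|a|^2|\delta|)$.

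Your column-operation and bordering evaluation, $\det A=\prod_{0\le i<j\le K}(x_j-x_i)$, is a valid and slightly more explicit alternative to the paper's integral representation of the difference--Vandermonde determinant.

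\emph{One correction.} Your parenthetical claim that ``the linear term is not invariant under $C_Q$'' gives the wrong reason. Because $\Phi_a(\omega w)=\omega\Phi_a(w)$, the group $C_Q$ acts trivially on $a$. What actually kills the linear term is invariance of the moments under all rotations about the origin, combined with the weight-$\ell Q$ action on $a_\ell$, which is precisely the paper's device. Your explicit Fourier computation does not rely on this remark, so the proof stands.
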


Corollary~\ref{cor:common-map-branch} and Proposition~\ref{prop:local-tangent} are proved in Subsection~\ref{subsec:proof-common-conformal}.
\subsubsection{General core material type}
For $\kappa\in[0,\infty]$ and $\eta=(\eta_1,\ldots,\eta_K)\in\mathbb R^K$, denote by $\sigma_a^\kappa(\eta)$ the structure with core conductivity $\kappa$, coating conductivities $e^{\eta_j}$, and background conductivity $1$.
For $0<\kappa<\infty$, this is $\sigma_{a,\log\kappa,\eta}$ in \eqref{eq:log-conductivity}.
At $\kappa=0$, impose zero exterior normal flux on $\Gamma_0(a)$; at $\kappa=\infty$, impose a constant core potential and zero total flux across $\Gamma_0(a)$.
The endpoint CGPTs are defined by the exterior multipole expansion, as in Lemma~\ref{lem:small-core-endpoint-framework}.
Define the reduced material map
\begin{equation}
	\mathcal G_a^\kappa(\eta)
	:=
	\left(
	\frac{1}{2\pi n}\mathbb N_{nn}^{(2)}
	[\sigma_a^\kappa(\eta)]
	\right)_{n=1}^{K}.
	\label{eq:global-material-map}
\end{equation}
Thus \(\mathcal G_a^\kappa(\eta)=0\) is precisely the reduced
\(K\)-equation system selected by the \(C_Q\) symmetry.
The next theorem removes the restrictions on the core conductivity and core size, provided that the perturbation from the concentric geometry is sufficiently small.
The shape norm \(\left\lVert a\right\rVert _*\) is defined in \eqref{eq:global-shape-norm}.
\begin{theorem}\label{thm:global-material}
	Fix an arbitrary core conductivity \(0\leq\kappa\leq \infty\).
	There exists \(\rho_\kappa>0\) such that, whenever
	\begin{equation}
		\left\lVert a\right\rVert _*<\rho_\kappa,
		\label{eq:global-shape-smallness}
	\end{equation}
	there is at least one vector \(\eta^a\in\mathbb{R}^K\) for which
	\begin{equation}
		M_{mn}^{\alpha\beta}
		[\sigma_a^\kappa(\eta^a)]
		=0
		\quad
		\text{for all }1\leq m,n\leq K,
		\quad \alpha,\beta\in\{c,s\}.
		\label{eq:global-complete-cancellation}
	\end{equation}
	The coating conductivities \(\mathrm{e}^{\eta_j^a}\) are finite and positive and may be chosen in a compact material set depending on \(\kappa\) and the radii.
	If \(a\neq0\), the interfaces are noncircular, and if \(\kappa\neq1\), the structure is nontrivial.
\end{theorem}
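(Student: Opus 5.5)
Theorem~\ref{thm:global-material} asks for a solution of $\mathcal G_a^\kappa(\eta)=0$ for every small $a$ and every core type, with no smallness of $\kappa-1$ or of the core. The plan is a degree argument: at $a=0$ we have a nondegenerate, isolated radial root with nonzero Brouwer degree, and a homotopy in $a$ preserves that degree. By Proposition~\ref{prop:selection} (with $Q>2K$), vanishing of $\mathcal G_a^\kappa$ is equivalent to \eqref{eq:global-complete-cancellation}. So it suffices to produce a zero of this reduced map.

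\emph{Step 1 (concentric case).} At $a=0$ the interfaces are the circles $|w|=r_j$. The radial GPTs $\mathcal G_0^\kappa(\eta)$ are then given explicitly by the transfer-matrix recursion for concentric disks. By the radial existence result \cite{sun2026existence} there is a root $\eta^0\in\mathbb{R}^K$ for every $\kappa\in[0,\infty]$. I will fix a bounded open box $U\Subset\mathbb{R}^K$, depending on $\kappa$ and the radii, such that
\[
 \mathcal G_0^\kappa\neq0 \text{ on } \partial U
 \quad\text{and}\quad
 \deg(\mathcal G_0^\kappa,U,0)\neq0 .
\]
To get this I will use the structure of the radial recursion. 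Each $\mathcal G_{0,n}^\kappa$ is a ratio of multilinear expressions in the layer ratios $\tanh$-type quantities. Monotonicity of the outermost response in $\eta_K$ should give a priori bounds: if some $\eta_j\to\pm\infty$, the structure approaches an insulating or perfectly conducting layer, and some $\mathcal G_n$ acquires a definite sign. This confines all roots to a compact set and makes the boundary condition hold on a large box.

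The degree I will compute by homotoping $\kappa$ to $1$, which is allowed because the a priori bounds can be made uniform in $\kappa$ on compacts of $[0,\infty]$ via the endpoint formulations. At $\kappa=1$ the only root is $\eta=0$, with Jacobian $A$ of Proposition~\ref{prop:local-tangent}, and $\det A>0$. Hence the degree equals $1$.

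\emph{Step 2 (continuity in the shape).} I will show that $(a,\eta)\mapsto\mathcal G_a^\kappa(\eta)$ is continuous, indeed uniformly continuous on $\{\|a\|_*\le\rho\}\times\overline U$. For this I pull back by $\Phi_a$, which is conformal on $|w|\ge r_0$ by Lemma~\ref{lem:global-injectivity}. Since the conductivity is isotropic and scalar, the transmission problem in the reference annuli keeps the same conductivities. Only the core interior and the far-field test polynomials $P_n^\alpha\circ\Phi_a$ change, analytically in $a$. The core is handled by a Dirichlet-to-Neumann map of $D_0(a)$, which depends continuously on $a$ in the $\|\cdot\|_*$ norm; at the endpoints $\kappa=0,\infty$ it is simply Neumann or floating. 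Compactness of $\overline U$ then gives
\[
 \sup_{\overline U}\,\bigl|\mathcal G_a^\kappa-\mathcal G_0^\kappa\bigr|\to0 \qquad (a\to0).
\]

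\emph{Step 3 (homotopy).} Let $m=\min_{\partial U}|\mathcal G_0^\kappa|>0$ and choose $\rho_\kappa$ so that the deviation in Step 2 is below $m$. Then the linear homotopy between $\mathcal G_0^\kappa$ and $\mathcal G_a^\kappa$ has no zeros on $\partial U$. Hence $\deg(\mathcal G_a^\kappa,U,0)=1$, which yields a zero $\eta^a\in U$. This gives existence but not uniqueness, matching the statement's ``at least one''. The finiteness and positivity of $e^{\eta_j^a}$ follow from $\overline U$ being compact. Noncircularity for $a\ne0$ holds by construction. Nontriviality for $\kappa\ne1$ holds because the core differs from the background.

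\emph{Main obstacle.} I expect the hard part to be Step 1: the a priori confinement of radial roots uniformly in the core type, together with the nonzero-degree computation. I will first try sign arguments at each face of a large box, using monotone dependence of $\mathcal G_n$ on the $\eta_j$ in the radial case. If that fails, I will instead show that every radial root is nondegenerate, i.e.\ has a Vandermonde-type Jacobian of fixed sign. Then the degree equals the number of roots counted with a common sign, and this is positive.
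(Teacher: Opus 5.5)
The overall architecture is the paper's: nonzero degree for the radial map at $a=0$, uniform continuity in the shape, homotopy invariance, then Proposition~\ref{prop:selection}. Your Steps 2 and 3 are sound. The conformal pullback, which keeps the coatings isotropic and isolates the change in the core and in $P_n^\alpha\circ\Phi_a$, is a legitimate alternative to the paper's non-conformal diffeomorphism $\Psi_a$. Your linear homotopy works as well as the paper's path $ta$.

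\textbf{The gap is Step 1.} It carries the whole theorem, and you leave it at the level of heuristics.

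First, citing \cite{sun2026existence} only for \emph{existence} of a radial root is insufficient. A root of index zero can disappear under arbitrarily small perturbation. You genuinely need a bounded open set with no zeros on its boundary and nonzero degree.

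Second, your confinement argument does not produce that set. Escape to infinity in $\eta$-space is governed by the consecutive ratios $\sigma_j/\sigma_{j+1}$, which can degenerate jointly and at different rates. In each such limit some interface becomes insulating or perfectly conducting and shields the layers inside it. The outer layers, however, keep free parameters, and showing that they cannot cancel all $K$ responses simultaneously is exactly the nontrivial content. Monotonicity of one response in one coordinate does not confine the zeros of a $K\times K$ system, and ``some $\mathcal G_n$ acquires a definite sign'' is asserted, not shown.

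Third, your degree computation rests on the claim that at $\kappa=1$ the only radial root is $\eta=0$. For $K\geq2$ this is a global uniqueness statement for the radial material problem, and it is neither established nor obvious. The paper only knows that $\eta=0$ is a nondegenerate root with Jacobian $A$. If other roots exist, the degree at $\kappa=1$ is $1$ plus their indices and could vanish.

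Your fallback, that every radial root has a Jacobian of fixed sign, is equally unproven. Away from the homogeneous point the entries are $\frac{\mathrm{e}^{\eta_j}}{\pi n}\int_{\mathcal A_j}|\nabla u_n^c|^2\,\mathrm{d}x$. The difference--Vandermonde factorization that gives $\det A>0$ is then lost, and no sign of the determinant follows. The homotopy in $\kappa$ would also require the confinement uniformly along the whole path, including at endpoint core types, and you do not show this either.

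\textbf{The missing ingredient is a compactification,} which the paper imports from \cite{sun2026existence}:
\begin{enumerate}
\item Pass to the contrast variables $\gamma_n=(\sigma_{n-1}-\sigma_n)/(\sigma_{n-1}+\sigma_n)\in(-1,1)$, with the innermost contrast fixed by $\kappa$.
\item The radial response then extends continuously to the closed cube $[-1,1]^K$. This uses the fact that on the boundary faces the exterior response no longer depends on the innermost contrast.
\item This extension has no zeros on $\partial[-1,1]^K$ and has nonzero degree on the cube.
\item The diffeomorphism $\gamma\mapsto\eta$ from $(-1,1)^K$ onto $\mathbb{R}^K$, together with excision, transports this to a bounded open set $\mathcal O_\kappa\Subset\mathbb{R}^K$.
\end{enumerate}
Only after this do your Steps 2 and 3 apply.
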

Proof of Theorem \ref{thm:global-material} is shown in Subsection \ref{sec:Unif01}.

\subsection{Consequences for scattering}

Proposition~\ref{prop:total-degree} below shows that complete \(K\)-CGPT cancellation and \(Q\geq2K+2\), together with the cyclic selection rules, give
\[
M_{mn}^{\alpha\beta}=0
\quad\text{whenever }m+n\leq2K+1,
\qquad \alpha,\beta\in\{c,s\}.
\]

\begin{theorem}\label{thm:arbitrary-H}
		Let \(K\geq1\) and \(Q\geq2K+2\).
		Fix a \(C_Q\)-invariant reference multilayer
		\(\mathcal D=(D_0,\ldots,D_K)\), centered at the origin, with finite positive conductivities and \(C^{2,\alpha}\) interfaces, and suppose that its complete \(K\)-CGPT block vanishes.
		Let \(\sigma\) be its conductivity and let \(\sigma_{\varepsilon,z_*}\) be defined by \eqref{eq:scaled-conductivity}.
		Let \(\Omega\) be a fixed bounded smooth domain with \(B_R(z_*)\Subset\Omega\), assume \(z_*+\varepsilon\overline{D_K}\subset B_{R/2}(z_*)\), and let \(H\) be an entire harmonic incident field on \(\mathbb R^2\).
		Let \(u_\varepsilon\in H^1(\Omega)\) be the unique weak solution of
		\(\nabla\cdot(\sigma_{\varepsilon,z_*}\nabla u_\varepsilon)=0\) in \(\Omega\), with trace \(u_\varepsilon=H\) on \(\partial\Omega\).
		Then, for every compact \(E\Subset\Omega\setminus\{z_*\}\),
	\begin{equation}
		\left\lVert u_\varepsilon-H\right\rVert _{C^1(E)}
		\leq C_E\varepsilon^{2K+2}
		\left\lVert H\right\rVert _{L^\infty(B_R(z_*))}
		\label{eq:arbitrary-H-bound}
	\end{equation}
	for sufficiently small \(\varepsilon\).
	The constant may depend on \(R\), \(\Omega\), \(\operatorname{dist}(E,z_*)\), the reference geometry and its interface norms, and ellipticity bounds; it is uniform on compact parameter subsets satisfying those bounds.
\end{theorem}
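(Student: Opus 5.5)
Proving Theorem~\ref{thm:arbitrary-H} takes three steps: first a total-degree vanishing statement for the reference structure, then a free-space multipole estimate for the scaled inclusion, and finally a boundary-value comparison on \(\Omega\).

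\emph{Step 1 (total-degree vanishing).} We would first prove the assertion attributed to Proposition~\ref{prop:total-degree}: \(M_{mn}^{\alpha\beta}=0\) whenever \(m+n\leq 2K+1\). If both \(m,n\leq K\), this is the hypothesis. Otherwise one index, say \(m\geq K+1\), and then \(n\leq K\), so \(1\leq m-n\leq 2K\) and \(m+n\leq 2K+1<Q\). By the selection rules \eqref{eq:selection1}--\eqref{eq:selection2} of Proposition~\ref{prop:selection}, which hold for every \(Q\geq3\), both \(\mathbb N^{(1)}_{mn}\) and \(\mathbb N^{(2)}_{mn}\) vanish. Hence all four real entries vanish, and reciprocity \eqref{eq:reciprocity} covers the case \(n\geq K+1\).

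\emph{Step 2 (free-space estimate with scaling).} Next we would study the whole-plane problem \eqref{eq:conductivity-transmission-problem} with \(\sigma_{\varepsilon,z_*}\) and incident field \(H\). Translate so that \(z_*=0\) and expand \(H\) as in \eqref{eq:H-expansion}. Cauchy estimates on \(B_R\) give \(|a_n^\beta|\leq 2\|H\|_{L^\infty(B_R)}R^{-n}\). Scaling shows that the CGPTs of \(\sigma(\cdot/\varepsilon)\) are \(\varepsilon^{m+n}M_{mn}^{\alpha\beta}[\sigma]\). We also need the growth bound \(|M_{mn}|\leq C^{m+n}\) for the reference structure; it follows from \eqref{eq:real-cgpt} because \(\|\nabla P_n\|_{L^\infty(D_K)}\leq n\,\mathrm{diam}^{n-1}\) and \(\|\nabla u_n\|_{L^2(D_K)}\lesssim\|\nabla P_n\|_{L^2(D_K)}\). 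Inserting Step 1 into \eqref{eq:multipole-expansion}, only terms with \(m+n\geq 2K+2\) survive. For \(|x|\geq R/2\), with \(\varepsilon\) small so that \(C\varepsilon/R\) and \(C\varepsilon/|x|\) are below \(1/2\), the double series is dominated by \(\varepsilon^{2K+2}\|H\|_\infty\) times a convergent geometric series. Its derivatives are treated in the same way, which gives a \(C^1\) bound at distance \(\geq c\,\mathrm{dist}(E,z_*)\). At distances between \(\varepsilon\,\mathrm{diam}\,D_K\) and \(R/2\) the expansion still converges once \(|x|\) exceeds a fixed multiple of \(\varepsilon\). The factor \(|x|^{-m}\) only helps for compact sets away from \(z_*\), so for fixed \(E\) and small \(\varepsilon\) the free-space scattered field \(w_\varepsilon\) satisfies \(\|w_\varepsilon\|_{C^1(E\cup\partial\Omega)}\leq C\varepsilon^{2K+2}\|H\|_{L^\infty(B_R)}\).

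\emph{Step 3 (transfer to the bounded domain).} Let \(U_\varepsilon\) be the whole-space solution. Then \(u_\varepsilon-U_\varepsilon\) solves the divergence-form equation in \(\Omega\) with boundary data \(-w_\varepsilon|_{\partial\Omega}=O(\varepsilon^{2K+2})\) in \(C^{1}\) (indeed smooth) norm. Extend this datum harmonically and control the difference by the energy estimate, uniformly in \(\varepsilon\) by ellipticity, so that \(\|u_\varepsilon-U_\varepsilon\|_{H^1(\Omega)}\lesssim\varepsilon^{2K+2}\|H\|_\infty\). Away from \(B_{R/2}\) the coefficient is \(1\), so interior harmonic estimates upgrade this to \(C^1\) on the part of \(E\) outside \(B_{R/2}\). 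The main obstacle is the part of \(E\) inside \(B_{R/2}\setminus\{z_*\}\), where the difference is not merely harmonic because it also sees the small inclusion. We would handle it by writing \(u_\varepsilon-U_\varepsilon\) itself as a harmonic field \(h\) of size \(O(\varepsilon^{2K+2})\) plus the response of the scaled inclusion to \(h\). That response is \(O(\varepsilon^{2})\cdot O(\varepsilon^{2K+2})\) by Step 2 applied to \(h\) with \(K=0\), i.e. the leading dipole scaling. To make this rigorous, we would set up a fixed-point/Neumann-series representation using the Dirichlet Green function of \(\Omega\) and the \(C^{2,\alpha}\) regularity of the interfaces, which keeps the layer-potential constants uniform.
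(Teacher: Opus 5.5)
Your argument is correct, but Step~3 takes a different route from the paper. Steps~1 and~2 match Proposition~\ref{prop:total-degree} and Lemma~\ref{lem:small-volume} in substance.

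One detail in Step~2 should be stated explicitly. The paper justifies summing the harmonic series term by term over incident modes through the uniform density bound of Lemma~\ref{lem:uniform-interface-system}. In your variational setting the same job is done by \(\|\nabla w\|_{L^2(\mathbb R^2)}\leq C\|\nabla H\|_{L^2(z_*+\varepsilon D_K)}\), combined with uniform \(C^1\) convergence of the Taylor series of \(H\) on \(B_{R/2}(z_*)\).

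For the transfer to \(\Omega\), the paper never forms a whole-plane solution. It defines the feedback operator \(\mathcal T_\varepsilon\), solves for the effective incident field \(h_\varepsilon=(I+\mathcal T_\varepsilon)^{-1}H\), and writes \(u_\varepsilon=H-v_\varepsilon+\mathcal S_\varepsilon[h_\varepsilon]\). The cancellation estimate is thus applied to \(h_\varepsilon\), which is harmonic only on \(B_R(z_*)\). You instead compare with \(U_\varepsilon=H+w_\varepsilon\) and bound \(u_\varepsilon-U_\varepsilon\) by a uniform Dirichlet energy estimate with datum \(-w_\varepsilon|_{\partial\Omega}=O(\varepsilon^{2K+2})\).

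The ``main obstacle'' you identify is not real. The difference \(u_\varepsilon-U_\varepsilon\) is harmonic in all of \(\Omega\setminus(z_*+\varepsilon\overline{D_K})\), not only outside \(B_{R/2}(z_*)\). Once \(\varepsilon\max_{y\in\overline{D_K}}|y|<\tfrac12\operatorname{dist}(E,z_*)\), a fixed neighborhood of \(E\) misses the inclusion. Interior harmonic estimates then turn your \(H^1(\Omega)\) bound directly into the \(C^1(E)\) bound. Your proposed fixed-point remedy is essentially the paper's \(\mathcal T_\varepsilon\) argument run with \(d=2\); it works but is unnecessary.

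Your route is more elementary: the cancellation estimate is needed only for the given entire field, and the transfer uses only coercivity and interior regularity. The paper's route gives an explicit self-consistent representation of the field that actually excites the inclusion, within one layer-potential framework. Its cost is proving the small-volume estimate for arbitrary locally harmonic loadings and inverting \(I+\mathcal T_\varepsilon\).
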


\begin{corollary}\label{cor:polynomial}
Let \(Q\geq2K+2\), and let \(\sigma\) be a \(C_Q\)-invariant multilayer with finite positive conductivities whose complete \(K\)-CGPT block vanishes.
If the unscaled whole-space incident field is a harmonic polynomial of degree at most \(K\), then
\begin{equation}
 u(x)-H(x)=O(|x|^{-(Q-K)})
 \qquad(|x|\to\infty).
 \label{eq:polynomial-decay}
\end{equation}
For \(Q=2K+2\), the decay is \(O(|x|^{-(K+2)})\).
Unlike the small-inclusion estimate of Theorem~\ref{thm:arbitrary-H}, this fixed-scale conclusion is restricted to harmonic polynomials of degree at most \(K\).
\end{corollary}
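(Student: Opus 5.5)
We prove Corollary~\ref{cor:polynomial} by combining the exterior multipole expansion \eqref{eq:multipole-expansion} with the total-degree vanishing from Proposition~\ref{prop:total-degree} and the cyclic selection rules of Proposition~\ref{prop:selection}.

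Write the incident polynomial as \(H=H(0)+\sum_{n=1}^{K}r^n(a_n^c\cos n\theta+a_n^s\sin n\theta)\). Only incident orders \(1\leq n\leq K\) occur, and the constant term produces no scattered field. By \eqref{eq:multipole-expansion}, for large \(r\) the scattered potential \(u-H\) is a convergent sum of outgoing terms \(r^{-m}(\cos m\theta,\sin m\theta)\). The coefficient of each term is a finite linear combination of \(M_{mn}^{\alpha\beta}\) with \(n\leq K\). So it suffices to show that \(M_{mn}^{\alpha\beta}=0\) for all \(n\leq K\) and all \(m\leq Q-K-1\).

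\textbf{Step 1: orders \(m\leq K\).} For \(1\leq m,n\leq K\) the coefficients vanish by the assumed complete \(K\)-CGPT cancellation \eqref{eq:complete-block}.

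\textbf{Step 2: orders \(K<m\leq Q-K-1\), with \(n\leq K\).} Use the selection rules \eqref{eq:selection1}--\eqref{eq:selection2}, which hold for any \(C_Q\)-invariant conductivity.
\begin{itemize}
\item Since \(2\leq m+n\leq Q-1\), we have \(m+n\not\equiv0\pmod Q\), so \(\mathbb N_{mn}^{(1)}=0\).
\item Since \(1\leq m-n\leq Q-2\), we have \(m-n\not\equiv0\pmod Q\), so \(\mathbb N_{mn}^{(2)}=0\).
\end{itemize}
Adding and subtracting \eqref{eq:N1} and \eqref{eq:N2} recovers all four real entries \(M_{mn}^{cc},M_{mn}^{ss},M_{mn}^{cs},M_{mn}^{sc}\). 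Hence all of them vanish.

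\textbf{Conclusion.} The first possibly nonzero outgoing order is \(m=Q-K\), where \(m+n\equiv0\pmod Q\) can first occur (with \(n=K\)). The expansion \eqref{eq:multipole-expansion} converges absolutely and uniformly for \(r\geq R_0\), with \(R_0\) larger than the radius of a disk containing \(\overline{D_K}\). We can therefore factor out \(r^{-(Q-K)}\) and bound the remaining tail by a convergent geometric series. Only finitely many incident modes enter, so no uniformity over \(n\) is needed. This gives \(u-H=O(|x|^{-(Q-K)})\), and the case \(Q=2K+2\) gives \(O(|x|^{-(K+2)})\).

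\textbf{Main point requiring care.} The one step needing attention is the tail estimate. For a fixed harmonic incident field \(H=P_n^\beta\), the scattered field \(u_n^\beta-P_n^\beta\) is harmonic outside a disk containing \(\overline{D_K}\) and decays at infinity. Its Fourier coefficients \(\propto M_{mn}^{\alpha\beta}/(m r^m)\) are therefore bounded by \(C R_0^m\), so the series converges geometrically for \(r>R_0\). No use of Theorem~\ref{thm:arbitrary-H} is needed. That theorem concerns small-scale behaviour of general entire fields, whereas here the structure is at fixed scale and only degree-\(\leq K\) fields are allowed, which is exactly why the selection rules suffice.
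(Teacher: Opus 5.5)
Your proof is correct and is essentially the paper's argument: for $n\leq K$ and $m<Q-K$ the selection rules kill $\mathbb N_{mn}^{(1)}$ and the off-diagonal $\mathbb N_{mn}^{(2)}$, while the diagonal case $m=n\leq K$ is removed by the complete block cancellation. Your split into $m\leq K$ and $K<m\leq Q-K-1$ is only a reorganization of this, and you additionally write out the geometric tail bound on the multipole series that the paper leaves implicit (your opening citation of Proposition~\ref{prop:total-degree} is never actually used).
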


Theorem~\ref{thm:arbitrary-H} and Corollary~\ref{cor:polynomial} are proved in Section~\ref{sec:harmonic-backgrounds}.

\section{An independent scalar case: global weak neutrality with one coating}
\label{sec:global-one-shell}

This section proves Theorem~\ref{thm:global-one-shell} independently of the constructions based on common conformal level curves and the perturbative results.
The argument first reduces the polarization tensor to a scalar response, then proves strict monotonicity, and finally establishes opposite signs at the insulating and perfectly conducting limits.
The intermediate value theorem then yields the unique coating conductivity.
After translating the common center of rotation to the origin, set \(S:=\Omega\setminus\overline D\) and define
\begin{equation}
	\kappa:=\frac{\sigma_{\rm c}}{\sigma_{\rm m}},
	\qquad
	t:=\frac{\sigma_{\rm sh}}{\sigma_{\rm m}}.
	\label{eq:one-shell-normalized-parameters}
\end{equation}
For \(g\in\mathbb{R}^2\), let \(H_g(x)=g\cdot x\), and let \(u_t^g\) solve the normalized transmission problem with conductivity
\begin{equation}
	\widehat\sigma_t(x)=
	\begin{cases}
		\kappa,&x\in D,\\
		t,&x\in S,\\
		1,&x\in\mathbb{R}^2\setminus\overline\Omega.
	\end{cases}
	\label{eq:normalized-one-shell}
\end{equation}
Define its polarization tensor by
\begin{equation}
	\mathsf M(t)
	:=\bigl(M_{11}^{\alpha\beta}[\widehat\sigma_t]\bigr)_
	{\alpha,\beta\in\{c,s\}}.
	\label{eq:one-shell-Mt}
\end{equation}
By definition, the normalized structure is weakly neutral to every uniform incident field exactly when \(\mathsf M(t)=0\).
The next proposition reduces this matrix equation to a scalar equation.

\begin{proposition}\label{prop:one-shell-scalarization}
	For every \(t>0\),
	\begin{equation}
		\mathsf M(t)=\mu(t)I_2
		\label{eq:one-shell-scalar-PT}
	\end{equation}
	for a real scalar function \(\mu\).
\end{proposition}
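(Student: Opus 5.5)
The plan is to apply the cyclic selection rules of Proposition~\ref{prop:selection} with \(K=1\), together with reciprocity \eqref{eq:reciprocity}. After translating the common center of rotation to the origin, the normalized conductivity \(\widehat\sigma_t\) in \eqref{eq:normalized-one-shell} is bounded, uniformly positive, equal to \(1\) outside \(\Omega\), and invariant under \(z\mapsto\omega z\) with \(\omega=e^{2\pi i/Q}\). This holds because \(D\), \(\Omega\), and hence \(S\), are invariant under rotation through \(2\pi/Q\). Proposition~\ref{prop:selection} was stated for conductivities equal to \(1\) outside \(D_K\), but its proof uses only \(C_Q\)-invariance of \(\sigma\), uniqueness of the transmission problem, and the change of variables \(x=\omega y\) in \eqref{eq:real-cgpt}. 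It therefore applies verbatim with \(D_K\) replaced by \(\Omega\), and no smoothness beyond the \(C^{1,\alpha}\) assumption (which is only needed for well-posedness) enters.

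Taking \(m=n=1\), the identity \(\mathbb N_{11}^{(1)}=\omega^{2}\mathbb N_{11}^{(1)}\) forces \(\mathbb N_{11}^{(1)}=0\), since \(\omega^2\neq1\) when \(Q\geq3\). This is exactly where the hypothesis \(Q\geq3\) is used: for \(Q=2\) the identity would carry no information. By \eqref{eq:N1}, the vanishing of \(\mathbb N_{11}^{(1)}\) gives \(M_{11}^{cc}=M_{11}^{ss}\) and \(M_{11}^{cs}=-M_{11}^{sc}\). Reciprocity \eqref{eq:reciprocity} with \(m=n=1\) gives \(M_{11}^{cs}=M_{11}^{sc}\), so both off-diagonal entries vanish.

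Setting \(\mu(t):=M_{11}^{cc}[\widehat\sigma_t]\), we obtain \(\mathsf M(t)=\mu(t)I_2\). The entries are real because the real CGPTs \eqref{eq:real-cgpt} are integrals of real quantities, since \(u_1^\beta\) is real-valued. Hence \(\mu\) is real.

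The only step needing care is the justification that the rotated solution \(u_1^\beta(\omega\,\cdot)\) is again the transmission solution for the rotated incident field. This follows from the invariance \(\widehat\sigma_t(\omega z)=\widehat\sigma_t(z)\), the rotation invariance of the divergence-form operator with scalar coefficient, and uniqueness in \eqref{eq:conductivity-transmission-problem}, exactly as in the proof of Proposition~\ref{prop:selection}. Beyond this I expect no real obstacle.
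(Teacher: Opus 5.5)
Your proof is correct and is essentially the paper's argument in complex coordinates. The paper writes rotational covariance as \(\mathsf M(t)=R\mathsf M(t)R^{\mathsf T}\), uses \(Q\geq3\) to conclude that the commutant of \(R_{2\pi/Q}\) is \(\{aI_2+bJ\}\), and removes \(b\) by reciprocity; your vanishing of \(\mathbb N_{11}^{(1)}\) via \(\omega^2\neq1\), followed by \(M_{11}^{cs}=M_{11}^{sc}\), is the same computation, and applying Proposition~\ref{prop:selection} with \(D_K\) replaced by \(\Omega\) is legitimate.
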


\begin{proof}
	Rotational covariance and the identities \(RD=D\) and \(R\Omega=\Omega\) give
	\[
	\mathsf M(t)=R\mathsf M(t)R^{\mathsf T}.
	\]
	Thus \(\mathsf M(t)\) commutes with \(R=R_{2\pi/Q}\).
	Because \(Q\geq3\), the rotation angle is neither \(0\) nor \(\pi\), and every real matrix commuting with \(R\) has the form \(aI_2+bJ\), where \(J\) represents rotation through \(\pi/2\).
	Reciprocity \eqref{eq:reciprocity} makes \(\mathsf M(t)\) symmetric, so \(b=0\).
\end{proof}

By \eqref{eq:one-shell-scalar-PT}, it remains to determine whether \(\mu(t)\) crosses zero and whether the crossing is unique.
The following sensitivity formula provides the required monotonicity.

\begin{proposition}\label{prop:one-shell-monotonicity}
	The map \(t\mapsto\mathsf M(t)\) is locally real analytic on \((0,\infty)\), and
	\begin{equation}
		g^{\mathsf T}\mathsf M'(t)h
		=\int_S\nabla u_t^g\cdot\nabla u_t^h\,\mathrm{d} x
		\qquad(g,h\in\mathbb{R}^2).
		\label{eq:one-shell-sensitivity}
	\end{equation}
	Consequently, for every unit vector \(g\),
	\begin{equation}
		\mu'(t)=\int_S|\nabla u_t^g|^2\,\mathrm{d} x>0.
		\label{eq:one-shell-strict-monotonicity}
	\end{equation}
\end{proposition}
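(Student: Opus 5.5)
Analyticity comes from the coefficient being affine in \(t\), and the derivative formula from differentiating the weak formulation and using reciprocity. Set \(w_t^g:=u_t^g-H_g\), the unique decaying solution in the Beppo--Levi space \(W\) of \(H^1_{\rm loc}\) functions with \(\nabla w\in L^2\), modulo constants. It satisfies
\[
\int_{\mathbb{R}^2}\widehat\sigma_t\nabla w_t^g\cdot\nabla\varphi\,\mathrm{d} x
=-\int_{\mathbb{R}^2}(\widehat\sigma_t-1)\nabla H_g\cdot\nabla\varphi\,\mathrm{d} x
\qquad(\varphi\in W).
\]
We have \(\widehat\sigma_t=\widehat\sigma_1+(t-1)\chi_S\), so the operator \(\mathcal L_t\) is affine in \(t\) and is coercive uniformly on compact subsets of \((0,\infty)\). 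Its inverse is therefore real analytic in \(t\), for instance by a Neumann series around each \(t_0\). The right-hand side is affine in \(t\). Hence \(t\mapsto\nabla u_t^g\in L^2(\Omega)\) is real analytic, and so is \(\mathsf M(t)\), since by \eqref{eq:real-cgpt} we have \(g^{\mathsf T}\mathsf M(t)h=\int(\widehat\sigma_t-1)\nabla u_t^h\cdot\nabla H_g\,\mathrm{d} x\) (the pairing of \(H_g,H_h\) with \(P_1^c,P_1^s\) being linear).

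For the formula, the cleanest route is the energy (Dirichlet-to-Neumann) representation. I would first verify the identity
\[
g^{\mathsf T}\mathsf M(t)h=\int_{\mathbb{R}^2}(\widehat\sigma_t-1)\nabla u_t^h\cdot\nabla u_t^g\,\mathrm{d} x
+\int_{\mathbb{R}^2}\nabla w_t^h\cdot\nabla w_t^g\,\mathrm{d} x .
\]
To get it, take the weak equation for \(u_t^h\), test it against \(w_t^g\), and use \(\int\nabla H_h\cdot\nabla w_t^g=0\) (integration by parts with decay of \(w_t^g\) like \(|x|^{-1}\)). A cleaner variant is to differentiate directly. Write \(\dot u^h:=\partial_tu_t^h=\partial_t w_t^h\in W\) and differentiate the integral form of \(\mathsf M\):
\[
g^{\mathsf T}\mathsf M'(t)h=\int_S\nabla u_t^h\cdot\nabla H_g\,\mathrm{d} x+\int(\widehat\sigma_t-1)\nabla\dot u^h\cdot\nabla H_g\,\mathrm{d} x .
\]
The second term is handled with the equation for \(u_t^g\) tested with \(\varphi=\dot u^h\in W\): \(\int\widehat\sigma_t\nabla u_t^g\cdot\nabla\dot u^h=0\), because \(\int\nabla H_g\cdot\nabla\dot u^h=0\). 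This gives
\[
\int(\widehat\sigma_t-1)\nabla H_g\cdot\nabla\dot u^h=-\int\widehat\sigma_t\nabla w_t^g\cdot\nabla\dot u^h .
\]
Next, differentiate the equation for \(u_t^h\) in \(t\): \(\int\widehat\sigma_t\nabla\dot u^h\cdot\nabla\varphi=-\int_S\nabla u_t^h\cdot\nabla\varphi\) for \(\varphi\in W\). Taking \(\varphi=w_t^g\) gives
\[
-\int\widehat\sigma_t\nabla w_t^g\cdot\nabla\dot u^h=\int_S\nabla u_t^h\cdot\nabla w_t^g .
\]
Summing, \(g^{\mathsf T}\mathsf M'(t)h=\int_S\nabla u_t^h\cdot(\nabla H_g+\nabla w_t^g)=\int_S\nabla u_t^h\cdot\nabla u_t^g\), which is \eqref{eq:one-shell-sensitivity}.

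The main technical obstacle is justifying these test-function manipulations on the unbounded domain. Two points need care: that \(\dot u^h\) and \(w_t^g\) are admissible in \(W\), and that \(\int\nabla H_g\cdot\nabla\varphi=0\) for \(\varphi\in W\) with the \(O(|x|^{-1})\) decay. I would handle both by integrating by parts over \(B_\rho\), using that \(\varphi\) is harmonic outside \(\Omega\) with an expansion \(\varphi=c+O(1/\rho)\), and letting \(\rho\to\infty\). The flux term tends to zero because \(\int_{\partial B_\rho}\partial_\nu H_g=0\) kills the constant.

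Finally, by Proposition~\ref{prop:one-shell-scalarization}, \(\mathsf M'(t)=\mu'(t)I_2\), so taking \(h=g\) with \(|g|=1\) gives \(\mu'(t)=\int_S|\nabla u_t^g|^2\,\mathrm{d} x\ge0\). For strictness, suppose \(\nabla u_t^g=0\) on the open set \(S\). Then \(u_t^g\) is constant on \(S\), so its Cauchy data on \(\partial\Omega\) from outside are a constant potential and zero flux, by transmission continuity. Since \(u_t^g\) is harmonic in the connected exterior \(\mathbb{R}^2\setminus\overline\Omega\), unique continuation (Holmgren) makes it constant there. This contradicts \(u_t^g-g\cdot x=O(|x|^{-1})\) with \(g\ne0\), so \(\mu'(t)>0\).
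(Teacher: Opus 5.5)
Your proof reaches the correct formula and the correct strict sign. However, one auxiliary claim in it is false and should be removed.

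\textbf{The false claim.} It is not true that $\int_{\mathbb{R}^2}\nabla H_g\cdot\nabla\varphi\,\mathrm{d}x=0$ for decaying $\varphi\in W$. It is also not true that $\int\widehat\sigma_t\nabla u_t^g\cdot\nabla\dot u^h\,\mathrm{d}x=0$. Neither integral converges absolutely. If $\varphi=c+a\cdot x/|x|^2+O(|x|^{-2})$ near infinity, then
\[
 \int_{B_\rho}\nabla H_g\cdot\nabla\varphi\,\mathrm{d}x=\int_{\partial B_\rho}\varphi\,g\cdot\nu\,\mathrm{d}s\longrightarrow\pi\,a\cdot g .
\]
The reason is that the $O(1/\rho)$ dipole part is integrated over a circle of length $2\pi\rho$; only the constant is killed. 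For $\varphi=\dot u^h$, the expansion \eqref{eq:multipole-expansion} gives $a=-\mathsf M'(t)h/(2\pi)$. So with $h=g$ the limit is $-\tfrac12\mu'(t)|g|^2\neq0$, which is exactly the case you need.

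\textbf{Why the computation still works.} The two false vanishing statements cancel each other. The identity you actually extract,
\[
 \int(\widehat\sigma_t-1)\nabla H_g\cdot\nabla\dot u^h\,\mathrm{d}x=-\int\widehat\sigma_t\nabla w_t^g\cdot\nabla\dot u^h\,\mathrm{d}x,
\]
is just the weak equation for $w_t^g$ tested with $\varphi=\dot u^h\in W$. Every integrand there is absolutely integrable. Your linearized equation for $\dot u^h$ is likewise correct, since it is the $t$-derivative of the corrector equation for $w_t^h$.

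\textbf{Fix.} Drop the detour and the closing paragraph about the unbounded domain. Nothing needs justifying there, provided you never pair $\nabla H$ or $\nabla u$ with a finite-energy test function over all of $\mathbb{R}^2$. The paper states this precaution explicitly in Lemma~\ref{lem:small-core-endpoint-framework}. Your discarded first identity is also true, but for this same reason, not the one you give.

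\textbf{Comparison with the paper.} With that repair, your route differs from the paper's only in how the sensitivity formula is obtained.
\begin{itemize}
\item The paper subtracts the weak formulations at two parameters and obtains the exact two-point identity \eqref{eq:one-shell-difference-identity}, $g^{\mathsf T}(\mathsf M(t)-\mathsf M(q))h=(t-q)\int_S\nabla u_t^g\cdot\nabla u_q^h\,\mathrm{d}x$. It then lets $q\to t$, which needs only continuity of $q\mapsto\nabla u_q^h$ in $L^2$. The same two-point structure reappears at the endpoints in \eqref{eq:small-core-endpoint-difference}, where it also gives reciprocity.
\item You differentiate first and eliminate $\dot u^h$ by testing its linearized equation against $w_t^g$. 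This is the adjoint computation the paper uses for \eqref{eq:material-derivative-general} in Lemma~\ref{lem:small-core-sensitivity}. It needs differentiability of the corrector, which your analyticity step provides, and it extends verbatim to arbitrary directions of conductivity variation.
\end{itemize}

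Your analyticity and strict-positivity steps coincide with the paper's. In the strictness step, the paper extends the exterior field by the constant across $\partial\Omega$. Equivalently, you can extend $u_t^g-c$ by zero into $\Omega$; the result is weakly harmonic because both traces vanish. This device is what makes your appeal to Holmgren rigorous for an $H^1$ solution across a $C^{1,\alpha}$ curve.
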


\begin{proof}
	On every compact interval \(I\Subset(0,\infty)\), the variational operators for the correctors associated with \eqref{eq:normalized-one-shell} are uniformly coercive and depend affinely on \(t\).
	The Lax--Milgram theorem and the resolvent identity therefore give locally real analytic dependence of \(u_t^g-H_g\), and hence of the polarization tensor.
	Subtracting the weak formulations at \(t\) and \(q\) and canceling the symmetric cross terms gives the following variational identity for two conductivities, which is the case of the CGPT sensitivity formula corresponding to the polarization tensor \cite{ammari2014reconstruction}:
	\begin{equation}
		g^{\mathsf T}\bigl(\mathsf M(t)-\mathsf M(q)\bigr)h
		=(t-q)\int_S\nabla u_t^g\cdot\nabla u_q^h\,\mathrm{d} x.
		\label{eq:one-shell-difference-identity}
	\end{equation}
	Letting \(q\to t\) proves \eqref{eq:one-shell-sensitivity}.
	
	If the integral in \eqref{eq:one-shell-strict-monotonicity} vanished, then \(u_t^g\) would be constant in the connected coating \(S\).
	Its tangential derivative and its conormal derivative from the coating side would therefore vanish on \(\partial\Omega\).
	The transmission conditions give the same constant Dirichlet data and zero Neumann data on the exterior side.
	Extending the exterior solution by this constant into \(S\) gives a weakly harmonic function on \(\mathbb R^2\setminus\overline D\) because the Dirichlet and normal traces match across \(\partial\Omega\).
	The extended function is constant on the open set \(S\), so unique continuation makes it constant throughout \(\mathbb R^2\setminus\overline D\).
	This contradicts \(u_t^g(x)=g\cdot x+O(|x|^{-1})\) for \(g\neq0\).
\end{proof}

Strict monotonicity shows that \(\mu\) has at most one zero.
To prove existence, it remains to compare the limits as \(t\downarrow0\) and \(t\to\infty\).
Since uniform ellipticity degenerates at these endpoints, we pass to an equation for the trace on the outer boundary.
Let \(\Gamma=\partial\Omega\), with normal \(\nu\) pointing out of \(\Omega\), and set
\begin{equation}
	X_\Gamma:=H^{1/2}(\Gamma)/\mathbb{R}.
	\label{eq:one-shell-trace-quotient}
\end{equation}
The quotient records boundary data modulo additive constants.
For \(f\in X_\Gamma\), let \(w_t^f\) minimize
\begin{equation}
	E_t(w):=
	\kappa\int_D|\nabla w|^2\,\mathrm{d} x
	+t\int_S|\nabla w|^2\,\mathrm{d} x
	\label{eq:one-shell-interior-energy}
\end{equation}
among functions in \(H^1(\Omega)\) whose trace on \(\Gamma\) represents \(f\).
Define the interior Dirichlet-to-Neumann operator \(\Lambda_t^{\rm int}:X_\Gamma\to X_\Gamma^*\) by
\begin{equation}
	\left\langle\Lambda_t^{\rm int}f,h\right\rangle
	:=\kappa\int_D\nabla w_t^f\cdot\nabla w_t^h\,\mathrm{d} x
	+t\int_S\nabla w_t^f\cdot\nabla w_t^h\,\mathrm{d} x.
	\label{eq:one-shell-interior-DN}
\end{equation}

\begin{lemma}\label{lem:one-shell-interior-estimates}
	There are constants \(c,C>0\), independent of \(t>0\), such that
	\begin{equation}
		ct\left\lVert f\right\rVert _{X_\Gamma}^2
		\leq\left\langle\Lambda_t^{\rm int}f,f\right\rangle,
		\qquad
		\left\lVert \Lambda_t^{\rm int}\right\rVert _{\mathcal L(X_\Gamma,X_\Gamma^*)}\leq Ct.
		\label{eq:one-shell-interior-estimates}
	\end{equation}
\end{lemma}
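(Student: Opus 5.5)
The plan is to reduce both estimates to the minimal-energy characterization of \(w_t^f\) and a comparison with the case \(t=1\) (or any fixed reference value) on the coating alone. The key observation is that \(\Lambda_t^{\rm int}\) is given by the minimal energy:
\[
\left\langle\Lambda_t^{\rm int}f,f\right\rangle=\min\{E_t(w):w\in H^1(\Omega),\ w|_\Gamma\in f\},
\]
and it is symmetric and positive semidefinite. Constants are factored out by the quotient \(X_\Gamma\), so all statements are well posed.

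\emph{Lower bound.} For any admissible \(w\), we have \(E_t(w)\geq t\int_S|\nabla w|^2\,\mathrm{d}x\). Since \(S\) is connected, with \(C^{1,\alpha}\) boundary, and \(\Gamma\subset\partial S\), the trace theorem on \(S\) combined with the Poincaré--Wirtinger inequality gives
\[
\left\lVert w|_\Gamma\right\rVert_{X_\Gamma}\leq C_S\left\lVert\nabla w\right\rVert_{L^2(S)}.
\]
Concretely, one subtracts the mean of \(w\) over \(S\), bounds the \(H^1(S)\) norm by the gradient, and then applies the trace map \(H^1(S)\to H^{1/2}(\Gamma)\). Hence \(E_t(w)\geq tC_S^{-2}\left\lVert f\right\rVert_{X_\Gamma}^2\), and minimizing over \(w\) yields the first inequality with \(c=C_S^{-2}\), independent of \(t\) and \(\kappa\).

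\emph{Upper bound.} I will construct an explicit competitor that avoids the core. First fix a representative \(\tilde f\) of \(f\) with \(\left\lVert\tilde f\right\rVert_{H^{1/2}(\Gamma)}\leq2\left\lVert f\right\rVert_{X_\Gamma}\). Next take a bounded right inverse of the trace, \(\tilde w\in H^1(\Omega)\) with \(\left\lVert\tilde w\right\rVert_{H^1(\Omega)}\leq C\left\lVert\tilde f\right\rVert_{H^{1/2}}\). Finally multiply by a smooth cutoff \(\chi\) equal to \(1\) near \(\Gamma\) and vanishing on a neighborhood of \(\overline D\); this is possible because \(D\Subset\Omega\). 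Then \(w=\chi\tilde w\) is admissible, its gradient vanishes on \(D\), and therefore
\[
\left\langle\Lambda_t^{\rm int}f,f\right\rangle\leq E_t(w)=t\int_S|\nabla w|^2\,\mathrm{d}x\leq C't\left\lVert f\right\rVert_{X_\Gamma}^2,
\]
with \(C'\) independent of \(t\) and \(\kappa\).

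Because \(\Lambda_t^{\rm int}\) is symmetric and nonnegative, Cauchy--Schwarz for the form \(\langle\Lambda_t^{\rm int}\cdot,\cdot\rangle\) gives
\[
|\langle\Lambda_t^{\rm int}f,h\rangle|\leq\langle\Lambda_t^{\rm int}f,f\rangle^{1/2}\langle\Lambda_t^{\rm int}h,h\rangle^{1/2}\leq C't\left\lVert f\right\rVert\left\lVert h\right\rVert.
\]
This is the operator norm bound. Symmetry itself follows from the bilinear definition \eqref{eq:one-shell-interior-DN} together with the variational equations satisfied by \(w_t^f\) and \(w_t^h\).

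The main point requiring care is the upper bound. It must be uniform in \(t\) and must not involve \(\kappa\), which is why the competitor has to vanish identically on \(D\). Everything else is routine, apart from checking that the minimizer exists for \(t>0\); this follows from coercivity on the quotient, using \(\kappa>0\) and the direct method.
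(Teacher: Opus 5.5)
Your proposal is correct and follows the paper's proof essentially step for step. The lower bound comes from Poincar\'e's inequality modulo constants on the connected coating \(S\) together with the trace theorem, applied to every admissible \(w\) and then minimized; the upper bound comes from a competitor that vanishes on \(D\), so that only the coating term \(t\int_S|\nabla w|^2\,\mathrm{d}x\) survives. The differences are cosmetic and both valid: you cut off a global trace lift where the paper uses a trace extension on \(S\) with zero trace on \(\partial D\), and you obtain the operator-norm bound from Cauchy--Schwarz for the nonnegative symmetric form instead of polarization.
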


\begin{proof}
	Connectedness of \(S\), Poincar\'e's inequality modulo constants, and the trace theorem give
	\[
	\left\lVert f\right\rVert _{X_\Gamma} \leq C\inf_{c\in\mathbb R}\left\lVert w-c\right\rVert _{H^1(S)} \leq C\left\lVert \nabla w\right\rVert _{L^2(S)}
	\]
	for every admissible \(w\), which proves the lower bound after minimization and squaring.
	For the upper bound, choose the representative of \(f\) with zero mean.
	The trace extension theorem on the domain \(S\), which has two boundary components, provides \(V_f\in H^1(S)\) with trace \(f\) on \(\Gamma\), zero trace on \(\partial D\), and \(\left\lVert V_f\right\rVert _{H^1(S)}\leq C\left\lVert f\right\rVert _{X_\Gamma}\).
	Extending \(V_f\) by zero into \(D\) gives an admissible competitor with energy at most \(Ct\left\lVert f\right\rVert _{X_\Gamma}^2\).
	The bound for the quadratic form, together with polarization, gives the asserted operator norm estimate.
\end{proof}

To couple the interior response to the fixed exterior medium, let \(W_f\) be the unique exterior harmonic function with finite Dirichlet energy whose boundary trace represents \(f\) and which satisfies \(W_f(x)=O(|x|^{-1})\) as \(|x|\to\infty\), and define
\begin{equation}
	\Lambda^{\rm ext}f:=-\partial_\nu W_f.
	\label{eq:one-shell-exterior-DN}
\end{equation}
The exterior Dirichlet-to-Neumann operator is positive, self-adjoint, and coercive:
\begin{equation}
	c_{\rm e}\left\lVert f\right\rVert _{X_\Gamma}^2
	\leq\left\langle\Lambda^{\rm ext}f,f\right\rangle
	=\int_{\mathbb{R}^2\setminus\overline\Omega}|\nabla W_f|^2\,\mathrm{d} x
	\leq C_{\rm e}\left\lVert f\right\rVert _{X_\Gamma}^2.
	\label{eq:one-shell-exterior-coercivity}
\end{equation}

Let
\[
f_t=[u_t^g|_\Gamma],
\qquad
h_g=[H_g|_\Gamma].
\]
In the exterior,
\begin{equation}
	u_t^g=H_g+W_{f_t-h_g}.
	\label{eq:one-shell-exterior-representation}
\end{equation}
Combining the interior and exterior Dirichlet-to-Neumann maps, flux continuity becomes
\begin{equation}
	\bigl(\Lambda_t^{\rm int}+\Lambda^{\rm ext}\bigr)f_t=F_g,
	\qquad
	F_g:=g\cdot\nu+\Lambda^{\rm ext}h_g.
	\label{eq:one-shell-boundary-equation}
\end{equation}
Here \(F_g\in X_\Gamma^*\) because it annihilates constants, as \(\int_\Gamma g\cdot\nu\,\mathrm{d} s=0\).
This boundary equation controls both endpoint limits; the next proposition supplies the opposite polarization signs needed for existence.

\begin{proposition}\label{prop:one-shell-extreme-limits}
	Let \(\mathsf M_{\rm ins}(\Omega)\) and \(\mathsf M_{\rm cond}(\Omega)\) denote the polarization tensors for the insulating cavity \(\Omega\) and the perfectly conducting inclusion \(\Omega\), respectively, with the convention \eqref{eq:multipole-expansion}.
	Then
	\begin{align}
		\mathsf M(t)&\longrightarrow\mathsf M_{\rm ins}(\Omega)
		&&\text{as }t\downarrow0,
		\label{eq:one-shell-insulating-limit}\\
		\mathsf M(t)&\longrightarrow\mathsf M_{\rm cond}(\Omega)
		&&\text{as }t\to\infty.
		\label{eq:one-shell-conducting-limit}
	\end{align}
	More precisely,
	\begin{equation}
		\left\lVert \mathsf M(t)-\mathsf M_{\rm ins}\right\rVert \leq Ct,
		\qquad
		\left\lVert \mathsf M(t)-\mathsf M_{\rm cond}\right\rVert \leq\frac Ct.
		\label{eq:one-shell-limit-rates}
	\end{equation}
	For every \(g\neq0\),
	\begin{equation}
		g^{\mathsf T}\mathsf M_{\rm ins}g<0,
		\qquad
		g^{\mathsf T}\mathsf M_{\rm cond}g>0.
		\label{eq:one-shell-limit-signs}
	\end{equation}
\end{proposition}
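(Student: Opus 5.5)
The plan is to work entirely with the boundary equation \eqref{eq:one-shell-boundary-equation} and treat \(\Lambda_t^{\rm int}\) as the only \(t\)-dependent operator. First we express the polarization tensor through the trace \(f_t\). By \eqref{eq:one-shell-exterior-representation}, the exterior scattered field is \(W_{f_t-h_g}\). The coefficient of its dipole term, which by \eqref{eq:multipole-expansion} equals \(-g'^{\mathsf T}\mathsf M(t)g/(2\pi)\) after pairing with \(H_{g'}\), is given by a Green identity. Namely, \(g'^{\mathsf T}\mathsf M(t)g=\langle \Lambda^{\rm ext}(f_t-h_g),h_{g'}\rangle+\int_\Gamma (f_t-h_g)\,g'\cdot\nu\,\mathrm ds\) up to sign conventions. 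This is a bounded bilinear expression in \(f_t\), so convergence rates for \(f_t\) in \(X_\Gamma\) transfer directly to \(\mathsf M(t)\).

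\emph{Insulating limit.} As \(t\downarrow0\), Lemma~\ref{lem:one-shell-interior-estimates} gives \(\|\Lambda_t^{\rm int}\|\le Ct\). Let \(f_{\rm ins}\) solve \(\Lambda^{\rm ext}f_{\rm ins}=F_g\). This is exactly the Neumann problem for the exterior of \(\Omega\) with zero total normal flux, that is, the insulating cavity \(\Omega\). Then \((\Lambda_t^{\rm int}+\Lambda^{\rm ext})(f_t-f_{\rm ins})=-\Lambda_t^{\rm int}f_{\rm ins}\). Coercivity of the left side, with constant at least \(c_{\rm e}\) by \eqref{eq:one-shell-exterior-coercivity} and positivity of \(\Lambda^{\rm int}_t\), gives \(\|f_t-f_{\rm ins}\|\le Ct\). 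This yields the first rate in \eqref{eq:one-shell-limit-rates}.

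\emph{Conducting limit.} As \(t\to\infty\), divide the equation by \(t\). Set \(\Lambda_t^{\rm int}=t\,\widetilde\Lambda_t\) with \(ct\le\widetilde\Lambda_t\le C\). The lower bound \(\langle\Lambda_t^{\rm int}f,f\rangle\ge ct\|f\|^2\) together with boundedness of \(F_g\) and \(\Lambda^{\rm ext}\) gives \(\|f_t\|_{X_\Gamma}\le C/t\). Modulo constants, the limit trace is therefore \(0\). This is the floating perfect conductor \(\Omega\): constant potential on \(\Gamma\), and the zero-flux condition is automatic because the data lie in \(X_\Gamma\). Hence \(\|f_t-f_{\rm cond}\|\le C/t\) with \(f_{\rm cond}=0\), and inserting into the bilinear formula gives the second rate.

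\emph{Signs.} The signs follow from the classical energy characterizations of the endpoint tensors. For the perfect conductor, \(g^{\mathsf T}\mathsf M_{\rm cond}g=|\Omega||g|^2+\int_{\mathbb R^2\setminus\overline\Omega}|\nabla W|^2\,\mathrm dx\), where \(W\) is the exterior corrector, and this is positive. For the insulating cavity, \(g^{\mathsf T}\mathsf M_{\rm ins}g=-|\Omega||g|^2+\int_{\mathbb R^2\setminus\overline\Omega}|\nabla W|^2\,\mathrm dx\). Negativity then needs the variational bound: the exterior corrector's energy is strictly less than \(|\Omega||g|^2\). One obtains this by testing the exterior Neumann energy functional against the competitor \(-H_g\) extended appropriately, or by citing the known strict definiteness of polarization tensors for holes and perfect conductors (Ammari--Kang). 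Alternatively, both signs follow from the limit of \eqref{eq:one-shell-sensitivity} and symmetric monotonicity arguments with \(\kappa\) replaced by \(0\) or \(\infty\).

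The main obstacle is deriving the precise sign and normalization conventions relating \eqref{eq:multipole-expansion} to these energy identities, together with strictness in the insulating case. I would handle strictness by showing that equality forces \(H_g\) to be constant in \(\Omega\), which contradicts \(g\neq0\).
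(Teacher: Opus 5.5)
Your treatment of the two limits and of the rates \eqref{eq:one-shell-limit-rates} matches the paper's argument. As \(t\downarrow0\) you compare with \(f_0=(\Lambda^{\rm ext})^{-1}F_g\) via the resolvent identity. As \(t\to\infty\) you test \eqref{eq:one-shell-boundary-equation} against \(f_t\). In both cases you transfer trace convergence to \(\mathsf M(t)\) through a bounded affine functional. You use a Green identity on \(\Gamma\), the paper uses the dipole Fourier coefficient on a fixed circle, and both work. In the conducting step the scaled bound should read \(c\le\widetilde\Lambda_t\le C\), though you use the correct estimate afterwards.

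The genuine gap is the insulating sign, where your identity \(g^{\mathsf T}\mathsf M_{\rm ins}g=-|\Omega||g|^2+\int|\nabla W|^2\,\mathrm{d}x\) has the wrong sign on the energy term.
\begin{itemize}
\item Take \(\nu\) outward from \(\Omega\) and \(w=U_{\rm ins}^g-H_g\). The normalization \eqref{eq:multipole-expansion} gives \(g^{\mathsf T}\mathsf M_{\rm ins}g=\int_\Gamma(H_g\partial_\nu w-w\partial_\nu H_g)\,\mathrm{d}s\).
\item The Neumann condition gives \(\partial_\nu w=-\partial_\nu H_g\). So the first term equals \(-\int_\Gamma H_g\partial_\nu H_g\,\mathrm{d}s=-|\Omega||g|^2\).
\item The second term equals \(\int_\Gamma w\,\partial_\nu w\,\mathrm{d}s=-\int_{\mathbb R^2\setminus\overline\Omega}|\nabla w|^2\,\mathrm{d}x\). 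This is because the exterior region has outward normal \(-\nu\) on \(\Gamma\) and the contribution at infinity vanishes.
\end{itemize}
This is exactly \eqref{eq:one-shell-insulating-sign}. Both terms are nonpositive and the first is strictly negative, so negativity is immediate.

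The strict bound you set out to prove (exterior corrector energy \(<|\Omega||g|^2\)) is false. For a disk of radius \(\rho\), \(w=\rho^2\,g\cdot x/|x|^2\) has exterior energy exactly \(\pi\rho^2|g|^2=|\Omega||g|^2\). Your formula would then give \(\mathsf M_{\rm ins}=0\) instead of the correct \(-2|\Omega|I_2\). For an ellipse with \(g\) along its shorter axis, the energy strictly exceeds \(|\Omega||g|^2\). So the step you identify as the main obstacle cannot be carried out.

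Citing negative definiteness of hole polarization tensors would rescue the conclusion. Your monotonicity remark can also be made rigorous:
\begin{itemize}
\item \(\mathsf M_{\rm ins}(\Omega)\) and \(\mathsf M_{\rm cond}(\Omega)\) do not depend on \(D\) or \(\kappa\), so you may run the argument with \(\kappa=1\), where \(\mu(1)=0\).
\item Write \(\mathsf M_{\rm ins}=\mu_{\rm ins}I_2\) and \(\mathsf M_{\rm cond}=\mu_{\rm cond}I_2\) as the limits of \(\mu(t)I_2\).
\item Strict monotonicity then gives \(\mu_{\rm ins}\le\mu(1/2)<0<\mu(2)\le\mu_{\rm cond}\), which is \eqref{eq:one-shell-limit-signs}.
\end{itemize}
Your conducting identity is correct and coincides with \eqref{eq:one-shell-conducting-sign}.
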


\begin{proof}
	For \(t\downarrow0\), set \(f_0=(\Lambda^{\rm ext})^{-1}F_g\).
	The resolvent identity applied to \eqref{eq:one-shell-boundary-equation}, together with \eqref{eq:one-shell-interior-estimates} and \eqref{eq:one-shell-exterior-coercivity}, gives
	\begin{equation}
		\left\lVert f_t-f_0\right\rVert _{X_\Gamma}\leq Ct.
		\label{eq:one-shell-insulating-trace-rate}
	\end{equation}
	Define the exterior field by
	\[
	U_{\rm ins}^g:=H_g+W_{f_0-h_g}
	\]
	It satisfies \(\partial_\nu U_{\rm ins}^g=0\) on \(\Gamma\) and is therefore the solution for the insulating cavity.
	Moreover,
	\begin{equation}
		\left\lVert \nabla(u_t^g-U_{\rm ins}^g)\right\rVert _
		{L^2(\mathbb{R}^2\setminus\overline\Omega)}\leq Ct.
		\label{eq:one-shell-insulating-field-rate}
	\end{equation}
	No convergence inside the degenerating coating is needed.
	
	For \(t\to\infty\), test \eqref{eq:one-shell-boundary-equation} by \(f_t\).
	Since \(\|F_g\|_{X_\Gamma^*}<C\), positivity of \(\Lambda^{\rm ext}\) and Lemma~\ref{lem:one-shell-interior-estimates} give
	\[
	ct\left\lVert f_t\right\rVert _{X_\Gamma}^2 \leq\langle F_g,f_t\rangle \leq C\left\lVert f_t\right\rVert _{X_\Gamma},
	\qquad
	\left\lVert f_t\right\rVert _{X_\Gamma}\leq\frac Ct.
	\]
	Thus \(f_t\to0\) in \(X_\Gamma\), so the trace becomes asymptotically constant.
	Define the limiting exterior field by
	\[
	U_{\rm cond}^g:=H_g+W_{-h_g}
	\]
	It has constant trace on \(\Gamma\).
	Its perturbation has finite Dirichlet energy and no logarithmic term, and hence
	\[
	\int_\Gamma\partial_\nu U_{\rm cond}^g\,\mathrm{d} s=0.
	\]
	It is therefore the perfectly conducting solution, including the condition of zero total flux.
	Moreover,
	\begin{equation}
		\left\lVert \nabla(u_t^g-U_{\rm cond}^g)\right\rVert _
		{L^2(\mathbb{R}^2\setminus\overline\Omega)}\leq\frac Ct.
		\label{eq:one-shell-conducting-field-rate}
	\end{equation}
	
	Because \(F_g\) depends linearly on \(g\), the preceding estimates are uniform for \(|g|=1\).
	On any fixed circle enclosing \(\Omega\), harmonic estimates on the intervening annulus and the trace theorem bound the \(H^{1/2}\) trace by the exterior energy norm.
	The outgoing Fourier coefficient of order \(m=1\) is a bounded linear functional of this trace.
By the normalization \eqref{eq:multipole-expansion}, the coefficient is a linear combination of the entries of the polarization tensor.
	The preceding energy estimates therefore prove \eqref{eq:one-shell-limit-rates}.
	For \(U_{\rm ins}^g\), the Neumann condition eliminates the boundary term involving the total field; for \(U_{\rm cond}^g\), the same term vanishes because the trace is constant and the total flux is zero.
	Applying Green's identity on a truncated exterior domain and using the \(r^{-1}\) far-field behavior yields
	\begin{align}
		g^{\mathsf T}\mathsf M_{\rm ins}g
		&=-\int_{\partial \Omega}U_{\rm ins}^g\partial_\nu H_g\,\mathrm{d}s+\int_{\partial \Omega}\partial_\nu U_{\rm ins}^g H_g\,\mathrm{d}s,\\
		g^{\mathsf T}\mathsf M_{\rm cond}g
			&=-\int_{\partial \Omega}U_{\rm cond}^g\partial_\nu H_g\,\mathrm{d}s+\int_{\partial \Omega}\partial_\nu U_{\rm cond}^g H_g\,\mathrm{d}s.
		\label{eq:one-shell-conducting01}
	\end{align}
	Together with the boundary conditions, these identities yield
	\begin{align}
		g^{\mathsf T}\mathsf M_{\rm ins}g
		&=-|\Omega||g|^2
		-\int_{\mathbb{R}^2\setminus\overline\Omega}
		|\nabla(U_{\rm ins}^g-H_g)|^2\,\mathrm{d} x,
		\label{eq:one-shell-insulating-sign}\\
		g^{\mathsf T}\mathsf M_{\rm cond}g
		&=|\Omega||g|^2
		+\int_{\mathbb{R}^2\setminus\overline\Omega}
		|\nabla(U_{\rm cond}^g-H_g)|^2\,\mathrm{d} x.
		\label{eq:one-shell-conducting-sign}
	\end{align}
	This proves the strict signs.
\end{proof}

The scalarization, strict monotonicity, and endpoint signs now complete the proof of Theorem~\ref{thm:global-one-shell}.

\begin{proof}[Proof of Theorem~\ref{thm:global-one-shell}]
	Dividing all three conductivities by \(\sigma_{\rm m}\) leaves the physical solution unchanged, and \(\mathsf M(t)\) is the polarization tensor for the resulting normalized conductivity.
	Proposition~\ref{prop:one-shell-scalarization} gives \(\mathsf M(t)=\mu(t)I_2\).
	By Proposition~\ref{prop:one-shell-monotonicity}, \(\mu\) is continuous and strictly increasing on \((0,\infty)\), while Proposition~\ref{prop:one-shell-extreme-limits} gives
	\[
	\lim_{t\downarrow0}\mu(t)<0,
	\qquad
	\lim_{t\to\infty}\mu(t)>0.
	\]
	The intermediate value theorem gives a root \(t_*>0\), and strict monotonicity makes it unique.
	
	At \(t=1\), the coating has the same conductivity as the background, so the problem reduces to the single inclusion \(D\) with conductivity \(\kappa\).
	For $\lambda>0$, let $v_\lambda^g$ be the transmission solution for $D$ with conductivity $\lambda$ in a background of conductivity one.
	By the same scalarization argument, its polarization tensor has the form $\widetilde\mu(\lambda)I_2$.
	Since $\lambda=1$ gives the homogeneous configuration, $\widetilde\mu(1)=0$.
	The same sensitivity argument, now varying the conductivity in $D$, gives, for every unit vector $g$,
	\[
	\widetilde\mu'(\lambda)=\int_D|\nabla v_\lambda^g|^2\,\mathrm{d}x.
	\]
	Integrating from $1$ to $\kappa$ therefore yields
	\[
	\mu(1)=\widetilde\mu(\kappa)=\int_1^\kappa\int_D|\nabla v_\lambda^g|^2\,\mathrm{d}x\,\mathrm{d}\lambda.
	\]
	The integrand is strictly positive by the same unique continuation argument.
	Therefore
	\[
	\operatorname{sign}\mu(1)=\operatorname{sign}(\kappa-1).
	\]
	Hence \(\kappa>1\) implies \(0<t_*<1\), whereas \(0<\kappa<1\) implies \(t_*>1\).
	If \(\kappa=1\), then \(t=1\) is the homogeneous configuration and therefore the unique root.
	At \(t=t_*\), the normalized conductivity \(\widehat\sigma_{t_*}\) is exactly \(\sigma^*\) in \eqref{eq:global-one-shell-conductivity}, and \(\mathsf M(t_*)=0\) gives \eqref{eq:global-one-shell-neutrality}.
	Restoring physical units, \(\sigma_{\rm sh}^*=\sigma_{\rm m}t_*\), proves \eqref{eq:one-shell-root-below}--\eqref{eq:one-shell-root-above}.
\end{proof}

We next apply Theorem~\ref{thm:global-one-shell} to a geometry defined by the common conformal level curves introduced in Subsection~\ref{sec:cyclic-geometry}.

\begin{corollary}\label{cor:global-one-shell-common-map}
	Let \(Q\geq3\) and \(L\geq1\) be integers, let \(a_1,\ldots,a_L\in\mathbb{C}\), and let \(0<R_{\rm i}<R_{\rm e}\).
	Define
	\begin{equation}
		\Phi(w)=w+\sum_{\ell=1}^{L}a_\ell w^{1-\ell Q}.
		\label{eq:one-shell-common-map}
	\end{equation}
	If
	\begin{equation}
		\sum_{\ell=1}^{L}(\ell Q-1)|a_\ell|R_{\rm i}^{-\ell Q}<1,
		\label{eq:one-shell-common-map-univalence}
	\end{equation}
	then the images of \(|w|=R_{\rm i}\) and \(|w|=R_{\rm e}\) enclose a nested \(C_Q\)-symmetric core--shell pair.
	For every pair of finite positive conductivities \(\sigma_{\rm c}\) and \(\sigma_{\rm m}\), there exists a unique constant coating conductivity \(\sigma_{\rm sh}^*\in(0,\infty)\) for which the coated inclusion is weakly neutral to every uniform incident field.
\end{corollary}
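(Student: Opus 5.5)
The plan is to verify the hypotheses of Theorem~\ref{thm:global-one-shell} for \(D:=D_{\rm i}\), the bounded domain enclosed by \(\Phi(\{|w|=R_{\rm i}\})\), and \(\Omega:=D_{\rm e}\), the domain enclosed by \(\Phi(\{|w|=R_{\rm e}\})\), and then invoke that theorem directly. No new analysis is needed; the work is purely geometric.

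First, \eqref{eq:one-shell-common-map-univalence} is exactly \eqref{eq:univalence} with \(r_0=R_{\rm i}\). Lemma~\ref{lem:global-injectivity} then gives that \(\Phi\) is injective with nonvanishing derivative on \(\{|w|\ge R_{\rm i}\}\cup\{\infty\}\), and that the two image curves are analytic, simple, and strictly nested. By the Jordan curve theorem each curve bounds a simply connected domain, so \(D\) and \(\Omega\) are simply connected with real-analytic, hence \(C^{1,\alpha}\), boundaries. Moreover \(\overline D\subset\Omega\), since \(\Phi(\{|w|=R_{\rm i}\})\) is disjoint from \(\Phi(\{|w|=R_{\rm e}\})\) and lies in the bounded component of its complement. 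Injectivity also maps \(\{|w|>R_{\rm e}\}\) onto the unbounded component.

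Next, \(S=\Omega\setminus\overline D\) is the image \(\Phi(\{R_{\rm i}<|w|<R_{\rm e}\})\), which is nonempty and connected as the continuous image of a connected annulus. To justify this identification, I would note that \(\Phi\) maps the closed exterior of the disk of radius \(R_{\rm i}\) homeomorphically onto the closed exterior of \(D\): the image is closed, contains a neighborhood of infinity, and is bounded by \(\partial D\). The annulus \(\{R_{\rm i}<|w|<R_{\rm e}\}\) then goes to the region between the curves. This point is exactly the one asserted in the proof of Lemma~\ref{lem:global-injectivity}, so I can cite it.

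For symmetry, the equivariance \eqref{eq:equivariance}, \(\Phi(\omega w)=\omega\Phi(w)\), shows that both image curves are invariant under rotation by \(2\pi/Q\) about the origin. Hence the enclosed domains \(D\) and \(\Omega\) are also invariant, and \(Q\ge3\) holds by assumption.

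With all hypotheses in place, Theorem~\ref{thm:global-one-shell} yields, for any \(\sigma_{\rm c},\sigma_{\rm m}>0\), a unique \(\sigma_{\rm sh}^*\in(0,\infty)\) with \(M_{11}^{\alpha\beta}=0\), which is weak neutrality to all uniform fields. The only step requiring care is the topological identification of the image annulus with \(S\), and that is settled by the injectivity up to \(\infty\) from Lemma~\ref{lem:global-injectivity}.
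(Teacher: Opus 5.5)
Your proposal is correct and follows the same route as the paper. Both apply Lemma~\ref{lem:global-injectivity} with inner radius \(R_{\rm i}\) for injectivity and nesting, use \(\Phi(\omega w)=\omega\Phi(w)\) for the \(C_Q\) symmetry, and invoke Theorem~\ref{thm:global-one-shell} on the image domains; you simply spell out the hypotheses the paper's short proof leaves implicit, namely simple connectedness, \(C^{1,\alpha}\) boundaries, and connectedness of the shell as the image of the annulus.
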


\begin{proof}
	The proof of Lemma~\ref{lem:global-injectivity}, with inner radius \(R_{\rm i}\), gives global injectivity on \(\{|w|\geq R_{\rm i}\}\).
	The two image curves are therefore simple and nested.
	The identity \(\Phi(\omega w)=\omega\Phi(w)\), where \(\omega=\mathrm{e}^{2\pi i/Q}\), gives their common \(C_Q\) symmetry, so Theorem~\ref{thm:global-one-shell} applies.
	The conformal map generates only the geometry, and the transmission problem is solved again on the image domains.
\end{proof}

We finally combine weak neutrality with the selection rules of Proposition~\ref{prop:selection} to obtain the additional multipole suppression.

\begin{proof}[Proof of Corollary~\ref{cor:one-shell-uniform-suppression}]
	Applying Proposition~\ref{prop:selection} to incident order \(n=1\), the complex CGPT \(\mathbb N_{m1}^{(1)}\) can be nonzero only when \(m+1\equiv0\pmod Q\), while \(\mathbb N_{m1}^{(2)}\) can be nonzero only when \(m-1\equiv0\pmod Q\).
	Weak neutrality cancels the \(m=1\) term in \(\mathbb N_{m1}^{(2)}\), so the first possible surviving outgoing order is \(m=Q-1\).
	The multipole expansion \eqref{eq:multipole-expansion} gives \eqref{eq:one-shell-uniform-decay}.
	Thus all outgoing orders \(1\leq m\leq Q-2\) vanish, proving the final assertion.
\end{proof}

The scalar argument is now complete.
We next turn to the local higher-order constructions under cyclic symmetry.

\section{Proofs of the local \texorpdfstring{$C_Q$}{CQ}-symmetric constructions}
\label{sec:local-constructions}

We first prove the fixed-geometry theorem and its homothetic corollary, then the small-core theorem, and finally the joint shape--material continuation and its linearized correction stated in Section~\ref{sec:main-results}.

\subsection{Fixed geometries near the homogeneous state}
\label{subsec:proof-fixed-geometry}

For the conductivity \eqref{eq:fixed-target-conductivity} and \(1\leq n\leq K\), define the reduced response map by
\begin{equation}
 \mathcal G_n^{\mathcal D}(\delta,\eta)
 :=\frac{1}{2\pi n}
 \mathbb N_{nn}^{(2)}[\sigma_{\mathcal D,\delta,\eta}],
 \qquad
 \boldsymbol{\mathcal G}^{\mathcal D}
 :=(\mathcal G_1^{\mathcal D},\ldots,\mathcal G_K^{\mathcal D}).
 \label{eq:fixed-target-response}
\end{equation}
Under the standing assumption \(Q>2K\), Proposition~\ref{prop:selection} makes complete \(K\)-CGPT cancellation equivalent to \(\boldsymbol{\mathcal G}^{\mathcal D}(\delta,\eta)=0\).
We first establish real-analytic dependence on \((\delta,\eta)\) and then compute the material derivatives at the homogeneous point.

\begin{proof}[Proof of Theorem~\ref{thm:fixed-target}]
At the homogeneous reference point, \(\boldsymbol{\mathcal G}^{\mathcal D}(0,0)=0\).
For the incident harmonic polynomial \(P_n^\beta\), write
\[
 u_n^\beta=P_n^\beta+w_n^\beta.
\]
Here
\[
 \dot H^1(\mathbb{R}^2) :=\{w\in H^1_{\rm loc}(\mathbb{R}^2):\nabla w\in L^2(\mathbb{R}^2)\}/\mathbb{R}.
\]
The scattered field \(w_n^\beta\in\dot H^1(\mathbb{R}^2)\) satisfies
\begin{equation}
 \int_{\mathbb{R}^2}\sigma_{\mathcal D,\delta,\eta}
 \nabla w_n^\beta\cdot\nabla\varphi\,\mathrm{d} x
 =-\int_{D_K}(\sigma_{\mathcal D,\delta,\eta}-1)
 \nabla P_n^\beta\cdot\nabla\varphi\,\mathrm{d} x.
 \label{eq:fixed-target-corrector}
\end{equation}
Equip \(\dot H^1(\mathbb R^2)\) with the gradient norm, or equivalently choose the representative having zero mean on a fixed ball containing \(\overline{D_K}\).
The left-hand side of \eqref{eq:fixed-target-corrector} defines a bounded variational operator \(\mathscr L_{\delta,\eta}:\dot H^1(\mathbb R^2)\to\dot H^1(\mathbb R^2)^*\) whose coercivity constant is bounded away from zero in a material neighborhood of the origin.
Denote the right-hand side by \(F_{\delta,\eta}\); it is a bounded functional because it is supported in the fixed set \(D_K\).
Both \(\mathscr L_{\delta,\eta}\) and \(F_{\delta,\eta}\) depend real analytically on the parameters in their respective operator norms.
Since \(\mathscr L_{0,0}\) is an isomorphism, factor \(\mathscr L_{\delta,\eta}=\mathscr L_{0,0}[I+\mathscr L_{0,0}^{-1}(\mathscr L_{\delta,\eta}-\mathscr L_{0,0})]\).
After shrinking the parameter neighborhood, the second factor is inverted by its Neumann series, so \(w_n^\beta=\mathscr L_{\delta,\eta}^{-1}F_{\delta,\eta}\) is real analytic in \((\delta,\eta)\).
Inserting \(u_n^\beta=P_n^\beta+w_n^\beta\) into \eqref{eq:real-cgpt} proves the same real-analytic dependence for the CGPTs and \(\boldsymbol{\mathcal G}^{\mathcal D}\).

At \((\delta,\eta)=(0,0)\), the medium is homogeneous and \(u_n^\beta=P_n^\beta\).
Differentiating \eqref{eq:real-cgpt} at this point gives
\[
 \left.\partial_{\eta_j}M_{mn}^{\alpha\beta}
 \right|_{(0,0)}
 =\int_{\mathcal A_j}
 \nabla P_n^\beta\cdot\nabla P_m^\alpha\,\mathrm{d} x.
\]
Indeed, the term containing \(\partial_{\eta_j}u_n^\beta\) is multiplied by \(\sigma_{\mathcal D,\delta,\eta}-1\) and therefore vanishes at the homogeneous reference point.
The pointwise identities
\[
 |\nabla P_n^c|^2
 =|\nabla P_n^s|^2
 =n^2|z|^{2n-2},
 \qquad
 \nabla P_n^c\cdot\nabla P_n^s=0
\]
show that the two diagonal derivatives are equal and the two cross derivatives vanish.
Using the normalization in \eqref{eq:N2} and \eqref{eq:fixed-target-response}, we obtain
\begin{align*}
 \left.\partial_{\eta_j}\mathcal G_n^{\mathcal D}
 \right|_{(0,0)}
 &=\frac{n}{\pi}\int_{\mathcal A_j}|z|^{2n-2}\,\mathrm{d} x
 =J^{\mathcal D}_{nj},\\
 \left.\partial_\delta\mathcal G_n^{\mathcal D}
 \right|_{(0,0)}
 &=\frac{n}{\pi}\int_{D_0}|z|^{2n-2}\,\mathrm{d} x
 =v_n^{\mathcal D}.
\end{align*}
These are precisely the matrix and vector defined in \eqref{eq:fixed-target-J}--\eqref{eq:fixed-target-v}, so
\[
 D_\eta\boldsymbol{\mathcal G}^{\mathcal D}(0,0)=J^{\mathcal D},
 \qquad
 D_\delta\boldsymbol{\mathcal G}^{\mathcal D}(0,0)=v^{\mathcal D}.
\]
The analytic implicit-function theorem and \eqref{eq:fixed-target-nondegeneracy} now give the unique local branch \eqref{eq:fixed-target-branch}.
Differentiating
\[
 \boldsymbol{\mathcal G}^{\mathcal D} (\delta,\eta^{\mathcal D}(\delta))=0
\]
at \(\delta=0\) gives \(v^{\mathcal D}+J^{\mathcal D}(\eta^{\mathcal D})'(0)=0\), and hence \eqref{eq:fixed-target-expansion}.
The exponential parameterization gives finite positive conductivities, and the reduction preceding the proof yields the complete cancellation statement \eqref{eq:fixed-target-cancel}.
\end{proof}

For the homothetic class, it remains to factor the moment Jacobian into a positive diagonal matrix and a difference--Vandermonde matrix.

\begin{proof}[Proof of Corollary~\ref{cor:homothetic}]
Strict star-shapedness and \(0<\rho_0<\cdots<\rho_K\) make the domains in \eqref{eq:homothetic-domains} strictly nested.
Put
\[
 \mu_n:=\int_D|y|^{2n-2}\,\mathrm{d} y.
\]
The change of variables \(z=\rho_jy\) gives
\[
 \int_{D_j}|z|^{2n-2}\,\mathrm{d} x
 =\rho_j^{2n}\int_D|y|^{2n-2}\,\mathrm{d} y
 =\mu_nx_j^n.
\]
Subtracting consecutive scaled-domain moments gives
\begin{equation}
 J^{\mathcal D}
 =\operatorname{diag}\left(\frac{n\mu_n}{\pi}\right)_{n=1}^K B,
 \qquad
 v^{\mathcal D}
 =\operatorname{diag}\left(\frac{n\mu_n}{\pi}\right)_{n=1}^K b.
 \label{eq:homothetic-matrix}
\end{equation}
For \(I_j=(x_{j-1},x_j)\), one has
\[
 B_{nj}=\int_{I_j}nt^{n-1}\,\mathrm{d} t.
\]
Multilinearity of the determinant and Fubini's theorem yield
\begin{align*}
 \det B
 &=\int_{I_1\times\cdots\times I_K}
   \det[nt_j^{n-1}]_{n,j=1}^{K}
 \,\mathrm{d} t_1\cdots\,\mathrm{d} t_K\\
 &=\left(\prod_{n=1}^{K}n\right)
   \int_{I_1\times\cdots\times I_K}
   \prod_{1\leq i<j\leq K}(t_j-t_i)
   \,\mathrm{d} t_1\cdots\,\mathrm{d} t_K>0.
\end{align*}
The intervals are strictly ordered, so \(t_1<\cdots<t_K\) almost everywhere and the determinant is positive.
Since every \(\mu_n\) is positive, \eqref{eq:homothetic-matrix} gives \(\det J^{\mathcal D}>0\), proving \eqref{eq:homothetic-positive-determinant}.
The common positive diagonal factor in \(J^{\mathcal D}\) and \(v^{\mathcal D}\) cancels in \eqref{eq:fixed-target-expansion}, proving \eqref{eq:homothetic-expansion}.
\end{proof}

We next turn to the small-core regime, where the core material type is fixed and its geometry shrinks.

\subsection{Small cores of fixed material type}
\label{subsec:proof-small-core}

We use the regions \eqref{eq:small-core-regions}--\eqref{eq:small-core-limit-regions} and the conductivity \eqref{eq:small-core-conductivity}.
The limiting matrix \(J^0\) and reference responses \(g_n^{\mathcal B,\kappa}\) are defined in \eqref{eq:small-core-limit-J}--\eqref{eq:small-core-reference-response}.

Throughout this subsection, \(C_\kappa\) and \(O_\kappa(\,\cdot\,)\) may depend on the fixed parameters, domains, interface regularity, nesting distances, and size bounds, but not on \(\varepsilon\).
At \(\kappa=0,\infty\), this notation means dependence on the fixed endpoint boundary problem.
No uniformity over the extended interval \([0,\infty]\) is asserted.

At \(\varepsilon=0\), the core disappears and the first coating fills \(D_1\).
For \(0\leq\varepsilon\) and \(1\leq n\leq K\), define
\begin{equation}
 \mathcal G_{\varepsilon,n}(\eta)
 :=\frac{1}{2\pi n}\mathbb N_{nn}^{(2)}
 [\sigma_{\varepsilon,\eta}],
 \qquad
 \boldsymbol{\mathcal G}_\varepsilon
 :=(\mathcal G_{\varepsilon,1},\ldots,
       \mathcal G_{\varepsilon,K}),
 \label{eq:small-core-response}
\end{equation}
where at \(\varepsilon=0\) the limiting conductivity assigns \(\mathrm{e}^{\eta_1}\) to all of \(D_1\), retains the remaining coatings, and has unit background conductivity.

The first three lemmas establish the endpoint framework and cyclic selection, identify the limiting material Jacobian, and transfer it uniformly to small positive \(\varepsilon\) while controlling the forcing at \(\eta=0\).
The final lemma is used only for the endpoint lower bound in \eqref{eq:small-core-two-sided-bound}.

\begin{lemma}\label{lem:small-core-endpoint-framework}
Put
\[
 \Omega_\varepsilon :=\mathbb{R}^2\setminus\overline{\varepsilon\mathcal B},
 \qquad
 \Gamma_\varepsilon :=\partial(\varepsilon\mathcal B),
\]
and define on \(\Omega_\varepsilon\)
\[
 a_{\varepsilon,\eta} :=\sum_{j=1}^{K}\mathrm{e}^{\eta_j} \chi_{\mathcal A_j^\varepsilon} +\chi_{\mathbb{R}^2\setminus\overline{D_K}}.
\]
For \(\kappa=0\) and \(\kappa=\infty\), the scattering problem is well posed in the finite-energy class, its CGPTs are reciprocal, and the selection rules \eqref{eq:selection1}--\eqref{eq:selection2} hold.
In particular, if \(Q>2K\), the complete \(K\)-CGPT block vanishes if and only if
\[
 \mathcal G_{\varepsilon,n}(\eta)=0,
 \qquad 1\leq n\leq K.
\]
\end{lemma}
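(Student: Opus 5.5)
I plan to treat the two endpoint problems as variational problems posed on the perforated domain \(\Omega_\varepsilon\), with the core entering only through its boundary condition on \(\Gamma_\varepsilon\).

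\emph{Insulating case \(\kappa=0\).} For an incident harmonic polynomial \(P_n^\beta\), I write \(u=P_n^\beta+w\). The scattered field \(w\) lies in the homogeneous energy space \(\dot H^1(\Omega_\varepsilon)\), taken modulo constants. It solves
\[
\int_{\Omega_\varepsilon}a_{\varepsilon,\eta}\nabla w\cdot\nabla\varphi\,\mathrm dx=-\int_{\Omega_\varepsilon}(a_{\varepsilon,\eta}-1)\nabla P_n^\beta\cdot\nabla\varphi\,\mathrm dx+\int_{\Gamma_\varepsilon}\partial_\nu P_n^\beta\,\varphi\,\mathrm ds
\]
for all \(\varphi\in\dot H^1(\Omega_\varepsilon)\). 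Here \(\nu\) is the normal pointing into \(\varepsilon\mathcal B\), with the sign convention chosen so that the natural condition is zero total normal flux on \(\Gamma_\varepsilon\).

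\emph{Perfectly conducting case \(\kappa=\infty\).} I use the closed subspace of \(\dot H^1(\Omega_\varepsilon)\) consisting of functions \(w\) for which \(P_n^\beta+w\) has constant trace on \(\Gamma_\varepsilon\). Equivalently, \(w|_{\Gamma_\varepsilon}=-P_n^\beta|_{\Gamma_\varepsilon}+c\). I lift the data with a cutoff extension of \(-P_n^\beta\) near \(\Gamma_\varepsilon\) and solve a homogeneous problem in the subspace \(\{\varphi:\varphi|_{\Gamma_\varepsilon}\ \text{const}\}\). The variational equation tested on this subspace encodes the zero-total-flux condition automatically.

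\emph{Well-posedness and far field.} In both cases \(a_{\varepsilon,\eta}\) is bounded above and below, so the bilinear form is coercive on the relevant quotient space. Lax--Milgram then gives existence and uniqueness in the finite-energy class. Since \(\nabla w\in L^2\) and \(w\) is harmonic outside \(D_K\), the exterior expansion of \(w\) contains no logarithmic term once the total flux vanishes. So \(w\) has the multipole form \eqref{eq:multipole-expansion}, and this defines the endpoint CGPTs. For reciprocity I need an integral representation. By Green's identity on \(B_R\setminus\overline{\varepsilon\mathcal B}\), letting \(R\to\infty\), I will express \(M_{mn}^{\alpha\beta}\) as a symmetric bilinear expression in the total fields \(u_m^\alpha\), \(u_n^\beta\). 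For \(\kappa=0\) the expression is
\[
\int_{\Omega_\varepsilon}(a-1)\nabla u_n^\beta\cdot\nabla P_m^\alpha-\int_{\varepsilon\mathcal B}\nabla P_n^\beta\cdot\nabla P_m^\alpha ,
\]
with the analogous formula for \(\kappa=\infty\). I then check symmetry by subtracting the two variational identities tested against each other's scattered fields. This is the limiting form of the finite-\(\kappa\) formula \eqref{eq:real-cgpt}.

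\emph{Main obstacle.} I expect the hardest step to be matching the sign and normalization of the boundary terms so that the Green's-identity expression agrees exactly with the coefficients of \eqref{eq:multipole-expansion}. I will carry this out by pairing the Fourier coefficients on a large circle, exactly as in the proof of Proposition~\ref{prop:one-shell-extreme-limits}, and then check that the result equals the expression above.

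\emph{Selection rules.} The domain \(\varepsilon\mathcal B\), the coefficient \(a_{\varepsilon,\eta}\), and both boundary conditions are invariant under \(z\mapsto\omega z\). If \(u\) is the solution for \(P_n^\beta\), then \(u(\omega\,\cdot)\) is a finite-energy solution for the rotated incident field, which is a linear combination of \(P_n^c,P_n^s\) with phase \(\omega^n\). By uniqueness it is the solution for that field. Rotating the multipole expansion then gives \(\mathbb N^{(1)}_{mn}=\omega^{m+n}\mathbb N^{(1)}_{mn}\) and \(\mathbb N^{(2)}_{mn}=\omega^{n-m}\mathbb N^{(2)}_{mn}\), exactly as in Proposition~\ref{prop:selection}. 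The reduction for \(Q>2K\) then repeats that proof word for word, using the endpoint reciprocity already established.
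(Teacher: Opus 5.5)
\paragraph{Verdict.} Your well-posedness argument (quotient energy space, constant-trace test space with a cutoff lift at $\kappa=\infty$, Lax--Milgram), the far-field normalization, and the rotation-plus-uniqueness derivation of the selection rules and diagonal reduction match the paper's. The gap is in reciprocity.

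\paragraph{The representation formula is wrong.} The formula you display for $\kappa=0$ is incorrect. Green's identity on $\Omega_\varepsilon\cap B_R$, with $\nu$ the outward normal of $\varepsilon\mathcal B$ and $\partial_\nu u_n^\beta=0$ on $\Gamma_\varepsilon$, gives
\[
 M_{mn}^{\alpha\beta}=\int_{\Omega_\varepsilon}(a_{\varepsilon,\eta}-1)\nabla u_n^\beta\cdot\nabla P_m^\alpha\,\mathrm{d}x-\int_{\Gamma_\varepsilon}u_n^\beta\,\partial_\nu P_m^\alpha\,\mathrm{d}s .
\]
So the core term involves the harmonic extension of the trace of the total field $u_n^\beta$, not $P_n^\beta$.

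For example, take $\eta=0$ and $\mathcal B$ the unit disk. Then $u_1^c=(r+\varepsilon^2r^{-1})\cos\theta$ gives $M_{11}^{cc}=-2\pi\varepsilon^2$, whereas your expression gives $-\pi\varepsilon^2$. In this example the discrepancy is exactly the exterior energy term $-\int|\nabla w|^2$ that appears in Lemma~\ref{lem:small-core-endpoint-signs}.

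For the same reason, your expression is not the $\kappa\to0$ limit of \eqref{eq:real-cgpt}: the interior field tends to the harmonic extension of the exterior trace, not to $P_n^\beta$. Proving that your displayed expression is symmetric would therefore not prove reciprocity of the CGPTs.

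\paragraph{The $\kappa=\infty$ case.} The correct analogue is
\[
 M_{mn}^{\alpha\beta}=\int_{\Omega_\varepsilon}(a_{\varepsilon,\eta}-1)\nabla u_n^\beta\cdot\nabla P_m^\alpha\,\mathrm{d}x+\int_{\Gamma_\varepsilon}\mathrm{e}^{\eta_1}\partial_\nu u_n^\beta\,P_m^\alpha\,\mathrm{d}s .
\]
Here your cross-testing step fails as stated. The scattered fields have trace $-P+\text{const}$ on $\Gamma_\varepsilon$, so they are not admissible in the constant-trace test space.

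\paragraph{How to fix it.} With the corrected $\kappa=0$ formula, your cross-testing argument does go through, and at $\kappa=\infty$ it can be repaired by subtracting lifts. The paper's route is cleaner and works the same way for both endpoint types:
\begin{enumerate}[label=(\roman*)]
 \item Define the endpoint CGPT directly by the far-field pairing $\mathscr M(H,L)=\lim_{R\to\infty}\int_{\partial B_R}(L\partial_r w_H-w_H\partial_rL)\,\mathrm{d}s$. Your Fourier pairing identifies this with \eqref{eq:multipole-expansion}.
 \item Apply Green's identity to the two total fields $u_H,u_L$ on $\Omega_\varepsilon\cap B_R$.
 \item The terms on $\Gamma_\varepsilon$ vanish: at $\kappa=0$ by the Neumann condition, and at $\kappa=\infty$ because each trace is constant and each total flux is zero.
 \item On $\partial B_R$, the $H$--$L$ term vanishes because both are harmonic in $B_R$, and the $w_H$--$w_L$ term is $O(R^{-2})$.
\end{enumerate}
This leaves $\mathscr M(H,L)=\mathscr M(L,H)$, and no representation inside the core is needed.

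\paragraph{Minor points.} With $\nu$ pointing into $\varepsilon\mathcal B$, the boundary term in your insulating weak form needs a minus sign. Also, the insulating condition is the pointwise condition $\partial_\nu u=0$, not merely zero total flux.
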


\begin{proof}
Use the Beppo--Levi space
\[
 \dot H^1(\Omega_\varepsilon) := \{v\in H^1_{\rm loc}(\Omega_\varepsilon): \nabla v\in L^2(\Omega_\varepsilon)\}/\mathbb{R}
\]
and its closed subspace
\[
 \mathcal V_\varepsilon := \{v\in\dot H^1(\Omega_\varepsilon): \operatorname{Tr}_{\Gamma_\varepsilon}v \text{ is constant}\}.
\]
Both carry the gradient norm.
Write
\[
 A_{\varepsilon,\eta}(v,\varphi) :=\int_{\Omega_\varepsilon}a_{\varepsilon,\eta} \nabla v\cdot\nabla\varphi\,\mathrm{d} x.
\]
For an incident harmonic polynomial \(H\), the insulating scattering correction \(w=u-H\) is the unique element of \(\dot H^1(\Omega_\varepsilon)\) satisfying
\begin{align}
 A_{\varepsilon,\eta}(w,\varphi)
 ={}&-\sum_{j=1}^{K}(\mathrm{e}^{\eta_j}-1)
       \int_{\mathcal A_j^\varepsilon}
       \nabla H\cdot\nabla\varphi\,\mathrm{d} x\notag\\
 &+\left\langle\partial_\nu H,
       \operatorname{Tr}_{\Gamma_\varepsilon}\varphi
   \right\rangle,
 \qquad
 \varphi\in\dot H^1(\Omega_\varepsilon).
 \label{eq:small-core-insulating-weak}
\end{align}
The boundary functional is independent of the representative because \(\int_{\Gamma_\varepsilon}\partial_\nu H\,\mathrm{d} s=0\), and the trace theorem makes it bounded in the gradient norm.

For the perfectly conducting problem, let
\[
 \mathcal W_{\varepsilon,H} := \{w\in\dot H^1(\Omega_\varepsilon): \operatorname{Tr}_{\Gamma_\varepsilon}(H+w) \text{ is constant}\}.
\]
The correction is the unique \(w\in\mathcal W_{\varepsilon,H}\) satisfying
\begin{equation}
 A_{\varepsilon,\eta}(w,\varphi)
 =-\sum_{j=1}^{K}(\mathrm{e}^{\eta_j}-1)
       \int_{\mathcal A_j^\varepsilon}
       \nabla H\cdot\nabla\varphi\,\mathrm{d} x,
 \qquad
 \varphi\in\mathcal V_\varepsilon.
 \label{eq:small-core-conducting-weak}
\end{equation}
Indeed, a cutoff equal to one near \(\mathcal B\) gives
\[
 \zeta_{\varepsilon,H}(x) :=\chi(x/\varepsilon)\bigl(H(0)-H(x)\bigr) \in\mathcal W_{\varepsilon,H}.
\]
Writing \(w=\zeta_{\varepsilon,H}+v\) reduces \eqref{eq:small-core-conducting-weak} to a coercive problem on \(\mathcal V_\varepsilon\).
Lax--Milgram proves existence and uniqueness in both endpoint cases.
Testing the conducting equation with a compactly supported admissible function whose core trace is one, and then integrating by parts, gives
\[
 \int_{\Gamma_\varepsilon} \mathrm{e}^{\eta_1}\partial_\nu u\,\mathrm{d} s=0.
\]
Thus \eqref{eq:small-core-conducting-weak} is exactly the perfectly conducting problem.
Notice that \eqref{eq:small-core-insulating-weak} and \eqref{eq:small-core-conducting-weak} contain only bounded-support pairings with \(H\); no integral of \(\nabla H\) against an arbitrary finite-energy test function over the unbounded domain is used.
Outside \(D_K\), the correction is a finite-energy harmonic function.
Its logarithmic coefficient is therefore zero; subtracting its constant term gives the normalization \(w=O(|x|^{-1})\).

For either endpoint and harmonic polynomials \(H,L\), define the response pairing from the normalized far field by
\begin{equation}
 \mathscr M_{\varepsilon,\eta}^{\kappa}(H,L)
 :=
 \lim_{R\to\infty}\int_{\partial B_R}
 \bigl(L\partial_r w_H-w_H\partial_rL\bigr)\,\mathrm{d} s.
 \label{eq:small-core-endpoint-response-pairing}
\end{equation}
Green's identity on \(\Omega_\varepsilon\cap B_R\), with inner normal \(-\nu\), shows that for \(H=P_n^\beta\) and \(L=P_m^\alpha\) this limit equals the coefficient \(M_{mn}^{\alpha\beta}\) in \eqref{eq:multipole-expansion}; the core term vanishes by the endpoint boundary condition.
On any fixed annulus outside \(D_K\), say \(B_{2R}\setminus\overline{B_R}\) with \(R\) sufficiently large, the trace theorem, Poincar\'e's inequality modulo constants, and Fourier projection give
\[
 |\mathscr M_{\varepsilon,\eta}^{\kappa}(H,L)| \leq C_L\|\nabla w_H\|_{L^2(B_{2R}\setminus\overline{B_R})},
\]
so the response is a bounded functional of the energy solution.

Let \(a,\widetilde a\) be two positive coating coefficients with the same endpoint core type, let us write $\mathscr M_{\varepsilon,\eta}^{\kappa}$ as $\mathscr M_{a}^{\kappa}$ for short.
Green's identity on \(\Omega_\varepsilon\cap B_R\) first gives
\[
 \int_{\Omega_\varepsilon}(a-\widetilde a) \nabla u_H^{a,\kappa}\cdot \nabla u_L^{\widetilde a,\kappa}\,\mathrm{d} x = \mathscr M_a^\kappa(H,L) -\mathscr M_{\widetilde a}^\kappa(L,H).
\]
The core terms vanish under the two Neumann conditions when \(\kappa=0\).
When \(\kappa=\infty\), each core trace is constant and the remaining factor is the zero total flux.
Taking \(a=\widetilde a\) proves reciprocity; using it in the preceding identity yields
\begin{equation}
 \mathscr M_a^\kappa(H,L)
 -\mathscr M_{\widetilde a}^\kappa(H,L)
 =
 \int_{\Omega_\varepsilon}(a-\widetilde a)
 \nabla u_H^{a,\kappa}\cdot
 \nabla u_L^{\widetilde a,\kappa}\,\mathrm{d} x.
 \label{eq:small-core-endpoint-difference}
\end{equation}

Both endpoint boundary conditions are preserved by the \(C_Q\) action.
Rotating the solution and using uniqueness therefore gives the two phase relations in the proof of Proposition~\ref{prop:selection}, hence \eqref{eq:selection1}--\eqref{eq:selection2}.
On the diagonal, reciprocity gives \(M_{nn}^{cs}=M_{nn}^{sc}\), while \(\mathbb N_{nn}^{(1)}=0\) gives \(M_{nn}^{cc}=M_{nn}^{ss}\) and \(M_{nn}^{cs}=-M_{nn}^{sc}\).
Thus the cross terms vanish and
\[
 \mathbb N_{nn}^{(2)}=2M_{nn}^{cc}\in\mathbb{R}.
\]
The same counting argument as in Proposition~\ref{prop:selection} now proves the final assertion.

The proof is complete.
\end{proof}

The difference identity \eqref{eq:small-core-endpoint-difference} yields the coating derivative at the two endpoints.
The same formula holds for finite \(\kappa\) and for the limiting problem at \(\varepsilon=0\).

\begin{lemma}\label{lem:small-core-sensitivity}
For each fixed \(\kappa\in[0,\infty]\) and \(\varepsilon\geq0\), the map \(\eta\mapsto\boldsymbol{\mathcal G}_\varepsilon(\eta)\) is real analytic near the origin.
Let \(u_{\varepsilon,n}^c(\eta)\) be the corresponding solution generated by \(P_n^c\).
Then
\begin{equation}
 \partial_{\eta_j}\mathcal G_{\varepsilon,n}(\eta)
 =\frac{\mathrm{e}^{\eta_j}}{\pi n}
  \int_{\mathcal A_j^\varepsilon}
  |\nabla u_{\varepsilon,n}^c(\eta)|^2\,\mathrm{d} x,
 \qquad 1\leq n,j\leq K,
 \label{eq:small-core-sensitivity-formula}
\end{equation}
where at \(\varepsilon=0\) the region \(\mathcal A_j^\varepsilon\) is replaced by \(\mathcal A_j^0\).
In particular,
\begin{equation}
 D_\eta\boldsymbol{\mathcal G}_0(0)=J^0.
 \label{eq:small-core-limit-derivative}
\end{equation}
\end{lemma}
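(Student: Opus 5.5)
Analyticity comes first. For fixed \(\kappa\) and \(\varepsilon\geq0\), the corrector problems are linear in the coating coefficients \(\mathrm{e}^{\eta_j}\): the finite-\(\kappa\) problem \eqref{eq:fixed-target-corrector} on \(\dot H^1(\mathbb R^2)\), and the endpoint problems \eqref{eq:small-core-insulating-weak}--\eqref{eq:small-core-conducting-weak} on \(\dot H^1(\Omega_\varepsilon)\) and \(\mathcal V_\varepsilon\). Their bilinear forms \(A_{\varepsilon,\eta}\) and right-hand sides depend real analytically on \(\eta\) in operator norm, and they remain uniformly coercive for \(\eta\) in a bounded neighbourhood of the origin. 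I will repeat the Neumann-series argument from the proof of Theorem~\ref{thm:fixed-target} to get analytic dependence of \(w_n^\beta\) on \(\eta\). The response pairing \eqref{eq:small-core-endpoint-response-pairing} is a bounded linear functional of the energy solution, so every CGPT, and hence \(\mathcal G_{\varepsilon,n}\), is real analytic in \(\eta\). In the endpoint case \(\kappa=\infty\), the shift by \(\zeta_{\varepsilon,H}\) is independent of \(\eta\), so the reduction to \(\mathcal V_\varepsilon\) keeps this structure.

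For the derivative I will use the difference identity \eqref{eq:small-core-endpoint-difference} with \(a=a_{\varepsilon,\eta+he_j}\) and \(\widetilde a=a_{\varepsilon,\eta}\). For finite \(\kappa\) and at \(\varepsilon=0\) the same Green-identity derivation holds without core boundary terms, as for \eqref{eq:one-shell-difference-identity}. Dividing by \(h\) gives
\[
\frac{\mathscr M_a(H,L)-\mathscr M_{\widetilde a}(H,L)}{h}
=\frac{\mathrm{e}^{\eta_j+h}-\mathrm{e}^{\eta_j}}{h}\int_{\mathcal A_j^\varepsilon}\nabla u_H^{a}\cdot\nabla u_L^{\widetilde a}\,\mathrm{d}x .
\]
Continuity of the solutions in \(\eta\), which follows from the analyticity step, lets me pass to the limit \(h\to0\). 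This yields \(\partial_{\eta_j}M_{mn}^{\alpha\beta}=\mathrm{e}^{\eta_j}\int_{\mathcal A_j^\varepsilon}\nabla u_n^\beta\cdot\nabla u_m^\alpha\,\mathrm{d}x\).

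Next I pass to \(\mathcal G_{\varepsilon,n}\). By Lemma~\ref{lem:small-core-endpoint-framework}, and by Proposition~\ref{prop:selection} for finite \(\kappa\), the diagonal cross terms vanish identically in \(\eta\) and \(M_{nn}^{cc}=M_{nn}^{ss}\). Hence \(\mathcal G_{\varepsilon,n}=\frac{1}{2\pi n}\cdot2M_{nn}^{cc}=\frac{1}{\pi n}M_{nn}^{cc}\) for all \(\eta\). Differentiating this identity and inserting the formula above with \(m=n\) and \(\alpha=\beta=c\) gives exactly \eqref{eq:small-core-sensitivity-formula}.

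Finally, at \(\varepsilon=0\) and \(\eta=0\) the medium is homogeneous, so \(u_{0,n}^c=P_n^c\) and \(|\nabla P_n^c|^2=n^2|z|^{2n-2}\). Therefore \(\partial_{\eta_j}\mathcal G_{0,n}(0)=\frac{n}{\pi}\int_{\mathcal A_j^0}|z|^{2n-2}\,\mathrm{d}x=J^0_{nj}\), which is \eqref{eq:small-core-limit-derivative}. I expect the main obstacle to be justifying the limit \(h\to0\) in the endpoint cases. There the gradients live on the unbounded domain \(\Omega_\varepsilon\), and for \(\kappa=\infty\) on an affine constraint set. I will handle this by noting that \(u_H^a-u_H^{\widetilde a}=w_H^a-w_H^{\widetilde a}\) lies in the linear space \(\mathcal V_\varepsilon\), or in \(\dot H^1(\Omega_\varepsilon)\) when \(\kappa=0\). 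Its gradient norm is then \(O(h)\) by subtracting the two weak formulations and using uniform coercivity. Since the integral is over the bounded set \(\mathcal A_j^\varepsilon\), \(L^2\) convergence of the gradients there suffices.
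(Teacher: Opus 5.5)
Your proposal is correct and takes essentially the same route as the paper. Both arguments use Neumann-series analyticity of the uniformly coercive corrector problems (with \(\eta\)-independent test spaces at the endpoints), a material-derivative formula for \(M_{mn}^{\alpha\beta}\), the reduction \(\mathcal G_{\varepsilon,n}=M_{nn}^{cc}/(\pi n)\) via the selection rules and reciprocity, and evaluation at the homogeneous point. The one minor difference is the finite case \(0<\kappa<\infty\): there the paper differentiates \eqref{eq:real-cgpt} directly and removes the field-derivative term by testing the linearized equation against \(u_m^\alpha-P_m^\alpha\), whereas you use the two-conductivity difference identity in every case, which is equivalent and is what the paper itself does at the endpoints.
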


\begin{proof}
For \(0<\kappa<\infty\), the coercive variational operator depends real analytically on \(\eta\) in operator norm.
Factoring it at \(\eta=0\) and inverting the resulting perturbation by a Neumann series proves real-analytic dependence of the solution and hence of the response.
If a uniformly positive conductivity \(\sigma\) is varied in the direction \(q\), differentiating \eqref{eq:real-cgpt} gives
\begin{equation}
 D M^{\alpha\beta}_{mn}[\sigma](q)
 =\int_{\mathbb{R}^2}q\,
   \nabla u_n^\beta\cdot\nabla u_m^\alpha\,\mathrm{d} x.
 \label{eq:material-derivative-general}
\end{equation}
Indeed, the derivative of \(u_n^\beta\) solves the linearized weak equation with right-hand side \(-\int q\nabla u_n^\beta\cdot\nabla\varphi\), and testing this equation against \(u_m^\alpha-P_m^\alpha\) converts the field-derivative term in \eqref{eq:real-cgpt} into the missing part of \(\nabla u_m^\alpha\).

For \(\kappa=0,\infty\), the spaces in \eqref{eq:small-core-insulating-weak} and \eqref{eq:small-core-conducting-weak} do not depend on \(\eta\), and their coercive operators depend real analytically on \(\eta\) in operator norm.
Analytic inversion therefore gives analytic dependence of the energy solution.
The fixed-annulus estimate in the proof of Lemma~\ref{lem:small-core-endpoint-framework} transfers this analyticity to every far-field coefficient.
Differentiating \eqref{eq:small-core-endpoint-difference} gives the endpoint analogue of \eqref{eq:material-derivative-general}, with the integral taken over \(\Omega_\varepsilon\).
Since the material variation is supported in the coatings, the finite-conductivity and endpoint problems have the same coating derivative.

For \(n\leq K\), the selection rules and reciprocity give
\[
 \mathbb N_{nn}^{(1)}=0,
 \qquad
 M_{nn}^{cc}=M_{nn}^{ss},
 \qquad
 M_{nn}^{cs}=M_{nn}^{sc}=0,
\]
and hence
\[
 \mathcal G_{\varepsilon,n}=\frac{1}{\pi n}M_{nn}^{cc}.
\]
Taking \(q=\mathrm{e}^{\eta_j}\chi_{\mathcal A_j^\varepsilon}\) in \eqref{eq:material-derivative-general}, or in its endpoint analogue, proves \eqref{eq:small-core-sensitivity-formula}.
At \((\varepsilon,\eta)=(0,0)\), the medium is homogeneous and \(u_{0,n}^c=P_n^c\).
Since \(|\nabla P_n^c|^2=n^2|z|^{2n-2}\), the same formula gives \eqref{eq:small-core-limit-derivative}.
\end{proof}

At \(\varepsilon=0\) the core is absent, and \eqref{eq:small-core-limit-derivative} identifies the candidate limiting Jacobian.
Because the endpoint domains vary with \(\varepsilon\), the required stability is a uniform \(C^1\) estimate rather than an implicit-function argument posed directly at the degenerate geometry.

\begin{lemma}\label{lem:small-core-C1-limit}
There exist \(\rho>0\), \(\varepsilon_0>0\), and \(C_\kappa>0\) such that, for every \(0<\varepsilon<\varepsilon_0\),
\begin{align}
 \sup_{|\eta|\leq\rho}
 |\boldsymbol{\mathcal G}_\varepsilon(\eta)
  -\boldsymbol{\mathcal G}_0(\eta)|
 &\leq C_\kappa\varepsilon,
 \label{eq:small-core-C0-convergence}\\
 \sup_{|\eta|\leq\rho}
 \|D_\eta\boldsymbol{\mathcal G}_\varepsilon(\eta)
  -D_\eta\boldsymbol{\mathcal G}_0(\eta)\|
 &\leq C_\kappa\varepsilon.
 \label{eq:small-core-C1-convergence}
\end{align}
Moreover, the response at the homogeneous coating vector satisfies the exact identities
\begin{equation}
 \mathcal G_{\varepsilon,n}(0)
 =\varepsilon^{2n}g_n^{\mathcal B,\kappa},
 \qquad 1\leq n\leq K.
 \label{eq:small-core-exact-forcing}
\end{equation}
In particular,
\begin{equation}
 |\boldsymbol{\mathcal G}_\varepsilon(0)|
 \leq C_\kappa\varepsilon^2.
 \label{eq:small-core-forcing-bound}
\end{equation}
\end{lemma}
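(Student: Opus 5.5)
The plan has two independent parts. The exact identity \eqref{eq:small-core-exact-forcing} will follow from scaling. The two convergence estimates \eqref{eq:small-core-C0-convergence}--\eqref{eq:small-core-C1-convergence} will follow from a single corrector energy bound, inserted into the fixed-annulus response functional of Lemma~\ref{lem:small-core-endpoint-framework} and into the sensitivity formula \eqref{eq:small-core-sensitivity-formula}.

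\emph{Exact forcing.} At \(\eta=0\) every coating has the background value 1, so \(\sigma_{\varepsilon,0}\) is the single inclusion \(\varepsilon\mathcal B\) of type \(\kappa\).
\begin{itemize}
\item If \(u\) solves the problem for \(\mathcal B\) with incident field \(P_n^\beta\), then \(\varepsilon^n u(x/\varepsilon)\) solves it for \(\varepsilon\mathcal B\), because \(P_n^\beta(x)=\varepsilon^nP_n^\beta(x/\varepsilon)\). The insulating condition and the floating conductor condition (constant trace, zero flux) are both scale invariant.
\item Substituting into \eqref{eq:real-cgpt}, or into the far-field pairing \eqref{eq:small-core-endpoint-response-pairing} at the endpoints, gives \(M_{mn}^{\alpha\beta}[\varepsilon\mathcal B;\kappa]=\varepsilon^{m+n}M_{mn}^{\alpha\beta}[\mathcal B;\kappa]\).
\item Hence \(\mathcal G_{\varepsilon,n}(0)=\varepsilon^{2n}g_n^{\mathcal B,\kappa}\). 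Since \(\varepsilon<1\) and \(n\ge1\), this yields \eqref{eq:small-core-forcing-bound} with \(C_\kappa=\max_n|g_n^{\mathcal B,\kappa}|\).
\end{itemize}

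\emph{Corrector estimate.} Fix \(|\eta|\le\rho\) and a harmonic polynomial \(H=P_n^\beta\). Let \(u^0=u^0_H(\eta)\) be the \(\varepsilon=0\) solution, with no core and \(\mathcal A_1^0=D_1\). Since \(0\in D_1\) and the conductivity is constant in \(D_1\), \(u^0\) is harmonic near the origin. It therefore satisfies \(|\nabla u^0|\le C\) on \(B_{r}(0)\) for some fixed \(r\), uniformly in \(|\eta|\le\rho\) by analytic dependence on \(\eta\). The goal is
\[
\|\nabla(u^\varepsilon-u^0)\|_{L^2(\Omega_\varepsilon)}\le C_\kappa\varepsilon,
\]
uniformly in \(|\eta|\le\rho\).
\begin{itemize}
\item \emph{Finite \(\kappa\).} \(v=u^\varepsilon-u^0\) solves \(\nabla\cdot(\sigma_\varepsilon\nabla v)=-\nabla\cdot((\kappa-\mathrm e^{\eta_1})\chi_{\varepsilon\mathcal B}\nabla u^0)\). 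Coercivity gives \(\|\nabla v\|\le C\|\nabla u^0\|_{L^2(\varepsilon\mathcal B)}\le C\varepsilon\).
\item \emph{\(\kappa=0\).} The right-hand side of \eqref{eq:small-core-insulating-weak} becomes the boundary functional \(\langle \mathrm e^{\eta_1}\partial_\nu u^0,\operatorname{Tr}_{\Gamma_\varepsilon}\varphi\rangle\). Its dual norm is bounded by \(\|\nabla u^0\|_{L^2(\varepsilon\mathcal B)}\): write it as a volume integral over \(\varepsilon\mathcal B\) of \(\nabla u^0\cdot\nabla\tilde\varphi\), where \(\tilde\varphi\) is a scale-invariant extension of \(\varphi\) into the core (rescale to \(\mathcal B\), extend, rescale back). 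This gives \(O(\varepsilon)\).
\item \emph{\(\kappa=\infty\).} Use the lifting \(\zeta=\chi(x/\varepsilon)(u^0(0)-u^0(x))\) from Lemma~\ref{lem:small-core-endpoint-framework}, whose gradient is \(O(1)\) on a ball of radius \(O(\varepsilon)\), so \(\|\nabla\zeta\|_{L^2}=O(\varepsilon)\). The remainder solves a coercive problem on \(\mathcal V_\varepsilon\) with \(O(\varepsilon)\) data, hence is also \(O(\varepsilon)\).
\end{itemize}

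\emph{Convergence of \(\boldsymbol{\mathcal G}_\varepsilon\) and its derivative.}
\begin{itemize}
\item \emph{\(C^0\) bound.} \(\mathcal G_\varepsilon-\mathcal G_0\) is the far-field pairing applied to \(u^\varepsilon-u^0\), which is harmonic outside \(D_K\). The fixed-annulus bound in the proof of Lemma~\ref{lem:small-core-endpoint-framework} therefore gives \eqref{eq:small-core-C0-convergence}.
\item \emph{\(C^1\) bound.} By \eqref{eq:small-core-sensitivity-formula}, \(\partial_{\eta_j}\mathcal G_{\varepsilon,n}-\partial_{\eta_j}\mathcal G_{0,n}\) equals \(\mathrm e^{\eta_j}/(\pi n)\) times
\[
\int_{\mathcal A_j^\varepsilon}\bigl(|\nabla u^\varepsilon|^2-|\nabla u^0|^2\bigr)\,\mathrm{d}x-\delta_{j1}\int_{\varepsilon\mathcal B}|\nabla u^0|^2\,\mathrm{d}x.
\]
The first term is at most \(\|\nabla(u^\varepsilon-u^0)\|\bigl(\|\nabla u^\varepsilon\|+\|\nabla u^0\|\bigr)\) on bounded sets, which is \(O(\varepsilon)\). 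The second term is \(O(\varepsilon^2)\). This proves \eqref{eq:small-core-C1-convergence}.
\end{itemize}

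\emph{Main obstacle.} I expect the hardest step to be the \(\varepsilon\)-uniform bounds at the endpoints. Two things must be checked carefully:
\begin{itemize}
\item the coercivity constants on the \(\varepsilon\)-dependent spaces \(\dot H^1(\Omega_\varepsilon)\) and \(\mathcal V_\varepsilon\) do not degenerate as \(\varepsilon\to0\), which I would handle with scale-invariant trace and extension estimates on \(\varepsilon\mathcal B\);
\item the insulating boundary functional genuinely has dual norm \(O(\varepsilon)\) rather than \(O(1)\).
\end{itemize}
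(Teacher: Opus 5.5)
Your proposal is correct and follows the paper's proof in all essentials. The paper uses the same three ingredients: exact scaling for \eqref{eq:small-core-exact-forcing}, an \(O(\varepsilon)\) energy bound for \(u^\varepsilon-u^0\) in each of the three core cases (with the same cutoff lift at \(\kappa=\infty\)), and the sensitivity formula \eqref{eq:small-core-sensitivity-formula} for the Jacobian estimate. The deviations are minor and both work: you get \eqref{eq:small-core-C0-convergence} directly from the fixed-annulus far-field bound, where the paper integrates the Jacobian estimate from \(\eta=0\) and adds \eqref{eq:small-core-forcing-bound}; and you bound the insulating functional by Green's identity plus an extension into \(\varepsilon\mathcal B\), where the paper uses a scaled trace--Poincar\'e pairing (your extension should be applied to \(\varphi\) minus its mean on the scaled collar, which is allowed because \(\int_{\Gamma_\varepsilon}\partial_\nu u^0\,\mathrm{d}s=0\)).
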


\begin{proof}
We first prove the exact scaling at \(\eta=0\).
All coatings and the background then have conductivity one, so the only contrast is \(\varepsilon\mathcal B\).
If \(U_n^\beta\) is the corresponding finite-conductivity or endpoint solution for the reference core \(\mathcal B\), homogeneity of \(P_n^\beta\) gives
\[
 u_{\varepsilon,n}^\beta(x) =\varepsilon^nU_n^\beta(x/\varepsilon).
\]
For \(0<\kappa<\infty\), a change of variables in \eqref{eq:real-cgpt} gives the factor \(\varepsilon^{m+n}\) for \(M_{mn}^{\alpha\beta}\).
The same factor follows at \(\kappa=0,\infty\) from \eqref{eq:small-core-endpoint-response-pairing}; both endpoint boundary conditions and the zero-total-flux constraint are preserved by the scaling.
Hence
\[
 M_{mn}^{\alpha\beta}[\sigma_{\varepsilon,0}] =\varepsilon^{m+n} M_{mn}^{\alpha\beta}[\mathcal B;\kappa].
\]
This proves \eqref{eq:small-core-exact-forcing} and \eqref{eq:small-core-forcing-bound}.

We next prove the uniform Jacobian convergence.
Fix \(\rho>0\) small enough that the material ball under consideration lies in the analyticity neighborhood.
On the coatings and the background, the coefficients have common positive lower and upper bounds for \(|\eta|\leq\rho\).
The finite-conductivity problem and the two coercive endpoint formulations therefore satisfy uniform energy estimates.

Choose \(r_*>0\) such that \(\overline{B_{4r_*}}\subset D_1\).
Fix a ball \(U\) centered at the origin with \(\overline{\mathcal B}\Subset U\), and decrease \(\varepsilon_0\) so that \(\varepsilon\overline U\subset B_{r_*}\) for every \(0<\varepsilon<\varepsilon_0\).
The limiting field \(u_{0,n}^c(\eta)\) is harmonic in \(B_{4r_*}\), because the limiting conductivity is constant there.
Interior estimates and the uniform energy bound give
\begin{equation}
 \sup_{|\eta|\leq\rho}
 \|\nabla u_{0,n}^c(\eta)\|_{L^\infty(B_{2r_*})}
 \leq C_\kappa.
 \label{eq:small-core-interior-gradient}
\end{equation}
Set \(v_{\varepsilon,n}(\eta):=u_{\varepsilon,n}^c(\eta)-u_{0,n}^c(\eta)\), with the difference restricted to \(\Omega_\varepsilon\) for the endpoint problems.

If \(0<\kappa<\infty\), subtracting the two full-space weak equations gives
\[
 \int_{\mathbb{R}^2}\sigma_{\varepsilon,\eta}
 \nabla v_{\varepsilon,n}\cdot\nabla\varphi\,\mathrm{d} x
 =\int_{\varepsilon\mathcal B}
 (\mathrm{e}^{\eta_1}-\kappa)
 \nabla u_{0,n}^c(\eta)\cdot\nabla\varphi\,\mathrm{d} x.
\]
Testing with \(v_{\varepsilon,n}\) and using \eqref{eq:small-core-interior-gradient} gives the required \(O_\kappa(\varepsilon)\) energy bound.

For \(\kappa=0\), subtraction of the insulating weak problem and the restriction of the limiting full-space problem gives
\begin{equation}
 A_{\varepsilon,\eta}(v_{\varepsilon,n},\varphi)
 =\mathrm{e}^{\eta_1}
 \left\langle\partial_\nu u_{0,n}^c,
   \operatorname{Tr}_{\Gamma_\varepsilon}\varphi\right\rangle.
 \label{eq:small-core-insulating-difference}
\end{equation}
With this fixed \(U\), put \(\widetilde\varphi(y)=\varphi(\varepsilon y)\), and set
\[
 g_\varepsilon(y) :=\partial_{\nu_{\mathcal B}}u_{0,n}^c(\varepsilon y).
\]
Harmonicity inside \(\varepsilon\mathcal B\) gives \(\int_{\partial\mathcal B}g_\varepsilon\,\mathrm{d} s=0\), while \eqref{eq:small-core-interior-gradient} gives a uniform \(H^{-1/2}(\partial\mathcal B)\) bound.
If \(c\) is the average of \(\widetilde\varphi\) on \(U\setminus\overline{\mathcal B}\), the scaled trace and Poincar\'e inequalities yield
\begin{align*}
 \left|
 \left\langle\partial_\nu u_{0,n}^c,
   \operatorname{Tr}_{\Gamma_\varepsilon}\varphi\right\rangle
 \right|
 &=\varepsilon\left|
   \left\langle g_\varepsilon,
     \widetilde\varphi-c\right\rangle_{\partial\mathcal B}
   \right|\\
 &\leq C_\kappa\varepsilon
   \|\nabla\varphi\|_
   {L^2(\varepsilon(U\setminus\overline{\mathcal B}))}.
\end{align*}
Testing \eqref{eq:small-core-insulating-difference} with \(v_{\varepsilon,n}\) proves the same energy bound without extending the insulating solution into the core.

For \(\kappa=\infty\), let \(\mathcal V_\varepsilon\) be the constant-trace space in Lemma~\ref{lem:small-core-endpoint-framework}.
Every \(\varphi\in\mathcal V_\varepsilon\) extends across \(\varepsilon\mathcal B\) by its constant trace, with zero gradient there.
Using this extension in the limiting full-space equation and subtracting \eqref{eq:small-core-conducting-weak} gives the finite-energy identity
\begin{equation}
 A_{\varepsilon,\eta}(v_{\varepsilon,n},\varphi)=0,
 \qquad \varphi\in\mathcal V_\varepsilon.
 \label{eq:small-core-conducting-difference}
\end{equation}
Take a fixed cutoff \(\chi\in C_c^\infty(U)\) equal to one near \(\overline{\mathcal B}\), and put
\[
 \zeta_{\varepsilon,n}(x) :=\chi(x/\varepsilon) \bigl(u_{0,n}^c(\eta)(0)-u_{0,n}^c(\eta)(x)\bigr).
\]
The cutoff \(\chi(\cdot/\varepsilon)\) is supported in \(\varepsilon U\subset B_{r_*}\).
Then \(v_{\varepsilon,n}-\zeta_{\varepsilon,n}\in \mathcal V_\varepsilon\), and the interior gradient bound gives
\[
 \|\nabla\zeta_{\varepsilon,n}\|_{L^2(\Omega_\varepsilon)} \leq C_\kappa\varepsilon.
\]
Using \(v_{\varepsilon,n}-\zeta_{\varepsilon,n}\) in \eqref{eq:small-core-conducting-difference}, followed by coercivity and Cauchy--Schwarz, proves the same estimate.
The freely varying core constant is essential here: it removes the constant mismatch that would otherwise produce the two-dimensional logarithmic-capacity scale.

Thus all three fixed core types satisfy
\[
 \sup_{|\eta|\leq\rho}
 \|\nabla v_{\varepsilon,n}(\eta)\|_{L^2(\Omega_\varepsilon)}
 \leq C_\kappa\varepsilon.
\]

For \(j\geq2\), the coating region is fixed, so
\begin{align*}
 &\left|
 \int_{\mathcal A_j^0}|\nabla u_{\varepsilon,n}^c|^2\,\mathrm{d} x
 -\int_{\mathcal A_j^0}|\nabla u_{0,n}^c|^2\,\mathrm{d} x
 \right|
 \leq C_\kappa\varepsilon.
\end{align*}
For \(j=1\), the same estimate and the omitted-core bound from \eqref{eq:small-core-interior-gradient} give
\begin{align*}
 &\left|
 \int_{D_1\setminus\overline{\varepsilon\mathcal B}}
 |\nabla u_{\varepsilon,n}^c|^2\,\mathrm{d} x
 -\int_{D_1}|\nabla u_{0,n}^c|^2\,\mathrm{d} x
 \right|
 \leq C_\kappa\varepsilon.
\end{align*}
Here the integral of \(|\nabla u_{0,n}^c|^2\) over \(\varepsilon\mathcal B\) is in fact \(O_\kappa(\varepsilon^2)\).
Combining these estimates with \eqref{eq:small-core-sensitivity-formula} proves \eqref{eq:small-core-C1-convergence}.

Finally, \(\boldsymbol{\mathcal G}_0(0)=0\), and the fundamental theorem of calculus in the log-conductivity variables gives
\begin{align*}
 \boldsymbol{\mathcal G}_\varepsilon(\eta)
 -\boldsymbol{\mathcal G}_0(\eta)
 ={}&\boldsymbol{\mathcal G}_\varepsilon(0)+\int_0^1
 \bigl[D_\eta\boldsymbol{\mathcal G}_\varepsilon(t\eta)
       -D_\eta\boldsymbol{\mathcal G}_0(t\eta)\bigr]
 \eta\,\mathrm{d} t.
\end{align*}
Equations \eqref{eq:small-core-forcing-bound} and \eqref{eq:small-core-C1-convergence} prove \eqref{eq:small-core-C0-convergence}, after decreasing \(\varepsilon_0\) so that \(\varepsilon_0\leq1\).

The proof is complete.
\end{proof}

Only the two-sided endpoint bound remains; it requires the leading forcing coefficient to be nonzero.

\begin{lemma}\label{lem:small-core-endpoint-signs}
For the unscaled reference core,
\[
 g_1^{\mathcal B,0}<0,
 \qquad
 g_1^{\mathcal B,\infty}>0.
\]
\end{lemma}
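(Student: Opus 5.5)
The plan is to reduce both signs to the energy identities already obtained in the proof of Proposition~\ref{prop:one-shell-extreme-limits}, applied now to the single reference inclusion \(\mathcal B\) instead of \(\Omega\). By Lemma~\ref{lem:small-core-endpoint-framework}, for \(n=1\) and \(Q\geq3\) the selection rules and reciprocity give \(M_{11}^{cs}=M_{11}^{sc}=0\) and \(M_{11}^{cc}=M_{11}^{ss}\) for the endpoint problems \([\mathcal B;0]\) and \([\mathcal B;\infty]\). Hence
\[
 g_1^{\mathcal B,\kappa}=\frac{1}{2\pi}\mathbb N_{11}^{(2)}[\mathcal B;\kappa]=\frac{1}{\pi}M_{11}^{cc}[\mathcal B;\kappa]=\frac{1}{\pi}\,e_1^{\mathsf T}\mathsf M_\kappa(\mathcal B)e_1 ,
\]
where \(\mathsf M_\kappa(\mathcal B)\) is the \(2\times2\) polarization tensor with the convention \eqref{eq:multipole-expansion}. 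It therefore suffices to show \(g^{\mathsf T}\mathsf M_{\rm ins}(\mathcal B)g<0\) and \(g^{\mathsf T}\mathsf M_{\rm cond}(\mathcal B)g>0\) for \(g=e_1\).

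The key step is to rederive \eqref{eq:one-shell-insulating-sign}--\eqref{eq:one-shell-conducting-sign} with \(\Omega\) replaced by \(\mathcal B\). That derivation used only three facts: \(\mathcal B\) is a bounded simply connected \(C^{1,\alpha}\) domain; the endpoint solutions are the finite-energy exterior solutions; and the response is given by the far-field pairing \eqref{eq:small-core-endpoint-response-pairing}. All three hold here, with the solutions taken from the weak formulations \eqref{eq:small-core-insulating-weak}, \eqref{eq:small-core-conducting-weak} at \(\varepsilon=1\), \(\eta=0\).

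For \(H=L=H_g\) and \(w=U-H_g\), apply Green's identity on \(B_R\setminus\overline{\mathcal B}\). The term \(\int_{\partial\mathcal B}U\,\partial_\nu U\) vanishes in both cases: in the insulating case because \(\partial_\nu U=0\); in the conducting case because \(U\) is constant on \(\partial\mathcal B\) and has zero total flux. The \(O(R^{-1})\) decay of \(w\) removes the outer term.

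Writing \(U=H_g+w\) and using \(\int_{\partial\mathcal B}\partial_\nu H_g\,H_g\,ds=|\mathcal B||g|^2\) then gives the identity
\[
 g^{\mathsf T}\mathsf M_{\rm ins}g=-|\mathcal B||g|^2-\int_{\mathbb R^2\setminus\overline{\mathcal B}}|\nabla w|^2\,dx ,
\]
together with the analogous identity with opposite sign for the conductor. The sign bookkeeping follows exactly the chain displayed before \eqref{eq:one-shell-insulating-sign}. Since \(|\mathcal B|>0\), both inequalities are strict.

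The main obstacle is the sign and normalization bookkeeping. One must confirm that the pairing \eqref{eq:small-core-endpoint-response-pairing} agrees with \(M_{mn}^{\alpha\beta}\) in \eqref{eq:multipole-expansion} with the same orientation, so that \(|\mathcal B|\) enters with the stated sign. As a cross-check, I would compare with the finite-conductivity case via the difference identity \eqref{eq:small-core-endpoint-difference}: at \(\kappa=1\) the tensor is zero, it increases in \(\kappa\) by the positive derivative \(\int_{\mathcal B}|\nabla u|^2\), and \(\kappa\downarrow0\), \(\kappa\to\infty\) give the endpoints. This forces \(g_1^{\mathcal B,0}<0<g_1^{\mathcal B,\infty}\), consistent with the direct computation.
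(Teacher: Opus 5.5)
Your proposal is correct and follows essentially the paper's own argument: Green's identity on the exterior of \(\mathcal B\), using the Neumann condition (resp.\ constant trace with zero total flux), gives \(g^{\mathsf T}\mathsf M_0(\mathcal B)g=-|\mathcal B|-\int|\nabla w_0^g|^2\) and \(g^{\mathsf T}\mathsf M_\infty(\mathcal B)g=|\mathcal B|+\int|\nabla w_\infty^g|^2\). Cyclic selection and reciprocity then identify \(g_1^{\mathcal B,\kappa}\) with \(M_{11}^{cc}/\pi\), exactly as the paper's \(\mu_\kappa/\pi\); your monotonicity cross-check is unnecessary and would in any case need an endpoint-convergence argument as \(\kappa\to0,\infty\) that you do not supply.
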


\begin{proof}
Let \(H_g(x)=g\cdot x\) with \(|g|=1\), and write the endpoint solution as \(u_\kappa^g=H_g+w_\kappa^g\).
The normal \(\nu\) points from \(\mathcal B\) into its exterior, so the exterior domain has inner outward normal \(-\nu\).
Green's identity and the far-field normalization give
\begin{align*}
 g^{\mathsf T}\mathsf M_0(\mathcal B)g
 &=-|\mathcal B|
   -\int_{\mathbb{R}^2\setminus\overline{\mathcal B}}
     |\nabla w_0^g|^2\,\mathrm{d} x<0,\\
 g^{\mathsf T}\mathsf M_\infty(\mathcal B)g
 &=|\mathcal B|
   +\int_{\mathbb{R}^2\setminus\overline{\mathcal B}}
     |\nabla w_\infty^g|^2\,\mathrm{d} x>0.
\end{align*}
For the first identity, use \(\partial_\nu w_0^g=-\partial_\nu H_g\).
For the second, use that \(w_\infty^g+H_g\) is constant on the core and that \(\int_{\partial\mathcal B}\partial_\nu u_\infty^g\,\mathrm{d} s=0\).
Since \(Q>2K\) implies \(Q\geq3\), cyclic covariance and reciprocity give \(\mathsf M_\kappa(\mathcal B)=\mu_\kappa I_2\).
Moreover,
\[
 g_1^{\mathcal B,\kappa} =\frac{M_{11}^{cc}+M_{11}^{ss}}{2\pi} =\frac{\mu_\kappa}{\pi}.
\]
The two strict signs follow.
\end{proof}

\begin{proof}[Proof of Theorem~\ref{thm:small-strong-core}]
Lemma~\ref{lem:small-core-sensitivity} gives
\[
 J:=J^0=D_\eta\boldsymbol{\mathcal G}_0(0).
\]
By \eqref{eq:small-core-nondegeneracy}, the matrix \(J\) is invertible.
Take the log-conductivity radius \(\rho\) supplied by Lemma~\ref{lem:small-core-C1-limit}.
Since \(\boldsymbol{\mathcal G}_0\) is real analytic, we may shrink this same \(\rho\) so that
\begin{equation}
 \|D_\eta\boldsymbol{\mathcal G}_0(\eta)-J\|
 \leq C_\kappa|\eta|,
 \qquad |\eta|\leq\rho,
 \label{eq:small-core-pointwise-J-Lipschitz}
\end{equation}
and
\[
 \|J^{-1}\|C_\kappa\rho\leq\frac14.
\]
By Lemma~\ref{lem:small-core-C1-limit}, after decreasing \(\varepsilon_*\),
\begin{equation}
 \sup_{\substack{0<\varepsilon<\varepsilon_*\\|\eta|\leq\rho}}
 \left\|J^{-1}
 \bigl(D_\eta\boldsymbol{\mathcal G}_\varepsilon(\eta)-J\bigr)
 \right\|
 \leq\frac12.
 \label{eq:small-core-contraction-derivative}
\end{equation}
The forcing estimate \eqref{eq:small-core-forcing-bound} allows us to decrease \(\varepsilon_*\) once more so that
\[
 \left|J^{-1}\boldsymbol{\mathcal G}_\varepsilon(0)\right|
 \leq\frac{\rho}{4}.
\]

Define
\[
 \mathcal T_\varepsilon(\eta)
 :=\eta-J^{-1}\boldsymbol{\mathcal G}_\varepsilon(\eta).
\]
The mean-value formula and \eqref{eq:small-core-contraction-derivative} show that
\[
 |\mathcal T_\varepsilon(\eta) -\mathcal T_\varepsilon(\widetilde\eta)| \leq\frac12|\eta-\widetilde\eta|
\]
on \(\overline{B_\rho(0)}\).
Moreover, for \(|\eta|\leq\rho\),
\[
 |\mathcal T_\varepsilon(\eta)| \leq|\mathcal T_\varepsilon(0)|+\frac12|\eta| \leq\frac{\rho}{4}+\frac{\rho}{2} =\frac{3\rho}{4}<\rho.
\]
Thus \(\mathcal T_\varepsilon\) maps the closed ball strictly into the open ball and is a contraction.
The Banach fixed-point theorem gives a unique fixed point \(\eta^\varepsilon\in B_\rho(0)\), which is exactly the unique zero of \(\boldsymbol{\mathcal G}_\varepsilon\) in the open neighborhood \(V:=B_\rho(0)\).

At the fixed point,
\[
 |\eta^\varepsilon| \leq\left|J^{-1}\boldsymbol{\mathcal G}_\varepsilon(0)\right| +\frac12|\eta^\varepsilon|,
\]
so \eqref{eq:small-core-forcing-bound} proves \eqref{eq:small-core-root-bound}.
To obtain the leading term, write
\[
 0=\boldsymbol{\mathcal G}_\varepsilon(0)
   +J\eta^\varepsilon+R_\varepsilon,
\]
where
\[
 R_\varepsilon
 :=\int_0^1
 \bigl[D_\eta\boldsymbol{\mathcal G}_\varepsilon
       (t\eta^\varepsilon)-J\bigr]
 \eta^\varepsilon\,\mathrm{d} t.
\]
The uniform Jacobian convergence and \eqref{eq:small-core-pointwise-J-Lipschitz} give
\[
 \|D_\eta\boldsymbol{\mathcal G}_\varepsilon (t\eta^\varepsilon)-J\| \leq C_\kappa\varepsilon +C_\kappa t|\eta^\varepsilon|.
\]
Consequently,
\begin{align*}
 |R_\varepsilon|
 &\leq C_\kappa\varepsilon|\eta^\varepsilon|
      +\frac{C_\kappa}{2}|\eta^\varepsilon|^2
 =O_\kappa(\varepsilon^3).
\end{align*}
It follows that
\begin{equation}
 \eta^\varepsilon
 =-J^{-1}\boldsymbol{\mathcal G}_\varepsilon(0)
  +O_\kappa(\varepsilon^3).
 \label{eq:small-core-root-vector-relation}
\end{equation}
By the exact scaling identity,
\[
 \boldsymbol{\mathcal G}_\varepsilon(0) =\bigl(\varepsilon^2g_1^{\mathcal B,\kappa}, \varepsilon^4g_2^{\mathcal B,\kappa},\ldots, \varepsilon^{2K}g_K^{\mathcal B,\kappa}\bigr)^{\mathsf T}.
\]
All components after the first are \(O_\kappa(\varepsilon^4)\), so \eqref{eq:small-core-root-vector-relation} proves \eqref{eq:small-core-leading-expansion}.
For \(\kappa=0,\infty\), Lemma~\ref{lem:small-core-endpoint-signs} proves \eqref{eq:small-core-endpoint-signs}.
Hence
\[
 -g_1^{\mathcal B,\kappa}(J^0)^{-1}e_1\neq0,
\]
and the leading expansion, after decreasing \(\varepsilon_*\), gives \eqref{eq:small-core-two-sided-bound}.

Finally, exact \(C_Q\) symmetry and \(Q>2K\) allow Proposition~\ref{prop:selection} in the finite-conductivity case and Lemma~\ref{lem:small-core-endpoint-framework} at the endpoints to convert the \(K\) reduced equalities into \eqref{eq:small-core-complete-cancellation}.
The logarithmic parameterization makes every coating conductivity finite and positive; only the core is represented by an endpoint boundary condition.
\end{proof}

The two local conductivity regimes are now established.

\subsection{Joint shape--material continuation near the concentric geometry}
\label{subsec:proof-common-conformal}

We first identify the material Jacobian from the fixed-geometry calculation and establish analytic dependence for perturbations through common conformal level curves.
We then use rotational covariance to eliminate the linear shape term and derive the expansion in Proposition~\ref{prop:local-tangent}.

\begin{proof}[Proof of Corollary~\ref{cor:common-map-branch}]
For the conductivity \eqref{eq:log-conductivity}, Proposition~\ref{prop:selection} reduces the problem to the $K$ real equations
\begin{equation}
 \boldsymbol{\mathcal G}(a,\delta,\eta)
 =(\mathcal G_1,\ldots,\mathcal G_K)=0,
 \label{eq:response-system}
\end{equation}
where $\mathcal G_n$ is defined by \eqref{eq:reduced-response}.
At \(a=0\), the interfaces are the concentric circles \(|x|=r_j\).
Specializing the homogeneous-point calculation in the proof of Theorem~\ref{thm:fixed-target} to these disks and using \(x_j=r_j^2\) gives
\begin{equation}
 D_\eta\boldsymbol{\mathcal G}(0,0,0)=A,
 \qquad D_\delta\boldsymbol{\mathcal G}(0,0,0)=v.
 \label{eq:jacobian}
\end{equation}
Here \(A\) and \(v\) are the matrix and vector defined in \eqref{eq:A-v}.
The matrix \(A\) is the difference--Vandermonde matrix \(B\) in the proof of Corollary~\ref{cor:homothetic}, so the determinant computation there gives \(\det A>0\).

Choose \(\chi\in C_c^\infty((0,\infty))\) which equals one on neighborhoods of all radii \(r_j\), vanishes near the origin, and vanishes outside a fixed ball.
On \(\mathbb C\setminus\{0\}\), set
\[
 V_a(w):=\chi(|w|) \sum_{\ell=1}^{L}a_\ell w^{1-\ell Q},
 \qquad
 \Psi_a(w):=w+V_a(w),
\]
and set \(V_a=0\) near the origin.
Then \(\Psi_a=\Phi_a\) on every reference circle, \(\Psi_a=\operatorname{Id}\) outside a fixed ball, and
\[
 \left\lVert DV_a\right\rVert _{L^\infty}\leq C|a|.
\]
Shrink the parameter neighborhood so that \(\left\lVert DV_a\right\rVert _{L^\infty}<1\).
The mean-value formula then gives
\[
 |\Psi_a(x)-\Psi_a(y)| \geq(1-\left\lVert DV_a\right\rVert _{L^\infty})|x-y|.
\]
Thus \(\Psi_a\) is injective and bi-Lipschitz onto its image.
Invariance of domain makes its image open, while properness, which follows because \(\Psi_a\) is the identity outside a fixed ball, makes the image closed.
Hence the image is all of \(\mathbb R^2\); since \(D\Psi_a=I+DV_a\) is invertible, \(\Psi_a\) is a global \(C^{1,\alpha}\) diffeomorphism.
Moreover, \(\Psi_a(\omega w)=\omega\Psi_a(w)\), and its dependence on the real and imaginary parts of \(a\) is real analytic.

Writing the scattered correction in \(\dot H^1(\mathbb R^2)\) and pulling back by \(x=\Psi_a(y)\) gives a variational problem on the fixed radial partition.
Its coefficient matrix is, phase by phase,
\[
 \sigma_{0,\delta,\eta}(y)\, \det D\Psi_a(y)\, D\Psi_a(y)^{-1}D\Psi_a(y)^{-\mathsf T}.
\]
It is uniformly elliptic for small \(a\) and depends real analytically on \((a,\delta,\eta)\) in \(L^\infty\), hence in operator norm from \(\dot H^1\) to its dual.
The pulled-back right-hand side has the same analytic dependence.
Analytic inversion of these uniformly coercive operators therefore gives real-analytic dependence of the transmission solutions and of the bounded CGPT functionals \(\boldsymbol{\mathcal G}\).
Since $\boldsymbol{\mathcal G}(0,0,0)=0$ and $D_\eta\boldsymbol{\mathcal G}(0,0,0)=A$ is invertible, the real-analytic implicit-function theorem gives the unique branch \eqref{eq:eta-branch} satisfying \eqref{eq:response-system}.
Its conductivities are finite and positive, and Proposition~\ref{prop:selection} converts \eqref{eq:response-system} into the complete cancellation statement \eqref{eq:main-cancel}.
A \(C_Q\)-invariant circle with \(Q\geq3\) must be centered at the origin.
If \(\Gamma_j(a)\) were such a circle, then
\[
 \left|r_j+\sum_{\ell=1}^L a_\ell r_j^{1-\ell Q}\zeta^\ell\right|
 \quad (|\zeta|=1)
\]
would be constant.
Any polynomial of constant modulus on the unit circle is a monomial, as follows from the polynomial identity \(p(z)p^*(z)=c z^{\deg p}\), where \(p^*(z)=z^{\deg p}\overline{p(1/\overline z)}\).
Since the displayed polynomial has the nonzero constant term \(r_j\), it must be constant, forcing every \(a_\ell=0\).
Thus \(a\neq0\) makes the interfaces noncircular, and uniqueness holds in the material neighborhood supplied by the implicit-function theorem.
\end{proof}

\begin{proof}[Proof of Proposition~\ref{prop:local-tangent}]
For every shape $a$, the zero log-conductivity vector \((\delta,\eta)=(0,0)\) gives the homogeneous medium, so
\begin{equation}
 \boldsymbol{\mathcal G}(a,0,0)=0,
 \qquad \eta(a,0)=0.
 \label{eq:zero-contrast}
\end{equation}
For $\alpha\in\mathbb{R}$, define the action
\begin{equation}
 (\mathcal R_\alpha a)_\ell=e^{i\ell Q\alpha}a_\ell.
 \label{eq:shape-rotation-action}
\end{equation}
The identity
\[
 \Phi_{\mathcal R_\alpha a}(w) =e^{i\alpha}\Phi_a(e^{-i\alpha}w)
\]
shows that $\mathcal R_\alpha a$ represents a physical rotation of the entire multilayer.
A diagonal second-type CGPT is invariant under physical rotation, and hence
\begin{equation}
 \boldsymbol{\mathcal G}(\mathcal R_\alpha a,\delta,\eta)
 =\boldsymbol{\mathcal G}(a,\delta,\eta).
 \label{eq:response-shape-invariance}
\end{equation}
At $a=0$, no nonzero real linear functional on a complex weight-$\ell Q$ parameter can be invariant under every $\alpha$.
Indeed, if \(L_\ell\) denotes the restriction of the shape derivative to the \(\ell\)-th complex coordinate, invariance gives
\[
 L_\ell(e^{i\ell Q\alpha}h)=L_\ell(h)
 \quad\text{for every }\alpha.
\]
Taking \(\alpha=\pi/(\ell Q)\) and using real linearity yields \(L_\ell(-h)=L_\ell(h)=-L_\ell(h)\), hence \(L_\ell=0\).
Differentiating \eqref{eq:response-shape-invariance} therefore gives
\begin{equation}
 D_a\boldsymbol{\mathcal G}(0,\delta,\eta)=0
 \label{eq:no-linear-shape}
\end{equation}
for every nearby log-conductivity vector $(\delta,\eta)$ at the concentric radial reference geometry.

Let $\eta_0(\delta)=\eta(0,\delta)$.
Differentiating $\boldsymbol{\mathcal G}(a,\delta,\eta(a,\delta))=0$ in $a$ at $a=0$ gives
\[
 D_\eta\boldsymbol{\mathcal G}(0,\delta,\eta_0(\delta))D_a\eta(0,\delta)
 +D_a\boldsymbol{\mathcal G}(0,\delta,\eta_0(\delta))=0.
\]
The first factor is invertible by continuity from $A$, and the second term is zero by \eqref{eq:no-linear-shape}; hence $D_a\eta(0,\delta)=0$.
By \eqref{eq:zero-contrast} and the analytic Hadamard lemma, there is an analytic vector function \(h\) such that
\[
 \eta(a,\delta)=\delta h(a,\delta).
\]
For \(\delta\neq0\), the identity \(D_a\eta(0,\delta)=0\) gives \(D_a h(0,\delta)=0\); analyticity extends this equality to \(\delta=0\).
Differentiating \(\boldsymbol{\mathcal G}(0,\delta,\eta(0,\delta))=0\) at \(\delta=0\) and using \eqref{eq:jacobian} gives
\[
 v+A\,\partial_\delta\eta(0,0)=0,
 \qquad
 h(0,0)=\partial_\delta\eta(0,0)=-A^{-1}v.
\]
Taylor expansion, together with \(D_a h(0,\delta)=0\), now yields
\[
 h(a,\delta)=-A^{-1}v+O(|\delta|+|a|^2).
\]
Multiplying by \(\delta\) proves \eqref{eq:branch-expansion}.
Since $A^{-1}v\neq0$, the branch is nontrivial for $\delta\neq0$.
\end{proof}

\begin{remark}
For $K=2$, formula \eqref{eq:branch-expansion} gives, with $x_j=r_j^2$,
\begin{align*}
 \eta_1
 &=-\frac{x_0(x_2+x_1-x_0)}{(x_1-x_0)(x_2-x_0)}\,\delta
 +O(\delta^2+|a|^2|\delta|),\\
 \eta_2
 &=\frac{x_0x_1}{(x_2-x_1)(x_2-x_0)}\,\delta
 +O(\delta^2+|a|^2|\delta|).
\end{align*}
A core with conductivity slightly above the normalized background is therefore compensated first by a lower-conductivity inner coating and then by a higher-conductivity outer coating.
\end{remark}

\subsection{Uniform shape continuity and degree persistence}\label{sec:Unif01}

\begin{lemma}\label{lem:uniform-shape-continuity}
	Fix $\kappa\in[0,\infty]$ and let $B\Subset\mathbb R^K$ be compact.
	Then
	\begin{equation}
		\lim_{a\to0}
		\sup_{\eta\in B}
		\left|
		\mathcal G_a^\kappa(\eta)
		-\mathcal G_0^\kappa(\eta)
		\right|
		=0.
		\label{eq:uniform-shape-continuity}
	\end{equation}
	The convergence is also uniform for $ta$ with $0\leq t\leq1$.
\end{lemma}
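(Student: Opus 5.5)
The plan is to pull the problem back to the fixed concentric partition, as in the proof of Corollary~\ref{cor:common-map-branch}, and then prove continuity of the pulled-back solution operators in $a$, uniformly for $\eta\in B$. Take the cutoff diffeomorphism $\Psi_a(w)=w+\chi(|w|)\sum_\ell a_\ell w^{1-\ell Q}$, with the cutoff now equal to one near every radius $r_0,\ldots,r_K$. For $\|a\|_*$ small it is a global bi-Lipschitz $C^{1,\alpha}$ diffeomorphism and equals the identity outside a fixed ball. It maps $\{|w|=r_j\}$ onto $\Gamma_j(a)$.

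\textbf{Finite core, $0<\kappa<\infty$.} Pulling back the corrector equation for $\sigma_a^\kappa(\eta)$ gives a problem on $\dot H^1(\mathbb R^2)$ over the radial partition. Its coefficient is $\sigma_0^\kappa(\eta)\,\det D\Psi_a\,D\Psi_a^{-1}D\Psi_a^{-\mathsf T}$. For $\eta\in B$ compact, the conductivities $e^{\eta_j}$ lie in a fixed interval $[c_B,C_B]$. So the operators $\mathscr L_{a,\eta}$ are coercive with constants uniform in $(a,\eta)$, and
\[
\|\mathscr L_{a,\eta}-\mathscr L_{0,\eta}\|\le C_B\|DV_a\|_{L^\infty}\le C_B|a|.
\]
The right-hand side is supported in a fixed ball and also differs by $O(|a|)$. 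The second resolvent identity then gives $\|w_a-w_0\|_{\dot H^1}\le C_B|a|$, uniformly in $\eta\in B$. The pulled-back incident polynomial $P_n^\beta\circ\Psi_a$ equals $P_n^\beta$ outside a fixed ball. Hence the CGPT, read off through the far-field pairing on a fixed annulus outside the support of $V_a$ (as in Lemma~\ref{lem:small-core-endpoint-framework}), is a bounded linear functional of the pulled-back field. This gives $|\mathcal G_a^\kappa(\eta)-\mathcal G_0^\kappa(\eta)|\le C_B|a|$. Because the bound depends only on $|a|$, it holds verbatim for $ta$ with $0\le t\le1$.

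\textbf{Endpoints $\kappa=0,\infty$.} Here we work on $\Omega=\{|w|>r_0\}$ pulled back, using the Beppo--Levi spaces of Lemma~\ref{lem:small-core-endpoint-framework}. The key point is that $\Psi_a$ maps $\{|w|=r_0\}$ onto $\Gamma_0(a)$ and preserves both endpoint structures. Constant traces on the inner circle correspond to constant traces on $\Gamma_0(a)$, so the constrained space $\mathcal V$ and the affine space $\mathcal W_H$ pull back to fixed spaces. The only change is that $H$ is replaced by $H\circ\Psi_a$ in the affine constraint and in the Neumann datum $\langle\partial_\nu H,\cdot\rangle$. The zero-total-flux condition is encoded variationally and is therefore automatically preserved.

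For $\kappa=\infty$, subtract the lift $\zeta_{a}=\chi\,(H\circ\Psi_a(\cdot)-H)$ adjusted by constants. This lift is $O(|a|)$ in $\dot H^1$, and the problem is thereby reduced to a coercive problem on the fixed space $\mathcal V$. For $\kappa=0$, the boundary functional becomes the bulk form
\[
-\int(\text{pulled-back coefficient})\,\nabla(H\circ\Psi_a)\cdot\nabla\varphi
\]
on the fixed annulus, which again depends Lipschitz-continuously on $a$. The same resolvent argument then gives an $O(|a|)$ bound uniform over $\eta\in B$.

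\textbf{Main obstacle.} The delicate step is the endpoint reformulation. One must verify that the pullback commutes with the far-field normalization, including the absence of a logarithmic term and the choice of constant, and that the response pairing is unchanged. The plan handles both by measuring the response on an annulus where $\Psi_a=\mathrm{Id}$. It also handles the shape norm $\|a\|_*$: only $\|DV_a\|_{L^\infty}\lesssim\|a\|_*$ is used, and this is monotone along the segment $ta$.
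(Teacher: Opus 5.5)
Your proposal is correct and follows essentially the same route as the paper. You pull back by a cutoff diffeomorphism $\Psi_a$ to the fixed radial partition, use compactness of $B$ for uniform coercivity, and get uniform convergence of the correctors from the resolvent identity. At the endpoints you use the pulled-back Beppo--Levi formulations with a lift in the conducting case, you read off the responses on a fixed annulus where $\Psi_a=\operatorname{Id}$, and uniformity along $ta$ follows from $\|DV_{ta}\|_{L^\infty}=t\|DV_a\|_{L^\infty}$.

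The differences are cosmetic: you record an explicit $O(|a|)$ rate, use the far-field pairing rather than the volume integral also for finite $\kappa$, and lift by $\chi(H\circ\Psi_a-H)$ relative to the $a=0$ solution instead of the paper's $\chi(H(0)-H\circ\Psi_a)$.
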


\begin{proof}
	For small $a$, let $\Psi_a$ be a $C^{1,\alpha}$ diffeomorphism from the radial partition to the deformed partition that is the identity outside a fixed ball.
	First suppose $0<\kappa<\infty$.
	Since $B$ is compact in log-conductivity space, the pulled-back coefficients have ellipticity constants independent of $(a,\eta)$ in a neighborhood of $\{0\}\times B$.
	Let \(\mathscr L_{a,\eta}\) denote the corresponding operator from
	\(\dot H^1\) to its dual.
The \(C^1\) convergence of the pullback maps
	and compactness of \(B\) give
	\[
	\sup_{\eta\in B}
	\left\lVert \mathscr L_{a,\eta}-\mathscr L_{0,\eta}\right\rVert 
	\longrightarrow0.
	\]
	Uniform coercivity bounds both inverses independently of
	\((a,\eta)\), and the resolvent identity
	\[
	\mathscr L_{a,\eta}^{-1}-\mathscr L_{0,\eta}^{-1}
	=\mathscr L_{a,\eta}^{-1}
	(\mathscr L_{0,\eta}-\mathscr L_{a,\eta})
	\mathscr L_{0,\eta}^{-1}
	\]
	therefore proves uniform convergence of the inverses on \(B\).
	The pulled-back forcing functionals converge uniformly as well.
	In particular, for each of the finitely many incident modes used in
	\eqref{eq:global-material-map}, the corresponding pulled-back scattered
	corrections satisfy an estimate of the form
	\[
	\sup_{\eta\in B}
	\|w_{a,\eta}^{(n)}-w_{0,\eta}^{(n)}\|_{\dot H^1(\mathbb R^2)}
	\leq C_B\|\Psi_a-\operatorname{Id}\|_{C^1}.
	\]
	The CGPT integrals are supported in one fixed bounded set after
	pullback.
Cauchy--Schwarz, the uniform ellipticity bounds, and the
	preceding corrector estimate show that they are uniformly continuous
	functionals of the coefficients and correctors.
Hence they inherit
	the stated uniform convergence.
	For $\kappa=0,\infty$, use \eqref{eq:small-core-insulating-weak}--\eqref{eq:small-core-conducting-weak} with the core replaced by $D_0(a)$.
	The pullback identifies the test spaces with the fixed spaces
	\[
	\begin{gathered}
	\Omega_0:=\mathbb R^2\setminus\overline{D_0(0)},
	\qquad X_0:=\dot H^1(\Omega_0),\\
	X_\infty:=\{v\in\dot H^1(\Omega_0):\operatorname{Tr}_{\Gamma_0(0)}v\text{ is constant}\}.
	\end{gathered}
	\]
	In the conducting case, subtract the lift
	\[
	\zeta_{a,H}(x):=\chi(x)\bigl(H(0)-H(\Psi_a(x))\bigr),
	\]
	where $\chi\in C_c^\infty(\mathbb R^2)$ is a fixed cutoff equal to one near $\overline{D_0(0)}$, so that the pulled-back correction minus $\zeta_{a,H}$ lies in $X_\infty$.
	Since $\Psi_a\to\operatorname{Id}$ in $C^1$ and all conductivities outside the core remain bounded above and away from zero, the pulled-back forms on these spaces are uniformly coercive and converge in operator norm uniformly for $\eta\in B$.
	The bounded forcing functionals, including the insulating boundary term and the conducting lift, converge uniformly as well.
	The same resolvent argument therefore gives uniform convergence in the gradient norm on $\Omega_0$.
	On a fixed annulus outside the support of $\Psi_a-\operatorname{Id}$, the far-field pairing estimate in Lemma~\ref{lem:small-core-endpoint-framework} transfers this energy convergence to every required CGPT.

	The pullback diffeomorphisms can be chosen so that
	\[
	\sup_{0\leq t\leq1}
	\|\Psi_{ta}-\operatorname{Id}\|_{C^{1,\alpha}}
	\leq C\|a\|_*.
	\]
	The same operator estimate is therefore uniform for $0\leq t\leq1$ in all three cases, with constants allowed to depend on the fixed $\kappa$.

The proof is complete.
\end{proof}

We finally come to the general existence of non-circular structure with \(K\)-CGPT vanishing.
For later use, set
\begin{equation}
	\left\lVert a\right\rVert _*
	:=\sum_{\ell=1}^{L}
	(\ell Q-1)|a_\ell|r_0^{-\ell Q}.
	\label{eq:global-shape-norm}
\end{equation}
For a prescribed radial core conductivity $\kappa\in[0,\infty]$, also set
\begin{equation}
	\mathcal G_0^\kappa(\eta)
	:=\left(
	\frac{1}{2\pi n}\mathbb N_{nn}^{(2)}
	[\sigma_0^\kappa(\eta)]
	\right)_{n=1}^K,
	\qquad \eta\in\mathbb R^K.
	\label{eq:main-radial-material-map}
\end{equation}

\begin{proof}[Proof of Theorem \ref{thm:global-material}]
We use the homotopy invariance of Brouwer degree \cite{Ciarlet13} and the radial degree argument in \cite{sun2026existence}.
We first consider $0<\kappa<\infty$ and define
$$\gamma_n=\frac{\sigma_{n-1}-\sigma_{n}}{\sigma_{n-1}+\sigma_{n}}, \quad n=1,2,\dots,K+1,$$
where $\sigma_{K+1}:=1$ is the background conductivity.
We also set $\gamma:=(\gamma_2, \ldots, \gamma_{K+1})\in(-1,1)^K$.
The relations between $\gamma_1$, $\gamma$ and $\kappa$, $\eta$ are
$$
\kappa=\prod_{n=1}^{K+1}\frac{1+\gamma_n}{1-\gamma_n}, \quad \eta_m=\sum_{n=m+1}^{K+1}\log\frac{1+\gamma_n}{1-\gamma_n}, \quad m=1, 2, \ldots, K.
$$
For fixed $0<\kappa<\infty$, set
\[
P(\gamma):=\prod_{n=2}^{K+1}\frac{1+\gamma_n}{1-\gamma_n},
\qquad
\gamma_1^\kappa(\gamma):=\frac{\kappa-P(\gamma)}{\kappa+P(\gamma)}.
\]
Thus the fixed-core response is $F_\kappa(\gamma):=\mathcal G_0^\kappa(\eta(\gamma))$, with the innermost contrast chosen as $\gamma_1^\kappa(\gamma)$.
After reversing the layer order and dividing the $n$th radial response by $\pi n$, the fixed-core construction in \cite[proof of Theorem~3.1, equations~(5.10)--(5.12)]{sun2026existence} gives a continuous extension of $F_\kappa$ to $[-1,1]^K$ satisfying
\begin{equation}
	0\notin F_\kappa(\partial[-1,1]^K),
	\qquad
	\deg(F_\kappa,(-1,1)^K,0)\neq0.
	\label{eq:contrast-nonzero-degree}
\end{equation}
The boundary extension uses the independence of the exterior response from the innermost contrast on $\partial[-1,1]^K$, as established in \cite[proof of Theorem~5.5]{sun2026existence}.
At $\kappa=0$ and $\kappa=\infty$, take $\gamma_1=-1$ and $\gamma_1=1$, respectively, and define $F_\kappa(\gamma):=\mathcal G_0^\kappa(\eta(\gamma))$ using the endpoint responses.
The same boundary exclusion and nonzero degree in \eqref{eq:contrast-nonzero-degree} follow directly from \cite[proof of Theorem~5.5, equation~(5.9)]{sun2026existence} at these two fixed contrasts.
For every fixed $\kappa\in[0,\infty]$, the zero set of $F_\kappa$ is therefore compactly contained in $(-1,1)^K$, and the map $\gamma\mapsto\eta(\gamma)$ is a diffeomorphism from $(-1,1)^K$ onto $\mathbb R^K$.
Excision and change of variables for Brouwer degree then give a bounded open set $\mathcal O_\kappa\Subset\mathbb R^K$ such that
\begin{equation}
	0\notin\mathcal G_0^\kappa(\partial\mathcal O_\kappa),
	\qquad
	\deg(\mathcal G_0^\kappa,\mathcal O_\kappa,0)\neq0.
	\label{eq:radial-nonzero-degree}
\end{equation}
Boundedness
in log-conductivity coordinates makes every conductivity represented by
a vector in \(\overline{\mathcal O_\kappa}\) finite and uniformly
positive.	
The number
	\begin{equation}
		\mu_\kappa
		:=\min_{\eta\in\partial\mathcal O_\kappa}
		|\mathcal G_0^\kappa(\eta)|
		\label{eq:degree-boundary-gap}
	\end{equation}
	is strictly positive.
	Lemma~\ref{lem:uniform-shape-continuity} permits $\rho_\kappa$ to be chosen smaller than the injectivity threshold in \eqref{eq:univalence} and so that
	\begin{equation}
		\sup_{0\leq t\leq1}
		\sup_{\eta\in\partial\mathcal O_\kappa}
		\left|
		\mathcal G_{ta}^\kappa(\eta)
		-\mathcal G_0^\kappa(\eta)
		\right|
		<\mu_\kappa
		\label{eq:degree-homotopy-gap}
	\end{equation}
	whenever \eqref{eq:global-shape-smallness} holds.
	Consequently,
	\[
	0\notin\mathcal G_{ta}^\kappa
	(\partial\mathcal O_\kappa)
	\qquad 0\leq t\leq1.
	\]
	Homotopy invariance of the Brouwer degree gives
	\begin{equation}
		\deg(\mathcal G_a^\kappa,\mathcal O_\kappa,0)
		=\deg(\mathcal G_0^\kappa,\mathcal O_\kappa,0)
		\neq0.
		\label{eq:shape-degree-invariance}
	\end{equation}
	Hence $\mathcal G_a^\kappa$ has a zero $\eta^a\in\mathcal O_\kappa$.
	The compact inclusion $\overline{\mathcal O_\kappa}\Subset\mathbb{R}^K$ gives the asserted finite positive material bounds.
	Finally, Proposition~\ref{prop:selection} for $0<\kappa<\infty$ and the endpoint selection argument in Lemma~\ref{lem:small-core-endpoint-framework}, applied to the core $D_0(a)$, convert the $K$ reduced equalities into the complete block cancellation \eqref{eq:global-complete-cancellation}.
	The constant-modulus argument in the proof of
	Corollary~\ref{cor:common-map-branch} shows that \(a\neq0\) makes every interface
	noncircular, while \(\kappa\neq1\) already makes the core differ from
	the background.
\end{proof}

\section{Main result without geometric symmetry}
\label{sec:symmetry-free-completion}

The constructions in Section~\ref{sec:local-constructions} retain an exact cyclic symmetry, which reduces a complete CGPT block to \(K\) scalar equations.
In this section, we vary the interfaces and conductivities simultaneously, without imposing symmetry on the perturbed structure.
The relevant target is the total-degree triangle rather than the square block; its connection with arbitrary entire harmonic incident fields is established in Section~\ref{sec:harmonic-backgrounds}.

\subsection{The total degree triangle and its frequency count}

Reciprocity \eqref{eq:reciprocity} gives
\begin{equation}
 \mathbb N_{mn}^{(1)}=\mathbb N_{nm}^{(1)},
 \qquad
 \mathbb N_{nm}^{(2)}=\overline{\mathbb N_{mn}^{(2)}},
 \qquad
 \mathbb N_{nn}^{(2)}\in\mathbb R.
 \label{eq:symfree-complex-reciprocity}
\end{equation}
For a fixed ordered pair, the two complex quantities in \eqref{eq:N1}--\eqref{eq:N2}, together with \eqref{eq:symfree-complex-reciprocity} on the diagonal, contain exactly the independent information in the four real CGPTs.
Consequently an independent target for
\begin{equation}
 M_{mn}^{\alpha\beta}=0
 \quad(m+n\leq2K+1),
 \qquad \alpha,\beta\in\{c,s\},
 \label{eq:symfree-triangle-target}
\end{equation}
is
\begin{align}
 \mathscr T_K^{(1)}
 &=\{\mathbb N_{mn}^{(1)}:1\leq m\leq n,\ m+n\leq2K+1\},
 \label{eq:symfree-target-N1}\\
 \mathscr T_K^{(2,\mathrm{off})}
 &=\{\mathbb N_{mn}^{(2)}:1\leq m<n,\ m+n\leq2K+1\},
 \label{eq:symfree-target-N2off}\\
 \mathscr T_K^{(2,\mathrm{diag})}
 &=\{\mathbb N_{nn}^{(2)}:1\leq n\leq K\}.
 \label{eq:symfree-target-N2diag}
\end{align}

For \(1\leq q\leq2K+1\), define
\begin{align}
 I_q^+
 &=\{(m,n):1\leq m\leq n,\ m+n=q\},
 \label{eq:symfree-Iplus}\\
 I_q^-
 &=\{(m,n):1\leq m<n,\ n-m=q,\ m+n\leq2K+1\}.
 \label{eq:symfree-Iminus}
\end{align}
The first set indexes the first-type responses of Fourier weight \(q\), and the second indexes the second-type off-diagonal responses of the same weight.
Direct counting gives
\begin{equation}
 \#I_q^+=\left\lfloor\frac q2\right\rfloor,
 \qquad
 \#I_q^-=\left\lfloor\frac{2K+1-q}{2}\right\rfloor,
 \qquad
 \boxed{\#I_q^++\#I_q^-=K}.
 \label{eq:symfree-frequency-count}
\end{equation}
Thus the triangle contains \(K(2K+1)\) complex conditions and \(K\) real diagonal conditions, or \(4K^2+3K\) real conditions in total.
For each \(q\), the \(K\) complex conditions in \eqref{eq:symfree-frequency-count} can therefore be paired with \(K\) interface Fourier coefficients.
This count alone does not imply solvability: the corresponding derivative blocks must still be shown to be invertible.

\begin{remark}\label{rem:symfree-sharp-cutoff}
The cutoff is sharp for the present architecture, in which the core shape is prescribed and only the \(K\) coating interfaces and \(K\) coating conductivities are controls.
At total degree \(2K+2\), every even frequency block has \(K+1\) complex conditions, and the diagonal target has \(K+1\) real conditions.
For \(K=3\), the \(q=2\) block would already contain
\[
 \mathbb N_{11}^{(1)},\quad \mathbb N_{13}^{(2)},\quad \mathbb N_{24}^{(2)},\quad \mathbb N_{35}^{(2)},
\]
whereas the three coating boundaries provide only three complex coefficients of frequency two.
The number seven is therefore a control threshold, not a physical impossibility: an additional coating, an additional controlled interface, or another selection mechanism can move the threshold.
\end{remark}

\subsection{Analytic shape dependence and the Hadamard derivative}

Fix concentric reference interfaces \(\Gamma_j^0=\partial B_{r_j}\), \(0\leq j\leq K\), with \(0<r_0<\cdots<r_K\).
The core conductivity \(\sigma_0=\kappa\) is fixed, the coating conductivities are \(\sigma_j=e^{\eta_j}\), \(1\leq j\leq K\), and \(\sigma_{K+1}=1\).
Write \(\sigma_{0,\eta}\) for this concentric reference conductivity.
Let \(\mathbb T:=\mathbb R/(2\pi\mathbb Z)\) denote the one-dimensional torus, define
\begin{equation}
 \mathcal X:=\prod_{j=0}^{K}C^{2,\alpha}(\mathbb T;\mathbb R),
 \qquad 0<\alpha<1,
 \label{eq:symfree-shape-space}
\end{equation}
and, for \(h=(h_0,\ldots,h_K)\), set
\begin{equation}
 \Gamma_j(h)
 =\{(r_j+h_j(\theta))\mathrm{e}^{i\theta}:\theta\in\mathbb T\}.
 \label{eq:symfree-normal-graphs}
\end{equation}
Our Fourier convention is
\begin{equation}
 \widehat h_j(q)=\frac1{2\pi}\int_0^{2\pi}
 h_j(\theta)\mathrm{e}^{iq\theta}\,\mathrm{d}\theta,
 \qquad
 h_j(\theta)=\widehat h_j(0)
 +2\operatorname{Re}\sum_{q\geq1}\widehat h_j(q)\mathrm{e}^{-iq\theta}.
 \label{eq:symfree-Fourier-convention}
\end{equation}

The implicit-function argument requires analytic dependence of the target CGPTs on these shape coordinates and on the conductivities.
The following lemma supplies this dependence on a fixed reference partition.

\begin{lemma}\label{lem:symfree-shape-analyticity}
Near every concentric configuration \((0,\bar\eta)\in\mathcal X\times\mathbb R^K\) representing finite positive conductivities, the map from \((h,\eta)\) to any finite family of ordinary CGPTs is real analytic.
After shrinking the neighborhood, the interfaces remain strictly nested and the conductivity remains uniformly elliptic.
\end{lemma}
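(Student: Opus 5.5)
I would prove Lemma~\ref{lem:symfree-shape-analyticity} by the same pullback strategy used in the proof of Corollary~\ref{cor:common-map-branch}: transport every perturbed partition back to the fixed concentric partition by a diffeomorphism depending linearly on \(h\), and then invoke analytic inversion of uniformly coercive operators. Concretely, I would choose cutoffs \(\chi_j\in C_c^\infty((0,\infty))\) with \(\chi_j=1\) near \(r_j\) and pairwise disjoint supports contained in small neighborhoods of \(r_j\) that avoid the origin. Then I would set
\[
 \Psi_h(re^{i\theta}):=\Bigl(r+\sum_{j=0}^K\chi_j(r)h_j(\theta)\Bigr)e^{i\theta}.
\]
This map depends linearly, hence analytically, on \(h\in\mathcal X\) as a map into \(C^{2,\alpha}\) away from the origin (it equals the identity near \(0\) and outside a fixed ball). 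It sends \(\partial B_{r_j}\) onto \(\Gamma_j(h)\), and \(\lVert D\Psi_h-I\rVert_{L^\infty}\leq C\lVert h\rVert_{\mathcal X}\).

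For small \(\lVert h\rVert_{\mathcal X}\), the mean-value and invariance-of-domain arguments from the proof of Corollary~\ref{cor:common-map-branch} show that \(\Psi_h\) is a global \(C^{2,\alpha}\) diffeomorphism. The images \(\Gamma_j(h)\) are then disjoint simple closed curves, so they are strictly nested. Shrinking the neighborhood of \(\bar\eta\) also keeps \(e^{\eta_j}\) in a compact subset of \((0,\infty)\); this gives uniform ellipticity and settles the geometric half of the statement.

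Next I would pull back the corrector problem \eqref{eq:fixed-target-corrector} for each incident mode \(P_n^\beta\), posed in \(\dot H^1(\mathbb R^2)\). Pulling back through \(x=\Psi_h(y)\) gives, phase by phase, the coefficient matrix
\[
 \sigma_{0,\eta}(y)\,\det D\Psi_h\,D\Psi_h^{-1}D\Psi_h^{-\mathsf T}.
\]
The pulled-back forcing involves \(\nabla(P_n^\beta\circ\Psi_h)\) on a fixed bounded set. Three facts give analyticity of the coefficients in \(L^\infty\) on the fixed partition:
\begin{itemize}
 \item \(D\Psi_h\) is affine in \(h\);
 \item \(\det\) and matrix inversion near \(I\) are analytic;
 \item \(P_n^\beta\circ\Psi_h\) is polynomial in \(h\).
\end{itemize}
Hence the operator \(\mathscr L_{h,\eta}:\dot H^1\to(\dot H^1)^*\) and the forcing functional are analytic in operator norm. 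Since \(\mathscr L_{0,\bar\eta}\) is coercive, the Neumann-series factorization from the proof of Theorem~\ref{thm:fixed-target} makes the pulled-back corrector \(\widetilde w_n^\beta=w_n^\beta\circ\Psi_h\) analytic in \((h,\eta)\).

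Finally, I would express each CGPT as a bounded functional of quantities on the fixed partition. Changing variables in \eqref{eq:real-cgpt} gives an integral over the fixed set \(\overline{B_{r_K+c}}\) of \((\sigma_{0,\eta}-1)\) against \(\det D\Psi_h\, D\Psi_h^{-\mathsf T}\nabla(\widetilde w+P\circ\Psi_h)\cdot D\Psi_h^{-\mathsf T}\nabla(P_m^\alpha\circ\Psi_h)\). This is a continuous multilinear expression in analytic quantities, hence analytic. Alternatively, one can read off the far-field coefficients on an annulus where \(\Psi_h=\mathrm{Id}\), as in Lemma~\ref{lem:small-core-endpoint-framework}.

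The main obstacle I expect is checking that composition with \(\Psi_h\) is genuinely analytic in the required norms, rather than merely \(C^\infty\). This holds because the only compositions that occur are with polynomials \(P_n^\beta\) and with the fixed piecewise constant \(\sigma_{0,\eta}\): the unknowns are pulled back, never the shape variable itself. I would keep the construction in this form so that no composition of a nonsmooth function with \(\Psi_h\) is ever differentiated in \(h\).
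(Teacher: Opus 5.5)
Your proposal is correct and follows essentially the same route as the paper. It uses the same collar diffeomorphism \(\Psi_h\), the same properness and invariance-of-domain argument for global invertibility, the same pulled-back coefficient \(\sigma_{0,\eta}\det D\Psi_h\,D\Psi_h^{-1}D\Psi_h^{-\mathsf T}\), and analytic inversion of the uniformly coercive operator followed by pulling back \eqref{eq:real-cgpt}. The only difference is cosmetic: you take \(w_n^\beta\circ\Psi_h\) as the unknown with the pulled-back \((\sigma-1)\) forcing, whereas the paper uses \(u_n^\beta\circ\Psi_h-P_n^\beta\) with forcing \(-(A_{h,\eta}-I)\nabla P_n^\beta\); these differ by the compactly supported function \(P_n^\beta\circ\Psi_h-P_n^\beta\), which is polynomial in \(h\).
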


\begin{proof}
Choose mutually disjoint radial collars of the reference circles and smooth radial cutoffs \(\chi_j\) supported in those collars.
The map
\[
 \Psi_h(r\mathrm{e}^{i\theta}) =\left(r+\sum_{j=0}^{K}\chi_j(r)h_j(\theta)\right)\mathrm{e}^{i\theta}
\]
equals the identity near the origin and outside a fixed ball and sends \(\Gamma_j^0\) to \(\Gamma_j(h)\).
If \(\left\lVert D\Psi_h-I\right\rVert _{L^\infty}<1\), then
\[
 |\Psi_h(x)-\Psi_h(y)| \geq(1-\left\lVert D\Psi_h-I\right\rVert _{L^\infty})|x-y|,
\]
so \(\Psi_h\) is injective and its inverse is Lipschitz on its image.
Because \(\Psi_h\) equals the identity outside a fixed ball, it is proper.
Invariance of domain makes its image open, while properness makes the image closed.
Hence the image is all of \(\mathbb R^2\).
Since \(D\Psi_h\) is invertible everywhere, the local inverse theorem and the preceding estimate show that \(\Psi_h\) is a global bi-Lipschitz \(C^{2,\alpha}\) diffeomorphism.
Pullback to the fixed radial partition gives
\[
 A_{h,\eta} =\sigma_{0,\eta}\det D\Psi_h D\Psi_h^{-1}D\Psi_h^{-\mathsf T}.
\]
This coefficient is real analytic in \((h,\eta)\) as an \(L^\infty\)-valued map and is uniformly coercive nearby.
For the incident polynomial \(P_n^\beta\), the finite-energy unknown is the pulled-back corrector
\[
 v_n^\beta=u_n^\beta\circ\Psi_h-P_n^\beta\in\dot H^1(\mathbb R^2),
\]
not the total polynomially growing field itself.
Changing variables in the weak formulation gives
\[
 \int_{\mathbb R^2}A_{h,\eta}\nabla(u_n^\beta\circ\Psi_h)\cdot\nabla\varphi\,\mathrm{d}x=0,
 \qquad \varphi\in C_c^\infty(\mathbb R^2).
\]
Since \(P_n^\beta\) is harmonic, the corrector satisfies
\[
 \int_{\mathbb R^2}A_{h,\eta}\nabla v_n^\beta\cdot\nabla\varphi\,\mathrm{d}x
 =-\int_{\mathbb R^2}(A_{h,\eta}-I)\nabla P_n^\beta\cdot\nabla\varphi\,\mathrm{d}x.
\]
Because \(A_{h,\eta}-I\) is compactly supported, the right-hand side defines a bounded functional on \(\dot H^1(\mathbb R^2)\), and this identity extends by density to every \(\varphi\in\dot H^1(\mathbb R^2)\).
Analytic inversion of the uniformly coercive variational operator proves analytic dependence of the corrector.
Pulling \eqref{eq:real-cgpt} back to the fixed partition then proves the assertion that the CGPTs are analytic with respect to $h$ and $\eta$.
\end{proof}

To identify the derivative with respect to the interface variables, it is convenient to use the bilinear response associated with two harmonic loadings.
For two real harmonic loadings \(H,F\), put
\begin{equation}
 \mathscr M_\sigma(F,H)
 :=\int_{\mathbb R^2}(\sigma-1)\nabla u_H\cdot\nabla F\,\mathrm{d} x
 \label{eq:symfree-bilinear-response}
\end{equation}
and extend it complex bilinearly.
Thus
\begin{equation}
 \mathbb N_{mn}^{(1)}=\mathscr M_\sigma(z^m,z^n),
 \qquad
 \mathbb N_{mn}^{(2)}=\mathscr M_\sigma(\overline{z}^{\,m},z^n).
 \label{eq:symfree-complex-response}
\end{equation}
Orient \(\nu_j\) from phase \(\sigma_j\) toward phase \(\sigma_{j+1}\).
If \(u_H,u_F\) are the unperturbed total fields, write their common conormal fluxes as
\[
 q_H^{(j)}=\sigma_j\partial_{\nu_j}u_H^- =\sigma_{j+1}\partial_{\nu_j}u_H^+,
 \qquad
 q_F^{(j)}=\sigma_j\partial_{\nu_j}u_F^- =\sigma_{j+1}\partial_{\nu_j}u_F^+.
\]

\begin{proposition}\label{prop:symfree-Hadamard}
If positive \(h_j\) expands the inner phase across \(\Gamma_j^0\), then
\begin{align}
 D\mathscr M_\sigma(F,H)[h]
 =\sum_{j=0}^{K}\int_{\Gamma_j^0}h_j
 \bigg[& (\sigma_j-\sigma_{j+1})
       \partial_\tau u_H\,\partial_\tau u_F\notag\\
 &+\left(\frac1{\sigma_{j+1}}-\frac1{\sigma_j}\right)
       q_H^{(j)}q_F^{(j)}\bigg]\,\mathrm{d} s.
 \label{eq:symfree-Hadamard}
\end{align}
\end{proposition}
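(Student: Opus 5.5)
We derive the Hadamard formula \eqref{eq:symfree-Hadamard} by combining the two-conductivity difference identity with a domain-perturbation argument; this avoids differentiating the fields themselves across moving interfaces. The starting point is the exact identity already used in \eqref{eq:one-shell-difference-identity} and \eqref{eq:small-core-endpoint-difference}. For two uniformly elliptic conductivities \(\sigma,\widetilde\sigma\) equal to one outside a fixed ball, Green's identity and reciprocity give
\[
\mathscr M_{\widetilde\sigma}(F,H)-\mathscr M_\sigma(F,H)
=\int_{\mathbb R^2}(\widetilde\sigma-\sigma)\nabla \widetilde u_H\cdot\nabla u_F\,\mathrm dx .
\]
We apply this with \(\widetilde\sigma=\sigma_h\), the conductivity with interfaces \(\Gamma_j(th)\), and divide by \(t\). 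The difference \(\sigma_{th}-\sigma\) is supported in thin crescents of width \(t|h_j|\) around each \(\Gamma_j^0\). On the crescent adjacent to \(\Gamma_j^0\) where \(h_j>0\), it equals \(\sigma_j-\sigma_{j+1}\), because the inner phase expands. Where \(h_j<0\), it equals \(\sigma_{j+1}-\sigma_j\) on a region of signed width \(t h_j\). In both cases the contribution is \(t\int_{\Gamma_j^0}h_j(\sigma_j-\sigma_{j+1})(\cdots)\,\mathrm ds+o(t)\).

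The main obstacle is that \(\nabla\widetilde u_H\) inside the crescent does not converge to a one-sided trace of \(\nabla u_H\). It converges to the field of the perturbed problem evaluated in the swapped phase, which is the standard polarization-type boundary-layer effect. The plan is to handle this with a local transmission analysis. Let \(\sigma_{\rm in}\) denote the conductivity that \(\widetilde u_H\) sees inside the crescent. Since the interfaces are \(C^{2,\alpha}\) (Lemma~\ref{lem:symfree-shape-analyticity} gives analytic, hence \(C^1\), dependence and uniform \(C^{1,\alpha}\) bounds up to the interfaces in each phase), the tangential derivative of \(\widetilde u_H\) is continuous across the new interface. The conormal flux \(\sigma_{\rm in}\partial_\nu\widetilde u_H\) is also continuous. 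Both are \(O(t)\)-close to the unperturbed values \(\partial_\tau u_H\) and \(q_H^{(j)}\) uniformly on the crescent. For \(u_F\), which is evaluated in the original phase occupying the crescent, we write \(\nabla u_F=\partial_\tau u_F\,\tau+\sigma_{\rm orig}^{-1}q_F^{(j)}\nu\). Then
\[
(\widetilde\sigma-\sigma)\nabla\widetilde u_H\cdot\nabla u_F\approx(\sigma_{\rm in}-\sigma_{\rm orig})\Bigl[\partial_\tau u_H\partial_\tau u_F+\frac{q_H^{(j)}q_F^{(j)}}{\sigma_{\rm in}\sigma_{\rm orig}}\Bigr].
\]
Using \((\sigma_{\rm in}-\sigma_{\rm orig})/(\sigma_{\rm in}\sigma_{\rm orig})=1/\sigma_{\rm orig}-1/\sigma_{\rm in}\), with \(\sigma_{\rm in}=\sigma_j\) and \(\sigma_{\rm orig}=\sigma_{j+1}\) when \(h_j>0\), gives exactly the bracket in \eqref{eq:symfree-Hadamard}. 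The case \(h_j<0\) produces the same expression with the sign carried by \(h_j\).

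To make the uniform-closeness step rigorous, we will pull \(\widetilde u_H\) back by \(\Psi_{th}\) from Lemma~\ref{lem:symfree-shape-analyticity}. On the fixed partition, the pulled-back field converges in \(C^{1,\alpha}\) of each closed phase, by Schauder transmission estimates for \(C^{2,\alpha}\) interfaces and analytic dependence of the coefficients. Composing back, tangential derivatives and conormal fluxes are controlled on the crescents up to \(O(t)\). The crescent integrals are therefore \(t\int h_j(\cdots)\,\mathrm ds+O(t^2)\), with the Jacobian of the polar-graph parametrization contributing only \(O(t)\) corrections. Because the map \((h,\eta)\mapsto\mathscr M\) is real analytic by Lemma~\ref{lem:symfree-shape-analyticity}, this Gateaux limit along \(th\) is the Fréchet derivative. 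Complex bilinear extension then gives the formula for the complex loadings in \eqref{eq:symfree-complex-response}.
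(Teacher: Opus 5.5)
Your proof is correct, but it takes a different route from the paper's.

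\emph{The paper's route.} It transports the multilayer by \(T_t=I+tV\) and uses the second state as an adjoint to eliminate material derivatives. This gives the distributed formula \(\sum_E\int_E\mathsf S_H^F:DV\,\mathrm dx\) with the polarization stress \(\mathsf S_H^F=\sigma((\nabla u_H\cdot\nabla u_F)I-\nabla u_H\otimes\nabla u_F-\nabla u_F\otimes\nabla u_H)\). Because this stress is divergence-free in each phase, phasewise integration by parts yields interface integrals. The bracket in \eqref{eq:symfree-Hadamard} is then the inner-minus-outer jump of \(\nu_j\cdot\mathsf S_H^F\nu_j\); the component \(\tau\cdot\mathsf S_H^F\nu_j\) is continuous and drops out.

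\emph{Your route.} You use the exact two-conductivity identity, with one perturbed and one unperturbed state, and read the bracket off a thin-crescent computation. In the swapped layer the perturbed field keeps the tangential derivative and flux of \(u_H\) but has normal derivative \(q_H^{(j)}/\sigma_{\rm in}\). That is exactly the source of the \(1/\sigma_{j+1}-1/\sigma_j\) term, and your bookkeeping for both signs of \(h_j\) is right.

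\emph{The trade-off} is in what must be controlled pointwise. The paper evaluates pointwise only the unperturbed concentric fields, which are explicit and smooth up to each circle from either side. It needs the perturbed states only through their differentiability in the energy space. Your route avoids material derivatives and adjoints and reuses an identity already in the paper. The price is that it requires uniform phasewise \(C^1\) convergence of \(\widetilde u_H\) up to the moving interfaces. An energy estimate gives only \(\|\nabla(\widetilde u_H-u_H)\|_{L^2}=O(t^{1/2})\), which says nothing about the order-one profile of \(\nabla\widetilde u_H\) on a crescent of area \(O(t)\).

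That convergence is not supplied by Lemma~\ref{lem:symfree-shape-analyticity}, which gives analyticity only in \(\dot H^1\); your parenthetical overstates it. It needs a separate piecewise Schauder estimate for the pulled-back transmission problem with piecewise \(C^{1,\alpha}\) coefficients (for instance of Li--Nirenberg type), applied to the difference of the pulled-back correctors, as your last paragraph indicates. With that estimate cited explicitly the argument is complete, and it makes the boundary-layer origin of the formula transparent.
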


\begin{proof}
Choose a compactly supported \(C^{2,\alpha}\) vector field \(V\) whose normal trace on \(\Gamma_j^0\) is \(V\cdot\nu_j=h_j\), and let \(T_t=I+tV\).
Pulling the transmission problems back by \(T_t\), differentiating at \(t=0\), and using the second state as the adjoint eliminates the material derivatives of the two states.
Since the correctors have finite energy and \(V\) is compactly supported, this standard calculation gives
\[
 D\mathscr M_\sigma(F,H)[V]
 =\sum_E\int_E \mathsf S_H^F:DV\,\mathrm{d} x,
 \]
where the sum runs over the constant-conductivity phases and
\[
 \mathsf S_H^F
 :=\sigma\bigl((\nabla u_H\cdot\nabla u_F)I
 -\nabla u_H\otimes\nabla u_F
 -\nabla u_F\otimes\nabla u_H\bigr).
\]
This is the usual polarization stress.
It is divergence-free in each phase because
\(\sigma\) is constant there and both states satisfy the conductivity equation.
Phasewise integration by parts therefore yields
\[
 D\mathscr M_\sigma(F,H)[V]
 =\sum_{j=0}^{K}\int_{\Gamma_j^0}h_j\,
 \nu_j\cdot(\mathsf S_{H,j}^F-\mathsf S_{H,j+1}^F)\nu_j\,\mathrm{d} s.
\]
The tangential derivatives are continuous across \(\Gamma_j^0\), and
\(\partial_{\nu_j}u_H^-=q_H^{(j)}/\sigma_j\) and
\(\partial_{\nu_j}u_H^+=q_H^{(j)}/\sigma_{j+1}\), with the analogous identities for
\(u_F\).
Writing \(\mathsf S_{H,\ell}^F\) for the trace of the stress from the phase
of conductivity \(\sigma_\ell\), we obtain
\[
 \nu_j\cdot\mathsf S_{H,\ell}^F\nu_j
 =\sigma_\ell\partial_\tau u_H\,\partial_\tau u_F
 -\frac{q_H^{(j)}q_F^{(j)}}{\sigma_\ell},
 \qquad \ell=j,j+1.
\]
Taking the inner-minus-outer jump gives exactly \eqref{eq:symfree-Hadamard}.
For the corresponding single-interface GPT derivative, see
\cite[Section~4]{ammari2012generalized}; the use of this formula for multicoated
GPT-vanishing structures appears in \cite{feng2017construction}.
\end{proof}

At a concentric reference configuration, angular Fourier modes diagonalize the Hadamard formula.
The resulting frequency blocks can be written explicitly in terms of the radial transmission fields.
For a radial structure and incident \(z^n\), write in phase \(\ell\)
\begin{equation}
 U_{\ell,n}(r,\theta)
 =(a_{\ell,n}r^n+b_{\ell,n}r^{-n})\mathrm{e}^{in\theta},
 \qquad b_{0,n}=0,\quad a_{K+1,n}=1.
 \label{eq:symfree-radial-mode}
\end{equation}
At \(r=r_j\), define the common trace and flux amplitudes
\begin{align}
 d_{j,n}&=a_{j,n}r_j^n+b_{j,n}r_j^{-n},
 \label{eq:symfree-djn}\\
 p_{j,n}&=\sigma_jn
 (a_{j,n}r_j^{n-1}-b_{j,n}r_j^{-n-1}).
 \label{eq:symfree-pjn}
\end{align}
For \(m\leq n\), set
\begin{align}
 C_{j,mn}^{+}
 &=r_j\left[
 \left(\frac1{\sigma_{j+1}}-\frac1{\sigma_j}\right)p_{j,m}p_{j,n}
 -(\sigma_j-\sigma_{j+1})\frac{mn}{r_j^2}d_{j,m}d_{j,n}
 \right],
 \label{eq:symfree-Cplus}\\
 C_{j,mn}^{-}
 &=r_j\left[
 \left(\frac1{\sigma_{j+1}}-\frac1{\sigma_j}\right)p_{j,m}p_{j,n}
 +(\sigma_j-\sigma_{j+1})\frac{mn}{r_j^2}d_{j,m}d_{j,n}
 \right].
 \label{eq:symfree-Cminus}
\end{align}
Substitution in \eqref{eq:symfree-Hadamard} gives, for \(h_j=c_{j,q}\mathrm{e}^{-iq\theta}+\overline{c_{j,q}}\mathrm{e}^{iq\theta}\),
\begin{align}
 D\mathbb N_{mn}^{(1)}[h]
 &=2\pi\sum_{j=0}^{K}C_{j,mn}^{+}c_{j,m+n},
 \label{eq:symfree-N1-shape-derivative}\\
 D\mathbb N_{mn}^{(2)}[h]
 &=2\pi\sum_{j=0}^{K}C_{j,mn}^{-}c_{j,n-m},
 \qquad m<n.
 \label{eq:symfree-N2-shape-derivative}
\end{align}
The opposite tangential signs follow from \(\partial_\tau\mathrm{e}^{im\theta}\partial_\tau\mathrm{e}^{in\theta} =-mn r^{-2}\mathrm{e}^{i(m+n)\theta}\) and \(\partial_\tau\mathrm{e}^{-im\theta}\partial_\tau\mathrm{e}^{in\theta} =mn r^{-2}\mathrm{e}^{i(n-m)\theta}\).

Thus a shape coefficient of frequency \(q\) affects precisely the first-type targets with \(m+n=q\) and the off-diagonal second-type targets with \(n-m=q\).
Grouping these targets by \(q\) gives the control matrices below.
For each \(q\), choose \(K\) controlled interfaces
\begin{equation}
 \mathcal J_q\subset\{0,\ldots,K\},
 \qquad \#\mathcal J_q=K,
 \label{eq:symfree-control-interface-set}
\end{equation}
and define the real \(K\times K\) matrix \(S_q\), with rows ordered by \(I_q^+\) followed by \(I_q^-\), by
\begin{equation}
 (S_q)_{(+,m,n),j}=C_{j,mn}^+,
 \qquad
 (S_q)_{(-,m,n),j}=C_{j,mn}^-,
 \qquad j\in\mathcal J_q.
 \label{eq:symfree-Sq}
\end{equation}
Accordingly, the complex derivative block is \(2\pi S_q\).

\subsection{Conditional cancellation by interface correction}

The \(K\) diagonal second-type responses are assigned to the \(K\) conductivity variables, while the nonzero-frequency responses are assigned to the interface blocks \(S_q\).
At a radial log-conductivity root \(\bar\eta\), define
\begin{equation}
 \mathcal G_n(\eta)
 =\frac{\mathbb N_{nn}^{(2)}(\eta)}{2\pi n},
 \qquad
 J_{\bar\eta}=D_\eta(\mathcal G_1,\ldots,\mathcal G_K)(\bar\eta).
 \label{eq:symfree-material-block}
\end{equation}
Here the responses are evaluated at the concentric configuration \(h=0\).
The required nondegeneracy condition is
\begin{equation}
 \det J_{\bar\eta}\neq0,
 \qquad
 \det S_q\neq0,
 \quad 1\leq q\leq2K+1.
 \label{eq:symfree-ND}
\end{equation}

Let \(E\) insert the selected Fourier coefficients from \(\prod_{q=1}^{2K+1}\mathbb C^K\) into \(\mathcal X\), and let \(P_{\rm ctrl}\) extract those same coefficients.
By construction, \(P_{\rm ctrl}E=I\), and hence
\begin{equation}
 \mathcal X_{\rm free}:=\ker P_{\rm ctrl},
 \qquad
 \mathcal X=\mathcal X_{\rm free}\oplus\operatorname{ran}E.
 \label{eq:symfree-free-space}
\end{equation}

Under \eqref{eq:symfree-ND}, these material and interface blocks form an invertible derivative with respect to all control variables.

\begin{theorem}\label{thm:symfree-general}
Let \(\bar\eta\) be a radial \(K\)-GPT-vanishing root representing finite positive conductivities, and suppose \eqref{eq:symfree-ND} holds.
There exist a neighborhood \(\mathcal U\subset\mathcal X_{\rm free}\) of zero and a unique real-analytic map
\begin{equation}
 g\longmapsto(c(g),\eta(g)),
 \qquad
 c(0)=0,\quad\eta(0)=\bar\eta,
 \label{eq:symfree-IFT-map}
\end{equation}
from \(\mathcal U\) to \(\prod_{q=1}^{2K+1}\mathbb C^K\times\mathbb R^K\), such that the multilayer with interfaces \(h(g)=g+Ec(g)\) is strictly nested, has finite, positive, isotropic conductivities, and satisfies
\begin{equation}
 M_{mn}^{\alpha\beta}[h(g),\eta(g)]=0
 \quad\text{for every }m+n\leq2K+1,
 \quad\alpha,\beta\in\{c,s\}.
 \label{eq:symfree-exact-triangle}
\end{equation}
For each \(g\in\mathcal U\), the controls are unique in a fixed neighborhood of \((0,\bar\eta)\).
If, in addition,
\begin{equation}
 \widehat g_j(q)=0
 \quad(0\leq q\leq2K+1,\ 0\leq j\leq K),
 \label{eq:symfree-high-free-shape}
\end{equation}
then
\begin{equation}
 \left\lVert c(g)\right\rVert +\left\lvert \eta(g)-\bar\eta\right\rvert 
 \leq C\left\lVert g\right\rVert _{\mathcal X}^2.
 \label{eq:symfree-quadratic-compensation}
\end{equation}
\end{theorem}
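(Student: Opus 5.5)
We apply the real-analytic implicit function theorem to the target map
\[
\mathcal F(g,c,\eta):=\Bigl(\bigl(\mathscr T_K^{(1)},\mathscr T_K^{(2,\mathrm{off})}\bigr)[g+Ec,\eta],\ \bigl(\mathcal G_n[g+Ec,\eta]\bigr)_{n=1}^K\Bigr),
\]
regarded as a map from a neighborhood of \((0,0,\bar\eta)\) in \(\mathcal X_{\rm free}\times\prod_q\mathbb C^K\times\mathbb R^K\) into \(\prod_{q=1}^{2K+1}\mathbb C^K\times\mathbb R^K\). The target components are ordered by the frequency \(q\), using the counts \eqref{eq:symfree-frequency-count}. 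Lemma~\ref{lem:symfree-shape-analyticity} makes \(\mathcal F\) real analytic near the reference point and keeps the interfaces nested with uniformly elliptic conductivities. By the reciprocity relations \eqref{eq:symfree-complex-reciprocity}, \(\mathcal F=0\) is equivalent to \eqref{eq:symfree-exact-triangle}.

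At the reference point, \(\mathcal F(0,0,\bar\eta)=0\). The diagonal entries vanish because \(\bar\eta\) is a radial root. All other entries have nonzero angular weight \(m\pm n\), so rotational symmetry kills them, exactly as in Proposition~\ref{prop:selection} with \(Q=\infty\).

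The derivative with respect to \((c,\eta)\) is block triangular.
\begin{itemize}
\item \emph{Diagonal entries.} The derivative of \(\mathcal G_n\) in \(\eta\) is \(J_{\bar\eta}\). Its derivative in any shape direction vanishes, because \eqref{eq:symfree-Hadamard} at a radial configuration integrates weight-zero products against \(h_j\). Only \(\widehat h_j(0)\) contributes, and that mode lies in \(\mathcal X_{\rm free}\), not in the controls.
\item \emph{Nonzero-frequency entries.} In \(\eta\), these entries stay zero at \(h=0\) for every \(\eta\) by symmetry, so their \(\eta\)-derivative vanishes. In \(c\), formulas \eqref{eq:symfree-N1-shape-derivative}--\eqref{eq:symfree-N2-shape-derivative} show that the frequency-\(q\) coefficients act only on the frequency-\(q\) rows, through the matrix \(2\pi S_q\).
\end{itemize}
The derivative is therefore block diagonal, with blocks \(\operatorname{diag}(2\pi S_1,\ldots,2\pi S_{2K+1},J_{\bar\eta})\). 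It is invertible by \eqref{eq:symfree-ND}; since \(S_q\) is real, it is also invertible as a real-linear map on \(\mathbb C^K\). The analytic IFT then gives the unique branch \((c(g),\eta(g))\) together with local uniqueness.

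For \eqref{eq:symfree-quadratic-compensation}, differentiate \(\mathcal F(g,c(g),\eta(g))=0\) at \(g=0\). This gives \((Dc,D\eta)(0)[g]=-(\text{block})^{-1}D_g\mathcal F(0,0,\bar\eta)[g]\).
\begin{itemize}
\item Under \eqref{eq:symfree-high-free-shape}, \(g\) has no Fourier modes of order \(\le 2K+1\). The Hadamard formula shows that \(D_g\) of every target of frequency \(q\le 2K+1\) only sees \(\widehat g_j(q)\), so it vanishes.
\item The diagonal targets only see the zero mode, which is also absent.
\end{itemize}
Hence \(D_g(c,\eta)(0)\) vanishes on this subspace. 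Taylor's theorem for the analytic branch then gives the \(O(\|g\|^2)\) bound on the (closed) subspace defined by \eqref{eq:symfree-high-free-shape}.

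The main obstacle is justifying the block-diagonal structure rigorously. This means checking that the Fourier-mode bookkeeping of \eqref{eq:symfree-Hadamard} is exact, including the conjugate modes \(\mathrm{e}^{iq\theta}\) that do not pair with \(z^m,z^n\) targets. It also means confirming that the real-analytic Fréchet derivative in \(\mathcal X\) agrees with the Hadamard formula derived from the vector-field pullback. I would handle this by computing the derivative of the pulled-back variational problem from Lemma~\ref{lem:symfree-shape-analyticity} and invoking Proposition~\ref{prop:symfree-Hadamard}. The mode orthogonality \(\int \mathrm{e}^{i(m+n-q)\theta}\,\mathrm{d}\theta=0\) then handles the rest.
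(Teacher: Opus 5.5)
Your proposal is correct and follows essentially the same route as the paper. Both arguments apply the analytic implicit-function theorem to \(\mathcal H(c,\eta,g)=\mathcal F(g+Ec,\eta)\), identify the control derivative as the realification of \(\operatorname{diag}(J_{\bar\eta},2\pi S_1,\ldots,2\pi S_{2K+1})\) via radial symmetry, the Hadamard formula and angular orthogonality, and obtain the quadratic bound from \(D_g\mathcal H(0,\bar\eta,0)[g]=0\) on the high-frequency subspace. The identification you flag as the main obstacle is in fact immediate: the pullback of Lemma~\ref{lem:symfree-shape-analyticity} satisfies \(\Psi_{th}=I+tV\) with \(V\cdot\nu_j=h_j\) on \(\Gamma_j^0\), so the Fr\'echet derivative coincides with the vector-field derivative in Proposition~\ref{prop:symfree-Hadamard}.
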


\begin{proof}
Let \(\mathcal F(h,\eta)\) be the realification of the complex targets in \eqref{eq:symfree-target-N1}--\eqref{eq:symfree-target-N2off}, together with the normalized diagonal targets
\[
 \frac{\mathbb N_{nn}^{(2)}(h,\eta)}{2\pi n},
 \qquad 1\leq n\leq K.
\]
Vanishing of \(\mathcal F\) is equivalent to \eqref{eq:symfree-triangle-target}.
At \((h,\eta)=(0,\bar\eta)\), rotation invariance makes every first-type response and every off-diagonal second-type response zero, while the diagonal targets vanish because \(\bar\eta\) is a radial \(K\)-GPT-vanishing root.
A material variation remains radial, a positive-frequency shape variation cannot change a diagonal response to first order, and different shape frequencies decouple by \eqref{eq:symfree-N1-shape-derivative}--\eqref{eq:symfree-N2-shape-derivative}.
Define
\[
 \mathcal H(c,\eta,g):=\mathcal F(g+Ec,\eta).
\]
After ordering the controls as \((\eta,c_1,\ldots,c_{2K+1})\), the derivative of \(\mathcal H\) with respect to \((\eta,c)\) at \((0,\bar\eta,0)\) is the realification of
\[
 \operatorname{diag}(J_{\bar\eta},2\pi S_1,\ldots,2\pi S_{2K+1}).
\]
\begin{equation}
 \left|\det_{\mathbb R}
 D_{(\eta,c)}\mathcal H(0,\bar\eta,0)\right|
 =
 (2\pi)^{2K(2K+1)}\left|\det J_{\bar\eta}\right|
 \prod_{q=1}^{2K+1}(\det S_q)^2,
 \label{eq:symfree-control-determinant}
\end{equation}
which is nonzero by \eqref{eq:symfree-ND}.
Lemma~\ref{lem:symfree-shape-analyticity} and the analytic implicit-function theorem now give \eqref{eq:symfree-IFT-map}--\eqref{eq:symfree-exact-triangle}.

For a direction \(g\) satisfying \eqref{eq:symfree-high-free-shape}, formulas \eqref{eq:symfree-N1-shape-derivative}--\eqref{eq:symfree-N2-shape-derivative} and angular orthogonality give
\[
 D_g\mathcal H(0,\bar\eta,0)[g]=0.
\]
Differentiating the identity
\[
 \mathcal H(c(g),\eta(g),g)=0
\]
at \(g=0\) and using the invertibility of the control derivative yield \(Dc(0)[g]=D\eta(0)[g]=0\).
Taylor's theorem on this high-frequency subspace proves \eqref{eq:symfree-quadratic-compensation}.
\end{proof}

\begin{remark}\label{rem:symfree-translation-gauge}
For the full \(q=1\) derivative block containing all \(K+1\) interface columns, simultaneous translation gives the complex kernel vector \((1,\ldots,1)\).
This unavoidable gauge kernel does not obstruct full row rank.
Once one specified \(K\times K\) minor is nonzero, the full \(K\times(K+1)\) block has rank \(K\), and its kernel is exactly the translation line.
The ordinary CGPT translation formula is lower triangular in total degree.
Hence simultaneous translation cannot recreate a coefficient in the target from lower total degree tensors after the whole triangle has been canceled.
No assertion is made that every minor is invertible.
\end{remark}

\subsection{Certified three-coating interface derivative blocks}

To verify the nondegeneracy required by Theorem~\ref{thm:symfree-general}, consider the following three-coating data and write
\begin{equation}
 \mathcal G_a^4(\eta)
 :=\left(
 \frac{1}{2\pi n}\mathbb N_{nn}^{(2)}
 [\sigma_{a,\log4,\eta}]
 \right)_{n=1}^{3}.
 \label{eq:symfree-reduced-material-map}
\end{equation}
The radial material block is certified in \texttt{gate1} and the interface derivative blocks in \texttt{gate2}, using the common box
\begin{equation}
 \bar\eta_{\rm rad}:=
 \begin{pmatrix}
 -1.53006096729191691732657047148667374\\
 \phantom{-}0.80245726807094701062267600365567964\\
 -0.16225781746837391133029569801616697
 \end{pmatrix},
 \qquad
 B_{\rm rad}:=\bar\eta_{\rm rad}+[-10^{-20},10^{-20}]^3.
 \label{eq:symfree-radial-box}
\end{equation}
For mode \(n\), the inward transfer matrix across \(r=r_j\) is
\begin{equation}
 T_{j,n}=\frac1{2\sigma_j}
 \begin{pmatrix}
 \sigma_j+\sigma_{j+1}
 & (\sigma_j-\sigma_{j+1})r_j^{-2n}\\
 (\sigma_j-\sigma_{j+1})r_j^{2n}
 & \sigma_j+\sigma_{j+1}
 \end{pmatrix}.
 \label{eq:symfree-transfer}
\end{equation}
With \(P_n=T_{0,n}\cdots T_{3,n}\), exterior coefficients \((1,b_n)^{\mathsf T}\), and core regularity, one has
\begin{equation}
 b_n=-\frac{(P_n)_{21}}{(P_n)_{22}}.
 \label{eq:symfree-scattering-coefficient}
\end{equation}
Only \(1\leq n\leq6\) enters the total-degree-seven target.

\begin{proposition}\label{prop:symfree-shape-certificate}
Take
\[
 K=3,\quad \kappa=4,\quad (r_0,r_1,r_2,r_3)=\left(\frac12,\frac{33}{50},\frac{41}{50},1\right),
\]
with the box \(B_{\rm rad}\) in \eqref{eq:symfree-radial-box}.
The radial map \(\mathcal G_0^4\) has exactly one zero \(\eta_{\rm rad}^*\) in \(B_{\rm rad}\), and
\begin{equation}
 \det D_\eta\mathcal G_0^4(B_{\rm rad})
 \subset
 [0.01830739599226524591860,
  0.01830739599226524599060].
 \label{eq:symfree-radial-determinant}
\end{equation}
For every \(q=1,\ldots,7\), choose the interface control columns \(\mathcal J_q=\{1,2,3\}\).
Then every \(S_q\) is invertible.
More precisely, the following outward decimal intervals contain \(\det S_q(B_{\rm rad})\):
\begin{table}[H]
\centering
\small
\begin{tabular}{c@{\qquad}l}
\toprule
\(q\) & certified interval for \(\det S_q\)\\
\midrule
1 & \([6.636901510952761,\ 6.636901510952765]\)\\
2 & \([1.908901271610178,\ 1.908901271610180]\)\\
3 & \([-1.051256939049554,\ -1.051256939049552]\)\\
4 & \([-0.01708012219352023,\ -0.01708012219352011]\)\\
5 & \([0.007250882067103773,\ 0.007250882067103875]\)\\
6 & \([-0.000236806328747981,\ -0.000236806328747955]\)\\
7 & \([-0.000116447075501611,\ -0.000116447075501558]\)\\
\bottomrule
\end{tabular}
\caption{Strict enclosures of the determinants of the interface derivative blocks restricted to the three coating interfaces.
The common factor \(2\pi\) in the physical derivative is not included.}
\label{tab:symfree-certified-Sq}
\end{table}
\end{proposition}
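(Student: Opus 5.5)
This is a computer-assisted statement, so I will certify it with interval arithmetic applied to closed-form radial quantities, with no PDE solver involved.

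\emph{Radial response.} At \(a=0\) the structure is concentric. For each mode \(n\), the radial response is determined by the transfer matrices \eqref{eq:symfree-transfer} and the scattering coefficient \eqref{eq:symfree-scattering-coefficient}. I would use the fact that \(\mathcal G_n=\mathbb N_{nn}^{(2)}/(2\pi n)\) is an explicit rational function of the conductivities \(\sigma_j=e^{\eta_j}\), obtained by matching the exterior field \(r^n+b_nr^{-n}\) with \eqref{eq:multipole-expansion}. Up to the paper's normalization, \(\mathcal G_n\) is proportional to \(b_n\). Its \(\eta\)-derivatives then follow either by forward-mode automatic differentiation of the matrix products or from the sensitivity formula \eqref{eq:small-core-sensitivity-formula}. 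The second option gives \(\partial_{\eta_j}\mathcal G_n\) as a radial integral of \(|\nabla U_{j,n}|^2\) over the \(j\)th annulus, which is elementary in \(a_{j,n},b_{j,n}\).

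\emph{Existence and uniqueness of the zero.} All of this is evaluated in interval arithmetic at a precision of about 120 bits. I would run the interval Newton (Krawczyk) operator
\[
\mathcal K(B)=\bar\eta_{\rm rad}-Y\mathcal G_0^4(\bar\eta_{\rm rad})+(I-Y\,D_\eta\mathcal G_0^4(B))(B-\bar\eta_{\rm rad}),
\]
where \(Y\) is a floating-point approximate inverse of the Jacobian at the midpoint. If \(\mathcal K(B_{\rm rad})\) lies in the interior of \(B_{\rm rad}\), then standard Krawczyk theory gives exactly one zero in the box and nonsingularity of every Jacobian there. The determinant enclosure \eqref{eq:symfree-radial-determinant} comes from expanding the \(3\times3\) interval determinant over \(B_{\rm rad}\). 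The box width \(10^{-20}\) forces a very tight residual, so a well-chosen midpoint (found by prior high-precision Newton iteration) and multiprecision intervals are essential.

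\emph{Interface blocks.} Over the same box, I would enclose \(a_{\ell,n},b_{\ell,n}\) for \(n\le6\) by back-substitution: start from the exterior coefficients \((1,b_n)\) and apply the transfer matrices inward. From these I form \(d_{j,n}\) and \(p_{j,n}\) via \eqref{eq:symfree-djn}--\eqref{eq:symfree-pjn}, and then \(C^\pm_{j,mn}\) via \eqref{eq:symfree-Cplus}--\eqref{eq:symfree-Cminus}. For each \(q=1,\dots,7\), the rows of \(S_q\) are indexed by \(I_q^+\) and \(I_q^-\) from \eqref{eq:symfree-Iplus}--\eqref{eq:symfree-Iminus}, and its columns by \(j\in\{1,2,3\}\). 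Enclosing each \(3\times3\) determinant over \(B_{\rm rad}\) and checking that the interval excludes \(0\) gives the table. Since the root lies in the box, the enclosures hold at \(\eta^*_{\rm rad}\).

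\emph{Main obstacle.} The hard step is controlling overestimation for \(q=6,7\). Those determinants are about \(10^{-4}\) and involve high powers \(r_j^{\pm2n}\) with cancellation between terms. My first remedy is to evaluate at the midpoint with a thin interval and propagate the box width through a mean-value (centered) form. Raising the precision comes next. This yields the narrow intervals reported, with the endpoints rounded outward in the final decimal conversion.
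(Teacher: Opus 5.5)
Your plan is essentially the paper's proof. The paper also certifies the radial root by a strict Krawczyk inclusion in $B_{\rm rad}$, using a fixed dyadic preconditioner and analytic log-conductivity derivatives of the transfer matrices \eqref{eq:symfree-transfer}, and then obtains $\det D_\eta\mathcal G_0^4(B_{\rm rad})$ and the seven $\det S_q(B_{\rm rad})$ by direct outward-rounded evaluation of \eqref{eq:symfree-transfer}--\eqref{eq:symfree-Cminus}, with positive denominators $(P_n)_{22}$.

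The differences are only implementation choices. The paper uses 512/768-bit Arb balls for the root, and for the blocks an exact-rational $10^{-90}$ grid with explicit Taylor-tail bounds for the exponentials; you propose roughly 120-bit intervals and centered forms. Because the box has half-width $10^{-20}$, the overestimation you worry about for $q=6,7$ is negligible even under plain direct evaluation.
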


\begin{proof}
The radial certificates in \texttt{gate1} evaluate the transfer matrices \eqref{eq:symfree-transfer} and their analytic log-conductivity derivatives with outward-rounded Arb ball arithmetic at \(512\) and \(768\) bits.
The computation uses a fixed nonsingular dyadic preconditioner and gives a Krawczyk image strictly inside \(B_{\rm rad}\), with each boundary margin larger than \(9.999999999999996\times10^{-21}\).
The standard Krawczyk criterion therefore gives the unique zero \(\eta_{\rm rad}^*\), while direct outward evaluation of the Jacobian gives \eqref{eq:symfree-radial-determinant}.
The accompanying source and verifier are supplied with the certificate.

The \texttt{gate2} calculation treats all radii and log-conductivity box endpoints as exact rationals and rounds every algebraic operation outwards on a \(10^{-90}\) grid.
Exponentials are enclosed by the Taylor polynomial of degree \(120\); for \(x\geq0\) the tail is bounded by
\[
 0\leq\mathrm{e}^x-\sum_{k=0}^{120}\frac{x^k}{k!}
 \leq\frac{x^{121}}{121!}\frac1{1-x/122},
\]
and negative arguments are handled by reciprocal intervals.
The largest exponential tail is below \(2.8\times10^{-179}\).
All six denominator intervals \((P_n)_{22}(B_{\rm rad})\) are positive and lie in \([1.0822,2.1143]\).
Equations \eqref{eq:symfree-transfer}--\eqref{eq:symfree-Cminus} give every entry, and direct outward evaluation of the seven \(3\times3\) determinants gives Table~\ref{tab:symfree-certified-Sq}.
The source and JSON certificate are supplied in \texttt{gate2}, where the verifier rebuilds the certificate and checks its source hash.
\end{proof}

The \(q=1\) certificate concerns the specific minor with columns \(\Gamma_1,\Gamma_2,\Gamma_3\).
Hence the full \(3\times4\) interface derivative block has full row rank.
At the exact radial root its kernel is precisely the simultaneous-translation line, so fixing the core \(q=1\) coefficient gives a transverse gauge.

\begin{theorem}\label{thm:symfree-K3}
At the certified radial root \(\eta_{\rm rad}^*\) of Proposition~\ref{prop:symfree-shape-certificate}, let the free space consist of all small \(C^{2,\alpha}\) interface perturbations whose selected coating coefficients satisfy
\[
 \widehat g_j(q)=0,
 \qquad j=1,2,3,\quad q=1,\ldots,7.
\]
There is a neighborhood of zero and a unique local real-analytic choice of those \(21\) complex coating coefficients and the three coating log-conductivities for which
\begin{equation}
 M_{mn}^{\alpha\beta}=0
 \quad(m+n\leq7),
 \qquad \alpha,\beta\in\{c,s\}.
 \label{eq:symfree-K3-triangle}
\end{equation}
In particular, the entire core perturbation may be prescribed.
The resulting interfaces are strictly nested and all conductivities remain finite, positive, scalar, and isotropic.
\end{theorem}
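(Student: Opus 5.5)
The plan is to obtain Theorem~\ref{thm:symfree-K3} as the special case \(K=3\) of Theorem~\ref{thm:symfree-general}, with the hypotheses checked by the certificates of Proposition~\ref{prop:symfree-shape-certificate}. Take \(\bar\eta=\eta_{\rm rad}^*\), the unique zero of \(\mathcal G_0^4\) in \(B_{\rm rad}\). It represents finite positive conductivities, since the box is bounded in log-conductivity coordinates. It is a radial \(3\)-GPT-vanishing root, because at \(h=0\) rotation invariance kills all first-type and off-diagonal second-type responses, and the diagonal ones vanish by definition of the root. The nondegeneracy \eqref{eq:symfree-ND} then follows from the enclosures. First, \(\det J_{\bar\eta}\) lies in the positive interval \eqref{eq:symfree-radial-determinant}; this interval is an enclosure of \(\det D_\eta\mathcal G_0^4\) over the whole box, so in particular at \(\eta^*_{\rm rad}\). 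Second, for \(1\le q\le7\) with \(\mathcal J_q=\{1,2,3\}\), each \(\det S_q\) lies in an interval of Table~\ref{tab:symfree-certified-Sq} that excludes zero. Again the entries are functions of \(\eta\) enclosed over \(B_{\rm rad}\), so the point values at the root are included.

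Next I define the control and free spaces concretely. The selected coefficients are \(\widehat h_j(q)\) for \(j=1,2,3\) and \(q=1,\dots,7\), which gives \(21\) complex numbers. \(E\) inserts them as the trigonometric perturbations \(c_{j,q}e^{-iq\theta}+\overline{c_{j,q}}e^{iq\theta}\) on \(\Gamma_j\), and \(P_{\rm ctrl}\) reads them off through \eqref{eq:symfree-Fourier-convention}. With this choice \(\mathcal X_{\rm free}=\ker P_{\rm ctrl}\) is exactly the space described in the statement. It contains every core perturbation \(g_0\in C^{2,\alpha}(\mathbb T)\) and all coating modes of frequency \(0\) or \(\ge8\), together with the unselected components. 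Theorem~\ref{thm:symfree-general} then gives the neighborhood \(\mathcal U\), the unique real-analytic map \(g\mapsto(c(g),\eta(g))\), the triangle cancellation \eqref{eq:symfree-K3-triangle} (which is \eqref{eq:symfree-exact-triangle} with \(2K+1=7\)), strict nesting, and positivity. Scalar isotropy holds because the conductivity is \(e^{\eta_j}\) phasewise. The claim that the entire core perturbation may be prescribed follows because the core coordinates lie in \(\mathcal X_{\rm free}\).

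The main obstacle is a consistency point. Theorem~\ref{thm:symfree-general} uses the Hadamard blocks \(C^\pm_{j,mn}\) evaluated at the exact root, while the certificate encloses them via the transfer-matrix formulas \eqref{eq:symfree-transfer}--\eqref{eq:symfree-scattering-coefficient}. I would check that the radial coefficients \(a_{\ell,n},b_{\ell,n}\), and hence \(d_{j,n},p_{j,n}\), are obtained by propagating \((1,b_n)\) inward through the \(T_{j,n}\), and that \((P_n)_{22}\neq0\) guarantees this propagation is well defined. I would also check that the diagonal material block in \eqref{eq:symfree-control-determinant} decouples from the shape blocks. For that, I would note that positive-frequency shape variations do not affect the diagonal responses to first order, by angular orthogonality in \eqref{eq:symfree-Hadamard}. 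Conversely, material variations keep the configuration radial, so off-diagonal and first-type responses remain zero; this makes the off-diagonal derivative blocks vanish. Finally, the \(q=1\) translation gauge of Remark~\ref{rem:symfree-translation-gauge} does not interfere, because the core \(q=1\) coefficient is free and the controlled minor uses only the columns \(\Gamma_1,\Gamma_2,\Gamma_3\), which is exactly what the certificate establishes.
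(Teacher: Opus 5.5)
Your proposal is correct and follows the paper's own proof: you specialize Theorem~\ref{thm:symfree-general} to \(K=3\) at \(\bar\eta=\eta^*_{\rm rad}\), with the nondegeneracy \eqref{eq:symfree-ND} supplied by the box enclosures \eqref{eq:symfree-radial-determinant} and Table~\ref{tab:symfree-certified-Sq}, which contain the point values at the root. Your additional checks (the explicit description of \(\mathcal X_{\rm free}\), the block-diagonal control derivative, and the \(q=1\) translation gauge) are consistent with what the paper already establishes in the proof of Theorem~\ref{thm:symfree-general} and in Remark~\ref{rem:symfree-translation-gauge}.
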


\begin{proof}
The conductivity Jacobian determinant is enclosed away from zero by \eqref{eq:symfree-radial-determinant}, and all interface derivative block determinants are enclosed away from zero by Proposition~\ref{prop:symfree-shape-certificate}.
Theorem~\ref{thm:symfree-general} applies.
The assertion is local: no explicit shape radius, large-deformation continuation, or global uniqueness is claimed.
\end{proof}

\subsection{An analytic family with no Euclidean symmetry}

The preceding theorem allows the core perturbation to be prescribed freely.
A convenient way to exclude rotations about an unknown center and reflections across an unknown line is to prescribe a support function whose two leading noncircular harmonics have coprime orders.
Put
\begin{equation}
 p_s(\theta)=\frac12+s\left[
 \cos8\theta+\frac35\cos\left(9\theta+\frac\pi{16}\right)\right]
 \label{eq:symfree-support-function}
\end{equation}
and
\begin{equation}
 X_s(\theta)=p_s(\theta)n(\theta)+p_s'(\theta)t(\theta),
 \quad
 n=(\cos\theta,\sin\theta),\quad
 t=(-\sin\theta,\cos\theta).
 \label{eq:symfree-support-parametrization}
\end{equation}
The radius of curvature is
\begin{equation}
 p_s+p_s''=\frac12-63s\cos8\theta
 -48s\cos\left(9\theta+\frac\pi{16}\right).
 \label{eq:symfree-curvature-radius}
\end{equation}
If \(0<|s|<1/444\), then \(p_s+p_s''\geq1/2-111|s|>1/4\), so the core is smooth and strictly convex.
Near zero, this curve has a unique analytic radial-graph representation \(r=1/2+h_0(s,\varphi)\), with
\begin{equation}
 h_0'(0,\varphi)=\cos8\varphi
 +\frac35\cos\left(9\varphi+\frac\pi{16}\right).
 \label{eq:symfree-core-first-jet}
\end{equation}

\begin{lemma}\label{lem:symfree-no-Euclidean-symmetry}
For every sufficiently small \(s\neq0\), the convex curve \eqref{eq:symfree-support-parametrization} has no nontrivial rotational symmetry about any center and no reflection symmetry across any line.
\end{lemma}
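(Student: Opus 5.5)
Since the curve \(X_s\) is strictly convex and \(C^\infty\), it is determined by its support function \(p_s\), measured relative to the origin. Any Euclidean isometry \(T\) preserving the curve acts on support functions in a simple way. A rotation by \(\phi\) about a center \(c\) maps \(p(\theta)\) to \(p(\theta-\phi)+c\cdot n(\theta)\). A reflection across a line through \(c\) with direction angle \(\psi\) maps \(p(\theta)\) to \(p(2\psi-\theta)+\tilde c\cdot n(\theta)\) for a suitable vector \(\tilde c\). The translation part only adds a first harmonic \(\alpha\cos\theta+\beta\sin\theta\). The plan is therefore to compare Fourier coefficients of \(p_s\) at the orders \(8\) and \(9\), where translations contribute nothing.

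\textbf{Rotations.} Suppose \(T\) is a rotation by \(\phi\notin2\pi\mathbb Z\) about some center with \(T(X_s)=X_s\). Then \(p_s(\theta-\phi)+c\cdot n(\theta)=p_s(\theta)\). The constant term \(1/2\) matches automatically, and the first harmonic forces \(c=0\). In complex form the order-\(8\) part of \(p_s\) is \(s\operatorname{Re}e^{8i\theta}\), and the order-\(9\) part is \(\tfrac35 s\operatorname{Re}e^{i(9\theta+\pi/16)}\). Since \(s\neq0\), matching these coefficients gives \(e^{-8i\phi}=1\) and \(e^{-9i\phi}=1\). Because \(\gcd(8,9)=1\), this implies \(e^{-i\phi}=1\), a contradiction.

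\textbf{Reflections.} Suppose instead that \(p_s(2\psi-\theta)+\tilde c\cdot n(\theta)=p_s(\theta)\). The order-\(8\) coefficient gives \(\cos(8(2\psi-\theta))=\cos 8\theta\), so \(16\psi\in2\pi\mathbb Z\), that is, \(\psi\in\tfrac{\pi}{8}\mathbb Z\). The order-\(9\) coefficient gives \(\cos(18\psi-9\theta+\pi/16)=\cos(9\theta+\pi/16)\), which requires \(18\psi+\pi/8\in2\pi\mathbb Z\). Substituting \(\psi=k\pi/8\) yields \(18k\pi/8+\pi/8\in2\pi\mathbb Z\), that is, \((18k+1)/16\in\mathbb Z\). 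This is impossible because \(18k+1\) is odd. Hence no reflection symmetry exists.

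This argument uses only the support-function form and so holds for every \(s\neq0\) with \(|s|<1/444\). Strict convexity on that range is already recorded in \eqref{eq:symfree-curvature-radius}. The restriction to small \(s\) in the statement is needed only to guarantee that the curve is a valid convex curve.

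The step I expect to need the most care is justifying the transformation rule for support functions under a reflection, including the orientation reversal and the shift of the parameter. One has to check that the unit outer normal at angle \(\theta\) maps to the normal at angle \(2\psi-\theta\), and that support functions determine convex bodies uniquely. Both are standard facts of convex geometry.
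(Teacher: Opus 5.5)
Your proposal is correct and follows the paper's proof essentially step for step. As in the paper, translations add only a first harmonic to the support function. The order-\(8\) and order-\(9\) coefficients together with \(\gcd(8,9)=1\) then exclude rotations. For reflections, the eighth coefficient forces the axis direction into \(\frac{\pi}{8}\mathbb Z\), and the ninth then requires \((18k+1)/16\in\mathbb Z\), which fails by parity. The only difference is cosmetic: you match real cosine terms, while the paper uses the complex coefficient identity \(\widehat p(k)=e^{-2ik\beta}\overline{\widehat p(k)}\).
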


\begin{proof}
Translation of the origin changes a support function only by a first Fourier harmonic.
If rotation through \(\gamma\) about any center preserved the curve, its nonzero Fourier coefficients of orders eight and nine would give
\[
 \mathrm{e}^{-8i\gamma}=\mathrm{e}^{-9i\gamma}=1.
\]
Since \(\gcd(8,9)=1\), the rotation is trivial.
If reflection across a line of direction \(\beta\) preserved the curve, the support coefficients \(\widehat p(k)=(2\pi)^{-1}\int p(\theta)\mathrm{e}^{-ik\theta} \,\mathrm{d}\theta\) would satisfy, for \(k\geq2\),
\[
 \widehat p(k)=\mathrm{e}^{-2ik\beta}\overline{\widehat p(k)}.
\]
The real nonzero eighth coefficient gives \(\beta=\ell\pi/8\).
The ninth coefficient, whose phase is \(\pi/16\) modulo \(\pi\), then requires \((1+18\ell)/16\in\mathbb Z\), which is impossible because the numerator is odd.
\end{proof}

Applying Theorem~\ref{thm:symfree-K3} to this core perturbation gives the symmetry-free family below.

\begin{corollary}\label{cor:symfree-asymmetric-family}
There is \(s_*>0\) such that every \(0<|s|<s_*\) determines a strictly nested three-coating conductivity with the core \eqref{eq:symfree-support-parametrization}, no nontrivial rotation or reflection symmetry, and the exact cancellation \eqref{eq:symfree-K3-triangle}.
Its controlled coating coefficients and log-conductivity vector satisfy
\begin{equation}
 c_{j,q}(s)=O(s^2),
 \qquad
 \eta(s)-\eta_{\rm rad}^*=O(s^2).
 \label{eq:symfree-squared-retuning}
\end{equation}
\end{corollary}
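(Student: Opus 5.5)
The plan is to apply Theorem~\ref{thm:symfree-K3} with the free interface perturbation chosen as \(g(s):=(h_0(s,\cdot),0,0,0)\). This means the core boundary is the radial graph \(r=1/2+h_0(s,\varphi)\) of the support-function curve \eqref{eq:symfree-support-parametrization}, and the three coating interfaces are left circular before correction. Since only coating coefficients are controlled (\(j=1,2,3\)), the vector \(g(s)\) automatically lies in the free space of Theorem~\ref{thm:symfree-K3}, whatever the Fourier content of \(h_0\). First I will check that \(s\mapsto h_0(s,\cdot)\in C^{2,\alpha}(\mathbb T)\) is real analytic with \(h_0(0,\cdot)=0\). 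The curve \(X_s\) is an entire function of \(s\) with values in analytic curves. Its polar angle \(\varphi(\theta)=\theta+O(s)\) is an analytic diffeomorphism of \(\mathbb T\) for small \(s\), so the implicit function theorem applied to \(\arg X_s(\theta)=\varphi\) yields \(h_0\) analytic in \(s\), and \eqref{eq:symfree-core-first-jet} gives its first jet. Consequently \(\|g(s)\|_{\mathcal X}\leq C|s|\), and for \(|s|<s_*\) the perturbation lies in the neighborhood \(\mathcal U\). The theorem then produces controls \((c(s),\eta(s))\), analytic in \(s\), for which the structure is strictly nested, has finite positive isotropic conductivities, and satisfies \eqref{eq:symfree-K3-triangle}.

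Next comes the quadratic retuning \eqref{eq:symfree-squared-retuning}. I cannot quote \eqref{eq:symfree-quadratic-compensation} directly, because \(g(s)\) is not purely high-frequency in all coordinates. I will instead argue through the first-order term. Differentiating \(\mathcal H(c(s),\eta(s),g(s))=0\) at \(s=0\) gives \(D_{(\eta,c)}\mathcal H\,(\eta'(0),c'(0))=-D_g\mathcal H[\partial_sg(0)]\). The direction \(\partial_sg(0)\) is supported on \(\Gamma_0\) and contains only angular frequencies \(8\) and \(9\). By \eqref{eq:symfree-N1-shape-derivative}--\eqref{eq:symfree-N2-shape-derivative}, a frequency-\(q'\) core variation affects only targets with \(m+n=q'\) or \(n-m=q'\). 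Every target has \(m+n\leq7\) and \(0<n-m\leq 5\), so frequencies \(8,9\) touch none of them. The diagonal targets are unaffected by positive-frequency variations. Hence \(D_g\mathcal H[\partial_sg(0)]=0\), and invertibility of the control derivative forces \(c'(0)=0\) and \(\eta'(0)=0\). Analyticity then gives both \(c_{j,q}(s)=O(s^2)\) and \(\eta(s)-\eta^*_{\rm rad}=O(s^2)\).

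Finally, the symmetry statement. The core \(\Gamma_0(h(s))\) is exactly the prescribed convex curve, because the corrections \(Ec(s)\) act only on coating interfaces. Any rotation or reflection of the whole multilayer preserving \(\sigma\) must map the core phase \(\{\sigma=\kappa=4\}\) to itself, since the coating conductivities \(e^{\eta_j(s)}\) are near \(e^{\eta^*_{\rm rad},j}\neq4\) and the regions are separated by nesting. Such an isometry would therefore be a symmetry of the core curve, and Lemma~\ref{lem:symfree-no-Euclidean-symmetry} excludes it for small \(s\neq0\). I expect the main technical point to be the first step: verifying cleanly that the support-function curve admits an analytic radial graph in \(C^{2,\alpha}\) depending analytically on \(s\). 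The natural approach is to reparametrize via the polar angle and invoke the analytic implicit function theorem in the Banach space of analytic functions on a strip. The frequency-decoupling step is routine once that is in place.
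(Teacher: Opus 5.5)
Your proposal is correct and follows the paper's proof essentially step for step. You use the same free perturbation \(g(s)=(h_0(s),0,0,0)\) and the same first-jet argument: frequencies \(8,9\) miss every target, so \(c'(0)=0\) and \(\eta'(0)=0\), with the same caveat that \eqref{eq:symfree-quadratic-compensation} cannot be quoted directly because \(h_0(s)\) may acquire low harmonics at order \(s^2\). Your symmetry argument via the core phase and Lemma~\ref{lem:symfree-no-Euclidean-symmetry} is also the paper's, except that you only need the core value \(4\) to differ from the coating and background conductivities, a weaker condition than the paper's pairwise distinctness of all five phases, and still sufficient.
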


\begin{proof}
Take the free perturbation \(g(s)=(h_0(s),0,0,0)\) in Theorem~\ref{thm:symfree-K3}.
Equation \eqref{eq:symfree-core-first-jet} shows that the first derivative of the target along this curve is zero, because the first jet contains no Fourier order \(0,\ldots,7\).
Differentiating the implicit identity at \(s=0\) gives \(c'(0)=0\) and \(\eta'(0)=0\), proving \eqref{eq:symfree-squared-retuning}.
Although the radial graph may acquire low harmonics at order \(s^2\), the argument uses only its first jet and does not require those harmonics to vanish for finite \(s\).

The core has no Euclidean symmetry by Lemma~\ref{lem:symfree-no-Euclidean-symmetry}.
At the certified radial root, the five phase conductivities are pairwise distinct, and they remain so after shrinking the analytic neighborhood.
Any Euclidean symmetry of the full conductivity would therefore preserve the core phase, contradicting the lemma.
\end{proof}

\section{Entire harmonic incident fields}
\label{sec:harmonic-backgrounds}

Complete \(K\)-CGPT cancellation controls only a finite block of incident and outgoing modes, whereas an entire harmonic incident field contains infinitely many incident modes.
The cyclic selection rules bridge this gap by extending the block cancellation to all pairs of sufficiently low total degree.
The resulting powers of the inclusion size are then summed by means of a uniform interface estimate and transferred from free space to the bounded domain by a harmonic boundary correction.

\begin{proposition}\label{prop:total-degree}
Let \(Q\geq2K+2\), and let $\sigma$ be any $C_Q$-invariant multilayer with finite positive conductivities for which the complete block \eqref{eq:complete-block} vanishes.
Then
\begin{equation}
 M_{mn}^{\alpha\beta}=0
 \quad\text{whenever }m+n\leq2K+1,
 \quad \alpha,\beta\in\{c,s\}.
 \label{eq:total-degree-zero}
\end{equation}
\end{proposition}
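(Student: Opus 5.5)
The plan is to split the index pairs with \(m+n\leq2K+1\) into those inside the square block \(1\leq m,n\leq K\) and those outside it, and to show that outside the block the cyclic selection rules of Proposition~\ref{prop:selection} already force the coefficients to vanish. Pairs inside the block vanish by hypothesis \eqref{eq:complete-block}. So it suffices to treat pairs with \(m+n\leq2K+1\) and \(\max(m,n)\geq K+1\). By reciprocity \eqref{eq:reciprocity} we may assume \(n\geq K+1\), and then \(1\leq m\leq 2K+1-n\leq K\).

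For such a pair, pass to the complex families \eqref{eq:N1}--\eqref{eq:N2}. By \eqref{eq:selection1}, \(\mathbb N_{mn}^{(1)}\) can be nonzero only if \(m+n\equiv0\pmod Q\). But \(2\leq m+n\leq2K+1<2K+2\leq Q\), so \(\mathbb N_{mn}^{(1)}=0\). By \eqref{eq:selection2}, \(\mathbb N_{mn}^{(2)}\) can be nonzero only if \(n-m\equiv0\pmod Q\). Here \(n\geq K+1>m\), so \(n-m\geq1\), and \(n-m\leq 2K+1-2m\leq2K-1<Q\). Hence \(n-m\) is a nonzero residue and \(\mathbb N_{mn}^{(2)}=0\).

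From \(\mathbb N_{mn}^{(1)}=\mathbb N_{mn}^{(2)}=0\), take real and imaginary parts. This gives \(M^{cc}_{mn}\pm M^{ss}_{mn}=0\) and \(M^{cs}_{mn}\pm M^{sc}_{mn}=0\), so all four real entries vanish. Reciprocity then transfers the conclusion to the transposed pair \((n,m)\), which covers the case \(m\geq K+1\).

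The only point requiring care is the counting. One must confirm that neither \(m+n\) nor \(|m-n|\) reaches \(Q\), and that \(m\neq n\) whenever the pair lies outside the square block. Both follow from \(m+n\leq 2K+1\) and \(Q\geq2K+2\); the diagonal case with \(m=n\geq K+1\) cannot occur since then \(m+n\geq2K+2\). The selection rules need only \(C_Q\)-invariance, not the specific level-curve geometry, so the statement holds for any \(C_Q\)-invariant multilayer as claimed. There is no genuine obstacle beyond this bookkeeping.
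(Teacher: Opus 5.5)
Your proposal is correct and is essentially the paper's argument: you apply the two cyclic selection rules to kill $\mathbb N^{(1)}_{mn}$ (since $m+n<Q$) and the off-diagonal $\mathbb N^{(2)}_{mn}$ (since $0<|m-n|<Q$). The hypothesis \eqref{eq:complete-block} then handles what remains. The only difference is bookkeeping: the paper splits into diagonal and off-diagonal pairs and handles $|m-n|$ directly, so it needs no reciprocity reduction, whereas you split into inside and outside the square block and use reciprocity to assume $n\geq K+1$.
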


\begin{proof}
Suppose that \(m+n\leq2K+1<Q\).
The first selection rule \eqref{eq:selection1} gives \(\mathbb N_{mn}^{(1)}=0\), because a positive multiple of \(Q\) cannot be smaller than \(Q\).
Moreover, \(|m-n|<m+n<Q\), so \eqref{eq:selection2} allows \(\mathbb N_{mn}^{(2)}\) to be nonzero only when \(m=n\).
In that case \(2m\leq2K+1\), hence \(m=n\leq K\), and \eqref{eq:complete-block} gives \(\mathbb N_{nn}^{(2)}=0\).
Thus both complex families vanish, and \eqref{eq:N1}--\eqref{eq:N2} give the claim for all four real CGPTs.
\end{proof}

The proposition identifies the cancellation region required for an \(O(\varepsilon^{2K+2})\) estimate.
The exact scaling law below converts this total-degree condition into powers of \(\varepsilon\).
For the remainder of the section, let
\(\mathcal D=(D_0,\ldots,D_K)\) be a fixed reference multilayer centered at the origin, with finite positive conductivities and \(C^{2,\alpha}\) interfaces, and write
\[
 \mathcal D_{\varepsilon,z_*}:=z_*+\varepsilon\mathcal D.
\]
For this scaled multilayer, define the centered harmonic basis
\begin{equation}
 P_{n,z_*}^c(z)=\operatorname{Re}(z-z_*)^n,
 \qquad
 P_{n,z_*}^s(z)=\operatorname{Im}(z-z_*)^n,
 \label{eq:centered-basis}
\end{equation}
and denote the tensors computed with this basis by $M_{mn,z_*}^{\alpha\beta}$.
The basis is centered at $z_*$ because ordinary CGPTs mix under translation of the origin.

Uniqueness of the transmission solution and a change of variables in \eqref{eq:real-cgpt} give the exact scaling law
\begin{equation}
 M_{mn,z_*}^{\alpha\beta}[z_*+\varepsilon\mathcal D]
 =\varepsilon^{m+n}M_{mn}^{\alpha\beta}[\mathcal D].
 \label{eq:cgpt-scaling}
\end{equation}

For a general local harmonic field, the modal expansion is infinite.
To apply \eqref{eq:cgpt-scaling} term by term with constants independent of \(\varepsilon\), we first obtain an \(\varepsilon\)-uniform estimate for the interface densities.
Put $\Gamma_j=\partial D_j$, orient $\nu_j$ from phase $\sigma_j$ toward phase $\sigma_{j+1}$, and set $\sigma_{K+1}=1$ and
\[
 \Gamma_{j,\varepsilon}=z_*+\varepsilon\Gamma_j.
\]
For a function $h$ harmonic in a neighborhood of the closed scaled structure, define
\begin{equation}
 \mathcal S_\varepsilon[h]
 =\sum_{j=0}^{K}\mathcal S_{\Gamma_{j,\varepsilon}}
       [\varphi_{\varepsilon,j}],
 \qquad
 \mathcal S_\Gamma[\varphi](x)
 =\frac{1}{2\pi}\int_\Gamma\log|x-y|\varphi(y)\,\mathrm{d} s_y,
 \label{eq:local-scattering-definition}
\end{equation}
where the mean-zero densities are chosen so that $h+\mathcal S_\varepsilon[h]$ satisfies the flux transmission condition on every $\Gamma_{j,\varepsilon}$.
Their existence and uniqueness follow from Lemma~\ref{lem:uniform-interface-system}.

Let
\[
 X=\prod_{j=0}^{K}H^{-1/2}_0(\Gamma_j),
\]
where the subscript denotes zero mean.
With
\begin{equation}
 \left.\partial_{\nu_j}\mathcal S_{\Gamma_j}[\phi]\right|_{\pm} =\left(\pm\frac12I+\mathcal K_j^*\right)\phi,
 \label{eq:single-layer-normal-jump}
\end{equation}
where \(+\) denotes the \(\sigma_{j+1}\)-side and \(-\) the \(\sigma_j\)-side,
define $\mathcal V_{jj}=\mathcal K_j^*$ and, for $j\neq k$,
\[
 \mathcal V_{jk}\phi =\left.\partial_{\nu_j}\mathcal S_{\Gamma_k}[\phi] \right|_{\Gamma_j}.
\]
After the pullback $\widehat\varphi_{\varepsilon,j}(y)=\varphi_{\varepsilon,j}(z_*+\varepsilon y)$, the density equation on the fixed interfaces is
\begin{equation}
 \mathbb A_{\boldsymbol\sigma}
 \widehat{\boldsymbol\varphi}_\varepsilon
 =\left((\sigma_j-\sigma_{j+1})
   \partial_{\nu_j}h(z_*+\varepsilon\,\cdot)\right)_{j=0}^{K},
 \label{eq:rescaled-interface-system}
\end{equation}
where
\begin{equation}
 (\mathbb A_{\boldsymbol\sigma}\phi)_j
 =\frac{\sigma_j+\sigma_{j+1}}{2}\phi_j
  -(\sigma_j-\sigma_{j+1})
    \sum_{k=0}^{K}\mathcal V_{jk}\phi_k.
 \label{eq:rescaled-interface-operator}
\end{equation}

\begin{lemma}\label{lem:uniform-interface-system}
The operator
\(\mathbb A_{\boldsymbol\sigma}:X\to X\) in \eqref{eq:rescaled-interface-operator}
is an isomorphism.
For every \(h\) harmonic in a neighborhood of the closed scaled multilayer, the unique solution of \eqref{eq:rescaled-interface-system} satisfies
\begin{equation}
 \left\lVert \widehat{\boldsymbol\varphi}_\varepsilon\right\rVert _{X}
 \leq C_{\mathcal D,\boldsymbol\sigma}
 \sum_{j=0}^{K}
 \left\lVert \partial_{\nu_j}h(z_*+\varepsilon\,\cdot)\right\rVert 
      _{H^{-1/2}(\Gamma_j)}.
 \label{eq:uniform-rescaled-inverse}
\end{equation}
The inverse bound is uniform on compact parameter families with uniform ellipticity, interface separation, \(C^{2,\alpha}\) interface bounds, and outer radius.
\end{lemma}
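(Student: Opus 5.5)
The plan is to show that \(\mathbb A_{\boldsymbol\sigma}\) is a Fredholm operator of index zero on \(X\) and that it is injective. Injectivity will come from the uniqueness of the transmission problem. The estimate \eqref{eq:uniform-rescaled-inverse} then follows from the bounded inverse together with the scale invariance of the pulled-back system.

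\emph{Step 1: \(\mathbb A_{\boldsymbol\sigma}\) maps \(X\) into \(X\).} For \(\phi\in H^{-1/2}_0(\Gamma_k)\) I will check that every component of \(\mathbb A_{\boldsymbol\sigma}\phi\) has zero mean.
\begin{itemize}
\item For the diagonal term, \(\int_{\Gamma_j}\mathcal K_j^*\phi\,\mathrm{d}s=\langle\phi,\mathcal K_j[1]\rangle=\tfrac12\int_{\Gamma_j}\phi\,\mathrm{d}s=0\), using the classical identity \(\mathcal K_j[1]=\tfrac12\) for the interior-normal convention in \eqref{eq:single-layer-normal-jump}.
\item For \(j\neq k\), suppose first that \(\Gamma_k\) lies outside \(\Gamma_j\). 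Then \(\mathcal S_{\Gamma_k}[\phi]\) is harmonic inside \(\Gamma_j\), and its flux through \(\Gamma_j\) vanishes.
\item If instead \(\Gamma_k\) lies inside \(\Gamma_j\), the divergence theorem reduces the flux through \(\Gamma_j\) to \(\int_{\Gamma_k}\phi\,\mathrm{d}s=0\).
\end{itemize}
The right-hand side of \eqref{eq:rescaled-interface-system} also lies in \(X\), because \(\int_{\Gamma_j}\partial_{\nu_j}h(z_*+\varepsilon\,\cdot)\,\mathrm{d}s=0\) for \(h\) harmonic inside each scaled \(\Gamma_j\).

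\emph{Step 2: Fredholm structure.} For \(C^{2,\alpha}\) curves, \(\mathcal K_j^*\) has a continuous kernel, so it is compact on \(H^{-1/2}(\Gamma_j)\). The off-diagonal operators \(\mathcal V_{jk}\) have smooth kernels, since the interfaces are separated. Hence \(\mathbb A_{\boldsymbol\sigma}\) equals the invertible diagonal operator \(\operatorname{diag}\bigl((\sigma_j+\sigma_{j+1})/2\bigr)\) plus a compact operator. Therefore \(\mathbb A_{\boldsymbol\sigma}:X\to X\) is Fredholm of index zero, and it suffices to prove injectivity.

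\emph{Step 3: injectivity.} Suppose \(\mathbb A_{\boldsymbol\sigma}\phi=0\), and set \(u=\sum_k\mathcal S_{\Gamma_k}[\phi_k]\).
\begin{itemize}
\item By the jump relations \eqref{eq:single-layer-normal-jump}, the equation \(\mathbb A_{\boldsymbol\sigma}\phi=0\) is exactly the flux condition \(\sigma_j\partial_{\nu_j}u^-=\sigma_{j+1}\partial_{\nu_j}u^+\) on every \(\Gamma_j\). The potential \(u\) is continuous across each interface.
\item Because all densities have zero mean, \(u=O(|x|^{-1})\) and \(\nabla u=O(|x|^{-2})\) at infinity.
\item Green's identity applied phase by phase then gives \(\int\sigma|\nabla u|^2\,\mathrm{d}x=0\). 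So \(u\) is constant on \(\mathbb R^2\), and in fact \(u=0\) by the decay.
\item The normal-derivative jump of \(u\) across \(\Gamma_j\) equals \(\phi_j\), so \(\phi_j=0\) for every \(j\).
\end{itemize}
Hence \(\mathbb A_{\boldsymbol\sigma}\) is an isomorphism.

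\emph{Step 4: the estimate and its uniformity.} In \eqref{eq:rescaled-interface-system} the operator lives on the fixed reference interfaces and does not depend on \(\varepsilon\). Estimate \eqref{eq:uniform-rescaled-inverse} therefore follows directly from boundedness of \(\mathbb A_{\boldsymbol\sigma}^{-1}\) on \(X\).

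I expect the uniformity over compact parameter families to be the main obstacle. To handle it, I would pull each curve back to a fixed reference curve by \(C^{2,\alpha}\) diffeomorphisms and verify that the transported layer operators depend continuously on the parameters in operator norm. The continuous kernels vary continuously under \(C^{2,\alpha}\) bounds and uniform interface separation. Continuity of the inverse on the set of invertible operators, combined with compactness of the parameter set, then gives a uniform bound on \(\|\mathbb A_{\boldsymbol\sigma}^{-1}\|\).
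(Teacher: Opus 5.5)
Your proposal is correct and follows the paper's proof essentially step for step. Both arguments use the same five steps:
\begin{itemize}
\item the zero-mean mapping property, via $\mathcal K_j[1]=\tfrac12$ and the divergence theorem for the off-diagonal blocks;
\item Fredholm index zero, from compactness of $\mathcal K_j^*$ and the smoothing off-diagonal operators;
\item injectivity, from the energy identity for the homogeneous transmission field built from zero-mean densities;
\item the bounded inverse theorem for the estimate;
\item uniformity, by pulling back to fixed reference curves and using continuity of inversion on a compact family.
\end{itemize}
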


\begin{proof}
Uniform scaling leaves every \(\mathcal V_{jk}\) unchanged.
First, $\mathbb A_{\boldsymbol\sigma}$ maps $X$ into itself.
Indeed,
\[
 \int_{\Gamma_j}\mathcal K_j^*\phi_j\,\mathrm{d} s =\frac12\int_{\Gamma_j}\phi_j\,\mathrm{d} s=0.
\]
If $k>j$, then $\mathcal S_{\Gamma_k}[\phi_k]$ is harmonic in $D_j$; if $k<j$, the divergence theorem across the nested interfaces gives
\[
 \int_{\Gamma_j}\mathcal V_{jk}\phi_k\,\mathrm{d} s =\int_{\Gamma_k}\phi_k\,\mathrm{d} s=0.
\]
Also $\int_{\Gamma_j}\partial_{\nu_j}h\,\mathrm{d} s=0$ because $h$ is harmonic in $D_j$.
For $C^{2,\alpha}$ interfaces, each diagonal $\mathcal K_j^*$ is compact on $H^{-1/2}_0(\Gamma_j)$ and every off-diagonal $\mathcal V_{jk}$ is smoothing.
Thus $\mathbb A_{\boldsymbol\sigma}$ is Fredholm of index zero.
If \(\mathbb A_{\boldsymbol\sigma}\phi=0\), then \(\sum_k\mathcal S_{\Gamma_k}[\phi_k]\) is a homogeneous transmission solution.
Because every \(\phi_j\) has zero mean, the logarithmic term at infinity cancels and this field is \(O(|x|^{-1})\).
Writing this field as \(W\), integration by parts over all phases in a large disk and passage to infinity give
\[
 \int_{\mathbb{R}^2}\sigma|\nabla W|^2\,\mathrm{d} x=0.
\]
Indeed, the interface terms cancel by continuity of the potential and conormal flux, while the outer boundary term tends to zero because \(W=O(|x|^{-1})\) and \(\nabla W=O(|x|^{-2})\).
Thus \(W\) is constant, and its decay at infinity gives \(W=0\).
The normal-derivative jump across each interface then gives \(\phi_j=0\).
Hence $\mathbb A_{\boldsymbol\sigma}:X\to X$ is bijective and has a bounded inverse.
Estimate \eqref{eq:uniform-rescaled-inverse} follows from the bounded inverse theorem.

For a compact parameter family, identify the varying trace spaces with fixed reference spaces by the chosen \(C^{2,\alpha}\) interface parametrizations.
The pulled-back operators then depend continuously in operator norm on the interfaces and conductivities.
Since every operator in the family is invertible by the preceding argument and inversion is continuous on the open set of bounded invertible operators, compactness gives a uniform bound for their inverses.
Equation \eqref{eq:rescaled-interface-operator} remains regular if two adjacent conductivities coincide: the corresponding row simply gives $\sigma_j\phi_j=0$.
\end{proof}

The uniform inverse bound justifies summing the interface densities generated by the harmonic expansion.
Combined with total-degree cancellation, it yields the following free-space estimate.

\begin{lemma}\label{lem:small-volume}
Let $d\geq2$, let $h$ be harmonic and bounded in $B_R(z_*)$, and suppose that the CGPTs of the fixed reference multilayer $\mathcal D$, computed about the origin using \eqref{eq:harmonic-basis}, satisfy
\begin{equation}
 M_{mn}^{\alpha\beta}[\mathcal D]=0
 \quad\text{for }m+n\leq d-1,
 \qquad \alpha,\beta\in\{c,s\}.
 \label{eq:free-space-hypothesis}
\end{equation}
Let $E_0\Subset\mathbb{R}^2\setminus\{z_*\}$, and put
\[
 a_*:=\max_{y\in\overline{D_K}}|y|,
 \qquad
 \delta_0:=\operatorname{dist}(E_0,z_*).
\]
If
\begin{equation}
 \varepsilon a_*\leq\min\{R/2,\delta_0/2\},
 \label{eq:small-volume-separation}
\end{equation}
then
\begin{equation}
 \left\lVert \mathcal S_\varepsilon[h]\right\rVert _{C^1(E_0)}
 \leq C\varepsilon^d\left\lVert h\right\rVert _{L^\infty(B_R(z_*))}.
 \label{eq:free-space-estimate}
\end{equation}
The constant depends on $d$, $R$, $\delta_0$, the reference interfaces and fixed ellipticity bounds, but not on $\varepsilon$ or $h$.
It is uniform on the compact parameter families specified in Lemma~\ref{lem:uniform-interface-system}.
\end{lemma}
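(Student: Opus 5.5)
Expand $h$ about $z_*$ in the centered harmonic basis \eqref{eq:centered-basis}. Since $h$ is bounded and harmonic in $B_R(z_*)$, write $h(z)=h(z_*)+\sum_{n\geq1}\bigl(a_n^cP^c_{n,z_*}+a_n^sP^s_{n,z_*}\bigr)$ with Cauchy-type bounds $|a_n^\beta|\leq 2\|h\|_{L^\infty(B_R(z_*))}R^{-n}$. The series converges in $C^1$ on $\overline{B_{R/2}(z_*)}$, which contains the scaled structure by \eqref{eq:small-volume-separation}. The constant term produces no density. The map $h\mapsto\mathcal S_\varepsilon[h]$ is linear, and by Lemma~\ref{lem:uniform-interface-system} it is bounded from the normal data into the densities. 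I will therefore write $\mathcal S_\varepsilon[h]=\sum_n\sum_\beta a_n^\beta\,\mathcal S_\varepsilon[P^\beta_{n,z_*}]$. To justify the termwise decomposition, I note that the pulled-back normal data $\partial_{\nu_j}P^\beta_{n,z_*}(z_*+\varepsilon\,\cdot)$ have $H^{-1/2}(\Gamma_j)$ norm $\leq Cn(\varepsilon a_*)^{n}\varepsilon^{-1}\cdot\varepsilon$. The factor $\varepsilon$ comes from the chain rule, and the combination gives a bound of order $n(\varepsilon a_*)^n$ up to fixed constants. By \eqref{eq:uniform-rescaled-inverse}, the density series then converges geometrically, because $\varepsilon a_*/R\leq1/2$.

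Next, I would bound each modal field outside the structure through its multipole expansion. For $x\in E_0$ we have $|x-z_*|\geq\delta_0\geq2\varepsilon a_*$. Expanding $\log|x-y|$ for $y$ on $\Gamma_{j,\varepsilon}$ gives
\[
\mathcal S_\varepsilon[P^\beta_{n,z_*}](x)=-\sum_{m\geq1}\frac{1}{2\pi m|x-z_*|^m}\bigl(M^{c\beta}_{mn,z_*}\cos m\theta+M^{s\beta}_{mn,z_*}\sin m\theta\bigr),
\]
where the coefficients coincide with the CGPTs by \eqref{eq:multipole-expansion}. The zero-mean densities remove the logarithmic term. Two ingredients control the coefficients. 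The first is the scaling law \eqref{eq:cgpt-scaling}, which gives $M_{mn,z_*}=\varepsilon^{m+n}M_{mn}[\mathcal D]$. The second is a crude bound $|M_{mn}[\mathcal D]|\leq C\,mn\,a_*^{m+n}$, which I expect to obtain from the representation $M_{mn}=\sum_j\int_{\Gamma_j}P_m\,\varphi_j\,ds$ together with the uniform density estimate. Hypothesis \eqref{eq:free-space-hypothesis} kills every term with $m+n\leq d-1$. The remainder is therefore bounded by
\[
\sum_{m+n\geq d}C\,n\,R^{-n}\|h\|_\infty\,\varepsilon^{m+n}a_*^{m+n}\delta_0^{-m}.
\]
Setting $\rho:=\varepsilon a_*\max\{R^{-1},\delta_0^{-1}\}\leq1/2$, this is at most $C\|h\|_\infty\sum_{k\geq d}k^2\rho^k\leq C\rho^d\|h\|_\infty$. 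The result is the desired $\varepsilon^d$ bound, with a constant depending on $R,\delta_0,a_*$ and the reference data. The $C^1$ estimate follows in the same way by differentiating the multipole series in $x$, which costs a factor $m/|x-z_*|$ that is absorbed into the geometric sum.

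The main obstacle is the uniform coefficient bound $|M_{mn}[\mathcal D]|\leq C\,mn\,a_*^{m+n}$, with geometric growth exactly $a_*$ and only polynomial prefactors. Exactness matters because the series must converge under the borderline condition $\varepsilon a_*\leq\delta_0/2$. My plan is to pair the densities with traces of $P_m$ on $\Gamma_j\subset\overline{B_{a_*}}$. Here $\|P_m\|_{H^{1/2}(\Gamma_j)}\leq Cm\,a_*^m$, by interpolating the $C^1$ bound $ma_*^{m-1}$ with the sup bound. Combined with Lemma~\ref{lem:uniform-interface-system}, this yields the estimate. If the polynomial prefactors in $m$ and $n$ turn out to be larger than expected, they remain harmless, since any polynomial factor is dominated by $\rho^k$ with $\rho\leq1/2$.
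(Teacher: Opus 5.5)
Your proof is correct and follows the paper's argument essentially step for step: Cauchy bounds on the Fourier coefficients, termwise summation of the densities justified by the uniform inverse of Lemma~\ref{lem:uniform-interface-system}, multipole coefficients $\varepsilon^{m+n}M_{mn}^{\alpha\beta}[\mathcal D]$ from the scaling law, a growth bound of the form $\mathrm{poly}(m,n)\,a_*^{m+n}$, and a geometric tail sum. The one difference is the growth bound. You pair the reference densities with traces of $P_m^\alpha$ via the identity $M_{mn}^{\alpha\beta}=\sum_j\int_{\Gamma_j}P_m^\alpha\varphi_{j,n}^\beta\,\mathrm{d}s$, which follows from the transmission conditions. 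The paper instead uses the whole-space energy estimate $\|\nabla w_n^\beta\|_{L^2(\mathbb{R}^2)}\leq C\|\nabla P_n^\beta\|_{L^2(D_K)}$ and Cauchy--Schwarz in \eqref{eq:real-cgpt} to get $C_0\sqrt{mn}\,a_*^{m+n}$; both routes work.

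There is one small slip. The rescaled normal data are $\varepsilon^{n-1}\partial_{\nu_j}P_n^\beta$, so their $H^{-1/2}(\Gamma_j)$ norms are $O\bigl(n(\varepsilon a_*)^{n-1}\bigr)$ rather than $O\bigl(n(\varepsilon a_*)^{n}\bigr)$. This only affects the convergence check of the density series, which still holds since $\varepsilon a_*/R\leq 1/2$.
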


\begin{proof}
Set $\rho=3R/4$ and $M=\left\lVert h\right\rVert _{L^\infty(B_R(z_*))}$.
The harmonic expansion about $z_*$ is
\begin{equation}
 h(z_*+re^{i\theta})
 =h(z_*)+\sum_{n=1}^{\infty}r^n
   \bigl(a_n^c\cos n\theta+a_n^s\sin n\theta\bigr).
 \label{eq:local-harmonic-expansion}
\end{equation}
Taking Fourier coefficients on $|z-z_*|=\rho$ gives
\begin{equation}
 |a_n^c|+|a_n^s|\leq4M\rho^{-n}.
 \label{eq:cauchy-harmonic-coefficients}
\end{equation}
For the $n$-th centered harmonic polynomial,
\[
 \partial_{\nu_j}P_{n,z_*}^\beta(z_*+\varepsilon y) =\varepsilon^{n-1}\partial_{\nu_j}P_n^\beta(y).
\]
The fixed-interface trace bound
\[
 \left\lVert \partial_{\nu_j}P_n^\beta\right\rVert _{H^{-1/2}(\Gamma_j)} \leq Cn a_*^{n-1}
\]
and \eqref{eq:uniform-rescaled-inverse} show that the density series generated by \eqref{eq:local-harmonic-expansion} converges absolutely in $X$, because
\[
 \sum_{n=1}^{\infty} (|a_n^c|+|a_n^s|)\varepsilon^{n-1}n a_*^{n-1} \leq\frac{CM}{\rho} \sum_{n=1}^{\infty}n \left(\frac{\varepsilon a_*}{\rho}\right)^{n-1}<\infty.
\]
Here $\varepsilon a_*/\rho\leq2/3$ by \eqref{eq:small-volume-separation}.
The boundedness of the inverse in Lemma~\ref{lem:uniform-interface-system} therefore permits termwise application of the scattering operator to the harmonic series.
The constant term produces no density.
By linearity and uniqueness of the interface system, the remaining rescaled density is the absolutely convergent sum of the densities generated on the reference interfaces by the polynomial loadings \(a_n^\beta\varepsilon^{n-1}P_n^\beta\).
The standard GPT multipole representation \eqref{eq:multipole-expansion}, translated to \(z_*\) and combined with the exact scaling law \eqref{eq:cgpt-scaling}, shows that the coefficient of the \(m\)-th outgoing \(\alpha\)-mode generated by \(a_n^\beta P_{n,z_*}^\beta\) is \(-\varepsilon^{m+n}M_{mn}^{\alpha\beta}a_n^\beta/(2\pi m)\) \cite[Chapter~6]{ammari2007polarization}; the identification of these coefficients with GPTs is also developed in \cite{ammari2014reconstruction}.
The zero-mean normalization of the interface densities removes the additive \(\log\varepsilon\) term produced when the two-dimensional single-layer potential is rescaled.

It remains to control the growth of the reference CGPTs.
Write $u_n^\beta=P_n^\beta+w_n^\beta$.
Coercivity of the whole-space variational equation gives
\[
 \left\lVert \nabla w_n^\beta\right\rVert _{L^2(\mathbb{R}^2)} \leq C\left\lVert \nabla P_n^\beta\right\rVert _{L^2(D_K)}.
\]
Since $D_K\subset B_{a_*}$,
\[
 \left\lVert \nabla P_n^\beta\right\rVert _{L^2(D_K)} \leq(\pi n)^{1/2}a_*^n.
\]
The variational definition \eqref{eq:real-cgpt} and Cauchy--Schwarz therefore yield
\begin{equation}
 |M_{mn}^{\alpha\beta}[\mathcal D]|
 \leq C_0\sqrt{mn}\,a_*^{m+n}.
 \label{eq:cgpt-growth-bound}
\end{equation}

For $x-z_*=r(\cos\theta,\sin\theta)$, put $\chi_m^c(\theta)=\cos m\theta$ and $\chi_m^s(\theta)=\sin m\theta$.
For \(|x-z_*|>\varepsilon a_*\), the preceding density decomposition and the termwise logarithmic-kernel expansion therefore give
\begin{equation}
 \mathcal S_\varepsilon[h](x)
 =-\sum_{\alpha,\beta\in\{c,s\}}
   \sum_{m,n\geq1}
   \frac{\varepsilon^{m+n}M_{mn}^{\alpha\beta}[\mathcal D]
         a_n^\beta}
        {2\pi m r^m}\,
   \chi_m^\alpha(\theta).
 \label{eq:local-double-multipole-series}
\end{equation}
The outgoing modes satisfy
\begin{equation}
 \left\lVert \frac{\chi_m^\alpha(\theta)}{2\pi m r^m}\right\rVert 
       _{C^1(E_0)}
 \leq C_{\delta_0}\delta_0^{-m}.
 \label{eq:outgoing-mode-c1}
\end{equation}
Hypothesis \eqref{eq:free-space-hypothesis} removes all pairs with $m+n\leq d-1$.
With
\[
 L=a_*\max\{\rho^{-1},\delta_0^{-1}\},
 \qquad \varepsilon L\leq\frac23,
\]
equations \eqref{eq:cauchy-harmonic-coefficients}, \eqref{eq:cgpt-growth-bound}, and \eqref{eq:outgoing-mode-c1} give
\begin{align}
 \left\lVert \mathcal S_\varepsilon[h]\right\rVert _{C^1(E_0)}
 &\leq CM
 \sum_{\substack{m,n\geq1\\m+n\geq d}}
 \sqrt{mn}\,(\varepsilon L)^{m+n}\notag\\
 &\leq CM\sum_{p=d}^{\infty}p^2(\varepsilon L)^p\notag\\
 &\leq CM\varepsilon^dL^d
       \sum_{p=d}^{\infty}p^2(2/3)^{p-d}.
 \label{eq:double-series-sum}
\end{align}
The final series is finite; hence the multipole expansion converges absolutely in \(C^1(E_0)\) and satisfies \eqref{eq:free-space-estimate}.

The proof is complete.
\end{proof}

The free-space scattered field does not itself satisfy the boundary condition on \(\partial\Omega\).
A harmonic correction restores that condition and, by the preceding estimate, remains of the same order in \(\varepsilon\).

\begin{proof}[Proof of Theorem~\ref{thm:arbitrary-H}]
Set $d=2K+2$.
Proposition~\ref{prop:total-degree} verifies \eqref{eq:free-space-hypothesis}.
Let
\[
 \mathcal H_R =\{h\in C(\overline{B_R(z_*)}):\Delta h=0 \text{ in }B_R(z_*)\}
\]
with the supremum norm, and let $\mathcal E_\Omega$ be the harmonic Dirichlet extension from $\partial\Omega$ into $\Omega$.
Define the feedback operator on $\mathcal H_R$ by
\begin{equation}
 \mathcal T_\varepsilon h
 =\left.\mathcal E_\Omega\bigl(
   \mathcal S_\varepsilon[h]|_{\partial\Omega}\bigr)
   \right|_{B_R(z_*)}.
 \label{eq:feedback-operator}
\end{equation}
Lemma~\ref{lem:small-volume}, applied on a fixed compact tubular neighborhood of \(\partial\Omega\) disjoint from \(z_*\), and the maximum principle give
\begin{equation}
 \left\lVert \mathcal T_\varepsilon h\right\rVert _{L^\infty(B_R)}
 \leq
 \left\lVert \mathcal S_\varepsilon[h]\right\rVert _{C^0(\partial\Omega)}
 \leq C\varepsilon^d\left\lVert h\right\rVert _{L^\infty(B_R)}.
 \label{eq:feedback-operator-bound}
\end{equation}
For sufficiently small $\varepsilon$, $\left\lVert \mathcal T_\varepsilon\right\rVert <1/2$, so $I+\mathcal T_\varepsilon$ is invertible by its Neumann series.
Define the effective local incident field by
\begin{equation}
 h_\varepsilon=(I+\mathcal T_\varepsilon)^{-1}(H|_{B_R}).
 \label{eq:effective-incident-fixed-point}
\end{equation}
Then
\begin{equation}
 \left\lVert h_\varepsilon\right\rVert _{L^\infty(B_R)}
 \leq2\left\lVert H\right\rVert _{L^\infty(B_R)}.
 \label{eq:effective-incident-bound}
\end{equation}
Put
\[
 s_\varepsilon=\mathcal S_\varepsilon[h_\varepsilon],
 \qquad
 v_\varepsilon=\mathcal E_\Omega(s_\varepsilon|_{\partial\Omega}).
\]
Since \(v_\varepsilon|_{B_R}=\mathcal T_\varepsilon h_\varepsilon\), equation \eqref{eq:effective-incident-fixed-point} is equivalent to
\[
 h_\varepsilon+v_\varepsilon|_{B_R}=H|_{B_R}.
\]
Hence
\[
 \widetilde u_\varepsilon:=H-v_\varepsilon+s_\varepsilon
\]
satisfies all transmission conditions near the inclusion, is harmonic elsewhere in $\Omega$, and has the prescribed boundary value because $v_\varepsilon=s_\varepsilon$ on $\partial\Omega$.
Uniqueness gives $\widetilde u_\varepsilon=u_\varepsilon$.

For $E\Subset\Omega\setminus\{z_*\}$, take $\varepsilon$ smaller if necessary so that \eqref{eq:small-volume-separation} holds for both \(E\) and the chosen tubular neighborhood of \(\partial\Omega\).
Lemma~\ref{lem:small-volume} gives
\[
 \left\lVert s_\varepsilon\right\rVert _{C^1(E)} +\left\lVert s_\varepsilon\right\rVert _{C^0(\partial\Omega)} \leq C_E\varepsilon^d \left\lVert h_\varepsilon\right\rVert _{L^\infty(B_R)}.
\]
The maximum principle followed by the interior harmonic estimate gives
\[
 \left\lVert v_\varepsilon\right\rVert _{C^1(E)} \leq C_E\left\lVert v_\varepsilon\right\rVert _{L^\infty(\Omega)} \leq C_E\left\lVert s_\varepsilon\right\rVert _{C^0(\partial\Omega)}.
\]
Using \eqref{eq:effective-incident-bound} in $u_\varepsilon-H=s_\varepsilon-v_\varepsilon$ proves \eqref{eq:arbitrary-H-bound}.
\end{proof}

The preceding argument uses symmetry only through the total-degree cancellation in Proposition~\ref{prop:total-degree}.
It therefore applies without change to the symmetry-free family once the same cancellation condition is available.

\begin{corollary}\label{cor:symfree-arbitrary-H}
Let \(\mathcal D_s\) be any fixed nonzero member of the exact symmetry-free family in Corollary~\ref{cor:symfree-asymmetric-family}, centered at the origin, and set \(\mathcal D_{s,\varepsilon}=z_*+\varepsilon\mathcal D_s\).
Let \(\Omega\) be a fixed bounded smooth domain with \(B_R(z_*)\Subset\Omega\), assume \(\mathcal D_{s,\varepsilon}\subset B_{R/2}(z_*)\), and let \(H\) be harmonic in \(\mathbb{R}^2\).
Let \(\sigma_{\varepsilon,z_*}\) be the scaled conductivity associated with \(\mathcal D_s\), as in \eqref{eq:scaled-conductivity}, and let \(u_\varepsilon\in H^1(\Omega)\) be the corresponding weak solution with boundary trace \(H\).
Then, for every \(E\Subset\Omega\setminus\{z_*\}\),
\begin{equation}
 \left\lVert u_\varepsilon-H\right\rVert _{C^1(E)}
 \leq C_{E,s}\varepsilon^8
 \left\lVert H\right\rVert _{L^\infty(B_R(z_*))}
 \label{eq:symfree-arbitrary-H-bound}
\end{equation}
for sufficiently small \(\varepsilon\).
The constant is uniform when \(s\) ranges over a compact subinterval of the local analytic family on which the interface separation and ellipticity bounds are uniform.
\end{corollary}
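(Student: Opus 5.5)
The plan is to rerun the proof of Theorem~\ref{thm:arbitrary-H} with \(d=2K+2=8\) for \(K=3\). The cyclic hypothesis enters that proof only through Proposition~\ref{prop:total-degree}, which supplies \eqref{eq:free-space-hypothesis}. For the symmetry-free family, the same hypothesis is given directly by Corollary~\ref{cor:symfree-asymmetric-family}. Indeed, \eqref{eq:symfree-K3-triangle} states that \(M_{mn}^{\alpha\beta}[\mathcal D_s]=0\) for all \(m+n\leq7=d-1\). This cancellation holds for the CGPTs computed about the origin, at which \(\mathcal D_s\) is centered, and that is the normalization required in Lemma~\ref{lem:small-volume}. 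No selection rule is needed anywhere.

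Next I would check that the reference multilayer \(\mathcal D_s\) satisfies the standing hypotheses of Lemmas~\ref{lem:uniform-interface-system} and \ref{lem:small-volume}. Its interfaces are \(C^{2,\alpha}\) for the following reasons. The core is an analytic strictly convex curve. The coating interfaces are \(g+Ec(g)\), with \(g\) smooth and \(Ec\) a trigonometric polynomial, so they lie in \(\mathcal X\). Theorem~\ref{thm:symfree-general} guarantees strict nesting. The conductivities are finite and positive. Neither lemma uses symmetry: the isomorphism argument for \(\mathbb A_{\boldsymbol\sigma}\) relies only on nesting, zero-mean densities and an energy identity, and the multipole bookkeeping in \eqref{eq:local-double-multipole-series} relies only on the translated scaling law \eqref{eq:cgpt-scaling} and the growth bound \eqref{eq:cgpt-growth-bound}.

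Lemma~\ref{lem:small-volume} then gives \(\|\mathcal S_\varepsilon[h]\|_{C^1(E_0)}\leq C\varepsilon^8\|h\|_{L^\infty(B_R)}\). From there the feedback construction \eqref{eq:feedback-operator}--\eqref{eq:effective-incident-bound} goes through verbatim: invert \(I+\mathcal T_\varepsilon\) by a Neumann series, set \(u_\varepsilon=H-v_\varepsilon+s_\varepsilon\), identify it with the weak solution by uniqueness, and use the maximum principle together with interior estimates to obtain \eqref{eq:symfree-arbitrary-H-bound}.

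The step needing the most care is the uniformity claim in \(s\). Here I would use the real-analytic dependence of \((c(s),\eta(s))\) from Theorem~\ref{thm:symfree-K3}, together with the analyticity of the core curve in \(s\). These give continuous dependence of the interfaces in \(C^{2,\alpha}\) and of the conductivities on a compact \(s\)-interval. By hypothesis, the interface separation and ellipticity bounds are uniform on that interval, and \(a_*\) is bounded there. The compact-family clause of Lemma~\ref{lem:uniform-interface-system} then gives a uniform inverse bound for \(\mathbb A_{\boldsymbol\sigma}\). Feeding this into the constants of \eqref{eq:cgpt-growth-bound} and \eqref{eq:double-series-sum} makes \(C_{E,s}\) and the admissible range of \(\varepsilon\) uniform.
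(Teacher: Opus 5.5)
Your proposal is correct and follows the paper's own argument: \eqref{eq:symfree-K3-triangle} supplies the hypothesis \eqref{eq:free-space-hypothesis} of Lemma~\ref{lem:small-volume} with \(d=8\), and the feedback construction from the proof of Theorem~\ref{thm:arbitrary-H} is then reused verbatim. Your checks of \(C^{2,\alpha}\) regularity, nesting, and uniformity in \(s\) via the compact-family clause of Lemma~\ref{lem:uniform-interface-system} spell out hypotheses that the paper's short proof leaves implicit.
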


\begin{proof}
Equation \eqref{eq:symfree-K3-triangle} gives the hypothesis \eqref{eq:free-space-hypothesis} of Lemma~\ref{lem:small-volume} with \(d=8\).
Define \(\mathcal T_\varepsilon,h_\varepsilon,s_\varepsilon\), and \(v_\varepsilon\) as in the proof of Theorem~\ref{thm:arbitrary-H}.
Lemma~\ref{lem:small-volume}, followed by the maximum principle and the interior harmonic estimate, gives \eqref{eq:symfree-arbitrary-H-bound}.
\end{proof}

The fixed-scale polynomial statement follows directly from the selection rules and does not require the small-volume argument.

\begin{proof}[Proof of Corollary~\ref{cor:polynomial}]
The constant component of the incident polynomial produces no scattered field.
Fix an incident order \(1\leq n\leq K\) and an observation order \(m<Q-K\).
Then \(m+n<Q\), so the first selection rule \eqref{eq:selection1} gives \(\mathbb N_{mn}^{(1)}=0\).
Moreover, \(m<Q\) and \(n<Q\), hence \(|m-n|<Q\); the second selection rule \eqref{eq:selection2} can therefore allow a response only when \(m=n\).
In that case \(m=n\leq K\), and the complete block cancellation gives \(\mathbb N_{nn}^{(2)}=0\).
Thus both complex CGPT families, and hence all four real CGPTs, vanish for every \(n\leq K\) and \(m<Q-K\).
By linearity, the multipole expansion \eqref{eq:multipole-expansion} for an arbitrary harmonic polynomial of degree at most \(K\) starts at observation order \(m=Q-K\), which proves the stated decay.
\end{proof}

\begin{remark}
The restriction to incident polynomials of degree at most $K$ is essential: the incident mode $n=Q-1$ may couple to the dipole $m=1$ because $m+n=Q$.
After scaling, this contribution has total degree $Q\geq2K+2$ and is absorbed by \eqref{eq:arbitrary-H-bound}, which is therefore a small-inclusion estimate rather than a fixed-scale invisibility statement.
\end{remark}

\section{Computer-assisted and numerical examples}
\label{sec:examples}

\subsection{A certified chiral \texorpdfstring{$C_8$}{C8} structure}
\label{sec:explicit-example}

For one fixed noncircular structure without reflection symmetry, injectivity and chirality are proved analytically, while existence and uniqueness of the exact root of the response map inside a stated box are certified by a strict Krawczyk inclusion.
The certificate combines infinite-dimensional Faber--Grunsky formulas, analytic projection-tail estimates, and outward-rounded ball arithmetic.
A Nystr\"om computation in the physical domain provides an independent consistency check without interval arithmetic.

\subsubsection{Geometry and approximate center}

Take $K=3$, $Q=8$, background conductivity $1$, core conductivity $4$, and the common map
\begin{equation}
 \Phi(w)=w+\frac{3}{20000}w^{-7}
             +\frac{i}{20000000}w^{-15}.
 \label{eq:numerical-map}
\end{equation}
Equivalently, in the parametrization by the common conformal map \eqref{eq:common-map}, the prescribed shape parameter is
\begin{equation}
 a^\star
 :=\left(\frac{3}{20000},\frac{i}{20000000}\right)\neq0.
 \label{eq:certified-noncircular-shape-parameter}
\end{equation}
The level radii, indexed from the core toward the exterior, are
\begin{equation}
 (r_0,r_1,r_2,r_3)
 =\left(\frac12,\frac{33}{50},\frac{41}{50},1\right).
 \label{eq:numerical-radii}
\end{equation}
The univalence index in \eqref{eq:univalence} is
\begin{equation}
 \frac{7(1.5\times10^{-4})}{0.5^8}
 +\frac{15(5.0\times10^{-8})}{0.5^{16}}
 =0.317952<1.
 \label{eq:numerical-univalence}
\end{equation}
The estimate certifies injectivity and nesting of the interfaces.

The geometry is chiral.
With the physical rotation convention \eqref{eq:shape-rotation-action}, a rotation through $\alpha$ sends
\[
 (a_1,a_2)\longmapsto (e^{i8\alpha}a_1,e^{i16\alpha}a_2).
\]
Making the first transformed coefficient real forces $e^{i8\alpha}=\pm1$ and hence $e^{i16\alpha}=1$, so the second coefficient remains purely imaginary.
Equivalently, $a_2/a_1^2$ is rotation invariant and has phase $\pi/2$.
A reflection symmetry of the multilayer would in particular preserve its bounded outer domain.
The exact \(C_8\) symmetry forces the centroid of that domain to be the origin, and every reflection symmetry fixes the centroid; hence any reflection axis must pass through the origin.
A reflection axis would, after rotation to the real axis, leave the exterior domain invariant under \(z\mapsto\overline z\).
The maps \(\Phi(w)\) and \(\overline{\Phi(\overline w)}\) would then be two normalized exterior maps of the same domain.
Uniqueness of the normalization \(\Phi(w)=w+O(w^{-1})\) would make them equal, forcing every Laurent coefficient to be real.
This contradicts the purely imaginary nonzero coefficient \(a_2\), so no reflection axis exists.

The noncircular solve is initialized by the certified radial root \(\eta_{\rm rad}^*\) of Proposition~\ref{prop:symfree-shape-certificate}.
Starting from that root, a high-precision Faber--Grunsky solve provides the approximate center in inner-to-outer coating order,
\begin{equation}
\boxed{(\sigma_1,\sigma_2,\sigma_3)
\approx(0.2173691055307019,
  2.2297628798180334,
  0.8502996736218210).}
\label{eq:noncircular-root}
\end{equation}
These decimals locate the certified root; the core and background conductivities are \(\sigma_0=4\) and \(\sigma_4=1\), respectively.

For the rigorous enclosure, write
\[
 \eta=(\log\sigma_1,\log\sigma_2,\log\sigma_3)
\]
in inner-to-outer coating order and define the following exact rational center by its terminating decimal representation:
\begin{equation}
 \bar\eta=
 \begin{pmatrix}
 -1.52615842326786706722987361029159674329936613882185\\
 \phantom{-}0.80189524789822358780381229273728032672229722508971\\
 -0.16216643442931959760313595606642700681881532393627
 \end{pmatrix}
 \label{eq:certified-eta-center}
\end{equation}
and the exact rational box
\begin{equation}
 B_{\rm cert}:=\bar\eta+[-10^{-15},10^{-15}]^3.
 \label{eq:certified-eta-box}
\end{equation}

\begin{theorem}\label{thm:certified-C8}
For the geometry \eqref{eq:numerical-map}--\eqref{eq:numerical-radii}, with core conductivity \(4\) and background conductivity \(1\), there is exactly one \(\eta\in B_{\rm cert}\) for which the complete \(3\)-CGPT block vanishes.
The associated finite positive coating conductivities satisfy the outward decimal enclosures in Table~\ref{tab:certified-conductivities}.
Moreover, on the whole certified box,
\begin{equation}
 \det D_\eta\mathcal G_{a^\star}^{4}(B_{\rm cert})
 \subset
 [0.018232258368660,\;0.018232258368668],
 \label{eq:certified-noncircular-log-determinant}
\end{equation}
and hence
\begin{equation}
 \deg\!\left(
  \mathcal G_{a^\star}^{4},
  \operatorname{int}B_{\rm cert},0
 \right)=+1.
 \label{eq:certified-noncircular-local-degree}
\end{equation}
If
\[
 \tau_j:=\frac{\sigma_j-\sigma_{j+1}} {\sigma_j+\sigma_{j+1}},
 \qquad
 \boldsymbol\tau:=(\tau_3,\tau_2,\tau_1),
 \qquad \sigma_4=1,
\]
\(\Theta(\boldsymbol\tau)=\eta\) denotes the corresponding coordinate map, and
\[
 \mathcal M_{a^\star}^{4}(\boldsymbol\tau) :=(M_{11}^{cc},M_{22}^{cc},M_{33}^{cc})^{\mathsf T} =\operatorname{diag}(\pi,2\pi,3\pi) \mathcal G_{a^\star}^{4}(\Theta(\boldsymbol\tau)),
\]
then
\begin{equation}
 \det D_{\boldsymbol\tau}\mathcal M_{a^\star}^{4}
  \bigl(\Theta^{-1}(B_{\rm cert})\bigr)
 \subset[-105.538312937861,\;-105.538312937737].
 \label{eq:certified-noncircular-contrast-determinant}
\end{equation}
Consequently,
\begin{equation}
 \deg\!\left(
  \mathcal M_{a^\star}^{4},
  \Theta^{-1}(\operatorname{int}B_{\rm cert}),0
 \right)=-1.
 \label{eq:certified-noncircular-contrast-degree}
\end{equation}
At the unique log-conductivity vector,
\begin{equation}
 M_{mn}^{\alpha\beta}=0
 \qquad
 \text{for all }m,n\geq1,\quad m+n\leq7,
 \quad\alpha,\beta\in\{c,s\}.
 \label{eq:certified-total-degree-seven}
\end{equation}
At the same log-conductivity vector, the second-type response of total degree eight satisfies the outward decimal enclosure
\begin{equation}
 \mathbb N_{44}^{(2)}
 \in[-0.684735501505,-0.684735501504].
 \label{eq:certified-degree-eight}
\end{equation}
In particular, the first total degree beyond the certified cancellation range contains a rigorously nonzero response that is permitted by symmetry.
The uniqueness and Brouwer degree assertions are local to the displayed log-conductivity boxes; no global uniqueness in the conductivity parameters is claimed.
\end{theorem}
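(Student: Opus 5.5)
The proof will be a computer-assisted Krawczyk argument applied to the reduced map. By Proposition~\ref{prop:selection} with \(Q=8>2K=6\), the complete \(3\)-CGPT block vanishes exactly when \(\mathcal G_{a^\star}^4(\eta)=0\). So I will certify a unique zero of this three-dimensional real map in \(B_{\rm cert}\).

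To evaluate \(\mathcal G_{a^\star}^4\) and its Jacobian rigorously, I will use the common conformal map \(\Phi\) in \eqref{eq:numerical-map}. All interfaces are level curves of the single exterior map, so the layer potentials on \(\Gamma_j(a^\star)\) are represented in the Faber basis. Their matrix entries come from the Grunsky coefficients of \(\Phi\), which are explicit polynomials in \(a_1,a_2\) multiplied by powers of \(r_j\). The interface system \eqref{eq:rescaled-interface-operator} then becomes an infinite linear system on \(\ell^2\)-type sequence spaces, and the \(C_8\) symmetry decouples Fourier indices modulo \(8\). For each incident mode \(n\le 3\), I will truncate to indices \(\le N\) and bound the discarded tail analytically. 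The bound uses the geometric decay of the Grunsky coefficients, of order \((r_0/r_j)^{k}\) times \(|a|\)-powers, which is controlled by the univalence margin \eqref{eq:numerical-univalence}. This gives a truncated matrix plus a perturbation of small explicit norm, and a Neumann-series argument encloses the exact solution. The derivatives in \(\eta\) are obtained the same way, by differentiating the linear system, or through the sensitivity formula \eqref{eq:material-derivative-general}.

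With ball-arithmetic enclosures of \(\mathcal G(\bar\eta)\) and of \(D\mathcal G(B_{\rm cert})\), I will form the Krawczyk operator \(\bar\eta-Y\mathcal G(\bar\eta)+(I-YD\mathcal G(B_{\rm cert}))(B_{\rm cert}-\bar\eta)\). Here \(Y\) is a fixed approximate inverse of the Jacobian. I then check that its image lies strictly inside \(B_{\rm cert}\), which yields existence and uniqueness of the zero. The same Jacobian enclosure gives \eqref{eq:certified-noncircular-log-determinant}. Because the determinant does not vanish on the box, the local degree equals the sign of the determinant, namely \(+1\). The contrast-coordinate statement follows from the chain rule: \(\mathcal M=\operatorname{diag}(\pi,2\pi,3\pi)\mathcal G\circ\Theta\). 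I will enclose \(\det D\Theta\) in closed form, since \(\eta_m=\sum\log\frac{1+\tau}{1-\tau}\) gives a triangular Jacobian with a sign fixed by the ordering of \(\boldsymbol\tau\). Multiplying the enclosures gives \eqref{eq:certified-noncircular-contrast-determinant} and degree \(-1\). The conductivities in Table~\ref{tab:certified-conductivities} are obtained by exponentiating the box. Note that \(Q=8\ge 2K+2\), so Proposition~\ref{prop:total-degree} upgrades block cancellation to \eqref{eq:certified-total-degree-seven}.

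For \eqref{eq:certified-degree-eight}, I will evaluate \(\mathbb N_{44}^{(2)}\) on the whole box \(B_{\rm cert}\) using the same truncated Faber system with tail bounds, for incident mode \(n=4\). This is a symmetry-permitted entry, since \(m-n=0\). The interval obtained must exclude zero, which holds because its width is far smaller than \(0.68\).

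The main obstacle is the rigorous tail estimate for the infinite Faber--Grunsky system. The truncation error must be bounded uniformly over the box, and tightly enough to reach widths near \(10^{-12}\). The difficulty comes from the innermost ratio \(r_0/r_1\approx 0.76\), which gives slow geometric decay. I would first try a weighted \(\ell^2\) norm adapted to \(r_0\) and a large truncation, \(N\) of a few hundred, at high precision. If that fails, I would replace it with an explicit contraction bound in the weighted space.
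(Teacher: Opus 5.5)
Your proposal follows the paper's proof essentially step for step. The paper likewise uses a truncated Faber--Grunsky representation of $\mathcal G_{a^\star}^4$ with analytic tail radii and outward-rounded ball arithmetic, a Krawczyk inclusion with a fixed preconditioner for existence and uniqueness in $B_{\rm cert}$, the Jacobian determinant enclosure for the local degree, the chain rule through $D_{\boldsymbol\tau}\Theta$ for the contrast-coordinate degree, the selection rules for total degree seven, and a separate whole-box enclosure of $\mathbb N_{44}^{(2)}$. The one point the paper spells out that you leave implicit is that $\mathsf F_n(z)=z^n$ for $1\leq n\leq7$ for this map, so the diagonal Faber responses are exactly the ordinary-basis quantities $\mathbb N_{nn}^{(2)}/(2\pi n)$ defining $\mathcal G_{a^\star}^4$ (including $n=4$).
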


Exact \(C_8\) symmetry removes the forbidden low-order responses, and the three coating conductivities cancel the remaining diagonal terms \(\mathbb N_{nn}^{(2)}\), \(1\leq n\leq3\).
The nonzero enclosure \eqref{eq:certified-degree-eight} shows that cancellation stops at the first total-degree stratum beyond the certified range \(m+n\leq7\).
Thus the example is a finite-order near-neutral structure rather than perfectly invisible; after scaling, Theorem~\ref{thm:arbitrary-H} gives the corresponding \(O(\varepsilon^8)\) exterior estimate.

\begin{figure}[t]
\centering
\begin{tikzpicture}[scale=3.45]
  \newcommand{\levelcurve}[5]{%
    \draw[#1,thick,fill=#2]
      plot[smooth cycle,samples=241,domain=0:360]
      ({#3*cos(\x)+#4*cos(7*\x)+#5*sin(15*\x)},
       {#3*sin(\x)-#4*sin(7*\x)+#5*cos(15*\x)});
  }
  \levelcurve{outerblue}{outerblue!18}{1.00}{0.00015}{0.00000005}
  \levelcurve{middleblue}{middleblue!24}{0.82}{0.000601720}{0.000000981}
  \levelcurve{innergold}{innergold!28}{0.66}{0.002749690}{0.000025457}
  \levelcurve{coregray}{coregray!24}{0.50}{0.0192}{0.0016384}
  \draw[->,thin] (1.08,0.50)--(0.79,0.36)
    node[pos=0,anchor=west,align=left,font=\footnotesize]
    {$\sigma_{\rm out}=0.85030$};
  \draw[->,thin] (1.08,0.18)--(0.66,0.12)
    node[pos=0,anchor=west,align=left,font=\footnotesize]
    {$\sigma_{\rm mid}=2.22976$};
  \draw[->,thin] (1.08,-0.16)--(0.52,-0.12)
    node[pos=0,anchor=west,align=left,font=\footnotesize]
    {$\sigma_{\rm in}=0.21737$};
  \draw[->,thin] (1.08,-0.49)--(0.28,-0.25)
    node[pos=0,anchor=west,align=left,font=\footnotesize]
    {core $\sigma_0=4$};
  \node[font=\small,align=center] at (0,-1.19)
    {common $C_8$ level curves; the small second Laurent harmonic breaks every reflection symmetry};
\end{tikzpicture}
\caption{The certified chiral three-coating geometry.
The displayed conductivity decimals indicate the location of the rigorously enclosed log-conductivity root.
The global injectivity of every interface follows from \eqref{eq:numerical-univalence}.}
\label{fig:chiral-geometry}
\end{figure}

\subsubsection{Validated Faber--Grunsky and Krawczyk certificate}

The validation concerns the inner-to-outer log-conductivity center \eqref{eq:certified-eta-center} and the exact rational box \eqref{eq:certified-eta-box}.

The validation evaluates the semi-infinite Faber--Grunsky representation of the reduced map \(\mathcal G_{a^\star}^{4}\) in \eqref{eq:symfree-reduced-material-map}, using the physical multilayer response formula of \cite[Theorem~5.2]{choi2023geometric}.
For the present map, \(\mathsf F_n(z)=z^n\) for \(1\leq n\leq7\); hence its first four diagonal Faber responses are exactly the corresponding normalized second-type complex CGPT responses \(\mathbb N_{nn}^{(2)}/(2\pi n)\) in the ordinary harmonic basis.

At the prescribed value \(a=a^\star\neq0\), all material transfer factors in the Faber--Grunsky representation are differentiated analytically with respect to the log-conductivities.
Thus no finite-difference derivative enters the response, Jacobian, determinant, or Krawczyk certificate.

The production root certificate uses the truncation order \(N_{\rm tr}=512\), \(512\)-bit Arb/Acb arithmetic, and the three \(64\times64\) invariant residue blocks
\[
 (1,7),\qquad(2,6),\qquad(3,5).
\]
The certificate for total degree eight uses the additional self-paired \(64\times64\) residue block \((4,4)\) on the same log-conductivity box.
All finite Grunsky coefficients, matrix products, derivatives with respect to the log-conductivities, and linear solves are enclosed by outward-rounded balls.
The implementation uses \texttt{python-flint 0.8.0} with \texttt{FLINT 3.3.1} and one computational thread.
Analytic estimates for the omitted modes give the uniform response and Jacobian tail radii
\begin{align}
 \max_{1\leq n\leq3}\varepsilon_{0,n}&<1.577\times10^{-20},
 &
 \max_{\substack{1\leq n\leq3\\1\leq p\leq3}}\varepsilon_{1,np}
 &<1.712\times10^{-18}.
 \label{eq:certificate-tail-summary}
\end{align}
For the additional order-four response, the same estimates give
\begin{equation}
 \varepsilon_{0,4}<7.855\times10^{-23}.
 \label{eq:certificate-degree-eight-tail}
\end{equation}
These tail radii are added componentwise to the finite outward-rounded matrix balls and therefore enclose the exact infinite-dimensional response and its log-conductivity Jacobian on \(B_{\rm cert}\).

Let \(Y\) be the fixed exact \(256\)-bit dyadic preconditioner recorded in the machine-readable certificate.
Its determinant is enclosed away from zero and has midpoint
\[
 54.8478405570814036\ldots.
\]
Here the bracketed response and Jacobian intervals denote these enlarged \(N_{\rm tr}=512\) Arb/Acb enclosures.
In particular, with rows indexed by the response order \(n\) and columns by the inner-to-outer log-conductivities, define the displayed center matrix
\begin{equation}
 J_\star:=
 \begin{pmatrix}
  0.296487546388217&0.316278187638896&0.332968736844520\\
  0.096535915282060&0.336467076198360&0.559472575130880\\
  0.032897665930071&0.250094473122290&0.715663833751530
 \end{pmatrix}.
 \label{eq:certified-noncircular-log-jacobian}
\end{equation}
Then the componentwise interval enclosure is
\begin{equation}
 D_\eta\mathcal G_{a^\star}^{4}(B_{\rm cert})
 \subset
 J_\star+[-3\times10^{-14},3\times10^{-14}]^{3\times3}.
 \label{eq:certified-noncircular-log-jacobian-box}
\end{equation}
The full-precision componentwise matrix enclosure stored in the certificate is contained in the coarser displayed box \eqref{eq:certified-noncircular-log-jacobian-box}.
Direct outward-rounded evaluation of the determinant of that stored full-precision enclosure gives \eqref{eq:certified-noncircular-log-determinant}.

For completeness, the change from \(\boldsymbol\tau=(\tau_3,\tau_2,\tau_1)\) to \(\eta=(\log\sigma_1,\log\sigma_2,\log\sigma_3)\) has the explicit Jacobian
\begin{equation}
 D_{\boldsymbol\tau}\Theta=
 \begin{pmatrix}
  \dfrac{2}{1-\tau_3^2}&
  \dfrac{2}{1-\tau_2^2}&
  \dfrac{2}{1-\tau_1^2}\\[1.1ex]
  \dfrac{2}{1-\tau_3^2}&
  \dfrac{2}{1-\tau_2^2}&0\\[1.1ex]
  \dfrac{2}{1-\tau_3^2}&0&0
 \end{pmatrix},
 \qquad
 \det D_{\boldsymbol\tau}\Theta
 =-\prod_{j=1}^{3}\frac{2}{1-\tau_j^2}<0.
 \label{eq:contrast-to-log-jacobian}
\end{equation}
Thus the final contrast-coordinate matrix and determinant are
\begin{align}
 D_{\boldsymbol\tau}\mathcal M_{a^\star}^{4}
 &=
 \operatorname{diag}(\pi,2\pi,3\pi)\,
 D_\eta\mathcal G_{a^\star}^{4}\,
 D_{\boldsymbol\tau}\Theta,
 \label{eq:noncircular-final-contrast-jacobian}\\
 \det D_{\boldsymbol\tau}\mathcal M_{a^\star}^{4}
 &=
 6\pi^3\,
 \det D_\eta\mathcal G_{a^\star}^{4}\,
 \det D_{\boldsymbol\tau}\Theta.
 \label{eq:noncircular-final-contrast-determinant}
\end{align}
The certificate evaluates both the matrix in \eqref{eq:noncircular-final-contrast-jacobian} directly and the factorization in \eqref{eq:noncircular-final-contrast-determinant}; the resulting determinant intervals overlap and are strictly negative, yielding \eqref{eq:certified-noncircular-contrast-degree}.
These intervals satisfy
\begin{equation}
 \bar\eta-Y[\mathcal G_{a^\star}^{4}(\bar\eta)]
 +(I-Y[D_\eta\mathcal G_{a^\star}^{4}(B_{\rm cert})])
 (B_{\rm cert}-\bar\eta)
 \subset\operatorname{int}B_{\rm cert}.
 \label{eq:certificate-krawczyk}
\end{equation}
The three certified distances from the Krawczyk image to the boundary of \(B_{\rm cert}\) are bounded below by
\begin{equation}
 (9.9989444,\;9.9992948,\;9.9997967)\times10^{-16}.
 \label{eq:certificate-margins}
\end{equation}

\begin{proof}[Proof of Theorem~\ref{thm:certified-C8}]
Because \(\mathsf F_n(z)=z^n\) for \(1\leq n\leq7\), the first three components of the semi-infinite response are precisely the normalized second-type complex CGPT responses in the ordinary harmonic basis that define \(\mathcal G_{a^\star}^{4}\).
The tail radii in \eqref{eq:certificate-tail-summary} make the finite response and Jacobian balls rigorous enclosures of that exact map.
For a continuously differentiable map on a box, the standard Krawczyk criterion states that a strict inclusion of its Krawczyk image in the interior of the box gives exactly one zero there \cite{krawczyk1969newton,moore2009introduction,rump2010verification}.
Thus \eqref{eq:certificate-krawczyk} gives existence and uniqueness in \(B_{\rm cert}\).
The determinant enclosure \eqref{eq:certified-noncircular-log-determinant} shows that the unique zero is regular and has local index \(+1\).
Since the strict Krawczyk inclusion places it in \(\operatorname{int}B_{\rm cert}\) and excludes every other zero in \(B_{\rm cert}\), the regular-value formula for Brouwer degree proves \eqref{eq:certified-noncircular-local-degree}.
The coordinate factorization \eqref{eq:noncircular-final-contrast-determinant} and the negative orientation in \eqref{eq:contrast-to-log-jacobian} give the contrast-coordinate determinant enclosure and degree in \eqref{eq:certified-noncircular-contrast-degree}.
Since \(8>2\cdot3\), Proposition~\ref{prop:selection} also shows that, throughout \(B_{\rm cert}\), a zero of \(\mathcal G_{a^\star}^{4}\) is equivalent to cancellation of the complete \(3\)-CGPT block.
Finally, Proposition~\ref{prop:selection} makes every first-type tensor with \(m+n\leq7\) and every off-diagonal second-type tensor with \(m+n\leq7\) vanish.
The only remaining second-type cases are \(m=n\leq3\), which are exactly the three certified equations.
For \(n=4\), the same identity \(\mathsf F_4(z)=z^4\) identifies the semi-infinite response with \(\mathbb N_{44}^{(2)}/(8\pi)\), while \eqref{eq:certificate-degree-eight-tail} encloses the exact response uniformly on \(B_{\rm cert}\).
Outward-rounded evaluation gives \eqref{eq:certified-degree-eight}, whose upper endpoint is negative.
The unique exact root in \(B_{\rm cert}\) therefore has \(\mathbb N_{44}^{(2)}\neq0\).
Finally, exponentiating the three coordinate intervals of \(B_{\rm cert}\) with outward rounding gives the conductivity enclosures in Table~\ref{tab:certified-conductivities}.
\end{proof}

\begin{table}[H]
\centering
\caption{Outward decimal enclosures of the unique certified coating conductivities, ordered from the core toward the exterior.
The full ball endpoints are stored in the machine-readable certificate.}
\label{tab:certified-conductivities}
\begin{tabular}{@{}ccc@{}}
\toprule
coating & lower bound & upper bound\\
\midrule
\(\sigma_1\)
& \(0.217369105530701687\)
& \(0.217369105530702123\)\\
\(\sigma_2\)
& \(2.229762879818031155\)
& \(2.229762879818035615\)\\
\(\sigma_3\)
& \(0.850299673621820147\)
& \(0.850299673621821848\)\\
\bottomrule
\end{tabular}
\end{table}

The supplementary computational archive contains the exact decimal inputs, the exact dyadic preconditioner, the full outward-rounded balls, independent precision runs, and the verification procedure used for the preceding certificate.

\subsubsection{Quadratic prediction from the radial reference root}

\begin{remark}\label{rem:quadratic-coefficient-reading}
The certified nondegeneracy in Proposition~\ref{prop:symfree-shape-certificate}, Lemma~\ref{lem:symfree-shape-analyticity}, and the implicit-function theorem give a local analytic branch \(a\mapsto\eta(a)\) through \(\eta(0)=\eta_{\rm rad}^*\).
Set \(\sigma(a):=(e^{\eta_j(a)})_{j=1}^3\) and \(\bar\sigma:=\sigma(0)\), so that
\[
 \bar\sigma\approx
 (0.2165224661251919,\,
  2.231016403756756,\,
  0.8502219742037971).
\]
With \(\gamma=r_0=1/2\), put \(\widehat a_\ell:=\gamma^{-8\ell}a_\ell\) for \(\ell=1,2\).
Let \(e_\ell\) be the \(\ell\)-th coordinate vector in \(\mathbb C^2\), and define the exact coefficient matrix by
\[
 (C_\sigma)_{j\ell}:=
 \left.\frac12\frac{\mathrm{d}^2}{\mathrm{d}t^2}
 \sigma_j(\gamma^{8\ell}t e_\ell)\right|_{t=0},
 \qquad j=1,2,3,\quad \ell=1,2,\quad t\in\mathbb R.
\]
Rotation invariance gives the expansion
\begin{equation}
 \sigma(a)=\bar\sigma+
 C_\sigma
 \begin{pmatrix}|\widehat a_1|^2\\|\widehat a_2|^2\end{pmatrix}
 +O(\lVert\widehat a\rVert^3).
 \label{eq:certified-physical-quadratic-expansion}
\end{equation}
Numerically,
\[
 C_\sigma\approx
 \begin{pmatrix}
  0.5590159854459432& 1.1626320762841366\\
 -0.8493244551757674&-1.3365036556017672\\
  0.0532708102338975& 0.0760586836617055
 \end{pmatrix}.
\]
For the fixed chiral map in \eqref{eq:numerical-map},
\[
 \widehat a_1=0.0384,
 \qquad
 \widehat a_2=0.0032768\,i.
\]
The corresponding quadratic prediction in log-conductivity coordinates is
\begin{equation}
 \begin{aligned}
 \Delta\eta_{\rm quad}
 &:=\operatorname{diag}(\bar\sigma)^{-1}C_\sigma
 \begin{pmatrix}|\widehat a_1|^2\\|\widehat a_2|^2\end{pmatrix}\\
 &\approx(
  0.00386466260677840,\,
 -0.000567781781084374,\,
  0.000093349363159051)^{\mathsf T}.
 \end{aligned}
 \label{eq:target-quadratic-prediction}
\end{equation}
Let $\eta_{\rm chiral}^*$ denote the unique exact root in $B_{\rm cert}$ from Theorem~\ref{thm:certified-C8}.
Comparison of the two independently certified root boxes gives
\begin{equation}
 \eta_{\rm chiral}^*-\eta_{\rm rad}^*
 -\Delta\eta_{\rm quad}
 \in
 \begin{pmatrix}
 [3.7881417270448,3.7881417272449]\times10^{-5}\\
 [5.761608359950,5.761608361951]\times10^{-6}\\
 [-1.966324105738,-1.966324103737]\times10^{-6}
 \end{pmatrix}.
 \label{eq:target-quadratic-comparison}
\end{equation}
Componentwise, the second-order prediction differs from the certified material displacement by at most $2.2\%$.
This is an a posteriori comparison of two certified roots, not a proof that the two roots lie on one validated continuation branch.

For the two weights $1$ and $2$, rotation and reflection invariance show that the only possible cubic invariant is a multiple of $\operatorname{Re}(\widehat a_1^2\overline{\widehat a_2})$.
It vanishes for the phase choice above.
Hence the Taylor expansion along the corresponding radial parameter ray has no cubic term, although no numerical fourth-order remainder bound is asserted here.
\end{remark}

\subsubsection{Independent Nystr\"om cross-check in the physical domain}

Reindex the four interfaces from inside to outside as $\Gamma_0,\ldots,\Gamma_3$, set $\sigma_4=1$, and orient $\Gamma_j$ by the normal pointing from the inner phase $\sigma_j$ to the outer phase $\sigma_{j+1}$.
We use the convention
\begin{equation}
 \mathcal S_\Gamma[\varphi](x)
 =\frac{1}{2\pi}\int_\Gamma\log|x-y|\varphi(y)\,\mathrm{d} s_y,
 \qquad
 \left.\partial_\nu\mathcal S_\Gamma[\varphi]\right|_{\pm}
 =\left(\pm\frac12I+\mathcal K_\Gamma^*\right)\varphi,
 \label{eq:jump-convention}
\end{equation}
where $+$ is the exterior trace and $\nu$ is the outward normal.
We solve directly on the four physical curves using
\begin{equation}
 u=H+\sum_{j=0}^{3}\mathcal S_{\Gamma_j}[\varphi_j].
 \label{eq:single-layer-ansatz}
\end{equation}
The densities satisfy the block system
\begin{equation}
 \left(\lambda_j I-\mathcal K_{\Gamma_j}^*\right)\varphi_j
 -\sum_{\ell\neq j}
  \partial_{\nu_j}\mathcal S_{\Gamma_\ell}[\varphi_\ell]
 =\partial_{\nu_j}H,
 \qquad
 \lambda_j=\frac{\sigma_j+\sigma_{j+1}}
 {2(\sigma_j-\sigma_{j+1})}.
 \label{eq:BIE-system}
\end{equation}
For each incident field \(P_n^\beta\), \(1\leq n\leq8\) and \(\beta\in\{c,s\}\), we compute the complex moments
\begin{equation}
 Q_{mn}^{\beta}=\sum_{j=0}^{3}
 \int_{\Gamma_j}z^m\varphi_{j,n}^{\beta}\,\mathrm{d} s
 \label{eq:computed-moment}
\end{equation}
which, in the present normalization, satisfy
\begin{equation}
 Q_{mn}^{c}=M_{mn}^{cc}+iM_{mn}^{sc},
 \qquad
 Q_{mn}^{s}=M_{mn}^{cs}+iM_{mn}^{ss}.
 \label{eq:Q-to-M}
\end{equation}
Thus the two loadings and the real and imaginary observation components check all four real CGPT families.
A periodic Nystr\"om discretization is used independently on the four physical interfaces.
This solver does not use the Faber--Grunsky linear system or its projection-tail bounds.

For comparison with the reduced response, define
\begin{equation}
 \boldsymbol R^{\rm raw}
 :=(\operatorname{Re} Q_{11}^{c},\operatorname{Re} Q_{22}^{c},\operatorname{Re} Q_{33}^{c}).
 \label{eq:raw-response}
\end{equation}
The exact selection rules give
\begin{equation}
 \mathcal G_n=\frac{R_n^{\rm raw}}{\pi n}.
 \label{eq:raw-to-reduced}
\end{equation}

At the high-precision center \eqref{eq:certified-eta-center}, the analytic Faber--Grunsky Jacobian in raw units and outer-to-inner material order is
\begin{equation}
 J_{\rm raw}\approx
 \begin{pmatrix}
 1.0460521375458165&0.9936172307770452&0.9314430976140895\\
 3.5152698638322530&2.1140849895192020&0.6065530445154076\\
 6.7449727276611290&2.3570848783931897&0.3100531968184864
 \end{pmatrix}.
 \label{eq:crosscheck-raw-J}
\end{equation}
It has
\begin{equation}
 \det J_{\rm raw}\approx-3.39188668491305,
 \qquad
 s(J_{\rm raw})\approx(8.34708,1.22748,0.33105).
 \label{eq:crosscheck-raw-J-data}
\end{equation}
The independent analytic Nystr\"om Jacobian at \(160\) nodes per interface agrees with this matrix to \(1.10\times10^{-14}\) entrywise.
Its maximum spurious imaginary component is below \(5.3\times10^{-17}\).
At the same resolution the maximum low-order physical moment is \(3.11\times10^{-15}\).
These comparisons are diagnostics without interval certification and are not used in the proof of Theorem~\ref{thm:certified-C8}.

\begin{table}[t]
\centering
\caption{Independent Nystr\"om resolution check without interval arithmetic at the rounded center \eqref{eq:noncircular-root}.
The second column is the maximum magnitude over all complex moments encoding the four real families with \(1\leq m,n\leq3\).
The third checks every pair with \(m+n\leq7\).
Once roundoff is reached, the residual is not expected to decrease monotonically.}
\label{tab:convergence}
\begin{tabular}{@{}rrr@{}}
\toprule
nodes/interface & $\max_{m,n\leq3,\,\beta}|Q_{mn}^{\beta}|$
& $\max_{m+n\leq7,\,\beta}|Q_{mn}^{\beta}|$\\
\midrule
 96  & $2.7631\times10^{-9}$  & $2.7631\times10^{-9}$\\
128  & $4.3572\times10^{-12}$ & $4.3572\times10^{-12}$\\
160, 192, 256 & $<10^{-14}$ & $<10^{-14}$\\
\bottomrule
\end{tabular}
\end{table}

\subsubsection{A certified response beyond the cancellation range}

The interval enclosure \eqref{eq:certified-degree-eight} proves that \(\mathbb N_{44}^{(2)}\), at total degree \(m+n=8\), is nonzero at the certified log-conductivity root.
This is the first total degree beyond the certified range \(m+n\leq7\), although lower diagonal responses of total degrees two, four, and six are also permitted by symmetry and vanish here because of the material tuning.
Table~\ref{tab:first-shell} gives an independent Nystr\"om evaluation in the physical domain of several entries at total degree eight.

\begin{table}[t]
\centering
\caption{Largest of the cosine- and sine-loading complex moments at the first total degree beyond the certified range, \(m+n=8\), evaluated with \(256\) nodes per interface.
Reciprocal pairs have the same magnitude up to discretization error.}
\label{tab:first-shell}
\begin{tabular}{@{}ccr@{}}
\toprule
$m$ & $n$ & $\max_{\beta\in\{c,s\}}|Q_{mn}^{\beta}|$\\
\midrule
1&7&$5.88166235698\times10^{-3}$\\
2&6&$1.03752894259\times10^{-2}$\\
3&5&$1.31131931535\times10^{-2}$\\
4&4&$3.56397326838\times10^{-1}$\\
\bottomrule
\end{tabular}
\end{table}

Among the total-degree-eight entries, only \(\mathbb N_{44}^{(2)}\) is interval-certified; the remaining entries in Table~\ref{tab:first-shell} are diagnostics without interval certification.

\subsection{A homothetic numerical example}
\label{sec:large-amplitude-example}

We finally illustrate the fixed-geometry result on a geometry that is visibly not close to circular.
Let \(K=3\), \(Q=8\), and define four homothetic interfaces by
\begin{equation}
 z_j(\theta)=\rho_j\left(1+\frac{7}{25}\cos(8\theta)\right)
 \mathrm{e}^{i\theta},
 \qquad
 (\rho_0,\rho_1,\rho_2,\rho_3)
 =\left(\frac12,\frac{33}{50},\frac{41}{50},1\right).
 \label{eq:large-amplitude-geometry}
\end{equation}
The outer radial function ranges from \(18/25=0.72\) to \(32/25=1.28\), so its maximum-to-minimum radius ratio is \(16/9\).
The interfaces are smooth, strictly star-shaped, exactly \(C_8\)-symmetric, and strictly nested by homothety.
In particular, Corollary~\ref{cor:homothetic} applies near the homogeneous reference point without any smallness assumption on this geometry.

\begin{figure}[H]
\centering
\begin{tikzpicture}[scale=2.35]
  \newcommand{\starlevel}[3]{%
    \draw[#1,thick,fill=#2]
      plot[smooth cycle,samples=401,domain=0:360]
      ({#3*(1+0.28*cos(8*\x))*cos(\x)},
       {#3*(1+0.28*cos(8*\x))*sin(\x)});
  }
  \starlevel{outerblue}{outerblue!18}{1.00}
  \starlevel{middleblue}{middleblue!24}{0.82}
  \starlevel{innergold}{innergold!28}{0.66}
  \starlevel{coregray}{coregray!24}{0.50}
  \node[font=\footnotesize,fill=white,inner sep=1.5pt]
    at (0,0) {core \(6/5\)};
\end{tikzpicture}
\caption{The large-amplitude homothetic \(C_8\) geometry \eqref{eq:large-amplitude-geometry}.
The outer boundary has \(28\%\) radial oscillation and radius ratio \(16/9\), placing the example well outside a near-circular perturbative regime.}
\label{fig:large-amplitude-geometry}
\end{figure}

Take core conductivity \(6/5\) and background conductivity \(1\).
A Newton solve of the periodic Nystr\"om system in the physical domain, using the three coating log-conductivities from inner to outer as variables, gives the 576-node discrete root
\begin{equation}
 \boxed{(\sigma_1,\sigma_2,\sigma_3)
 \approx(0.4956062210827645,
          1.600311466225608,
          0.9138845615900373).}
 \label{eq:large-amplitude-materials}
\end{equation}
At this point the Jacobian in log-conductivity variables has determinant approximately
\[
 6.5628259\times10^{-3}.
\]
The boundary-integral system has two-norm condition number approximately \(30.8978\).
Here \(Q_{mn}^{\beta}\) is the physical complex moment in \eqref{eq:computed-moment}; both cosine and sine loadings are included.
The reproducible script and JSON output are included in the supplementary \texttt{gate0} directory.

\begin{table}[H]
\centering
\caption{Resolution check for the large-amplitude homothetic example at the fixed 576-node conductivity vector \eqref{eq:large-amplitude-materials}.
The 576-node solve grid is omitted, so every displayed row is an independent resolution check.}
\label{tab:large-amplitude-convergence}
\begin{tabular}{@{}rr@{}}
\toprule
nodes/interface
& \(\displaystyle\max_{m+n\leq7,\,\beta\in\{c,s\}}
   |Q_{mn}^{\beta}|\)\\
\midrule
448 & \(1.485\times10^{-8}\)\\
512 & \(1.620\times10^{-9}\)\\
640 & \(3.665\times10^{-11}\)\\
\bottomrule
\end{tabular}
\end{table}

Exact \(C_8\) symmetry reduces the complete \(3\)-CGPT block to the three diagonal equations used in the Newton solve.
The decreasing residuals in Table~\ref{tab:large-amplitude-convergence} provide a direct numerical check in the physical domain of the complete block cancellation and, by the selection rules, of every CGPT of total degree at most seven.
This example has no interval certification: the reported decimals define a discrete root, and the calculation does not rigorously enclose the exact infinite-dimensional root at core conductivity \(6/5\).

\section*{Acknowledgments}
H.~Liu has been partially supported by the Hong Kong RGC General Research Funds (projects 11303125, 11304224 and 11311122).
The authors would like to acknowledge the use of generative AI tools for language polishing, reference formatting, and logical structure optimization during the preparation of this manuscript.

\begingroup \hfuzz=2pt
\AtBeginEnvironment{thebibliography}{\small}
\bibliographystyle{siam-nodash}
\bibliography{Conformal_Multilayer_High_Order_GPT}
\endgroup

\end{document}